\title{Homotopy theory for algebras over polynomial monads}
\date{February 20, 2017}
\author{M. A. Batanin}
\address{Macquarie University, North Ryde, 2109 Sydney, Australia.}
\email{michael.batanin@mq.edu.au}
\author{C. Berger}
\address{Universit\'e de Nice, Lab. J.-A. Dieudonn\'e, Parc Valrose, 06108 Nice, France.}
\email{cberger@math.unice.fr}
\subjclass{18D20, 18D50, 55P48}
\keywords{Quillen model category, polynomial monad, coloured operad, graph}
\def\NN{\mathbb{N}}
\def\Ee{\mathcal{E}}
\def\uEe{\underline{\mathcal{E}}}
\def\Ob{\mathrm{Ob}}
\def\Alg{\mathrm{Alg}}
\def\lra{\leftrightarrows}
\def\Pp{\mathcal{P}}
\def\colim{\mathrm{colim}}
\def\CC{\mathbb{C}}
\def\DD{\mathbb{D}}
\def\Set{\mathrm{Set}}
\def\FinSet{\mathrm{FinSet}}
\def\Poly{\mathbf{Poly}}
\def\Bq{\mathbf{Bouq}}
\def\Coll{\mathbf{Coll}}
\def\Oo{\mathcal{O}}
\def\op{\mathrm{op}}
\def\Hom{\mathrm{Hom}}
\def\Cat{\mathrm{Cat}}
\def\Int{\mathrm{Int}}
\def\sg{\sigma}
\def\inc{\hookrightarrow}
\def\into{\rightarrowtail}
\def\Aa{\mathcal{A}}
\def\Bb{\mathcal{B}}
\def\Uu{\mathcal{U}}
\def\Ff{\mathcal{F}}
\def\AA{\mathbb{A}}
\def\BB{\mathbb{B}}
\def\LL{\mathbb{L}}
\def\hocolim{\mathrm{hocolim}}
\def\eqv{\overset{\sim}{\longrightarrow}}
\def\Ord{\mathrm{Ord}}
\def\RR{\mathbb{R}}
\def\dim{\mathrm{dim}}
\def\Ch{\mathrm{Ch}}
\def\Arr{\mathrm{Arr}}
\theoremstyle{plain}
\newtheorem{theorem}[subsection]{Theorem}
\newtheorem{defin}[subsection]{Definition}
\newtheorem{pro}[subsection]{Proposition}
\newtheorem{lem}[subsection]{Lemma}
\newtheorem{corol}[subsection]{Corollary}
\theoremstyle{remark}
\newtheorem{remark}[subsection]{Remark}
\newtheorem{example}[subsection]{Example}
\newtheorem{examples}[subsection]{Examples}
\newcommand{\HS}{\mbox{${\bf T}^{\scriptstyle \tt S}$}}
\newcommand{\HT}{\mbox{${\bf T}^{{\scriptstyle \tt   T}}$}}
\newcommand{\HPhi}{\mbox{${\bf T}^{{\scriptstyle \tt   \Phi}}$}}
\newcommand{\bc}{\mbox{${\bf b}$}}
\newcommand{\B}{\mbox{$\mathfrak{B}$}}
\newcommand{\Z}{\mbox{$\mathbb{S}$}}
\newcommand{\h}{\mbox{${\bf T}^{\scriptstyle \tt T_{f,g}}$}}
\newcommand{\ho}{\mbox{${\bf T}^{\scriptstyle \tt T+{2}}$}}
\newcommand{\hi}{\mbox{${\bf T}^{\scriptstyle \tt T_{f}}$}}
\newcommand{\hj}{\mbox{${\bf T}^{\scriptstyle \tt T_{g}}$}}
\newcommand{\cop}{\mbox{${\bf T}^{\scriptstyle \tt T+{1}}$}}
\newcommand{\nn}{\mbox{${\bf NO(n)}^{\scriptstyle \tt NO(n)}$}}
\newcommand{\nnt}{\mbox{$({\bf NO(n)}^{\scriptstyle \tt NO(n)})_T$}}
\newcommand{\copn}{\mbox{${\bf NO(n)}^{\scriptstyle \tt NO(n)+1}$}}
\newcommand{\ts}{\mbox{${\bf t}$}}
\newcommand{\tso}{\mbox{${\bf t}_{0}$}}
\newcommand{\LTrees}{\mbox{${\mathtt{LinearTrees}}$}}
\newcommand{\RTrees}{\mbox{${\mathtt{PlanarRootedTrees}}$}}
\newcommand{\OTrees}{\mbox{${\mathtt{OrderedTrees}}$}}
\newcommand{\ORTrees}{\mbox{${\mathtt{OrderedRootedTrees}}$}}
\newcommand{\OGraphs}{\mbox{${\mathtt{OrderedGraphs}}$}}
\newcommand{\OPTrees}{\mbox{${\mathtt{OrderedPlanarTrees}}$}}
\newcommand{\nPTrees}{\mbox{${\mathtt{nPlanarRootedTrees}}$}}
\newcommand{\wa}{\mbox{${\bf w}$}}
\newcommand{\qa}{\mbox{${\bf q}$}}
\newcommand{\la}{\mbox{${\bf l}$}}
\newcommand{\boxx}{
{\unitlength=0.2mm
\begin{picture}(11,2)(440,0)
\put(440,10){\line(0,-1){10}}
\put(440,0){\line(1,0){10}}
\put(450,0){\line(0,1){10}}
\put(450,10){\line(-1,0){10}}
\end{picture}}}
\newcommand{\coeq}{ {\unitlength=0.25mm
\begin{picture}(35,15)(-10,1)
\put(5.4,1.5){\makebox(0,0){\mbox{$\longrightarrow$}}}
\put(5.4,7.5){\makebox(0,0){\mbox{$\longrightarrow$}}}
\put(6.3,13){\makebox(0,0){\mbox{$\scriptscriptstyle d_1 $}}}
\put(6.3,-3){\makebox(0,0){\mbox{$\scriptscriptstyle d_0 $}}}
\end{picture}}}
\newcommand{\T}{\mbox{$T_{1,1}$}}
\newcommand{\F}{\mbox{$\mathcal F$}}
\begin{document}

\setcounter{tocdepth}{1}

\maketitle

\begin{abstract}We study the existence and left properness of transferred model structures for ``monoid-like'' objects in monoidal model categories. These include genuine monoids, but also all kinds of operads as for instance symmetric, cyclic, modular, higher operads, properads and PROP's. All these structures can be realised as algebras over polynomial monads.

We give a general condition for a polynomial monad which ensures the existence and (relative) left properness of a transferred model structure for its algebras. This condition is of a combinatorial nature and singles out a special class of polynomial monads which we call tame polynomial. Many important monads are shown to be tame polynomial.\end{abstract}

{\footnotesize \tableofcontents}

\section*{Introduction}

This text emerged from model-theoretical properties needed in our approach to the stabilisation hypothesis of Baez-Dolan \cite{BD}.  Indeed, our proof \cite{BBC} of this
hypothesis relies on a careful homotopical analysis of \emph{$n$-operads} and their \emph{symmetrisation}. This analysis was initiated in \cite{SymBat,EHBat} but for the proof of the stabilisation hypothesis, the formulas obtained in \emph{loc. cit.} had to be extended to a much broader context. Moreover, our study of stabilisation phenomena in \cite{BBC2} required the existence of a certain left Bousfield localisation of the transferred model structure on $n$-operads which turns $n$-operads into higher categorical analogues of $E_n$-operads \cite{LocBat,cis}. However, the available techniques for left Bousfield localisation (cf. \cite{Hirschhorn}) are based on left properness, and it turned out to be surprisingly difficult to verify this property for $n$-operads. These two problems motivated us to reconsider foundational aspects of the homotopy theory of operads from a new categorical perspective.

In this article we address among others the general problem of \emph{preservation of left properness under transfer}. We show that, under some specific conditions on the base category, a certain form of preservation is given for an interesting class of transfers, including those for the known model structures on monoids \cite{SS}, reduced symmetric operads  \cite{BergerMoerdijk}, general non-symmetric operads \cite{FM}, reduced $n$-operads \cite{SymBat}.

The common feature of all these transfers is that in each case the algebraic structure is governed by a \emph{polynomial monad} in sets. Building on $2$-categorical techniques developed in \cite{EHBat,W,W2} we show that for this kind of algebraic structure, the \emph{existence of a transferred model structure} and its \emph{left properness} are intimately related. Both rely on a careful analysis of \emph{free algebra extensions}. In the case of free monoid extensions this analysis has been done by Schwede-Shipley \cite{SS} in an exemplary and prototypical way. It is among the main results of this article that an analogous analysis is available for free algebra extensions over a general polynomial monad, provided the latter satisfies an extra-condition. This condition is of a combinatorial nature: it requires a certain category (attached to the polynomial monad) to be a coproduct of categories with terminal object. A polynomial monad fulfilling this extra-condition will be called \emph{tame}. This article provides a \emph{combinatorial toolkit} to construct free algebra extensions over any tame polynomial monad.

All operads above are algebras over tame polynomial monads. In particular, we recover Muro's \cite{FM} recent construction of free non-symmetric operad extensions. Although non-symmetric operads can be viewed as monoids for a certain circle-product, the construction of these free operad extensions is highly non-trivial, since the circle-product commutes with colimits only on one side, while the Schwede-Shipley construction of free monoid extensions is based on a commutation with colimits on both sides. The availability of a Schwede-Shipley type construction for free algebra extensions depends on the behaviour of what we call \emph{semi-free coproducts}. These are coproducts of an algebra with a free algebra. Any \emph{tame} polynomial monad induces a ``polynomial expansion'' for semi-free coproducts. E. g., the underlying object of the coproduct $M\vee F_T(K)$ of a monoid $M$ with a free monoid $F_T(K)$ can be computed as follows (cf. \cite{SS} and Section \ref{SSM}):\begin{equation}\label{semfreemon} M\vee F_T(K) = \coprod_{n\ge 0} M\otimes (K\otimes M)^{\otimes n}.\end{equation}

The existence of an analogous functorial ``polynomial expansion'' for semi-free coproducts of $T$-algebras over a tame polynomial monad $T$ is the main ingredient for a transferred model structure on $T$-algebras with good properties.

Interestingly, the question of left properness has been dealt with in literature only recently in some special cases, cf. Cisinski-Moerdijk \cite[Theorem 8.7]{CM} and Muro \cite[Theorem 1.11]{FM2}. Although the \emph{monoid axiom} of Schwede-Shipley \cite{SS} gives a quite precise criterion for the existence of a transfer, the monoid axiom alone does not guarantee preservation of left properness under transfer. We propose in this article a common strengthening of the monoid axiom and of left properness which ensures that the transferred model structure on $T$-algebras is left proper if $T$ is tame polynomial. We also show (cf. \cite{HRY,Sp}) that a relative form of left properness (namely, weak equivalences between $T$-algebras with \emph{cofibrant underlying object} are closed under pushout along cofibrations) already follows from the monoid axiom.

This strengthening crucially involves a model-theoretical concept of \emph{$h$-cofibration}. We call a Quillen model category \cite{Quillen} \emph{$h$-monoidal} if it is a monoidal model category in the sense of Hovey \cite{Hovey} and the tensor product of a (trivial) cofibration with an arbitrary object is a (trivial) $h$-cofibration. An $h$-monoidal model category, which is \emph{compactly generated} \cite{BergerMoerdijk2}, satisfies the monoid axiom of Schwede-Shipley. Most of the model categories of algebraic topologists are compactly generated $h$-monoidal. If in addition the class of weak equivalences is closed under tensor product (e.g. all objects are cofibrant) then the model category is called \emph{strongly $h$-monoidal}.\vspace{1ex}

Our main theorem can now be stated as follows:

\begin{theorem}For any tame polynomial monad $\,T$ in sets and any compactly generated monoidal model category $\,\Ee$ fulfilling the monoid axiom, the category of $\,T$-algebras in $\,\Ee$ admits a relatively left proper transferred model structure.

The transferred model structure is left proper provided $\,\Ee$ is strongly $h$-monoidal.\end{theorem}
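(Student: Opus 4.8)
The plan is to apply the standard transfer criterion for cofibrantly generated model categories along the free–forgetful adjunction $F_T:\Ee\lra\Alg_T(\Ee):U$. Since $T$ is finitary and $\Ee$ is cocomplete, $\Alg_T(\Ee)$ is complete and cocomplete, and one declares a morphism of $T$-algebras to be a weak equivalence (resp.\ fibration) exactly when $U$ sends it to one. Writing $I$ and $J$ for the generating cofibrations and trivial cofibrations of $\Ee$, the candidate generating (trivial) cofibrations for $\Alg_T(\Ee)$ are $F_T(I)$ and $F_T(J)$, and compact generation supplies the smallness needed to run the small object argument on these sets. The only non-formal hypothesis to verify is that $U$ carries every relative $F_T(J)$-cell complex to a weak equivalence, and compact generation again reduces this to a single free algebra extension, i.e.\ to the pushout of a map $F_T(j)$, $j\in J$, along an arbitrary morphism $F_T(K)\to A$ of $T$-algebras.

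Here the combinatorics of the tame polynomial monad enter. The polynomial expansion of a semi-free coproduct (the $T$-algebra analogue of \eqref{semfreemon}) equips the underlying object of such a pushout with a canonical filtration $U(A)=B_0\to B_1\to\cdots$ with colimit $U(B)$, in which each $B_{n-1}\to B_n$ is a pushout in $\Ee$ along a map of the form $X_n\otimes Q^n(j)$, where $Q^n(j)$ is an iterated pushout-product of $j$ with itself and $X_n$ is an object assembled from $A$, $K$ and $L$. It is precisely \emph{tameness} — the requirement that the category indexing these layers be a coproduct of categories with terminal object — that forces the layers to assume this clean pushout-product form; without it the indexing colimits would fail to factor through honest pushout-products, as Muro's one-sided circle-product already illustrates. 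Since $j$ is a trivial cofibration, each $Q^n(j)$ is a trivial cofibration by the pushout-product axiom, so each layer map is a pushout of a trivial cofibration tensored with an object; the monoid axiom then guarantees that the transfinite composite of these pushouts is a weak equivalence. This shows $U(A)\to U(B)$ is a weak equivalence and establishes the transferred model structure.

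For \emph{relative} left properness I take a weak equivalence $f:A\eqv A'$ of $T$-algebras whose underlying objects are cofibrant and push it out along a cofibration $A\to B$. Building the analogous polynomial filtrations of $B$ over $A$ and of $B'$ over $A'$ simultaneously, I compare them layer by layer: each comparison map is, up to the monoid-axiom-controlled gluing, a tensor product of $f$ (and of pushout-products of the attaching cofibration) with fixed objects. Cofibrancy of the underlying objects is what makes these tensor products homotopically invariant, so every layer map is a weak equivalence and hence so is $B\to B'$. This is where the hypothesis on underlying objects is genuinely used, and it is the reason the unrestricted statement demands more.

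To upgrade to full left properness when $\Ee$ is strongly $h$-monoidal, I drop the cofibrancy hypothesis and instead exploit that weak equivalences are now closed under tensoring with arbitrary objects. The layers of the two filtrations are glued along $h$-cofibrations, and strong $h$-monoidality makes each layer comparison a weak equivalence without any cofibrancy input; a cube-lemma argument for $h$-cofibrations then propagates these layerwise weak equivalences through the pushouts and the transfinite composite to conclude that $B\to B'$ is a weak equivalence. I expect the main obstacle throughout to be the precise layer-by-layer analysis of the polynomial filtration: establishing that the comparison maps have the asserted pushout-product-tensor form, and pinpointing exactly where cofibrancy of the underlying objects can be traded for strong $h$-monoidality via the theory of $h$-cofibrations.
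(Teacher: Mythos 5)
Your proposal takes essentially the same route as the paper: a transfer theorem for admissible monads (Theorems \ref{transfer} and \ref{adequate}), combined with the tameness-derived Schwede--Shipley-type filtration of free algebra extensions into pushouts along iterated pushout-product maps tensored with objects (this is exactly what the internal-classifier machinery of Theorems \ref{colim} and \ref{SS} delivers), then the monoid axiom plus underlying-cofibrancy and the gluing lemma for relative left properness, and strong $h$-monoidality plus $h$-cofibration/homotopy-pushout arguments for full left properness. The one detail you gloss over is that for a general tame polynomial monad each layer's attaching map is a \emph{coproduct} of such pushout-product-tensor maps (one per connected component of the degree-$k$ stratum of the classifier), rather than a single one as in the monoid case, which is precisely why the paper additionally needs the class of weak equivalences to be closed under arbitrary coproducts to obtain full left properness --- a fact it extracts from strong $h$-monoidality via Proposition \ref{allhcof} and Lemma \ref{charhcof}(iii).
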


Examples of $T$-algebras in $\,\Ee$ for tame polynomial monads $T$ include monoids, non-symmetric operads, reduced symmetric operads, reduced $n$-operads, reduced  cyclic operads, as well as higher opetopic extensions of all these structures. In several of these cases, existence results for a transferred model structure (under some conditions on $\Ee$) were known before. It seems however that even in the known cases, our assumptions on $\Ee$ are weaker than those which appeared in literature. Moreover, the discussion of left properness { in the generality considered here} seems to be new. Even more importantly, the uniformity of our approach allows us to give explicit formulas for the \emph{total left derived functors} induced by morphisms of  polynomial monads. These explicit formulas have often concrete applications.

Failure of tameness for a polynomial monad very often produces obstructions for the existence of transfer. However, these obstructions can be removed by imposing more restrictive conditions on the base category $\Ee$. For instance, if $\Ee$ is the category of chain complexes over a field of characteristic $0,$ or the category of simplicial sets, resp. compactly generated topological spaces with the Quillen model structure, then a transferred model structure for algebras in $\Ee$ over any (tame or not tame) polynomial monad exists. The question of left properness is more subtle, yet. \vspace{1ex}

The article is subdivided into four rather independent parts.\vspace{1ex}

Part 1 develops basic properties of $h$-monoidal model categories and relates this notion to the monoid axiom of Schwede-Shipley \cite{SS}. We recall the concept of compact generation \cite{BergerMoerdijk2} of a monoidal model category and give general ``admissibility'' conditions on a monad, sufficient for the existence and relative left properness of the transfer. Two themes are treated in some detail: monoids in $h$-monoidal model categories  (closely following Schwede-Shipley \cite{SS} but adding left properness) and stability of $h$-monoidality under passage to ``convoluted'' diagram categories (here we extend some of the results of Dundas-{\O}stv{\ae}r-R\"ondigs \cite{DOR}).

Part 2 is devoted to polynomial monads. This part relies on $2$-categorical techniques developed in \cite{EHBat,W2}, but we have tried to keep the presentation as self-contained as possible. These techniques are used to reformulate the construction of a pushout along a free $T$-algebra map as a left Kan extension of a certain functor attached to $\,T$. More generally, given a morphism of polynomial monads $S\to T$, the induced functor $Alg_S(\Ee)\to Alg_T(\Ee)$, left adjoint to restriction, is expressible as a left Kan extension.  Our main theorem then follows by combining this construction with the results of Part 1, since the explicit formula for free algebra extensions over a tame polynomial monad implies the admissibility of the latter.

At the end we study the Quillen adjunction induced by a morphism of tame polynomial monads. We show that in good cases the total left derived functor can be calculated as a homotopy colimit. Instances of this appear in \cite{SymBat}, where a higher-categorical Eckmann-Hilton argument is used to show that the derived symmetrisation of the terminal $n$-operad is homotopy-equivalent to the Fulton-MacPherson operads of compactified  point configurations in $\mathbb{R}^n$; and in \cite{G}, where Giansiracusa computes the derived modular envelope of several cyclic operads. Doing so he closely follows Costello \cite{Cos} who suggested that the derived modular envelope of the terminal planar cyclic operad is homotopy equivalent to the modular operad of nodal Riemann spheres with boundary. Notice that the main obstacle for Giansiracusa and Costello in rendering such a statement precise was the missing model structure on cyclic operads. This is by now not anymore the case.

Part 3 studies examples. We first show that the polynomial monads based on contractible graphs (i.e. trees) tend to be tame, at least their normalised or reduced versions\footnote{See Remarks \ref{Cavigliared} and \ref{Giovanni} regarding a subtlety  of the definitions of reduced operad and PROP.}. We then show that most of the polynomial monads for operads which are based on graphs rather than trees (such as modular operads, properads or PROP's) are not tame, even if we consider just their normalised versions. For all these operad types there is no transferred model structure for chain operads in positive characteristics. Nevertheless, a transfer exists in characteristic $0$. We get the surprising result that for \emph{any} polynomial monad $T$, its Baez-Dolan $+$-construction $T^+$ is a tame polynomial monad. This can be used to define a homotopy theory of homotopy $T$-algebras for any polynomial monad $T$.

We finally study in detail the monad for normalised $n$-operads. In \cite{EHBat,SymBat}, the latter was shown to be polynomial. Here we show that it is tame polynomial, which is quite a bit harder. This particular example is of special interest for us for reasons explained at the beginning of this introduction. In fact, it was this example which motivated the whole project. We also show that the polynomial monads for general (non-reduced) symmetric, cyclic and $n$-operads for $n\ge 2$, are not tame.

Part 4 contains a concise combinatorial definition of the notions of graph, tree and graph insertion. We decided to include this material here because the tameness of a given polynomial monad often relies on subtle properties of a canonically associated class of structured graphs. The monad multiplication reflects the operation of \emph{insertion of a graph into the vertex of another graph of the same class}. This close relationship between graph insertion and operad types is actually the central idea of Markl's article \cite{Markl}, and has been further developed in the recent book by Johnson-Yau \cite{JY}, cf. also chapter 13.14 in the book \cite{Loday} of Loday-Vallette. Our study of algebras over polynomial monads subsumes all these examples. Each class of graphs which is suitably closed under graph insertion defines a polynomial monad for which the question of tameness can be raised.

For the reader's convenience we present here two tables  which summarise various results obtained in this article. The first table presents monoidal model categories considered in Part 1 for which we were able to establish (strong)  $h$-monoidality.

\begin{figure}[h]
\centering 
\resizebox*{!}{8.5cm}{
\begin{tabular}{|| l || c | c | c ||  } 
 \hline \hline

 & & & \\
   &all objects  & strongly&

   \\ [-1ex]
 \raisebox{2ex}{Monoidal model category} &\raisebox{0.5ex}{cofibrant} & \raisebox{0.5ex}{ $h$-monoidal} & \raisebox{1.5ex}{h-monoidal}
\\ [1ex]
   \hline \hline

{Simplicial sets} (Quillen) & yes  &yes &{yes}
 \\
    \hline

Small categories (groupoids) (Joyal-Tierney)  &  yes  & yes& {yes}
 \\
   \hline
 Complete $\Theta_n$-spaces (Rezk) & yes & yes &{yes}
 \\ 

   \hline
Chain complexes over a field & yes & yes & yes
 \\
 \hline
C.g. topological spaces (Str\o m) & yes & yes & {yes} \\

 \hline

Modules over a commutative monoid & & & \\
in a monoidal category with cofibrant objects & \raisebox{1ex}{yes}  & \raisebox{1ex}{yes}&\raisebox{1ex}{yes}\\
    \hline

C.g. topological spaces (Quillen) & no  & yes & {yes}

 \\

   \hline
 Small $2$-categories ($2$-groupoids) with        &      &          & \\
 Gray tensor product  (Lack)    & \raisebox{1ex}{no} & \raisebox{1ex}{yes}  & \raisebox{1ex}{yes}\\

  \hline

Modules over a commutative monoid & & & \\
in a strongly $h$-monoidal model category  & \raisebox{1ex}{no}  & \raisebox{1ex}{yes}&\raisebox{1ex}{yes}
\\
  \hline

Chain complexes over a commutative ring  & & & \\
with the projective model structure & \raisebox{1ex}{no}  & \raisebox{1ex}{no}&\raisebox{1ex}{yes}\\

 \hline

Symmetric spectra in simplicial sets with & & & \\
levelwise or stable projective  model structures & \raisebox{1ex}{no}  & \raisebox{1ex}{no}&\raisebox{1ex}{yes}\\

 \hline

Modules over commutative monoid & & & \\
in an $h$-monoidal model category  & \raisebox{1ex}{no}  & \raisebox{1ex}{no}&\raisebox{1ex}{yes}\\
 \hline\hline
\end{tabular}}

\caption{$h$-monoidal model categories}
\end{figure}

\newpage

The second table contains the polynomial monads treated in Part 3 for which the question of tameness has been settled. This table refines a similar table contained in the aforementioned article \cite{Markl} by Markl.

\begin{figure}[h]

\centering 
\resizebox*{!}{14cm}{
\begin{tabular}{|| l | c || c || c ||  } 
 \hline \hline
   &  & &

   \\ [-1ex]
 \raisebox{1.5ex}{polynomial monad for} &\raisebox{1.5ex}{ \ \ type} & \raisebox{1.5ex}{insertional class of graphs} & \raisebox{1.5ex}{tame}
\\ [1ex]
   \hline \hline

{$\CC$-diagrams} & & {$\CC$-coloured corollas}&{yes}
 \\
    \hline

{monoids} &  & {linear rooted trees}&{yes}
 \\
   \hline

enriched categories  &  & $I$-coloured linear &
 \\
 with object-set $I$  & & rooted trees & \raisebox{1ex} {yes}                   \\

   \hline

{non-symmetric operads} & general & {planar rooted trees}&{yes}
 \\ 

   \hline
{symmetric operads} & general & rooted trees &{no}
 \\

   \cline{2-4}
 &reduced & {non-degenerate rooted trees}&{yes}
    \\
    \cline{2-4}

& constant-free  & {regular rooted trees}&{yes}
 \\
  \cline{2-4}
 & normal & {normal rooted trees}&{yes}
 \\
 \hline

{planar cyclic operads}& general  & planar trees &{no}
 \\
   \cline{2-4}

{ }& reduced & non-degenerate planar trees & yes

\\
   \cline{2-4}

{ }& constant-free& regular planar trees & yes

\\
   \cline{2-4}

{ }& normal & normal planar trees & yes

 \\
   \hline

cyclic operads & general & trees &{no}

 \\
   \cline{2-4}

& reduced   & non-degenerate trees &{yes}

 \\
   \cline{2-4}
& constant-free & regular  trees &{yes}

\\
   \cline{2-4}

{ }& normal & normal  trees & yes

 \\
   \hline

{$n$-operads for $n\ge 2$}& general & $n$-planar trees   &no

 \\
   \cline{2-4}

& reduced & non-degenerate $n$-planar trees &yes
 \\
   \cline{2-4}

& constant-free& regular $n$-planar trees &yes

 \\
   \cline{2-4}

&normal& normal $n$-planar trees &yes

 \\
 \hline

dioperads & general& directed trees &no

 \\
   \cline{2-4}

& normal& normal directed trees &yes

 \\
 \hline

 $\frac{1}{2}$PROP's& general &  $\frac{1}{2}$graphs &no

 \\
   \cline{2-4}

& normal  & normal $\frac{1}{2}$graphs &yes

\\
\hline
{modular operads} & general & {connected  graphs (with genus)}&no

\\
   \cline{2-4}
& normal & normal {connected (stable) graphs}&no

 \\

   \hline

   {properads} & general & {loop-free connected directed  graphs }&no

\\
   \cline{2-4}
 {} & normal &normal  {loop-free connected directed  graphs }&no

 \\  \hline

{PROP's} & general & {loop-free directed graphs}&no
 \\

    \cline{2-4}

& normal& normal {loop-free directed graphs}&no\\
\hline 

{wheeled operads} & general & {wheeled rooted trees }&{no}
 \\

    \cline{2-4}

& normal& normal  {wheeled rooted trees }& {yes}\\
\hline

  {wheeled properads} & general & {  connected directed  graphs }&no

\\
   \cline{2-4}
 {} & normal &normal  { connected directed  graphs }&no

 \\  \hline

{wheeled PROP's} & general  &directed graphs&no
 \\

    \cline{2-4}

& normal & normal {directed graphs}&no
\\
\hline
\hline
\end{tabular}}

\caption{Polynomial monads based on graphs}\label{table2}\end{figure}

\newpage

\part{Model structure for algebras over admissible monads}

\section{Homotopy cofibrations and $h$-monoidal model categories}\label{Hcof}

We introduce and investigate here a model-theoretical concept of \emph{$h$-cofibration}\footnote{Mike Hopkins suggested the term \emph{flat} morphism for $h$-cofibration, cf. \cite{HHR}. Our terminology is in conflict with the terminology of Peter May and Kate Ponto \cite[Chapter 17]{MayPonto} who use the term $h$-cofibration (resp. $q$-cofibration) to designate a cofibration in Str{\o}m's  (resp. Quillen's) model structure on topological spaces. We argue that this terminological conflict is not peculiar, because our $h$-cofibrations are defined for \emph{all} model categories, while the $h$-cofibrations of May and Ponto are defined for \emph{topological} model categories only. In those cases where the two definitions telescope each other, they do so in a nice way, cf. the proof of Proposition \ref{kspaces}, especially its footnote.} which seems interesting in itself. The dual concept of an \emph{$h$-fibration} has been studied by Rezk \cite{Rezk1} under the name of sharp map. The definition of an $h$-cofibration only depends on the class of weak equivalences and on the existence of pushouts. In left proper model categories, the class of $h$-cofibrations can be considered as the closure of the class of cofibrations under \emph{cofibre equivalence}, cf. Proposition \ref{hcofchar}. We prove a useful \emph{gluing lemma} for weak equivalences between pushouts along $h$-cofibrations in left proper model categories, cf. Proposition \ref{gluing}. We mainly use $h$-cofibrations in order to formulate a strengthening of left properness (\emph{$h$-monoidality}) well adapted to monoidal model categories. A similar concept has been developed by Dundas-{\O}stv{\ae}r-R\"ondigs \cite[Definition 4.6]{DOR}.

In the course of studying basic properties of $h$-monoidal model categories, we establish in Propositions \ref{twin1}, \ref{twin2} and \ref{detect} below some useful recognition criteria for $h$-monoidality. Since a compactly generated $h$-monoidal model categories fulfills the monoid axiom of Schwede-Shipley (Proposition \ref{couniv0}) and is left proper (Lemma \ref{mainimplications}), these criteria are helpful tools in establishing the monoid axiom of Schwede-Shipley and/or the left properness of a given monoidal model category.

We have been advertised by Maltsiniotis that the notion of $h$-cofibration already appears in some of Grothendieck's unpublished manuscripts. In recent work, Ara and Maltsiniotis use $h$-cofibrations to prove a version of the transfer theorem \cite[Proposition 3.6]{AM} which is related to our Theorem \ref{transfer}.\vspace{1ex}

For an object $X$ of a category $\Ee$ the undercategory $X/\Ee$ has as objects morphisms with domain $X$, and as morphisms ``commuting triangles'' under $X$. If $\Ee$ carries a model structure then so does $X/\Ee$. A map $X\to A\to B$ in $X/\Ee$ is a cofibration, weak equivalence, resp. fibration if and only if the underlying map $A\to B$ in $\Ee$ is.

\begin{defin}\label{h-cofibration}A morphism $f:X \rightarrow Y$ in a model category $\Ee$ is called an \emph{$h$-cofibration} if the functor $f_!:X/\Ee\to Y/\Ee$ (given by cobase change along $f$) preserves weak equivalences.\end{defin}

In more explicit terms, a morphism $f:X\to Y$ in $\Ee$ is an \emph{$h$-cofibration} if and only if in each commuting diagram of pushout squares in $\Ee$
\begin{equation}\label{hcof}\begin{diagram}[small]X&\rTo&A&\rTo^w&B\\\dTo^f&&\dTo&&\dTo\\Y&\rTo&A'&\rTo_{w'}&B'\end{diagram}\end{equation}
$w'$ is a weak equivalence as soon as $w$ is.

\begin{lem}\label{leftproper}A model category is left proper if and only if each cofibration is an $h$-cofibration.\end{lem}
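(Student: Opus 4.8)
The plan is to prove both implications by exploiting the pasting law for pushout squares, which allows one to rewrite a cobase change along a weak equivalence as a pushout of that very weak equivalence along a cofibration. In this way left properness and the $h$-cofibration condition become two readings of the same pushout data.

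First I would treat the implication ``left proper $\Rightarrow$ every cofibration is an $h$-cofibration''. Let $f:X\to Y$ be a cofibration and consider the defining diagram \eqref{hcof} with $w$ a weak equivalence, so that $A'=Y\sqcup_X A$ and $B'=Y\sqcup_X B$. The key observation is that, by the pasting law for pushouts, there is a canonical identification $B'=Y\sqcup_X B\cong A'\sqcup_A B$, under which the right-hand square formed by $w:A\to B$, $w':A'\to B'$ together with the vertical maps $A\to A'$ and $B\to B'$ is itself a pushout. Its left vertical map $A\to A'$ is the cobase change of the cofibration $f$ along $X\to A$, hence again a cofibration. Thus $w'$ is obtained by pushing the weak equivalence $w$ out along a cofibration, and left properness yields at once that $w'$ is a weak equivalence. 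This shows $f$ is an $h$-cofibration.

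For the converse, ``every cofibration is an $h$-cofibration $\Rightarrow$ left proper'', I would specialise the $h$-cofibration condition to the degenerate case of identity structure maps. Given a cofibration $f:X\to Y$ and a weak equivalence $v:X\to Z$, I instantiate \eqref{hcof} with $A=X$, $B=Z$ and top row $X\xrightarrow{\mathrm{id}}X\xrightarrow{v}Z$. Then the left square, whose horizontal maps are both identities, is trivially a pushout, so $A'=Y\sqcup_X X=Y$ and $B'=Y\sqcup_X Z$, and the induced map $w'$ is precisely the pushout of $v$ along $f$. Since $f$ is assumed to be an $h$-cofibration and $v$ is a weak equivalence, $w'$ is a weak equivalence; this is exactly the assertion that the pushout of a weak equivalence along a cofibration is a weak equivalence, i.e.\ left properness. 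Equivalently, one can phrase this directly as the statement that the functor $f_!:X/\Ee\to Y/\Ee$ carries the weak equivalence $v:(X,\mathrm{id}_X)\to(Z,v)$ of $X/\Ee$ to a weak equivalence.

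Neither direction presents a serious obstacle; the only point requiring genuine care is the pasting-law identification $Y\sqcup_X B\cong A'\sqcup_A B$ used in the first implication, together with the standard stability of cofibrations under cobase change. Everything else amounts to arranging the pushout squares so that left properness and the $h$-cofibration condition become literally the same statement applied to suitable squares.
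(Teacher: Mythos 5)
Your proof is correct and follows essentially the same route as the paper's: for the forward direction you observe that $A\to A'$ is a cofibration (cobase change of $f$) and that the right-hand square is a pushout, so left properness applies to $w$; for the converse you specialise diagram (\ref{hcof}) to the case $A=X$, recovering stability of weak equivalences under pushout along cofibrations. The paper's proof is just a terser statement of exactly these two observations.
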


\begin{proof}If in diagram (\ref{hcof}) above $\Ee$ is left proper, $f$ is a cofibration and $w$ a weak equivalence, then $w'$ is a weak equivalence as well, thus $f$ is an $h$-cofibration. Conversely, if every cofibration is an $h$-cofibration, then (\ref{hcof}) shows that weak equivalences are stable under pushout along cofibrations, so that $\Ee$ is left proper.\end{proof}

\begin{lem}\label{coprodhcof}The class of $\,h$-cofibrations is closed under composition, cobase change, retract, and under formation of finite coproducts.\end{lem}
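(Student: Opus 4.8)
The plan is to exploit the functorial form of the definition throughout: $f$ is an $h$-cofibration exactly when $f_!\colon X/\Ee\to Y/\Ee$ preserves weak equivalences, together with the fact (recorded just before the definition) that a map in an undercategory is a weak equivalence if and only if its underlying map in $\Ee$ is. \textbf{Composition} is then immediate: for composable $h$-cofibrations $f\colon X\to Y$ and $g\colon Y\to Z$, the pasting law for pushouts supplies a canonical isomorphism $(gf)_!\cong g_!\circ f_!$ of functors $X/\Ee\to Z/\Ee$, and a composite of weak-equivalence-preserving functors preserves weak equivalences.

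\textbf{Cobase change.} Let $f\colon X\to Y$ be an $h$-cofibration and let $f'\colon X'\to Y'$ be its cobase change along some $u\colon X\to X'$, so that $Y'=Y\sqcup_XX'$. For any object $X'\to A$ of $X'/\Ee$ the pasting law identifies $Y'\sqcup_{X'}A$ with $Y\sqcup_XA$; hence, on underlying objects, $f'_!$ coincides with the composite $f_!\circ u^*$, where $u^*\colon X'/\Ee\to X/\Ee$ denotes restriction along $u$. Since $u^*$ manifestly preserves weak equivalences and $f_!$ does so by hypothesis, and since weak equivalences in an undercategory are detected on underlying objects, $f'_!$ preserves weak equivalences, as required.

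\textbf{Retract.} This is the step I expect to cost the most. Suppose $f\colon X\to Y$ is a retract of an $h$-cofibration $g\colon X'\to Y'$, with source section/retraction $i,r$ (so $ri=\mathrm{id}_X$) and target section/retraction $j,s$ (so $sj=\mathrm{id}_Y$), compatible with $f$ and $g$. The idea is to realise cobase change along $f$ as a retract of cobase change along $g$. Given $\phi\colon X\to A$, the spans $(Y\leftarrow X\xrightarrow{\phi}A)$ and $(Y'\leftarrow X'\xrightarrow{\phi r}A)$ are joined by two morphisms of spans, built from $(i,j)$ and $(r,s)$ together with $\mathrm{id}_A$, whose composite is the identity of the first span; applying the pushout functor then exhibits $Y\sqcup_XA$ as a retract of $Y'\sqcup_{X'}A$, naturally in $\phi$. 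Consequently, for a weak equivalence $w$ in $X/\Ee$ the map $f_!w$ is a retract, in the arrow category of $\Ee$, of $g_!(r^*w)$. The latter is a weak equivalence because $g$ is an $h$-cofibration, and weak equivalences are closed under retracts; hence $f_!w$ is a weak equivalence and $f$ is an $h$-cofibration.

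\textbf{Finite coproducts.} By induction this reduces to the binary case, the empty case being the identity of the initial object. For $h$-cofibrations $f\colon X\to Y$ and $g\colon X'\to Y'$ I would factor $f\sqcup g=(\mathrm{id}_Y\sqcup g)\circ(f\sqcup\mathrm{id}_{X'})$. Here $f\sqcup\mathrm{id}_{X'}$ is the cobase change of $f$ along the coproduct inclusion $X\to X\sqcup X'$, and $\mathrm{id}_Y\sqcup g$ is the cobase change of $g$ along $X'\to Y\sqcup X'$, both squares being pushouts by the interaction of pushouts with coproducts. The closure under cobase change and under composition already established then yields that $f\sqcup g$ is an $h$-cofibration.
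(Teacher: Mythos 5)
Your proposal is correct and follows essentially the same route as the paper: the first three closure properties are direct consequences of the functorial definition (which the paper simply asserts as "immediate"), and for finite coproducts both arguments rest on the observation that $f\sqcup 1_Z$ is the cobase change of $f$ along the coproduct injection $X\to X\sqcup Z$, combined with the factorisation $f\sqcup g=(1\sqcup g)\circ(f\sqcup 1)$ and closure under composition. The only difference is that you spell out the details (notably the retract argument via a retract of spans) that the paper leaves implicit.
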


\begin{proof}Closedness under composition, cobase change and retract follows immediately from the definition. For closedness under finite coproducts, it is enough to show that for each $h$-cofibration $f:X\to Y$ and each object $Z$, the coproduct $f\sqcup 1_Z$ is an $h$-cofibration. This follows from the fact that the commutative square

\begin{diagram}[small]X&\rTo  &X\sqcup Z \\\dTo^f&&\dTo_{f\sqcup 1_Z}\\Y&\rTo&Y\sqcup Z\end{diagram}
is a pushout.\end{proof}

\begin{lem}\label{charhcof}--
\begin{itemize}
\item[(i)]An object $Z$ is $h$-cofibrant if and only if $-\sqcup Z$ preserves weak equivalences.
\item[(ii)]The class of weak equivalences is closed under finite coproducts if and only if all objects of the model category are $h$-cofibrant.
\item[(iii)]The class of weak equivalences is closed under arbitrary coproducts whenever all objects are $h$-cofibrant and weak equivalences are closed under filtered colimits along coproduct injections.
\end{itemize}\end{lem}

\begin{proof}The first statement expresses the fact that a pushout along the map from an initial object to $Z$ is the same as taking the coproduct with $Z$. The second statement follows from the first and from the identity $f\sqcup g=(1_{\mathrm{codomain}(f)}\sqcup g)\circ(f\sqcup 1_{\mathrm{domain}(g)}).$ The third statement follows from the second and the fact that any coproduct can be calculated as a filtered colimit of finite coproducts with structure maps being coproduct injections.\end{proof}

\subsection{Homotopy pushouts and cofibre equivalences}--\vspace{1ex}

A commutative square in a general model category
\begin{diagram}[small]A&\rTo&B\\\dTo&&\dTo\\C&\rTo&D\end{diagram}will be called a \emph{formal homotopy pushout} if for any factorisation $A\into B'\eqv B$ (resp. $A\into C'\eqv C$) of $A\to B$ (resp. $A\to C$) into a cofibration followed by a weak equivalence, the induced map $B'\cup_AC'\to D$ is a weak equivalence.

In left proper model categories, it suffices to choose a particular factorisation of $A\to B$ and check that $B'\cup_AC\to D$ is a weak equivalence. We will see in Corollary \ref{formal=real} below that in left proper model categories, formal homotopy pushouts are homotopy pushouts (i.e. homotopy colimits). Yet, in order to avoid any confusion, we maintain the terminological distinction.\vspace{1ex}

The following proposition gives several useful characterisations of $h$-cofibrations in left proper model categories. Left properness is essential here because formal homotopy pushouts are easier to recognise in left proper model categories than in general model categories. For instance, in a left proper model category any pushout along a cofibration is a formal homotopy pushout, which is not the case in general model categories.

A weak equivalence $w$ in $X/\Ee$ is called a \emph{cofibre equivalence} if for each morphism $g:X\to B$ the functor $g_!:X/\Ee\to B/\Ee$ takes $w$ to a weak equivalence in $B/\Ee$.

\begin{pro}\label{hcofchar}In a left proper model category $\Ee$, the following four properties of a morphism $f:X\to Y$ are equivalent:\begin{itemize}\item[(i)]$f$ is an $h$-cofibration;\item[(ii)]every pushout along $f$ is a formal homotopy pushout;\item[(iii)]for every factorisation of $f$ into a cofibration followed by a weak equivalence, the weak equivalence is a cofibre equivalence;\item[(iv)]there exists a factorisation of $f$ into a cofibration followed by a cofibre equivalence.\end{itemize}\end{pro}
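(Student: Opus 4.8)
The plan is to establish the cycle of implications $(i)\Rightarrow(iii)\Rightarrow(iv)\Rightarrow(ii)\Rightarrow(i)$, exploiting at each stage that in a left proper model category a pushout along a cofibration is automatically a homotopy pushout, and that homotopy pushouts are invariant under weak equivalence of diagrams.

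For $(i)\Rightarrow(iii)$, suppose $f$ is an $h$-cofibration and factor it as $X\xrightarrow{j} Z\xrightarrow{w} Y$ with $j$ a cofibration and $w$ a weak equivalence. Given any $g:X\to B$, I want $g_!(w):B\to B\sqcup_X Z \to B\sqcup_X Y$ to be a weak equivalence. The point is that cobase change along $f$ factors through cobase change along $j$ followed by cobase change along $w$, and since $f_!$ preserves weak equivalences by hypothesis, a diagram-chase of nested pushout squares as in (\ref{hcof}) shows $w$ is carried to a weak equivalence under $g_!$; concretely one compares the pushout of $w$ along $g$ with the pushout of $w'$ coming from pushing $g$ forward, using that $f_!=w_!\circ j_!$ preserves weak equivalences. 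The implication $(iii)\Rightarrow(iv)$ is immediate: a factorisation of $f$ into a cofibration followed by a weak equivalence exists by the model category axioms, and $(iii)$ guarantees that weak equivalence is a cofiber equivalence, so it serves as the required factorisation in $(iv)$.

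For $(iv)\Rightarrow(ii)$, factor $f=w\circ j$ with $j:X\to Z$ a cofibration and $w:Z\to Y$ a cofiber equivalence. Given any pushout of $f$ along $g:X\to B$, I decompose it into the pushout of $j$ along $g$, followed by the pushout of $w$. The first pushout is along a cofibration, hence a homotopy pushout by left properness. For the second, the definition of cofiber equivalence says precisely that $g_!$ (into $Z/\Ee$, then comparing with $Y$) sends $w$ to a weak equivalence, so the second square is a weak equivalence of the relevant spans and does not disturb the homotopy-pushout property. Composing a homotopy pushout with a levelwise weak equivalence yields a homotopy pushout, giving $(ii)$. Finally $(ii)\Rightarrow(i)$ follows from the homotopy invariance of homotopy pushouts: in diagram (\ref{hcof}), if every pushout along $f$ is a homotopy pushout and $w$ is a weak equivalence, then comparing the two homotopy pushout squares (the left and the composite) forces $w'$ to be a weak equivalence, which is exactly the defining property of an $h$-cofibration.

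I expect the main obstacle to be the bookkeeping in $(i)\Rightarrow(iii)$, where one must correctly identify which nested pushout squares realise $g_!(w)$ as an instance of the weak-equivalence-preservation guaranteed by $f$ being an $h$-cofibration; the subtlety is that $g$ is an arbitrary map out of $X$, not the map $f$ itself, so one must re-express the cobase change along $g$ in terms of cobase change along $f$ via a suitable pasting of pushout squares. The other steps rely on the standard but essential fact, valid precisely because $\Ee$ is left proper, that pushouts along cofibrations are homotopy pushouts and that homotopy pushouts are preserved under composition and detected up to weak equivalence; these are the properties the preamble to the proposition explicitly flags as the reason left properness is assumed.
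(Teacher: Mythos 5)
Your cycle $(i)\Rightarrow(iii)\Rightarrow(iv)\Rightarrow(ii)\Rightarrow(i)$ differs from the paper's $(i)\Rightarrow(ii)\Rightarrow(iii)\Rightarrow(iv)\Rightarrow(i)$, and three of your four implications are sound: $(iii)\Rightarrow(iv)$ is formal, and your $(iv)\Rightarrow(ii)$ and $(ii)\Rightarrow(i)$ are correct uses of the facts that, in a left proper model category, a pushout along a cofibration is a homotopy pushout and that homotopy pushouts are invariant under weak equivalences of spans. The genuine gap is in $(i)\Rightarrow(iii)$, which is also the hardest implication in your route, and the justification you offer there does not suffice. Writing $f=w\circ j$ with $j:X\to Z$ a cofibration and $w:Z\to Y$ a weak equivalence, you must show that for an \emph{arbitrary} $g:X\to B$ the map $g_!(w):B\sqcup_X Z\to B\sqcup_X Y$ is a weak equivalence. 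But the identity $f_!=w_!\circ j_!$ together with the hypothesis that $f_!$ preserves weak equivalences gives no formal handle on $g_!$: the map $g_!(w)$ is the cobase change of $w$ along $Z\to B\sqcup_X Z$, and that map is neither a cofibration nor itself a cobase change along $f$, so neither left properness nor the $h$-cofibration property of $f$ applies to it directly. You correctly flag this as the obstacle, but flagging it is not the same as resolving it, and the ``diagram-chase'' you appeal to is never exhibited.

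The missing idea is precisely the paper's key move in its proof of $(i)\Rightarrow(ii)$: factor the \emph{horizontal} map $g$ as a cofibration $i:X\to A$ followed by a weak equivalence $u:A\to B$, and form the $3\times 3$ grid of pushouts with rows pushed out of $X\to A\to B$ and columns pushed out of $X\to Z\to Y$. Then (a) $A\sqcup_X Z\to B\sqcup_X Z$ is a weak equivalence, since $j$ is an $h$-cofibration by Lemma \ref{leftproper} and $u$ is a weak equivalence; (b) $A\sqcup_X Y\to B\sqcup_X Y$ is a weak equivalence, since $f$ is an $h$-cofibration; (c) $A\sqcup_X Z\to A\sqcup_X Y$ is a weak equivalence by left properness, being the cobase change of $w$ along the cofibration $Z\to A\sqcup_X Z$; and the $2$-out-of-$3$ property applied to the resulting commutative square then forces $g_!(w)$ to be a weak equivalence. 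With this inserted your proof closes, but note that (a)--(c) amounts to the paper's argument for $(i)\Rightarrow(ii)$ fused with its argument for $(ii)\Rightarrow(iii)$, so the economy you hoped to gain by bypassing $(ii)$ is illusory: routing through $(ii)$, as the paper does, separates the factorisation of $g$ from the factorisation of $f$ and keeps each step one-idea-deep.
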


\begin{proof}(i)$\implies$(ii) For a given outer pushout rectangle like in diagram (\ref{hcof}) above with an $h$-cofibration $f$, factor the given map $X\to B$ as a cofibration $X\into A$ followed by a weak equivalence $w:A\eqv B$, and define $Y\to A'$ as the pushout of $X\into A$ along $f$. Since the right square is then a pushout, $w':A'\to B'$ is a weak equivalence, whence the outer rectangle is a formal homotopy pushout.

(ii)$\implies$(iii) The pushout
\begin{diagram}[small]X&\rTo^g&B\\\dTo^f&&\dTo\\Y&\rTo&B'\end{diagram}
is factored vertically according to a factorisation of $f$ into a cofibration $X\into Z$ followed by a weak equivalence $v:Z\eqv Y$:\begin{diagram}[small]X&\rTo^g&B\\\dInto&&\dInto\\Z&\rTo& Z'\\\dTo^v&&\dTo_{v'}\\Y&\rTo&B.\end{diagram}Since the outer rectangle is a formal homotopy pushout by assumption, $v'$ is a weak equivalence, whence $v$ is a cofibre equivalence.

(iii)$\implies$(iv) This is obvious.

(iv)$\implies$(i) Consider a commutative diagram like in (\ref{hcof}) above, and factor $f$ into a cofibration $X\into Z$ followed by a cofibre equivalence $v:Z\eqv Y$. This induces the following commuting diagram of pushout squares\begin{diagram}[small]X&\rTo&A&\rTo^w&B\\\dInto&&\dInto&&\dInto\\Z&\rTo&W&\rTo^{w''}&Z'\\\dTo^v&&\dTo_{v''}&&\dTo_{v'}\\Y&\rTo&A'&\rTo^{w'}&B'\end{diagram}
in which $v''$ and $v'$ are weak equivalences because $v$ is a cofibre equivalence, and $w''$ is a weak equivalence by left properness of $\Ee$. Therefore, the $2$-out-of-$3$ property of the class of weak equivalences implies that $w'$ is a weak equivalence as well, and hence $f$ is an $h$-cofibration as required.\end{proof}

\begin{lem}\label{formal}Let $\Ee$ be a left proper model category.
\begin{itemize}\item[(a)]A commutative square\begin{diagram}[small]A&\rTo&A'\\\dTo^w_\sim&&\dTo_{w'}\\B&\rTo&B'\end{diagram}with a left vertical weak equivalence $w:A\eqv B$ is a formal homotopy pushout if and only if $w':A'\to B'$ is a weak equivalence;\item[(b)]In a commutative diagram\begin{diagram}[small]A&\rTo&B&\rTo&C\\\dTo&&\dTo&&\dTo\\A'&\rTo&B'&\rTo&C'\end{diagram}in which the left square is a formal homotopy pushout, the right square is a formal homotopy pushout if and only if the outer rectangle is a formal homotopy pushout.\end{itemize}\end{lem}

\begin{proof}(a) Factoring $A\to A'$ into a cofibration $A\into A''$ followed by a weak equivalence $v:A''\eqv A'$ and taking a pushout we get the following commutative square\begin{diagram}[small]A&\rInto&A''&\rTo^v&A'\\\dTo^w&&\dTo^{w''}&&\dTo_{w'}\\B&\rInto&B''&\rTo_{v'}&B'\end{diagram}in which $w''$ is a weak equivalence by left properness. Therefore, by the $2$-out-of-$3$ property of weak equivalences, $v'$ is a weak equivalence (i.e. the given square is a formal homotopy pushout) if and only if $w'$ is a weak equivalence.

(b) Factoring $A\to A'$ into a cofibration $A\into A''$ followed by a weak equivalence $v:A''\eqv A'$ and taking pushouts we get the following commutative diagram\begin{diagram}[small]A&\rTo&B&\rTo&C\\\dInto&&\dInto&&\dInto\\A''&\rTo&\NWpbk B''&\rTo&\NWpbk C''\\
\dTo^v_\sim&&\dTo^{v''}_\sim&&\dTo_{v'}\\A'&\rInto&B'&\rTo&C'\end{diagram}in which $v''$ is a weak equivalence because the left square above is a formal homotopy pushout. Now $v'$ is a weak equivalence if and only if either the right square above is a formal homotopy pushout or, equivalently, the outer rectangle above is a formal homotopy pushout.\end{proof}

\begin{pro}[cf. Proposition B.12 in \cite{HHR}]\label{gluing}--

In a model category $\Ee$, consider the following commutative diagram
\begin{diagram}[small]C&\lTo&A&\rTo^u&B\\\dTo^{f_\gamma}&&\dTo^{f_\alpha}&&\dTo_{f_\beta}\\C'&\lTo&A'&\rTo_{u'}&B'\end{diagram}in which $f_\alpha,f_\beta,f_\gamma$ are supposed to be weak equivalences.\vspace{1ex}

Then the induced map on pushouts $f_\gamma\cup_{f_\alpha}f_\beta:C\cup_AB\to C'\cup_{A'}B'$ is a weak equivalence provided one of the following two conditions is satisfied:
\begin{itemize}\item[(a)]all objects of the diagram are cofibrant and the maps $u,u'$ are cofibrations;\item[(b)]the model category $\Ee$ is left proper and the maps $u,u'$ are $h$-cofibrations.\end{itemize}\end{pro}

\begin{proof}Case (a) is classical, see e.g. Goerss-Jardine \cite[Lemma 9.8]{GoeJa}. Alternatively, (a) can be deduced from (b) by the following trick: the full subcategory of $\Ee$ spanned by the cofibrant objects is a \emph{category of cofibrant objects} in the sense of Brown and the proof of (b) holds in such categories (in which every cofibration is an $h$-cofibration because all objects are cofibrant).

To prove (b) let us write $D=C\cup_AB$ and $D'=C'\cup_{A'}B'$. Then the induced map $f_\delta:D\to D'$ is part of the following commutative cube

\begin{gather}\label{cube0}\begin{diagram}[size=1.6em,silent,UO]&&C&\rTo&&&D\\&\ruTo&\vLine_{f_\gamma}&&&\ruTo&\dTo_{f_\delta}\\A&&&\rTo_u&B&&\\
\dTo_{f_\alpha}&&\dTo&&\dTo_{f_\beta}&&\\&&C'&\hLine&\VonH&\rTo&D'\\&\ruTo&&&&\ruTo&\\A'&\rTo_{u'}&&&B'&&\end{diagram}\end{gather}
in which top and bottom squares are formal homotopy pushouts by Proposition \ref{hcofchar}ii, and the left square is a formal homotopy pushout by Lemma \ref{formal}a. It follows then from Lemma \ref{formal}b that the right square is also a formal homotopy pushout. This implies by Lemma \ref{formal}a that $f_\delta$ is a weak equivalence as required.\end{proof}

\begin{corol}\label{formal=real}In left proper model categories, formal homotopy pushouts are homotopy pushouts in the homotopical sense.\end{corol}

\begin{proof}In cube (\ref{cube0}) assume that $f_\alpha:A\eqv A'$ is a cofibrant replacement of $A'$, and define $B$ (resp. $C$) by factoring $A\eqv A'\to B'$ (resp. $A\eqv A'\to C'$) as a cofibration $A\into B$ (resp. $A\into C$) followed by a weak equivalence $f_\beta:B\eqv B'$ (resp. $f_\gamma:C\eqv C'$). The top square is then a formal homotopy pushout and $D=B\cup_AC$ realises the homotopy colimit of the lower diagram $C'\leftarrow A'\to B'$.

If the bottom square is a formal homotopy pushout then the proof of Proposition \ref{gluing}b shows that the canonical map $f_\delta:D\to D'$ from the homotopy pushout to the formal homotopy pushout is a weak equivalence.\end{proof}

A weak equivalence which is an $h$-cofibration will be called a \emph{trivial $h$-cofibration}. A weak equivalence which remains a weak equivalence under any cobase change will be called \emph{couniversal}. For instance, trivial cofibrations are couniversal weak equivalences.

\begin{lem}\label{couniv}In general, couniversal weak equivalences are trivial $h$-cofibrations. In left proper model categories, the converse holds: trivial $h$-cofibrations are couniversal weak equivalences.\end{lem}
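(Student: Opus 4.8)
The statement splits into two implications which I would handle separately, the first requiring no properness and the second using it essentially.

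\emph{Couniversal weak equivalences are trivial $h$-cofibrations.} Let $f\colon X\to Y$ be a couniversal weak equivalence. Being already a weak equivalence, it suffices to check that $f$ is an $h$-cofibration, so I would start from an arbitrary diagram of pushout squares as in (\ref{hcof}) with $w$ a weak equivalence. The left-hand square exhibits $A\to A'$ as a cobase change of $f$ (along $X\to A$), and the outer rectangle exhibits $B\to B'$ as a cobase change of $f$ (along $X\to B$); couniversality makes both of these weak equivalences. The right-hand square commutes and has three of its four edges—namely $w$ and the two vertical cobase changes—weak equivalences, so the $2$-out-of-$3$ property forces $w'$ to be a weak equivalence. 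Hence $f$ is an $h$-cofibration, and being a weak equivalence, a trivial $h$-cofibration.

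\emph{In a left proper model category, trivial $h$-cofibrations are couniversal.} Let $f\colon X\to Y$ be a trivial $h$-cofibration and $g\colon X\to B$ an arbitrary map; I must show that the cobase change $f'\colon B\to B'$ of $f$ along $g$ is a weak equivalence. The plan is to invoke Proposition \ref{hcofchar}: since $f$ is an $h$-cofibration and $\Ee$ is left proper, the defining pushout square of $f'$ is a homotopy pushout, i.e.\ $B'$ computes the homotopy pushout of the span $Y\leftarrow X\to B$. I would then compare this with the homotopy pushout of the span $X\leftarrow X\to B$ (identity on the left leg), whose value is $B$. The triple $(f,\mathrm{id}_X,\mathrm{id}_B)$ is a levelwise weak equivalence of spans—this is exactly where $f$ being a weak equivalence is used—so invariance of homotopy colimits under levelwise weak equivalence yields a weak equivalence between the two homotopy pushouts, which upon identifying the structure maps is precisely $f'$.

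I expect the only real content to lie in the second implication; the first is a formal $2$-out-of-$3$ argument. The main obstacle is the homotopy-theoretic input there: one needs both that pushouts along $h$-cofibrations are genuine homotopy pushouts (this is where left properness enters, via Proposition \ref{hcofchar}(ii)) and that homotopy pushouts are invariant under levelwise weak equivalences of spans, together with a careful check that the induced comparison map is indeed $f'$. A fully algebraic alternative, staying inside the paper's framework, would instead factor $f$ as a trivial cofibration followed by a cofiber equivalence using Proposition \ref{hcofchar}(iii)--(iv), and then exhibit $f'$ as a composite of the cobase change of the trivial cofibration (a weak equivalence, since trivial cofibrations are couniversal) with the cobase change of the cofiber equivalence (a weak equivalence by the very definition of cofiber equivalence), using pushout pasting to identify the two intermediate squares.
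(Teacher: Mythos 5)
Your proof is correct and follows essentially the same route as the paper: the first implication is the same $2$-out-of-$3$ argument, and the second uses Proposition \ref{hcofchar}(ii) together with the fact that weak equivalences are preserved under homotopy pushout (your span-comparison is just a spelled-out version of that fact), which is exactly the paper's argument. Your alternative factorisation argument via \ref{hcofchar}(iii)--(iv), cobase-changing the trivial cofibration and the cofiber equivalence separately, is also valid, but it is a bonus rather than a different main route.
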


\begin{proof}The $2$-out-of-$3$ property of the class of weak equivalences implies that couniversal weak equivalences are trivial $h$-cofibrations. In a left proper model category, the pushout of a trivial $h$-cofibration is a trivial $h$-cofibration in virtue of Lemmas \ref{coprodhcof}, \ref{hcofchar}ii and \ref{formal}a.\end{proof}

\begin{defin}\label{h-monoidal}A model category is called \emph{$h$-monoidal} if it is a monoidal model category \cite{Hovey} such that for each (trivial) cofibration $f:X\to Y$ and each object $Z$, the tensor product $f\otimes 1_Z:X\otimes Z\to Y\otimes Z$ is a (trivial) $h$-cofibration.

It is called \emph{strongly $h$-monoidal} if moreover the class of weak equivalences is closed under tensor product.\end{defin}
In particular, each cofibration is an $h$-cofibration so that, by Lemma \ref{leftproper}, $h$-monoidal model categories are \emph{left proper}. Moreover, in virtue of Lemma \ref{couniv}, the condition on trivial cofibrations can be considered as a weak form of the \emph{monoid axiom} of Schwede-Shipley \cite{SS}, cf. Proposition \ref{couniv0} and Corollary \ref{couniv1} below.

\begin{lem}\label{mainimplications}For monoidal model categories the following implications hold:\begin{center}all objects cofibrant $\implies$ strongly $h$-monoidal  $\implies$  $h$-monoidal $\implies$ left proper.\end{center}\end{lem}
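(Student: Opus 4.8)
The plan is to establish the three implications separately, in increasing order of difficulty.

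The middle implication, \emph{strongly $h$-monoidal} $\implies$ \emph{$h$-monoidal}, is a tautology: by Definition \ref{h-monoidal} a strongly $h$-monoidal model category is one which is already $h$-monoidal and which in addition has its weak equivalences closed under tensor, so there is nothing to prove. For \emph{$h$-monoidal} $\implies$ \emph{left proper} I would specialise the defining $h$-monoidality condition to the tensor unit $I$. Given any cofibration $f:X\to Y$, the unit coherence isomorphism identifies $f\otimes 1_I$ with $f$ itself, so $f$ is an $h$-cofibration. Hence every cofibration is an $h$-cofibration, and Lemma \ref{leftproper} immediately yields left properness. (This is exactly the observation already recorded just before the lemma.)

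The real content lies in \emph{all objects cofibrant} $\implies$ \emph{strongly $h$-monoidal}, which I would break into the two clauses of Definition \ref{h-monoidal} plus left properness. First, left properness: a model category in which every object is cofibrant is left proper; here I would invoke the gluing lemma together with the fact that a pushout of a cofibration between cofibrant objects is a homotopy pushout (cf. \cite{Hirschhorn}), comparing the span $C\leftarrow A\to B$ with $C\leftarrow A\xrightarrow{\mathrm{id}}A$ via a levelwise weak equivalence to conclude that the cobase change of a weak equivalence along a cofibration is again a weak equivalence. Next, $h$-monoidality: since $Z$ is cofibrant, $\emptyset\to Z$ is a cofibration, and the pushout-product of a (trivial) cofibration $f:X\to Y$ with $\emptyset\to Z$ collapses --- using $X\otimes\emptyset=Y\otimes\emptyset=\emptyset$ --- to $f\otimes 1_Z$; the pushout-product axiom \cite{Hovey} then makes $f\otimes 1_Z$ a (trivial) cofibration, which by left properness and Lemma \ref{leftproper} is a (trivial) $h$-cofibration. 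Finally, closure of weak equivalences under tensor: for cofibrant $Z$ the functor $-\otimes Z$ preserves (trivial) cofibrations (again by the pushout-product axiom), hence is left Quillen, so by Ken Brown's lemma it preserves weak equivalences between cofibrant objects --- that is, all weak equivalences, since every object is cofibrant. Writing $f\otimes g=(1\otimes g)\circ(f\otimes 1)$ then shows that the tensor of two weak equivalences is a weak equivalence.

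I expect the only non-formal step to be the first clause above, namely deducing left properness from the hypothesis that all objects are cofibrant; everything else is a direct application of the pushout-product axiom and Lemma \ref{leftproper}. If one prefers not to reprove this standard fact, it can simply be cited from \cite{Hirschhorn}.
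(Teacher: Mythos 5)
Your proof is correct and follows essentially the same route as the paper: the last two implications are immediate from Definition \ref{h-monoidal} and Lemma \ref{leftproper}, and the first implication combines left properness of model categories with all objects cofibrant (the ``well-known argument of Rezk'', which you reprove via the gluing/cube lemma), the pushout-product axiom to get that tensoring a (trivial) cofibration with any object is a (trivial) cofibration, and Brown's Lemma for closure of weak equivalences under tensor. The only cosmetic difference is that the paper quotes Brown's Lemma in its factorisation form (trivial cofibration followed by a retraction of a trivial cofibration) rather than via left Quillen functors, which amounts to the same thing.
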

\begin{proof}For the first implication, it suffices to observe that, by a well-known argument of Rezk, if all objects are cofibrant then the model structure is left proper, i.e. (by \ref{leftproper}) cofibrations are $h$-cofibrations. Moreover, the pushout-product axiom implies that tensoring a (trivial) cofibration with an arbitrary object yields again a (trivial) cofibration. Therefore, the model structure is $h$-monoidal. The class of weak equivalences is closed under tensor product, since by Brown's Lemma (if all objects are cofibrant) each weak equivalence factors as a trivial cofibration followed by a retraction of a trivial cofibration. The other two implications are obvious.\end{proof}

It is in general difficult to describe explicitly the class of $h$-cofibrations of a model category. The following three propositions are useful since they are applicable even if such an explicit description is unavailable.

\begin{pro}\label{twin1}Let $\Ee$ be a closed symmetric monoidal category with two model structures, called resp. \emph{injective} and \emph{projective}, and sharing the same class of weak equivalences. We assume that the following three properties hold:

\begin{itemize}\item the projective model structure is a monoidal model structure;
\item the injective model structure is left proper;\item tensoring a (trivial) cofibration of the projective model structure with an arbitrary object yields a (trivial) cofibration of the injective model structure.\end{itemize}
Then the projective model structure is $h$-monoidal.\end{pro}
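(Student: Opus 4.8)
The plan is to verify the defining conditions of $h$-monoidality (Definition \ref{h-monoidal}) for the projective model structure directly, by exploiting the fact that the injective and projective structures share the same weak equivalences. The key observation is that the notion of $h$-cofibration depends \emph{only} on the class of weak equivalences and the existence of pushouts (as stressed right after Definition of $h$-cofibration), and pushouts are computed in the underlying category $\Ee$ independently of which model structure we use. Consequently, ``$h$-cofibration'' is an unambiguous notion: a morphism is an $h$-cofibration for the projective structure if and only if it is one for the injective structure. This is what lets us transfer information between the two structures.

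First I would fix a (trivial) cofibration $f:X\to Y$ of the \emph{projective} model structure and an arbitrary object $Z$, and consider $f\otimes 1_Z:X\otimes Z\to Y\otimes Z$. By the third hypothesis, $f\otimes 1_Z$ is a (trivial) cofibration of the \emph{injective} model structure. Next I would invoke Lemma \ref{leftproper}: since the injective model structure is left proper, every injective cofibration is an $h$-cofibration (for the injective structure, hence for the common class of weak equivalences). In the trivial case, $f\otimes 1_Z$ is moreover a weak equivalence, so it is a trivial $h$-cofibration. Because the class of $h$-cofibrations is intrinsic to the shared weak equivalences, $f\otimes 1_Z$ is then automatically a (trivial) $h$-cofibration relative to the projective structure as well. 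This establishes exactly the condition required by Definition \ref{h-monoidal}, and since the projective structure is assumed monoidal, it is $h$-monoidal.

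The one point that needs care, and which I regard as the main (if modest) obstacle, is making precise the claim that the $h$-cofibration condition is literally the same for both model structures. The subtlety is that left properness enters only through the injective structure via Lemma \ref{leftproper}, whereas the conclusion concerns the projective structure, which is \emph{not} assumed left proper; so one cannot simply say ``cofibrations are $h$-cofibrations'' within the projective world. The resolution is precisely that an $h$-cofibration is defined purely in terms of cobase change and preservation of weak equivalences, with no reference to cofibrations or fibrations, and both structures have the same weak equivalences and the same pushouts. I would state this observation explicitly as the bridge between the two structures, and then the argument above goes through without further work.
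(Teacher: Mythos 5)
Your proposal is correct and is essentially the paper's own argument: the paper's proof consists of exactly the two observations you make, namely that the class of $h$-cofibrations depends only on the (shared) weak equivalences and pushouts, and that Lemma \ref{leftproper} applied to the left proper injective structure then yields the required (trivial) $h$-cofibrations. Your write-up merely spells out in more detail what the paper compresses into two sentences.
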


\begin{proof}Observe that the notion of $h$-cofibration only depends on the class of weak equivalences, hence both model structures have the same class of $h$-cofibrations. The statement then follows directly from Lemma \ref{leftproper}.\end{proof}

\begin{pro}\label{twin2}Let $\Ee$ be a symmetric monoidal category with two monoidal model structures such that each cofibration (resp. weak equivalence, resp. fibration) of the first model structure is an $h$-cofibration (resp. weak equivalence, resp. fibration) of the second. If all objects of the first model structure are cofibrant then both model structures are $h$-monoidal.\end{pro}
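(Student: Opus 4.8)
The plan is to handle the two model structures separately. For the first, since all its objects are cofibrant, Lemma \ref{mainimplications} applies directly and shows that it is strongly $h$-monoidal, hence $h$-monoidal; so the only real work concerns the second structure. Throughout, write $\mathrm{Cof}_i$, $\mathrm{W}_i$ and $\mathrm{Fib}_i$ ($i=1,2$) for the cofibrations, weak equivalences and fibrations of the $i$-th structure, and $h\mathrm{Cof}_i$ for its class of $h$-cofibrations, recalling that the latter depends only on $\mathrm{W}_i$.

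The key preliminary step is to observe that the comparison hypotheses, which point in the direction $1\to 2$, force the \emph{reverse} comparison at the level of (trivial) cofibrations. From $\mathrm{W}_1\subseteq\mathrm{W}_2$ and $\mathrm{Fib}_1\subseteq\mathrm{Fib}_2$ we obtain $\mathrm{Fib}_1\cap\mathrm{W}_1\subseteq\mathrm{Fib}_2\cap\mathrm{W}_2$; since cofibrations are exactly the maps with the left lifting property against trivial fibrations, passing to left lifting classes reverses the inclusion and gives $\mathrm{Cof}_2\subseteq\mathrm{Cof}_1$. In the same way, $\mathrm{Fib}_1\subseteq\mathrm{Fib}_2$ together with the characterisation of trivial cofibrations as the maps left lifting against fibrations gives $\mathrm{Cof}_2\cap\mathrm{W}_2\subseteq\mathrm{Cof}_1\cap\mathrm{W}_1$. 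Thus every (trivial) cofibration of the second structure is a (trivial) cofibration of the first.

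With these inclusions the $h$-monoidality of the second structure follows from the pushout-product axiom of the first. Let $f$ be a cofibration of the second structure and $Z$ an arbitrary object. Then $f\in\mathrm{Cof}_1$, and since all objects are cofibrant in the first structure the map $\emptyset\to Z$ lies in $\mathrm{Cof}_1$ as well; the pushout-product axiom applied to these two maps (their pushout-product being $f\otimes 1_Z$) shows $f\otimes 1_Z\in\mathrm{Cof}_1$, whence $f\otimes 1_Z\in h\mathrm{Cof}_2$ by hypothesis. If $f$ is moreover a weak equivalence of the second structure, then $f\in\mathrm{Cof}_1\cap\mathrm{W}_1$, so the trivial half of the pushout-product axiom gives $f\otimes 1_Z\in\mathrm{Cof}_1\cap\mathrm{W}_1$, and the hypotheses $\mathrm{Cof}_1\subseteq h\mathrm{Cof}_2$ and $\mathrm{W}_1\subseteq\mathrm{W}_2$ exhibit $f\otimes 1_Z$ as a trivial $h$-cofibration of the second structure. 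By Definition \ref{h-monoidal} the second structure is $h$-monoidal.

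The one point that requires care is the reversal of inclusions in the preliminary step: the hypotheses are stated in the direction $1\to 2$, whereas the pushout-product computation must be carried out inside the first structure, and therefore needs the (trivial) cofibrations of the second structure to be recognised among those of the first. This is precisely what the lifting-property characterisations supply. It is also essential that the hypothesis on cofibrations only demands ``$h$-cofibration of the second structure'' rather than ``cofibration'', for after transporting $f\otimes 1_Z$ from the first structure back to the second one can in general control it only up to $h$-cofibration. Beyond this, no homotopy-theoretic input is used apart from the pushout-product axiom and Lemma \ref{mainimplications}.
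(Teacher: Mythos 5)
Your proposal is correct and follows essentially the same route as the paper's own proof: handle the first structure via Lemma \ref{mainimplications}, reverse the comparison to get that (trivial) cofibrations of the second structure are (trivial) cofibrations of the first via lifting properties, and then use closure of the first structure's (trivial) cofibrations under tensoring (all objects cofibrant plus the pushout-product axiom) together with the hypotheses to land in the (trivial) $h$-cofibrations of the second. The only difference is that you spell out the lifting-property reversal and the pushout-product computation in more detail than the paper does.
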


\begin{proof}Since all objects of the first model structure are cofibrant, the first model structure is (strongly) $h$-monoidal by Lemma \ref{mainimplications}. Since the trivial fibrations of the first structure are among the trivial fibrations of the second, the cofibrations of the second are among the cofibrations of the first. The latter class is closed under tensor product and contained in the class of $h$-cofibrations of the second structure. This yields the first half of $h$-monoidality for the second model structure. Similarly, since the fibrations of the first model structure are among the fibrations of the second, the trivial cofibrations of the second are among the trivial cofibrations of the first. The latter class is closed under tensor product and contained in the class of trivial $h$-cofibrations of the second model structure. This shows that the second half of $h$-monoidality holds for the second model structure as well.\end{proof}

\begin{pro}\label{detect}Let $\,\Ee$ be a monoidal model category in which all objects are fibrant. Then $\,\Ee$ is $h$-monoidal provided the internal hom of $\,\Ee$ detects weak equivalences in the following sense: a map $f:X\to Y$ is a weak equivalence whenever $\uEe(f,W)$ is a weak equivalence for all objects $W$.\end{pro}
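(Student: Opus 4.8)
The plan is to use the detection hypothesis to convert every statement about weak equivalences in $\Ee$ into a statement about weak equivalences in the images $\uEe(-,W)$, where the pushout-product axiom and right properness make things tractable. Recall that $h$-monoidality asks, for each (trivial) cofibration $f:X\to Y$ and each object $Z$, that $f\otimes 1_Z$ be a (trivial) $h$-cofibration. First I would dispose of the weak-equivalence content of the ``trivial'' case: for a trivial cofibration $f$ and any $Z$, the adjunction isomorphism $\uEe(f\otimes 1_Z,W)\cong\uEe(f,\uEe(Z,W))$ together with the pushout-product axiom, applied to the trivial cofibration $f$ and the fibration $\uEe(Z,W)\to *$ (which exists since $\uEe(Z,W)$ is fibrant), shows that $\uEe(f\otimes 1_Z,W)$ is a trivial fibration for every $W$; detection then forces $f\otimes 1_Z$ to be a weak equivalence. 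Thus everything reduces to the single claim that $f\otimes 1_Z$ is an $h$-cofibration whenever $f$ is a cofibration.

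For this I would dualise the defining diagram (\ref{hcof}) through the contravariant functor $\uEe(-,W)$. Being a right adjoint on $\Ee^{\op}$, it carries the two pushout squares of (\ref{hcof}) into pullback squares, and by the same computation $\uEe(f\otimes 1_Z,W)\cong\uEe(f,\uEe(Z,W))$ is now a \emph{fibration} (here $f$ is only a cofibration). Consequently $\uEe(A',W)\to\uEe(A,W)$, as a base change of this fibration, is again a fibration, and $\uEe(w',W)$ is precisely the base change of $\uEe(w,W)$ along it. Since all objects of $\Ee$ are fibrant, $\Ee$ is right proper, so the base change of a weak equivalence along a fibration is a weak equivalence. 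Hence $\uEe(w',W)$ is a weak equivalence for every $W$ as soon as $\uEe(w,W)$ is, and once this holds for all $W$ the detection hypothesis yields that $w'$ itself is a weak equivalence, i.e. $f\otimes 1_Z$ is an $h$-cofibration.

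The remaining point, which I expect to be the \emph{main obstacle}, is to know that $\uEe(w,W)$ is a weak equivalence whenever $w$ is; that is, that the internal hom is homotopical in its first variable. The natural input is Ken Brown's lemma for the right Quillen functor $\uEe(-,W):\Ee^{\op}\to\Ee$ (right Quillen because $W$ is fibrant), which gives preservation of weak equivalences between fibrant objects of $\Ee^{\op}$, equivalently between \emph{cofibrant} objects of $\Ee$. The difficulty is that the $w:A\to B$ occurring in (\ref{hcof}) is an arbitrary weak equivalence, with $A$ and $B$ fibrant but not necessarily cofibrant, so one is not directly in the scope of Ken Brown's lemma. I would try to bridge this by taking cofibrant replacements of $A$ and $B$ and using right properness (all objects being fibrant) to compare $\uEe(w,W)$ with the image of the cofibrant model of $w$, the latter handled by Ken Brown; making this comparison rigorous for non-cofibrant $A,B$ is exactly where the hypothesis ``all objects fibrant'' must be used beyond the mere right properness it provides, and it is the technical heart of the argument.
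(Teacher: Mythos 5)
Your first two paragraphs are, step for step, the paper's own proof. The trivial-cofibration case is handled there by exactly your adjunction computation: $\uEe(f\otimes 1_Z,W)\cong\uEe(f,\uEe(Z,W))$ is a trivial fibration because $\uEe(Z,W)$ is fibrant, and the detection hypothesis concludes. The $h$-cofibration case is likewise done by applying $\uEe(-,W)$ to diagram (\ref{hcof}): the pushouts become pullbacks, $\uEe(A',W)\to\uEe(A,W)$ is a base change of the fibration $\uEe(f,\uEe(Z,W))$, and $\uEe(w',W)$ is the base change of $\uEe(w,W)$ along it; the remark immediately following Proposition \ref{detect} confirms that right properness (coming from fibrancy of all objects) is precisely the ingredient used tacitly at this point. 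So you have not taken a different route.

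The step you isolate as the ``technical heart'' --- that $\uEe(w,W)$ is a weak equivalence whenever $w$ is --- is a genuine issue, but note that the paper does not close it either: its proof asserts only that the claim ``follows from the pushout-product axiom and the hom-tensor adjunction,'' which tacitly treats the detection hypothesis as two-sided, i.e.\ as also granting that $\uEe(-,W)$ preserves weak equivalences in the first variable. Your proposed bridge via cofibrant replacement cannot succeed as stated: to compare $\uEe(w,W)$ with $\uEe(Qw,W)$ you would need $\uEe(QA\to A,W)$ to be a weak equivalence, which is again an instance of the very preservation statement you are trying to prove, now for a non-cofibrant object --- the argument is circular, and Ken Brown's lemma for the right Quillen functor $\uEe(-,W):\Ee^\op\to\Ee$ genuinely stops at cofibrant objects. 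Worse, preservation can actually fail under the literal hypotheses: in the paper's own example of compactly generated spaces with the Quillen structure (all objects fibrant, and reflection holds since bijectivity of $[Y,W]\to[X,W]$ for all $W$ forces a homotopy equivalence), a weakly contractible Warsaw circle $X$ has $\pi_0\,\uEe(X,S^1)\cong\mathbb{Z}$, so $\uEe(-,S^1)$ does not preserve the weak equivalence $X\to *$; hence no bridging argument can derive preservation from fibrancy plus one-sided detection, and the proof strategy (yours and the paper's) cannot run in that example --- which is presumably why the paper also offers Corollary \ref{kspaces} for it. The honest conclusion is that your first two paragraphs constitute a complete proof once the hypothesis is read, or strengthened, to the two-sided form ``$f$ is a weak equivalence if and only if $\uEe(f,W)$ is, for all $W$''; under the literal one-sided reading, the gap you flag is shared with the paper's proof rather than introduced by you.
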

\begin{proof}Let $f: X\to Y $ be a cofibration. We have to show that in
\begin{diagram}[small]X\otimes Z&\rTo&A&\rTo^w&B\\\dTo^{f\otimes Z}&&\dTo&&\dTo\\Y\otimes Z&\rTo&A'&\rTo_{w'}&B'\end{diagram}
$w'$ is a weak equivalence if $w$ is. For this it suffices to show that $\uEe(w',W)$ is a weak equivalence for all $W$, which follows from the pushout-product axiom and the hom-tensor adjunction. If $f$ is a trivial cofibration then $\uEe(f\otimes Z,W)\cong\uEe(f,\uEe(Z,W))$ is a trivial fibration for each object $W$. Hence $f\otimes Z$ is a weak equivalence.\end{proof}

\begin{remark}We implicitly used in the preceding proof that $\Ee$ is \emph{right proper} because all of its objects are fibrant, and that therefore weak equivalences in $\Ee$ are stable under pullback along fibrations. Proposition \ref{detect} deduces from this and the good behaviour of the internal hom of $\Ee$ that $\Ee$ is $h$-monoidal and hence in particular \emph{left proper}. This explicit relationship between right and left properness in monoidal model categories does not seem to have been observed before.\end{remark}

\begin{corol}\label{kspaces}The category of compactly generated topological spaces is strongly $h$-monoidal with respect to Str\o m's and Quillen's cartesian model structures.\end{corol}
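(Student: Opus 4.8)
The plan is to treat Str\o m's and Quillen's model structures separately, deducing the first directly from Lemma \ref{mainimplications} and the second from Proposition \ref{twin2}. Throughout, the monoidal structure is the cartesian product, for which both model structures on compactly generated spaces are well known to be monoidal model categories; in particular the pushout-product axiom holds both for closed Hurewicz cofibrations and for Quillen cofibrations. For Str\o m's model structure every object is both cofibrant and fibrant, so Lemma \ref{mainimplications} applies at once and shows that this structure is strongly $h$-monoidal.

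For Quillen's model structure I would invoke Proposition \ref{twin2}, taking Str\o m's structure as the ``first'' and Quillen's structure as the ``second''. The three comparison hypotheses read: every homotopy equivalence is a weak homotopy equivalence; every Hurewicz fibration is a Serre fibration; and every closed Hurewicz cofibration is an $h$-cofibration for the class of weak homotopy equivalences. The first two are immediate class inclusions. The third is precisely the topological gluing lemma: pushing a weak homotopy equivalence out along a Hurewicz cofibration again yields a weak homotopy equivalence, which is exactly the $h$-cofibration condition recorded in diagram (\ref{hcof}). Since all objects of Str\o m's structure are cofibrant, Proposition \ref{twin2} then shows that Quillen's structure is $h$-monoidal.

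To upgrade this to strong $h$-monoidality it remains to verify that weak homotopy equivalences are closed under cartesian product, which is classical for compactly generated spaces (for instance by CW-approximation, using that geometric realisation commutes with products and Whitehead's theorem). Combining this with the $h$-monoidality just obtained gives strong $h$-monoidality of Quillen's structure, completing the proof. The main obstacle is the third comparison hypothesis, namely that closed Hurewicz cofibrations are $h$-cofibrations with respect to weak homotopy equivalences; this is where left properness of Quillen's structure enters implicitly, repackaged through the gluing lemma. Everything else is either a direct inclusion of classes of maps or a citation of standard point-set topology.
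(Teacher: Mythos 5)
Your proposal is correct and follows essentially the same route as the paper's proof: Lemma \ref{mainimplications} handles Str\o m's structure, Proposition \ref{twin2} (with exactly the same three class inclusions, Str\o m first and Quillen second) handles Quillen's structure, and strong $h$-monoidality of the latter comes from closure of weak homotopy equivalences under products. The only difference is cosmetic: where the paper simply cites as ``known (though not well-known)'' that closed Hurewicz cofibrations are $h$-cofibrations for Quillen's structure, you supply the justification via the gluing lemma for closed cofibrations and weak homotopy equivalences, which is indeed the fact behind that citation.
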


\begin{proof}Recall that in Str\o m's model structure the weak equivalences and fibrations are homotopy equivalences and Hurewicz fibrations respectively; the corresponding classes in Quillen's model structure are weak homotopy equivalences and Serre fibrations. These classes verify the inclusion relations required by Proposition \ref{twin2}. The cofibrations of Str\o m's model structure are the closed cofibrations (i.e. NDR-pairs) in the topologist's classical sense. Closed cofibrations are $h$-cofibrations for Quillen's model structure\footnote{In order to show that weak homotopy equivalences are preserved under cobase-change along closed cofibrations, it is enough to consider weak homotopy equivalences which are closed cofibrations. In other words, for any NDR-triad $(X,X_1,X_2)$ we have to show that if $(X_1,X_1\cap X_2)$ is $\infty$-connected then so is $(X,X_2)$. This holds if all spaces are $CW$-complexes. $CW$-approximation reduces thus the problem to a gluing lemma for weak homotopy equivalences between NDR-triads. This follows from the analogous statement for excisive triads \cite[Theorem 6.7.9]{Dieck} because each NDR-triad is homotopy equivalent to an excisive triad by a double mapping cylinder construction.}. In Str\o m's model structure all objects are cofibrant so that it is strongly $h$-monoidal by Lemma \ref{mainimplications}. Proposition \ref{twin2} implies that Quillen's model structure is $h$-monoidal. It is strongly $h$-monoidal since the product of two weak homotopy equivalences is again a weak homotopy equivalence.\end{proof}

\begin{corol}\label{chaincomplexes}The following two examples are $h$-monoidal model categories:

\begin{itemize}\item the category of chain complexes over a commutative ring with the projective model structure;
\item the category of symmetric spectra (in simplicial sets) with the stable projective model structure.\end{itemize}\end{corol}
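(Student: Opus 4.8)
The plan is to deduce both cases from Proposition \ref{twin1}, comparing the given \emph{projective} model structure with the corresponding \emph{injective} model structure on the same closed symmetric monoidal category. In both examples the two structures share the same weak equivalences (quasi-isomorphisms for chain complexes, stable equivalences for symmetric spectra), and the injective structure exists because the underlying category is locally presentable abelian, resp. a left Bousfield localisation of the injective level structure on symmetric spectra. The first hypothesis of Proposition \ref{twin1}, that the projective structure is monoidal, is classical in both cases. The second hypothesis, left properness of the injective structure, is immediate: its cofibrations are exactly the monomorphisms, so every object is injectively cofibrant, whence the injective structure is left proper by Lemma \ref{mainimplications}. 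It remains to verify the third hypothesis, namely that tensoring a (trivial) projective cofibration with an arbitrary object yields a (trivial) injective cofibration, i.e. a monomorphism which, in the trivial case, is moreover a weak equivalence.

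For the non-trivial half I would show that tensoring \emph{any} projective cofibration with an arbitrary object produces a monomorphism. For chain complexes this is transparent: a projective cofibration is a degreewise-split monomorphism (its cokernel being degreewise projective), and a degreewise-split monomorphism remains one after $\otimes Z$. For symmetric spectra one uses that $(-)\wedge Z$ is a left adjoint, hence preserves the cellular presentation of projective cofibrations; so it suffices to treat the generating cofibrations $F_n(i)$ with $i$ a monomorphism of pointed simplicial sets. Since $(-)\wedge Z$ is a simplicial functor, $F_n(i)\wedge Z$ is levelwise a smash of $i$ with a fixed pointed simplicial set, and smashing with a fixed pointed simplicial set preserves monomorphisms. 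As the class of monomorphisms is closed under the relevant colimits and retracts, every projective cofibration becomes an injective cofibration after tensoring.

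The trivial half is where the real work lies, and here the two examples diverge. For chain complexes a trivial projective cofibration has a cokernel $C$ which is both cofibrant and acyclic, hence contractible; tensoring a contractible complex with any complex stays contractible, so in the degreewise-split short exact sequence $0\to A\otimes Z\to B\otimes Z\to C\otimes Z\to 0$ the cokernel $C\otimes Z$ is acyclic, forcing $A\otimes Z\to B\otimes Z$ to be a quasi-isomorphism. For symmetric spectra no such elementary contractibility argument is available, since trivial stable cofibrations involve the stabilisation maps and are not built from trivial cofibrations of simplicial sets. One must instead invoke the substantial fact that smashing a trivial projective cofibration with an \emph{arbitrary} spectrum is again a stable equivalence --- precisely the content of the Hovey--Shipley--Smith verification of the monoid axiom for symmetric spectra --- and I regard this as the main obstacle of the proof. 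There is no circularity here: we import the monoid axiom as an established external fact, rather than deriving it from $h$-monoidality. Granting it, Proposition \ref{twin1} applies and both projective structures are $h$-monoidal.

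Finally, I would record that neither example is \emph{strongly} $h$-monoidal, consistently with Figure \ref{table1}: the weak equivalences are not closed under the tensor product, since tensoring two quasi-isomorphisms of unbounded complexes (resp. smashing two stable equivalences) need not preserve them in the absence of flatness hypotheses. Thus the passage through the injective structure in Proposition \ref{twin1}, rather than a direct appeal to Lemma \ref{mainimplications}, is genuinely necessary.
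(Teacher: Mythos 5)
Your proposal is correct and follows the paper's own route: both cases are deduced from Proposition \ref{twin1} by comparison with the injective model structure, and your chain-complex argument (a projective cofibration is degreewise split, so tensoring preserves the monomorphism; in the trivial case the cofibrant acyclic quotient is contractible, stays contractible after tensoring, and the long exact homology sequence finishes the job) is exactly the argument given in the paper --- indeed your phrasing ``cofibrant and acyclic, hence contractible'' is the more careful one, since for unbounded complexes degreewise projectivity plus acyclicity alone would not suffice. The only divergence is the symmetric spectra case: the paper runs an argument \emph{analogous} to the chain-complex one, citing Proposition III.1.11i and Lemma III.1.4 of Schwede's book project for the needed level-based facts, whereas you verify the monomorphism half by hand from the generating cofibrations $F_n(i)$ and then import the Hovey--Shipley--Smith monoid axiom as a black box for the trivial half. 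Both are legitimate, and your non-circularity remark is apt (the monoid axiom for symmetric spectra is independent of the paper's Proposition \ref{couniv0}); your version simply trades Schwede's lemmas for a stronger external stable input. One small imprecision: you invoke Lemma \ref{mainimplications} for left properness of the injective structures, but that lemma is stated for \emph{monoidal} model categories and the injective structures need not satisfy the pushout-product axiom; what you actually need is only Rezk's argument that all objects cofibrant implies left proper, which holds without any monoidality and is what the proof of Lemma \ref{mainimplications} uses, so nothing breaks.
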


\begin{proof}We use in both cases Proposition \ref{twin1}. Recall that the cofibrations of the injective (resp. projective) model structure on chain complexes are the monomorphisms (resp. monomorphisms with degreewise projective quotient). In particular, a projective cofibration $f_\bullet:X_\bullet\to Y_\bullet$ is degreewise split so that $f_\bullet\otimes Z_\bullet$ is degreewise a monomorphism, and hence a cofibration in the injective model structure. If $f_\bullet$ is trivial (i.e. a quasi-isomorphism), its degreewise projective quotient $Y_\bullet/X_\bullet$ is acyclic, and hence contractible. Therefore $(Y_\bullet/X_\bullet)\otimes Z_\bullet$ is contractible as well, and hence $f_\bullet\otimes Z_\bullet$ is trivial as required. The statement about symmetric spectra follows by an analogous argument from Proposition III.1.11i, Lemma III.1.4 and Theorem III.2.2 of Schwede's book project \cite{Schwede}.\end{proof}

\begin{examples}Below a list of frequently used monoidal model categories in which all objects are cofibrant. By Lemma \ref{mainimplications} they are thus strongly $h$-monoidal.\vspace{1ex}

\begin{itemize}\item simplicial sets;
\item small categories with the folklore model structure, cf. \cite{Lack,BergerMoerdijk2};
\item Rezk's model for $(\infty,n)$-categories \cite{Rezk};
\item compactly generated spaces with Str\o m's model structure;
\item chain complexes over a field with the projective model structure.
\end{itemize}

There are strongly $h$-monoidal model categories in which not all objects are cofibrant, e.g.

\begin{itemize}\item compactly generated spaces with Quillen's model structure, cf. Corollary \ref{kspaces};

\item small $2$-categories (or $2$-groupoids) with the Gray tensor product, cf. Proposition \ref{detect} and \cite{Lack,BergerMoerdijk2}.\end{itemize}

Corollary \ref{chaincomplexes} treats two examples of $h$-monoidal model categories which are not strongly $h$-monoidal.
\end{examples}

\subsection{Resolution axiom and strong unit axiom}\label{strongunit}
It can be verified by direct inspection that in the preceding examples all objects are \emph{$h$-cofibrant}. In Proposition \ref{allhcof} below we give a sufficient criterion for this to hold. This is the only place in this article where we make explicit use of Hovey's \emph{unit axiom} \cite{Hovey} under a \emph{strengthened} form. We are indebted to Muro for clarifying comments, who uses in  \cite{FM2} another strengthening of Hovey's unit axiom, weaker than ours, but with a similar purpose.

The unit $e$ of a monoidal model category $\Ee$ plays an important role. The minimal requirement for the existence of a unit in the homotopy category of $\Ee$ has been formulated by Hovey \cite{Hovey} as the so-called \emph{unit axiom}: cofibrant resolutions of the unit should remain weak equivalences under tensor with \emph{cofibrant} objects.

It is often the case that the following (stronger) \emph{resolution axiom} holds: general cofibrant resolutions are stable under tensor with cofibrant objects. The resolution axiom implies (by $2$-out-of-$3$) that the class of weak equivalences is stable under tensor with cofibrant objects, and actually implies (again by $2$-out-of-$3$) that cofibrant resolutions of the unit remain weak equivalences under tensor with \emph{arbitrary} objects. We call the latter property the \emph{strong unit axiom}. We thus have for any monoidal model category the following chain of implications$$\text{strongly $h$-monoidal}\implies\text{resolution axiom}\implies\text{strong unit axiom}$$The reader should observe that the strong unit axiom also holds if the unit $e$ of $\Ee$ is already cofibrant, in which case the following proposition has a much easier proof.

\begin{pro}\label{allhcof}In any $h$-monoidal model category, in which the strong unit axiom holds, all objects are $h$-cofibrant.\end{pro}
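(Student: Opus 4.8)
The plan is to verify $h$-cofibrancy of an arbitrary object $Z$ through the criterion of Lemma~\ref{charhcof}(a), i.e.\ to show that the functor $-\sqcup Z$ preserves weak equivalences. The guiding idea is that, although $Z$ itself need not be cofibrant, it admits a canonical \emph{tensor resolution} which \emph{is} automatically $h$-cofibrant. Fix a cofibrant resolution $q:\tilde e\to e$ of the monoidal unit, and for every object $X$ set $p_X:=q\otimes 1_X:\tilde e\otimes X\to X$. Then two basic facts hold: first, $\tilde e\otimes X$ is $h$-cofibrant, because $\tilde e$ cofibrant means $\emptyset\to\tilde e$ is a cofibration, so by $h$-monoidality $\emptyset\cong\emptyset\otimes X\to\tilde e\otimes X$ is an $h$-cofibration (using that $-\otimes X$ preserves colimits, hence the initial object); second, $p_X$ is a weak equivalence, which is precisely the strong unit axiom.

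The crux is an intermediate claim: for all objects $C$ and $W$, the map $p_C\sqcup 1_W:(\tilde e\otimes C)\sqcup W\to C\sqcup W$ is a weak equivalence. To prove it, apply the strong unit axiom to the object $C\sqcup W$, so that $p_{C\sqcup W}:\tilde e\otimes(C\sqcup W)\to C\sqcup W$ is a weak equivalence. Since tensoring preserves coproducts, $\tilde e\otimes(C\sqcup W)\cong(\tilde e\otimes C)\sqcup(\tilde e\otimes W)$ and, by naturality of this distributivity isomorphism in the tensor variable, $p_{C\sqcup W}$ is identified with $p_C\sqcup p_W$. Now factor $p_C\sqcup p_W=(p_C\sqcup 1_W)\circ(1_{\tilde e\otimes C}\sqcup p_W)$. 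The second factor is $-\sqcup(\tilde e\otimes C)$ applied to the weak equivalence $p_W$, hence a weak equivalence because $\tilde e\otimes C$ is $h$-cofibrant (Lemma~\ref{charhcof}(a)); two-out-of-three then forces $p_C\sqcup 1_W$ to be a weak equivalence as well. This step is the main obstacle, and it is exactly here that the \emph{strong} unit axiom is indispensable rather than Hovey's plain unit axiom: one must tensor the resolution $q$ with the \emph{possibly non-cofibrant} object $C\sqcup W$ and still retain a weak equivalence.

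With the claim in hand, the conclusion follows by a routine two-out-of-three argument. Given a weak equivalence $g:A\to B$, consider the commuting square
\begin{diagram}[small]A\sqcup(\tilde e\otimes Z)&\rTo^{g\sqcup 1}&B\sqcup(\tilde e\otimes Z)\\\dTo^{1\sqcup p_Z}&&\dTo_{1\sqcup p_Z}\\A\sqcup Z&\rTo_{g\sqcup 1}&B\sqcup Z\end{diagram}
Its top horizontal map is $-\sqcup(\tilde e\otimes Z)$ applied to $g$, hence a weak equivalence since $\tilde e\otimes Z$ is $h$-cofibrant; its two vertical maps are weak equivalences by the claim (with $C=Z$ and $W=A$, resp.\ $W=B$, using commutativity of the coproduct). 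Two-out-of-three yields that the bottom map $g\sqcup 1_Z$ is a weak equivalence, so $-\sqcup Z$ preserves weak equivalences and $Z$ is $h$-cofibrant.

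I note, finally, that unlike most of the surrounding results this argument uses neither left properness nor the explicit characterisations of Proposition~\ref{hcofchar}: its only inputs are $h$-monoidality, the strong unit axiom, and the coproduct characterisation of $h$-cofibrancy in Lemma~\ref{charhcof}(a). This also makes transparent the remark that when the unit $e$ is already cofibrant the proof simplifies, since one may take $\tilde e=e$ and $p_X=1_X$, trivialising both the tensor resolution and the intermediate claim.
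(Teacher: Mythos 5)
Your proof is correct and follows essentially the same route as the paper's: the $h$-cofibrancy of $Q(e)\otimes Z$ (your $\tilde e\otimes Z$) via $h$-monoidality, the strong unit axiom applied to a coproduct combined with distributivity and two-out-of-three to obtain the weak equivalence $(Q(e)\otimes Z)\sqcup X\to Z\sqcup X$, and then the concluding square argument. The only difference is organisational: the paper first runs the argument for the unit $e$ as a warm-up (where $h$-cofibrancy of $Q(e)$ alone, coming from left properness, suffices), whereas you isolate the key comparison map as a standalone claim and treat the general case directly.
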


\begin{proof}We shall use Lemma \ref{charhcof}(i) for recognising $h$-cofibrant objects. The strong unit axiom requires the existence of a \emph{cofibrant resolution} $Q(e)\to e$ for the \emph{unit} $e$, which remains a weak equivalence after tensoring with arbitrary objects. For each object $X$ of $\Ee$, we thus have a weak equivalence $Q(e)\otimes(e\sqcup X)\to e\sqcup X$. Since the tensor commutes with coproducts, this weak equivalence can be rewritten as$$Q(e)\sqcup(Q(e)\otimes X)  \to  Q(e)\sqcup X \to  e\sqcup X$$where the first map is the coproduct of $Q(e)$ with $Q(e)\otimes X\to X$. Therefore, since $Q(e)$ is $h$-cofibrant by Lemma \ref{leftproper}, the first map above is a weak equivalence. By the $2$-of-$3$ property of weak equivalences, the second map $Q(e)\sqcup X\to  e\sqcup X$ is a weak equivalence as well. But then, for each weak equivalence $X\to Y$, the commutative diagram\begin{diagram}[small]Q(e)\sqcup X&\rTo & Q(e)\sqcup Y\\\dTo & &\dTo\\  e\sqcup X &\rTo& e\sqcup Y\end{diagram}
implies that $e\sqcup X\to e\sqcup Y$ is a weak equivalence, which shows that $e$ is $h$-cofibrant.

In an $h$-monoidal model category we have the stronger property that $Q(e)\otimes Z$ is $h$-cofibrant for each object $Z$. Therefore, factoring the weak equivalence $$Q(e)\otimes (Z\sqcup X)=(Q(e)\otimes Z)\sqcup(Q(e)\otimes X)\to Z\sqcup X$$ through $(Q(e)\otimes Z)\sqcup X$ yields a weak equivalence $(Q(e)\otimes Z)\sqcup X\to Z\sqcup X$ for all objects $Z$ and $X$. This implies as above that all objects $Z$ are $h$-cofibrant.\end{proof}

\begin{remark}If all objects of a (monoidal) model category are $h$-cofibrant then each weak equivalence factors as a trivial $h$-cofibration followed by a retraction of a trivial $h$-cofibration, using the same argument as for Brown's Lemma. In this case, the resolution axiom amounts thus to the property that tensoring a trivial $h$-cofibration with a cofibrant object yields a weak equivalence. In particular, the two examples of Corollary \ref{chaincomplexes} fulfill the resolution axiom.\end{remark}

The following lemma is also useful to retain:

\begin{lem}\label{wecof}In an $h$-monoidal model category, tensoring a weak equivalence between cofibrant objects with an arbitrary object yields again a weak equivalence.\end{lem}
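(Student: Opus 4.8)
The plan is to reduce the assertion to the defining property of $h$-monoidality by means of a Ken Brown style factorisation of the given weak equivalence. Let $w:X\to Y$ be a weak equivalence between cofibrant objects, and let $Z$ be an arbitrary object; we must show that $w\otimes 1_Z$ is a weak equivalence.

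First I would factor $w$ in the standard way used to prove Brown's Lemma. Since $X$ and $Y$ are cofibrant, the two coproduct injections into $X\sqcup Y$ are cofibrations. Factoring the map $(w,1_Y):X\sqcup Y\to Y$ as a cofibration followed by a trivial fibration $q:W\to Y$, and precomposing with the two injections, yields maps $i:X\to W$ and $s:Y\to W$ which are both cofibrations and satisfy $q\circ i=w$ and $q\circ s=1_Y$. The two-out-of-three property (applied to $w=q\circ i$ and to $1_Y=q\circ s$, using that the trivial fibration $q$ is a weak equivalence) shows that both $i$ and $s$ are \emph{trivial} cofibrations.

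Next I would tensor the whole picture with $Z$. By $h$-monoidality, tensoring a trivial cofibration with an arbitrary object yields a trivial $h$-cofibration, hence in particular a weak equivalence; so both $i\otimes 1_Z$ and $s\otimes 1_Z$ are weak equivalences. Since $(q\otimes 1_Z)\circ(s\otimes 1_Z)=(q\circ s)\otimes 1_Z=1_{Y\otimes Z}$, the two-out-of-three property forces $q\otimes 1_Z$ to be a weak equivalence. Finally $w\otimes 1_Z=(q\otimes 1_Z)\circ(i\otimes 1_Z)$ is a composite of two weak equivalences, hence a weak equivalence, as required.

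I expect no serious obstacle here: the only subtle point is arranging the factorisation so that it produces a trivial cofibration $s$ admitting a retraction $q$ which is itself a weak equivalence, and this is exactly what the cofibrancy of $X$ and $Y$ buys us through the coproduct trick. The essential input from $h$-monoidality is solely that trivial cofibrations remain weak equivalences after tensoring with any object, while the cofibrancy hypothesis on $X$ and $Y$ is precisely what allows the arbitrary weak equivalence $w$ to be resolved into trivial cofibrations.
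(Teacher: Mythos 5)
Your proof is correct and follows essentially the same route as the paper: the paper's proof simply cites Brown's Lemma to factor the weak equivalence as a trivial cofibration followed by a retraction of a trivial cofibration, then observes that both factors remain weak equivalences after tensoring, which is exactly your argument with the Ken Brown factorisation spelled out explicitly.
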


\begin{proof}By Brown's Lemma, a weak equivalence between cofibrant objects factors as a trivial cofibration followed by a retraction of a trivial cofibration. Both factors yield a weak equivalence when tensored with an arbitrary object.\end{proof}

\section{Admissible monads on compactly generated model categories}

It is well-known that the class of (trivial) cofibrations in an arbitrary model category is closed under cobase change, transfinite composition and retract. Classes of morphisms with these three closure properties will be called \emph{saturated}.

\begin{defin}With respect to a saturated class of morphisms $K$ in a model category $\,\Ee$, the class $W$ of weak equivalences of $\,\Ee$ is called \emph{$K$-perfect} if $\,W$ is closed under filtered colimits along morphisms in $K$.\end{defin}

\begin{remark}Our terminology is borrowed from Lurie \cite{Lurie} who calls a class of weak equivalences perfect if it is $K$-perfect with respect to the saturated class $K$ of \emph{all} morphisms. By Hovey's argument (see the proof of \cite[Corollary 7.4.2]{Hovey} or \cite[Lemma 3.5]{DOR} for a more recent treatment) a sufficient condition for the $K$-perfectness of the class of weak equivalences is the existence of a \emph{generating set of cofibrations} whose domain and codomain are \emph{finite} with respect to $K$.\end{remark}

\begin{lem}\label{transfinite}If the class $W$ of weak equivalences is $K$-perfect then the intersection $W\cap K$ is closed under transfinite composition.\end{lem}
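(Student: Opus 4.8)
The plan is to prove closure of $W\cap K$ under transfinite composition by a transfinite induction on the length of the composition, using $K$-perfectness to handle the crucial limit ordinal step. Let $\lambda$ be an ordinal and let $X:\lambda\to\Ee$ be a $\lambda$-sequence whose structure maps $X_\alpha\to X_{\alpha+1}$ all lie in $W\cap K$. I want to show that the transfinite composite $X_0\to \colim_{\alpha<\lambda}X_\alpha$ lies in $W\cap K$. Since $K$ is saturated, it is in particular closed under transfinite composition, so the composite automatically lies in $K$; the real content is that it is a weak equivalence.

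First I would record the two separate things $K$-perfectness buys us. On the one hand, each map $X_\alpha\to X_{\alpha+1}$ is assumed to be in $K$, so every map $X_0\to X_\alpha$ in the sequence is a transfinite composite of maps in $K$ and hence lies in $K$ by saturation; this guarantees that the colimit $\colim_{\alpha<\lambda}X_\alpha$ is a filtered colimit taken \emph{along morphisms in $K$}, which is exactly the hypothesis under which $K$-perfectness asserts stability of weak equivalences. On the other hand, $W$-membership of the individual maps will be propagated by induction.

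The induction itself runs as follows. I would prove by transfinite induction on $\beta\le\lambda$ that the map $X_0\to X_\beta$ lies in $W$. The base case $\beta=0$ is the identity. For a successor $\beta=\gamma+1$, the map $X_0\to X_\gamma$ is in $W$ by the inductive hypothesis and $X_\gamma\to X_{\gamma+1}$ is in $W$ by assumption, so their composite $X_0\to X_{\gamma+1}$ is in $W$ by closure of $W$ under composition (the $2$-out-of-$3$ property, in particular). For a limit ordinal $\beta$, the object $X_\beta=\colim_{\gamma<\beta}X_\gamma$ is a filtered colimit of the objects $X_\gamma$ along the maps $X_\gamma\to X_{\gamma'}$, all of which lie in $K$ as noted above. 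Applying the inductive hypothesis, every map $X_0\to X_\gamma$ for $\gamma<\beta$ lies in $W$; passing to the filtered colimit over $\gamma$, $K$-perfectness tells us that the induced map on colimits is again a weak equivalence. Taking $\beta=\lambda$ finishes the argument and shows $X_0\to X_\lambda$ lies in $W$, hence in $W\cap K$.

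The main obstacle, and the only step that uses the hypothesis in an essential way, is the limit ordinal case: one must phrase the passage to the limit as a filtered colimit of a diagram of weak equivalences whose transition maps lie in $K$, so that the definition of $K$-perfectness applies verbatim. Concretely, the subtlety is that $K$-perfectness is about a colimit of a morphism of $\lambda$-sequences (a natural weak equivalence between two filtered diagrams connected by maps in $K$), so I would set it up as comparing the constant diagram on $X_0$ with the diagram $(X_\gamma)_{\gamma<\beta}$ via the maps $X_0\to X_\gamma$; the transition maps in the target diagram lie in $K$, each comparison map is a weak equivalence, and $K$-perfectness yields that the map of colimits $X_0\to X_\beta$ is a weak equivalence. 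Everything else is formal bookkeeping using saturation of $K$ and $2$-out-of-$3$ for $W$.
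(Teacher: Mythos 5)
Your proof is correct and follows essentially the same route as the paper: the paper also identifies the transfinite composite with the colimit of the natural transformation from the constant diagram at $X_0$ to the given sequence and applies $K$-perfectness. The only difference is presentational — you spell out the transfinite induction guaranteeing that each comparison map $X_0\to X_\gamma$ is a weak equivalence (in particular at limit ordinals), which the paper's three-line proof leaves implicit.
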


\begin{proof}Any transfinite composition of maps can be identified with the colimit of a natural transformation from a constant diagram to the given sequence of maps. If the given maps belong to $W\cap K$ this colimit is a filtered colimit of weak equivalences along morphisms in $K$. By assumption such a colimit is a weak equivalence.\end{proof}

We shall say that a class of morphisms is \emph{monoidally saturated} if it is saturated and moreover closed under tensoring with \emph{arbitrary objects} of the monoidal model category. Accordingly, the \emph{monoidal saturation} of a class $K$ is the least monoidally saturated class containing $K$. For instance, in virtue of the pushout-product axiom, the class of (trivial) cofibrations of a monoidal model category is monoidally saturated whenever all objects of the model category are cofibrant.

We are mainly interested in the monoidal saturation of the class of cofibrations. This monoidal saturation will be denoted $I^\otimes$ since it suffices to monoidally saturate a generating set of cofibrations which traditionally is denoted $I$. For brevity we shall call \emph{$\otimes$-cofibration} any morphism in $I^\otimes$. An object will be called \emph{$\otimes$-small} (resp. \emph{$\otimes$-finite}) if it is small (resp. finite) with respect to $I^\otimes$. The class of weak equivalences will be called \emph{$\otimes$-perfect} if it is $I^\otimes$-perfect.

\begin{defin}[cf. \cite{BergerMoerdijk2}] A model category is called \emph{$K$-compactly generated} if it is cofibrantly generated, its class of weak equivalences is $K$-perfect, and each object is small with respect to $K$.

A monoidal model category is called \emph{compactly generated}, if the underlying model category is $I^\otimes$-compactly generated.\end{defin}

For instance, any monoidal model category whose underlying model category is \emph{combinatorial}, and whose class of weak equivalences is \emph{closed under filtered colimits}, is an example of a compactly generated monoidal model category. The majority of our examples are of this kind. However, compactly generated topological spaces form a monoidal model category which is neither combinatorial nor does it have a class of weak equivalences which is closed under filtered colimits. Yet, every compactly generated space is $\otimes$-small, and the class of weak equivalences is $\otimes$-perfect, hence the monoidal model category of compactly generated spaces is compactly generated in the aforementioned model-theoretical sense, cf. \cite{Hovey,BergerMoerdijk2}.\vspace{1ex}

\begin{pro}\label{couniv0}In any compactly generated $h$-monoidal model category, the monoid axiom of Schwede-Shipley holds and each $\otimes$-cofibration is an $h$-cofibration.

If in addition the strong unit axiom holds (cf. Section \ref{strongunit}) then the class of weak equivalences is closed under arbitrary coproducts.\end{pro}

\begin{proof}The monoid axiom of Schwede-Shipley \cite{SS} requires the monoidal saturation of the class of trivial cofibrations to stay within the class of weak equivalences. In a cofibrantly generated monoidal model category this monoidal saturation can be constructed by choosing a generating set $J\,$ for the trivial cofibrations, and saturating the class $\{f\otimes 1_Z\,|\,f\in J,\,Z\in\Ob\Ee\}$ under cobase change, transfinite composition and retract. Since, by Lemma \ref{couniv}, each $f\otimes 1_Z$ is a couniversal weak equivalence and a $\otimes$-cofibration, and both classes are closed under cobase change and retract, it remains to be shown that the class of maps, which are simultaneously weak equivalences and $\otimes$-cofibrations, is closed under transfinite composition. This is precisely Lemma \ref{transfinite} for $K=I^\otimes$.

For the second statement, we have to show that the monoidal saturation of the class of cofibrations stays within the class of $h$-cofibrations. As before, this monoidal saturation can be constructed by choosing a generating set $I\,$ for the cofibrations, and saturating the class $\{f\otimes 1_Z\,|\,f\in I,\,Z\in\Ob\Ee\}$ under cobase change, transfinite composition and retract. Since each $f\otimes 1_Z$ is an $h$-cofibration and a $\otimes$-cofibration, and both classes are closed under cobase change and retract, it remains to be shown that the class of maps, which are simultaneously $h$-cofibrations and $\otimes$-cofibrations, is closed under transfinite composition. This follows from the definition of an $h$-cofibration, since a vertical transfinite composition of diagrams of the form (\ref{hcof}) (all vertical maps being $h$-cofibrations and $\otimes$-cofibrations) yields a diagram of the same form (\ref{hcof}).

For the last statement, note first that by Proposition \ref{allhcof} all objects are $h$-cofibrant so that by Lemma \ref{charhcof}(iii) it remains to be shown that weak equivalences are closed under filtered colimits along coproduct injections. By the strong unit axiom, a cofibrant resolution $Q(e)\eqv e$ of the monoidal unit $e$ yields a resolution $Q(e)\otimes Z\eqv Z$ of each object $Z$ by a $\otimes$-cofibrant object $Q(e)\otimes Z$. This resolution functor commutes with colimits so that each colimit can be replaced with a weakly equivalent colimit of $\otimes$-cofibrant objects. Any coproduct injection with respect to a $\otimes$-cofibrant object is a $\otimes$-cofibration. By compact generation weak equivalences are closed under filtered colimits along $\otimes$-cofibrations as required.\end{proof}

\begin{corol}\label{couniv1}In a monoidal model category with $\otimes$-perfect class of weak equivalences, the monoid axiom of Schwede-Shipley holds if and only if the tensor product of a trivial cofibration with an arbitrary object is a couniversal weak equivalence.\end{corol}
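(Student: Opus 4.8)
The plan is to establish the two implications separately, reducing the forward implication to the very definition of the monoid axiom and the converse to the argument already carried out in the first half of Proposition \ref{couniv0}.

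First I would dispatch the implication ``monoid axiom $\implies$ couniversality''. Let $f$ be a trivial cofibration and $Z$ an arbitrary object. The map $f\otimes 1_Z$ lies in the monoidal saturation of the trivial cofibrations, since that class contains all trivial cofibrations and is closed under tensoring with arbitrary objects. Being a monoidal saturation, it is in particular closed under cobase change, so every pushout of $f\otimes 1_Z$ again lies in it and is therefore a weak equivalence by the monoid axiom. Hence $f\otimes 1_Z$ is a couniversal weak equivalence. This direction uses neither $\otimes$-perfectness nor any smallness.

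For the converse I would argue as in Proposition \ref{couniv0}. The monoidal saturation of the trivial cofibrations is the saturation, under cobase change, transfinite composition and retract, of the class $\{f\otimes 1_Z\ :\ f\text{ a trivial cofibration},\ Z\in\Ob\,\Ee\}$ (equivalently, of the corresponding class built from a generating set $J$). By hypothesis each generator $f\otimes 1_Z$ is a couniversal weak equivalence, and it is visibly a $\otimes$-cofibration. I would then verify that the intersection of the couniversal weak equivalences with $I^\otimes$ is closed under all three saturation operations; since this intersection is contained in $W$, the monoid axiom follows. Closure under retract and cobase change is essentially formal: $I^\otimes$ is saturated, and for couniversal weak equivalences cobase-change stability is the pushout pasting law, while retract stability comes from transporting the retract data through the relevant pushouts.

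The step I expect to be the main obstacle is the compatibility of cobase change with transfinite composition: the class $W\cap I^\otimes$ is \emph{not} closed under cobase change, because weak equivalences are not pushout-stable in general, so couniversality has to be carried along transfinite composites. This is exactly where $\otimes$-perfectness enters. Given a transfinite composite $w=\colim_\alpha w_\alpha$ whose stages are couniversal weak equivalences in $I^\otimes$, I would form, for an arbitrary map $g$ out of its domain, the levelwise pushouts; by the pasting law each stage of the resulting tower is again a couniversal weak equivalence lying in $I^\otimes$, and the pushout of $w$ is their transfinite composite. Lemma \ref{transfinite}, applied with $K=I^\otimes$, then shows this composite is a weak equivalence, so $w$ is couniversal. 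This closes the induction and yields the monoid axiom. When the generating cofibrations have $\otimes$-small domains one may instead pass to the small-object-argument normal form, writing every saturated map as a retract of a transfinite composite of cobase changes of generators; then no cobase change ever follows a transfinite composite and Lemma \ref{transfinite} alone suffices --- this is the route implicit in Proposition \ref{couniv0}.
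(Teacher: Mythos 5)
Your proof is correct and takes essentially the same route as the paper, which derives the corollary from the first paragraph of the proof of Proposition \ref{couniv0}: one saturates the class $\{f\otimes 1_Z\}$ under cobase change, transfinite composition and retract, and invokes Lemma \ref{transfinite} with $K=I^\otimes$ (this is exactly where $\otimes$-perfectness enters, and the forward direction is indeed immediate from the definition of the monoid axiom). Your explicit verification that couniversality persists along transfinite composites --- forming levelwise pushouts, noting each stage stays a couniversal weak equivalence in $I^\otimes$, and applying Lemma \ref{transfinite} to the resulting tower --- spells out a point the paper's terse formulation leaves implicit, but it is a refinement of the same argument rather than a different one.
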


\begin{proof}This follows from the argument of first paragraph of the preceding proof.\end{proof}

\begin{remark}The preceding proposition and corollary (together with \ref{twin1}, \ref{twin2} or \ref{detect}) may be an efficient tool to establish the monoid axiom and left properness. For instance, Lack's original proofs \cite[Theorems 6.3 and 7.7]{Lack} of these properties for the category of small $2$-categories are quite a bit more involved.\end{remark}

\subsection{Admissible monads}Recall that a \emph{monad} $T$ on $\Ee$ is called \emph{finitary} if $T$ preserves filtered colimits, or what amounts to the same, if the forgetful functor $U_T:\Alg_T\to\Ee$ preserves filtered colimits. Here, $\Alg_T$ denotes the category of $T$-algebras and $$F_T:\Ee\lra\Alg_T:U_T$$the free-forgetful adjunction. Thus $T=U_TF_T$ and $F_T(X)=(TX,\mu_X)$ where $\mu:T^2\to T$ is the multiplication of the monad $T$.
\begin{defin}Let $\Ee$ be a model category, $W$ its class of weak equivalences, and $K$ be an arbitrary saturated class in $\Ee$. A monad $T$ on $\Ee$ is said to be \emph{$K$-admissible} if for each cofibration (resp. trivial cofibration) $u:X\to Y$ and each map of $T$-algebras $\alpha:F_T(X)\to R$, the pushout in $\Alg_T$
\begin{gather}
\label{extension}
\begin{diagram}[small]F_T(X)&\rTo^\alpha&R\\\dTo^{F_T(u)}&&\dTo_{u_\alpha}\\F_T(Y)&\rTo&\NWpbk R[u,\alpha]
\end{diagram}
\end{gather}
yields a $T$-algebra map $u_{\alpha}: R\to R[u,\alpha]$ whose underlying map $U_T(u_{\alpha})$ belongs to $K$ (resp. to $W\cap K$).\end{defin}

For all the sequel a \emph{weak equivalence $f:R\to S$ of $T$-algebras} means a map of $T$-algebras such that the underlying map $U_T(f):U_T(R)\to U_T(S)$ is a weak equivalence in $\Ee$. A $T$-algebra $R$ will be called \emph{$U_T$-cofibrant} if the underlying object $U_T(R)$ is cofibrant in $\Ee$.

Note that the notion of \emph{$h$-cofibration} (cf. Definition \ref{h-cofibration}) makes sense for any category with pushouts and a specified class of weak equivalences. Accordingly, a map of free $T$-algebras $F_T(u):F_T(X)\to F_T(Y)$ will be called an \emph{$h$-cofibration} if for any diagram of pushouts of $T$-algebras
 \begin{gather}\label{extension2}\begin{diagram}[small]F_T(X)&\rTo^\alpha&R&\rTo^f&S\\\dTo^{F_T(u)}&&\dTo&&\dTo\\F_T(Y)&\rTo&\NWpbk R[u,\alpha]&\rTo&\NWpbk S[u,f\alpha]\end{diagram}\end{gather}
 in which $f:R\to S$ is a weak equivalence, the induced map $R[u,\alpha]\to S[u,f\alpha]$ is again a weak equivalence. We shall say that $F_T(u)$ is a \emph{relative $h$-cofibration} if\begin{itemize}\item[(*)]for any weak equivalence $f:R\to S$ between \emph{$U_T$-cofibrant} $T$-algebras and any $\alpha:F_T(X)\to R$, the induced map $R[u,\alpha]\to S[u,f\alpha]$ is again a weak equivalence between \emph{$U_T$-cofibrant} $T$-algebras.\end{itemize}If all objects of $\Ee$ are cofibrant, any $h$-cofibration $F_T(u)$ is a relative $h$-cofibration. In general there might be $h$-cofibrations which are not relative $h$-cofibrations.

\begin{defin}A model structure on $T$-algebras will be called  \emph{relatively left proper} if for any weak equivalence $f:R\to S$ between \emph{$U_T$-cofibrant} $T$-algebras and any cofibration $R\to R'$ of $\,T$-algebras, cobase change along the latter yields a weak equivalence $R'\to R'\cup_RS$ between \emph{$U_T$-cofibrant} $T$-algebras.\end{defin}

Again, if all objects of $\Ee$ are cofibrant, then a left proper model structure on $\Alg_T$ is automatically relatively left proper. In general however this might be wrong.

\begin{theorem}\label{transfer}For any finitary $K$-admissible monad $\,T$ on a $K$-compactly generated model category $\Ee$, the category of $T$-algebras admits a transferred model structure. This model structure is (relatively) left proper if and only if the free $T$-algebra functor takes cofibrations in $\Ee$ to (relative) $h$-cofibrations in $\Alg_T$.\end{theorem}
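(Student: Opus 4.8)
The plan is to get existence from the standard transfer criterion, and then to deduce both properness equivalences from the characterisation of left properness via $h$-cofibrations (Lemma \ref{leftproper}), reducing everything to the generating cofibrations by saturation.

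\emph{Existence.} I would take $F_T(I)$ and $F_T(J)$ as generating (trivial) cofibrations of $\Alg_T$, declaring $f$ to be a weak equivalence (resp. fibration) exactly when $U_T(f)$ is one. Two things must be checked. First, $F_T(I)$ and $F_T(J)$ must permit the small object argument: since $T$ is finitary, $U_T$ preserves filtered colimits, and by $K$-admissibility every relative $F_T(I)$- or $F_T(J)$-cell complex has underlying map in the saturated class $K$; as each object of $\Ee$ is $K$-small, the domains of $F_T(I)$ and $F_T(J)$ are small relative to the corresponding cell complexes. Second, $U_T$ must send every relative $F_T(J)$-cell complex to a weak equivalence: a single pushout of some $F_T(j)$ with $j\in J$ has underlying map in $W\cap K$ by $K$-admissibility, and $W\cap K$ is closed under transfinite composition by Lemma \ref{transfinite} (using $K$-perfectness of $W$), so the underlying map of any relative $F_T(J)$-cell complex lies in $W$. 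This yields the transferred model structure.

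\emph{Left properness.} By Lemma \ref{leftproper} the transferred structure is left proper precisely when every cofibration of $T$-algebras is an $h$-cofibration. Since $U_T$ creates fibrations and trivial fibrations, $F_T$ is left Quillen and carries cofibrations of $\Ee$ to cofibrations of $\Alg_T$; hence if the structure is left proper, each $F_T(u)$ is an $h$-cofibration, giving the ``only if'' direction at once. For the converse, assume $F_T$ sends cofibrations to $h$-cofibrations. Every cofibration of $\Alg_T$ is a retract of a relative $F_T(I)$-cell complex, so by closure under retracts (Lemma \ref{coprodhcof}) it suffices to show such cell complexes are $h$-cofibrations. Each generator $F_T(i)$ is an $h$-cofibration by hypothesis and, by $K$-admissibility, its pushouts have underlying map in $K$; closure of $h$-cofibrations under cobase change (Lemma \ref{coprodhcof}) together with saturation of $K$ makes every cell attachment an $h$-cofibration with underlying map in $K$.

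\emph{The main obstacle} is the transfinite composition step, since Lemma \ref{coprodhcof} only provides finite composition. Here I would argue exactly as in the second half of the proof of Proposition \ref{couniv0}: the class of maps that are simultaneously $h$-cofibrations and have underlying map in $K$ is closed under transfinite composition. Concretely, given such a tower and a test diagram of the shape (\ref{hcof}) in $\Alg_T$ with $w$ a weak equivalence, one stacks the pushout squares stage by stage; the bottom comparison map $w'$ is then a filtered colimit, along morphisms of $K$, of the weak equivalences produced at the successor stages, and is a weak equivalence because $U_T$ preserves these colimits ($T$ finitary) and $W$ is $K$-perfect. Thus relative $F_T(I)$-cell complexes, hence all cofibrations of $\Alg_T$, are $h$-cofibrations, and left properness follows from Lemma \ref{leftproper}. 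The relative statement is proved by the same two arguments, replacing ``$h$-cofibration'' by ``relative $h$-cofibration'' throughout and restricting all test weak equivalences $f:R\to S$ to those with $U_T(R)$ and $U_T(S)$ cofibrant in $\Ee$. The one additional point, and where I expect to take the most care, is that this cofibrancy hypothesis must be propagated along the cell filtration so that the relative preservation property of each $F_T(i)$ can be invoked at every successor stage; this holds whenever the underlying maps of the cell attachments are cofibrations of $\Ee$, in particular when $K$ consists of cofibrations, as for the class of $\otimes$-cofibrations relevant to the main theorem. Granting this, the retract-and-saturation reasoning shows that relative left properness is equivalent to $F_T$ sending cofibrations to relative $h$-cofibrations.
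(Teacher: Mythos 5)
Your proposal follows the paper's proof essentially step for step: existence via the small object argument applied to $F_T(I)$ and $F_T(J)$, with smallness deduced from the finitariness of $T$ plus $K$-smallness of the objects of $\Ee$, and with Lemma \ref{transfinite} and $K$-perfectness controlling the saturation of $F_T(J)$; the ``only if'' direction via Lemma \ref{leftproper} and the fact that $F_T$ is left Quillen; and the converse via the observation that every cofibration of $\Alg_T$ is a retract of a transfinite composite of free extensions $R\to R[u,\alpha]$, whose successor stages preserve weak equivalences by the $h$-cofibration hypothesis and whose limit stages do so by $K$-perfectness, since $U_T$ preserves filtered colimits and $K$-admissibility keeps the underlying maps in $K$. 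Your transfinite-composition step, modelled on the second half of Proposition \ref{couniv0}, is exactly the mechanism the paper compresses into a single sentence, so in the absolute case your argument and the paper's coincide.

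The one place where your text goes wrong is the final sentence about the relative case. You correctly isolate a genuine subtlety which the paper's proof passes over in silence --- iterating the relative $h$-cofibration property along the cell filtration requires $U_T(R_\beta)$ and $U_T(S_\beta)$ to remain cofibrant at every stage --- but your proposed sufficient condition is misinstantiated: the class $I^\otimes$ of $\otimes$-cofibrations does \emph{not} consist of cofibrations of $\Ee$ (already $f\otimes 1_Z$ for a cofibration $f$ and a non-cofibrant object $Z$ generally fails to be a cofibration), so ``$K$ consists of cofibrations'' breaks down precisely in the main application of the theorem. In the paper the propagation is secured not inside Theorem \ref{transfer} but in the adequacy framework feeding into it: in the proofs of Theorems \ref{SSformonoids} and \ref{adequate} one first reduces, via Proposition \ref{rlp}, to cofibrations $u$ with cofibrant domain and to weak equivalences $f:R\to S$ with $U_T(R)$ and $U_T(S)$ cofibrant, after which the pushout-product axiom shows that all stages of the functorial filtration are cofibrant, so the relative preservation property can indeed be invoked at each step. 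With that repair --- which affects only your closing sentence, not the architecture of the argument --- your proof agrees with the paper's.
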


\begin{proof}By definition of a transfer, a map of $T$-algebras $f$ is defined to be a weak equivalence (resp. fibration) precisely when $U_T(f)$ is a weak equivalence (resp. fibration) in $\Ee$. Cofibrations of $T$-algebras are defined by the left lifting property with respect to trivial fibrations. In order to show that these three classes define a model structure on $\Alg_T$, the main difficulty consists in proving the existence of cofibration/trivial fibration (resp. trivial cofibration/fibration) factorisations. For this we apply Quillen's small object argument to the image $F_T(I)$ (resp. $F_T(J)$) of a generating set $I$ (resp. $J$) for the cofibrations (resp. trivial cofibrations) of $\Ee$. The following two points have to be shown:\begin{enumerate}\item[(i)]The domains of the maps in $F_T(I)$ (resp. $F_T(J)$) are small with respect to the saturation of $F_T(I)$ (resp. $F_T(J)$) under cobase change and transfinite composition in $\Alg_T$;\item[(ii)]The saturation of $F_T(J)$ under cobase change and transfinite composition in $\Alg_T$ stays within the class of weak equivalences.\end{enumerate}
Since the forgetful functor $U_T$ preserves filtered colimits, an adjunction argument and the $K$-smallness of the objects of $\Ee$ yield (i). Moreover, Lemma \ref{transfinite} and the $K$-perfectness of the weak equivalences in $\Ee$ yield (ii).

If the transferred model structure on $\Alg_T$ is (relatively) left proper then the left Quillen functor $F_T$ takes cofibrations in $\Ee$ to (relative) $h$-cofibrations in $\Alg_T$ by Lemma \ref{leftproper}. Conversely, assume that $F_T(u)$ is a (relative) $h$-cofibration for each generating cofibration $u$. Note first that the forgetful functor $U_T$ preserves transfinite compositions since it preserves filtered colimits. It follows then from the $K$-perfectness of the class of weak equivalences and the $K$-admissibility of $T$ that cobase change along a transfinite composition of free $T$-algebra extensions of the form $R\to R[u,\alpha]$ preserves weak equivalences (between $U_T$-cofibrant $T$-algebras). But any cofibration in $\Alg_T$ is retract of such a transfinite composition. Thus, $\Alg_T$ is (relatively) left proper.

The reader should observe that in the relatively left proper case, we need the full condition (*) of a relative $h$-cofibration in order to ensure that, at each step of the transfinite composition, the underlying objects are cofibrant.\end{proof}

\begin{pro}\label{rlp}The free $T$-algebra functor takes cofibrations to relative $h$-cofibrations if it takes cofibrations with cofibrant domain to relative $h$-cofibrations.\end{pro}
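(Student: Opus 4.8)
The plan is to show that the class $\,\mathcal{C}$ of cofibrations $u\colon X\to Y$ in $\,\Ee$ for which $F_T(u)$ is a relative $h$-cofibration is closed under cobase change, transfinite composition and retract. By hypothesis $\mathcal{C}$ contains every cofibration with cofibrant domain, in particular the generating cofibrations $I$ (whose domains we may assume cofibrant); since every cofibration is a retract of a transfinite composite of cobase changes of maps in $I$, these closure properties force $\mathcal{C}$ to consist of all cofibrations. It is convenient to reformulate the relative $h$-cofibration property: $F_T(u)$ is a relative $h$-cofibration exactly when the cobase change of a weak equivalence $f\colon R\to S$ with $U_T(R),U_T(S)$ cofibrant along the extension map $R\to R[u,\alpha]$ is again a weak equivalence, for this cobase change is precisely the comparison $R[u,\alpha]\to S[u,f\alpha]$.

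The cleanest step, which I would treat first, is closure under cobase change. If $u'\colon X'\to Y'$ is the cobase change of $u$ along a map $g\colon X\to X'$, then $F_T$, being a left adjoint, preserves the defining pushout, so that $F_T(Y')=F_T(X')\sqcup_{F_T(X)}F_T(Y)$ and $F_T(u')$ is the cobase change of $F_T(u)$ along $F_T(g)$. Consequently $R[u',\alpha']=R[u,\alpha'\circ F_T(g)]$ and $S[u',f\alpha']=S[u,f\alpha'\circ F_T(g)]$ for every $\alpha'\colon F_T(X')\to R$, with the two comparison maps identified; hence for a weak equivalence $f$ with cofibrant underlying objects, $R[u',\alpha']\to S[u',f\alpha']$ is a weak equivalence because $F_T(u)$ is. The decisive feature is that this imposes no condition on the possibly non-cofibrant domain $X'$, which is what lets one pass beyond cofibrant domains.

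For transfinite composition I would re-run the bookkeeping of the relative-left-properness part of Theorem \ref{transfer}. Present a cofibration as a $\lambda$-sequence $X=Y_0\to\cdots\to Y_\lambda=Y$ whose successive maps $Y_i\to Y_{i+1}$ are cobase changes of generating cofibrations, and set $R_i=R[X\to Y_i,\alpha]$, $S_i=S[X\to Y_i,f\alpha]$; each cell $Y_i\to Y_{i+1}$ has $F_T$-image a relative $h$-cofibration by the cobase-change step and the hypothesis. A transfinite induction then shows every $R_i\to S_i$ to be a weak equivalence: at a successor stage $R_{i+1}\to S_{i+1}$ is the cobase change of $R_i\to S_i$ along the free extension $R_i\to R_{i+1}$ determined by the cell $Y_i\to Y_{i+1}$, hence a weak equivalence by the relative $h$-cofibration property of that cell, while at a limit stage one invokes the $K$-perfectness of $W$, the connecting maps having underlying maps in $K$ by $K$-admissibility. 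Taking $i=\lambda$ yields $R[u,\alpha]\to S[u,f\alpha]$. Closure under retract is formal, so $\mathcal{C}$ is saturated and the proposition follows.

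The main obstacle is the successor stage of this induction rather than the cobase-change step: applying the relative $h$-cofibration property of a cell to the comparison $R_i\to S_i$ is legitimate only when $U_T(R_i)$ and $U_T(S_i)$ are cofibrant. The argument therefore depends on free $T$-algebra extensions along cofibrations preserving cofibrancy of the underlying object — a consequence of $K$-admissibility together with the cofibrancy of $U_T(R)$ and $U_T(S)$ in the compactly generated setting, and precisely the point already secured in the proof of Theorem \ref{transfer}. A minor secondary point is that the reduction uses generating cofibrations with cofibrant domain, which is unproblematic for the model categories in play.
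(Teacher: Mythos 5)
Your cobase-change step is correct, and it is in fact the entire proof; but the way you deploy it, inside a cellular induction, opens a gap that you acknowledge and then incorrectly dismiss. At the successor stage you must apply the relative $h$-cofibration property of a cell to the comparison map $R_i\to S_i$, which is only legitimate if $U_T(R_i)$ and $U_T(S_i)$ are cofibrant. This does not follow from $K$-admissibility: admissibility only places the underlying map of a free extension $R\to R[u,\alpha]$ in the saturated class $K$ (for instance the class of $\otimes$-cofibrations), and maps in $K$ are in general not cofibrations, so pushing a cofibrant object out along them need not yield a cofibrant object. Nor is this point ``already secured in the proof of Theorem \ref{transfer}'': that proof nowhere establishes that free $T$-algebra extensions preserve cofibrancy of underlying objects, and nothing in the paper's hypotheses yields it. A secondary defect is the reduction to generating cofibrations with cofibrant domains: no such generating set is assumed to exist (and in general none need exist), and your argument also imports cofibrant generation, admissibility and compact generation into a proposition which, as stated and as proved in the paper, is a purely formal fact about an arbitrary monad whose category of algebras has the relevant pushouts.

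The repair is to apply your cobase-change identity exactly once, taking $g$ to be the adjoint attaching map itself. Given an arbitrary cofibration $u:X\to Y$, a map $\alpha:F_T(X)\to R$, and a weak equivalence $f:R\to S$ with $U_T(R),U_T(S)$ cofibrant, let $\alpha':X\to U_T(R)$ be the adjoint of $\alpha$ and let $v:U_T(R)\to P$ be the pushout of $u$ along $\alpha'$. Since $\alpha$ factors as $k\circ F_T(\alpha')$, where $k:F_TU_T(R)\to R$ is the structure map, pasting of pushouts identifies $R[u,\alpha]$ with the pushout of $F_T(v)$ along $k$, and $S[u,f\alpha]$ with the pushout of $F_T(v)$ along $fk$ --- this is your identity $R[u',\alpha']=R[u,\alpha'\circ F_T(g)]$ read in the other direction. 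Now $v$ is a cofibration whose domain $U_T(R)$ is cofibrant precisely because the relative condition only quantifies over such $R$; so the hypothesis applies to $F_T(v)$ and shows that $R[u,\alpha]\to S[u,f\alpha]$ is a weak equivalence. This one-step argument is the paper's proof: no cellular decomposition, no transfinite induction, and no extraneous hypotheses are needed.
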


\begin{proof}Suppose that $u:X\to Y$ is a cofibration. We have to show that for a weak equivalence $f:R\to S$ between $U_T$-cofibrant $T$-algebras, the morphism
$R[u,\alpha] \to S[u,f\alpha]$ in the diagram (\ref{extension2}) is a weak equivalence. Let $\alpha': X \to U_T(R)$ be the composite   $$X\stackrel{\epsilon}{\to} U_TF_T(X)\stackrel{U_T(\alpha)}{\to} U_T(R)$$and consider the following pushout in $\Ee:$
\begin{diagram}[small]X&\rTo^{\alpha'}&U_T(R)\\\dTo^u&&\dTo_v\\Y&\rTo&\NWpbk P\end{diagram}
The given map $\alpha$ factors as
$$F_T(X)\stackrel{F_T(\alpha')}{\to} F_TU_T(R)\stackrel{k}{\to} R$$
where $k$ is the structure map of the $T$-algebra $R.$ Therefore, by the universal property of pushouts, the right-hand square of the following commutative diagram
\begin{diagram}[small]F_T(X)&\rTo^{F_T(\alpha')}&F_TU_T(R)&\rTo^k&R\\\dTo^{F_T(u)}&&\dTo&&\dTo\\F_T(Y)&\rTo&\NWpbk F_T(P)&\rTo&\NWpbk R[u,\alpha]\end{diagram}
is a pushout. Hence, we get the following pushout diagram in $\Alg_T:$
\begin{diagram}[small]F_TU_T(R)&\rTo^k&R&\rTo^f&S\\\dTo^{F_T(v)}&&\dTo&&\dTo\\F_T(P)&\rTo&\NWpbk R[u,\alpha]&\rTo&\NWpbk S[u,f\alpha]\end{diagram}

Since $v$ is a cofibration with cofibrant domain, $F_T(v)$ is a relative $h$-cofibration by assumption, so that $R[u,\alpha]\to S[u,f\alpha]$ is a weak equivalence as required.\end{proof}

\begin{defin}A monad $T$ is \emph{$K$-adequate} if the underlying map of any free $T$-algebra extension $u_\alpha:R\to R[u,\alpha]$ admits a functorial factorisation
$$U_T(R)= R[u]^{(0)}\to R[u]^{(1)}\to \ldots \to R[u]^{(n)}\to \ldots \to \colim_n R[u]^{(n)}= U_T(R[u,\alpha]);$$
such that for a (trivial) cofibration $u$, each map of the sequence belongs to $K$ (resp. $W\cap K$), and for a weak equivalence $f:R\to S$, the induced morphisms $R[u]^{(n)}\to S[u]^{(n)}$ are weak equivalences for all $n\ge 0.$

A monad $T$ is \emph{relatively $K$-adequate} if the second condition only holds for cofibrations $u$ with cofibrant domain and for weak equivalences $f:R\to S$ between $U_T$-cofibrant $T$-algebras, and the individual maps of the horizontal sequences are cofibrations.\end{defin}

\begin{theorem}\label{adequate}Any finitary (relatively) $K$-adequate monad $T$ on a $K$-compactly generated  model category $\Ee$ is $K$-admissible, and the associated free $T$-algebra functor takes cofibrations to (relative) $h$-cofibrations. Hence, the category of $T$-algebras has a transferred model structure which is (relatively) left proper.\end{theorem}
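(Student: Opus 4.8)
The plan is to verify the two properties required by Theorem~\ref{transfer} — namely $K$-admissibility of $T$ and the fact that $F_T$ sends cofibrations to (relative) $h$-cofibrations — and then simply invoke that theorem to obtain the transferred, (relatively) left proper model structure. Both properties will be read off directly from the functorial factorisation
$$U_T(R)=R[u]^{(0)}\to R[u]^{(1)}\to\cdots\to\colim_n R[u]^{(n)}=U_T(R[u,\alpha])$$
supplied by (relative) $K$-adequacy, the only ambient tools needed being the saturation of $K$ and the $K$-perfectness of $W$ guaranteed by $K$-compact generation.

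For $K$-admissibility I would observe that the underlying map $U_T(u_\alpha)$ of any free $T$-algebra extension is, by the very definition of adequacy, the transfinite composite of the sequence $R[u]^{(n)}\to R[u]^{(n+1)}$. When $u$ is a cofibration each of these maps lies in $K$, and since $K$ is saturated it is closed under transfinite composition, so $U_T(u_\alpha)\in K$. When $u$ is a trivial cofibration each map lies in $W\cap K$; here I invoke Lemma~\ref{transfinite}, which uses exactly the $K$-perfectness of $W$ to show that $W\cap K$ is closed under transfinite composition, so that $U_T(u_\alpha)\in W\cap K$. This is precisely $K$-admissibility, and it holds in both the absolute and the relative case, since the clause that each map of the sequence lies in $K$ (resp.\ $W\cap K$) is part of the definition of adequacy in either form.

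For the $h$-cofibration property I would start from a diagram of the form~(\ref{extension2}) with $f:R\to S$ a weak equivalence, and apply the functorial factorisation to the two free extensions $R\to R[u,\alpha]$ and $S\to S[u,f\alpha]$. Functoriality of the factorisation in the algebra produces a commutative ladder of sequences whose $n$-th rung is the map $R[u]^{(n)}\to S[u]^{(n)}$; by adequacy every such rung is a weak equivalence, while the horizontal transition maps of both sequences lie in $K$ because $u$ is a cofibration. This ladder is exactly a natural transformation between two sequences with transition maps in $K$ that is pointwise a weak equivalence, so $K$-perfectness of $W$ forces the induced map on colimits $U_T(R[u,\alpha])\to U_T(S[u,f\alpha])$ to be a weak equivalence. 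Hence $R[u,\alpha]\to S[u,f\alpha]$ is a weak equivalence of $T$-algebras, which is the defining property of $F_T(u)$ being an $h$-cofibration.

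In the relative case adequacy only yields the rung-wise weak equivalences when $u$ has cofibrant domain and $U_T(R),U_T(S)$ are cofibrant, so the argument above shows directly that $F_T(u)$ is a \emph{relative} $h$-cofibration for every cofibration $u$ with cofibrant domain; Proposition~\ref{rlp} then upgrades this to all cofibrations. With $K$-admissibility and the ($h$-cofibration) condition both established, Theorem~\ref{transfer} delivers the transferred (relatively) left proper model structure. I expect the only delicate point to be the passage to colimits in the third paragraph: one must make sure $K$-perfectness is being used in its correct ``morphism-of-diagrams'' form — a pointwise weak equivalence between two sequences with $K$-transition maps induces a weak equivalence on colimits — exactly as it is already deployed inside the proof of Lemma~\ref{transfinite}. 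Everything else is a formal consequence of saturation and functoriality.
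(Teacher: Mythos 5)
Your proposal is correct and takes essentially the same route as the paper's own proof: $K$-admissibility is obtained from saturation of $K$ together with Lemma \ref{transfinite}, and the (relative) $h$-cofibration property is obtained from the functorial ladder of factorisations, whose vertical maps are weak equivalences and horizontal maps lie in $K$, so that $K$-perfectness (part of $K$-compact generation) makes the colimit a weak equivalence, with Proposition \ref{rlp} handling the reduction to cofibrations with cofibrant domain in the relative case and Theorem \ref{transfer} then yielding the transferred (relatively) left proper model structure. The paper's proof is exactly this argument, stated more tersely.
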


\begin{proof}The second statement follows from the first and from Theorem \ref{transfer}.

That a $K$-adequate monad is $K$-admissible follows from the saturation of $K$ and from Lemma \ref{transfinite}. Moreover, given a cofibration $u:X\to Y$ and a weak equivalence $f:R\to S$ of $T$-algebras, the underlying map of morphism $R[u,\alpha]\to S[u, f\alpha]$ of diagram (\ref{extension2}) is a sequential colimit of a ladder in $\Ee$\begin{diagram}[small]R[u]^{(0)}&\rTo&R[u]^{(1)}&\rTo&\cdots&\rTo&R[u]^{(n)}&\rTo&\cdots&\rTo&\colim_n R[u]^{(n)}&=U_T(R[u,\alpha])\\
\dTo&&\dTo&&&&\dTo&&&&\dTo\\S[u]^{(0)}&\rTo&S[u]^{(1)}&\rTo&\cdots&\rTo&S[u]^{(n)}&\rTo&\cdots&\rTo&\colim_n S[u]^{(n)}&=U_T(S[u, f\alpha])\end{diagram}
in which the vertical maps are weak equivalences and the horizontal maps belong to $K.$ Since $\Ee$ is $K$-compactly generated this colimit is a weak equivalence. This shows that the free $T$-algebra functor takes cofibrations to $h$-cofibrations.

For the relative version, $K$-admissibility is proved as before. For the relative $h$-cofibration property of $F_T(u)$ we can assume, according to Proposition \ref{rlp}, that $u$ has a cofibrant domain. Then, by assumption, the horizontal maps of the diagram above are cofibrations so that their composite is a cofibration as well, whence $U_T(R[u,\alpha])$ is cofibrant, given that $R$ is $U_T$-cofibrant by assumption. Moreover, the colimit of the ladder is a weak equivalence by the same argument as before (or by invoking a Reedy telescope lemma). Therefore, $F_T(u)$ is a relative $h$-cofibration.\end{proof}

\section{Monoids in $h$-monoidal model categories}\label{SSTHeorem}

This section presents the main result of Schwede-Shipley \cite{SS} concerning the existence of a model structure on monoids if the monoid axiom holds. We add a discussion of left properness of the transferred model structure, cf. Muro \cite{FM2}.

Recall that $I^\otimes$ denotes the monoidal saturation of the class of cofibrations, and that any morphism in $I^\otimes$ is called a $\otimes$-cofibration. Accordingly, we say $\otimes$-admissible (resp. $\otimes$-adequate) instead of $I^\otimes$-admissible (resp. $I^\otimes$-adequate).

\begin{theorem}
\label{SSformonoids}For any compactly generated monoidal model category $\Ee$ the free monoid monad $~T$ on $\,\Ee$ is :
\begin{itemize}\item[(a)]relatively $\otimes$-adequate if the monoid axiom holds;
\item[(b)]$\otimes$-adequate if $\,\Ee$ is strongly $h$-monoidal.
\end{itemize}
And hence
\begin{itemize}
\item[(a$'$)]there is a relatively left proper transferred model structure on monoids if the monoid axiom holds;
\item[(b$'$)]the model structure on monoids is left proper if $\,\Ee$ is strongly $h$-monoidal.
\end{itemize}
\end{theorem}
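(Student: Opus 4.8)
The plan is to reduce the four assertions to the two adequacy statements (a) and (b): granting these, the conclusions (a$'$) and (b$'$) about the transferred model structure follow immediately from Theorem \ref{adequate}, since a finitary, (relatively) $\otimes$-adequate monad on a compactly generated model category is $\otimes$-admissible with free-algebra functor sending cofibrations to (relative) $h$-cofibrations, and the free monoid monad is finitary because $\otimes$ preserves filtered colimits in each variable. So the whole content lies in exhibiting, for the free monoid monad $T$, a functorial factorisation of the underlying map of each free monoid extension $u_\alpha:R\to R[u,\alpha]$ of the shape required in the definition of $\otimes$-adequacy, and in checking its two defining properties. The filtration itself is due to Schwede-Shipley \cite{SS}; the new points are its functoriality in $R$ and the homotopical control yielding (relative) left properness.

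First I would recall the Schwede-Shipley filtration of the underlying object of $R[u,\alpha]$, which refines the polynomial expansion (\ref{semfreemon}) from a semi-free coproduct to an arbitrary free monoid extension. Writing $u^{\square n}:Q_n(u)\to Y^{\otimes n}$ for the $n$-fold pushout-product power of $u$, the filtration $U_T(R)=R[u]^{(0)}\to R[u]^{(1)}\to\cdots$ has as its $n$-th inclusion a cobase change of the map obtained from $u^{\square n}$ by tensoring with, and interspersing, copies of $R$ between consecutive tensor factors; its colimit is $U_T(R[u,\alpha])$. This construction is manifestly functorial in $R$, supplying the comparison maps $R[u]^{(n)}\to S[u]^{(n)}$. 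The first adequacy property (each layer lies in $I^\otimes$, resp. in $W\cap I^\otimes$, when $u$ is a cofibration, resp. a trivial cofibration) is then formal: by the pushout-product axiom $u^{\square n}$ is a (trivial) cofibration, tensoring and interspersing with copies of $R$ keeps it in the monoidally saturated class $I^\otimes$, and cobase change preserves $I^\otimes$; for $u$ trivial, the layers additionally lie in the monoidal saturation of the trivial cofibrations, hence in $W$ by the monoid axiom, which is available in case (b) as well since a compactly generated strongly $h$-monoidal category satisfies it by Proposition \ref{couniv0}.

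The real work is the second adequacy property, that a weak equivalence $f:R\to S$ induces weak equivalences $R[u]^{(n)}\to S[u]^{(n)}$; I would prove this by induction on $n$, applying the gluing lemma to the pushout square defining the $n$-th layer. The inductive step needs two inputs: that the squares for $R$ and for $S$ are homotopy pushouts, and that the comparison maps between their attaching objects $R\otimes Q_n(u)\otimes R\otimes\cdots$ and $R\otimes Y^{\otimes n}\otimes R\otimes\cdots$ are weak equivalences. In the relative case (a), where $U_T(R),U_T(S)$ are cofibrant and $u$ has cofibrant domain, every object occurring in a layer is cofibrant and the attaching maps are genuine cofibrations (pushout-product axiom for cofibrant objects), so the standard gluing lemma for spans of cofibrant objects applies in any model category, without invoking left properness; the comparison maps are weak equivalences because tensoring a weak equivalence between cofibrant objects with a cofibrant object is a weak equivalence, $-\otimes Z$ being left Quillen for $Z$ cofibrant. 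In the absolute case (b), $R$ and $S$ need not be cofibrant, so instead I would use that $\Ee$ is strongly $h$-monoidal, hence $h$-monoidal and, by Proposition \ref{couniv0}, a category in which every $\otimes$-cofibration is an $h$-cofibration: the attaching maps are $\otimes$-cofibrations, so the layer pushouts are pushouts along $h$-cofibrations and thus homotopy pushouts (Proposition \ref{hcofchar}(ii)) in the left proper category $\Ee$; and the comparison maps between attaching objects are weak equivalences because in a strongly $h$-monoidal category weak equivalences are closed under tensor product, which is exactly what permits replacing the non-cofibrant factors $R$ by $S$.

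The main obstacle, and the technical heart of the argument, is the precise identification and control of the filtration: one must verify that the $n$-th layer really is a cobase change of the $R$-interspersed pushout-product of $u^{\square n}$, which is where the associative, non-commutative combinatorics of monoid words enters, namely the punctured-cube colimit $Q_n(u)$ with copies of $R$ inserted between consecutive letters; and one must ensure that the attaching objects are computed by homotopy colimits, so that a levelwise weak equivalence of punctured cubes descends to a weak equivalence on colimits. In case (b) this last point again rests on the structure maps of the cube being tensor products of the cofibration $u$ with arbitrary objects, hence $h$-cofibrations, so that the finite punctured-cube colimit is a homotopy colimit. Once this combinatorial bookkeeping is in place, the homotopical inputs assemble exactly as above, and Theorem \ref{adequate} delivers (a$'$) and (b$'$).
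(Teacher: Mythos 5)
Your proposal follows essentially the same route as the paper: the same Schwede–Shipley filtration by $R$-interspersed pushout-product powers of $u$, the layer maps controlled via the pushout-product axiom and the monoid axiom, and the comparison maps $R[u]^{(n)}\to S[u]^{(n)}$ handled by induction with a gluing-lemma argument — reducing to cofibrant $R$, $S$ and cofibrant domain of $u$ (via Proposition \ref{rlp}) in the relative case, and using $h$-cofibrations/homotopy pushouts together with closure of weak equivalences under tensor in the strongly $h$-monoidal case. The only cosmetic difference is that you deduce the needed $h$-cofibrancy of the attaching maps from Proposition \ref{couniv0} rather than directly from the definition of $h$-monoidality, which is equally valid.
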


\begin{proof}(a$'$), (b$'$) follow from (a), (b) and Theorem \ref{adequate}.

Let $R$ be a monoid in $\Ee$, and let $u:Y_0\to Y_1$ be a map in $\,\Ee$ equipped with a map of monoids $F_T(Y_0)\to R$.
We shall exhibit the pushout in the category of monoids as a sequential colimit in $\Ee.$

 Let $R[u]^{(0)}=R$ and define inductively $R[u]^{(n)}$ by the following pushout
\begin{gather}\label{SSformula}\begin{diagram}[small]Y^{(n)}_-&\rTo&R[u]^{(n-1)}\\\dTo&&\dTo\\Y^{(n)}&\rTo&\NWpbk R[u]^{(n)}\end{diagram}\end{gather}
where
$$Y^{(n)}=R\otimes \overbrace{Y_1\otimes R\otimes\cdots\otimes Y_1\otimes R}^n$$
and $Y_-^{(n)}$ is the colimit of a diagram over a punctured $n$-cube $\{0,1\}^n-\{(1,\dots,1)\}$ in which the vertex $(i_1,\dots,i_n)$ takes the value
 \begin{equation*}
 R\otimes Y_{i_1}\otimes \cdots\otimes R\otimes Y_{i_n}\otimes R
 \end{equation*}
and the edge-maps are induced by $u.$ The map $Y^{(n)}_-\to Y^{(n)}$ is the comparison map from the colimit of this diagram to the value at $(1,\dots,1)$ of the extended diagram on the whole $n$-cube. The map $Y^{(n)}_-\to R[u]^{(n-1)}$ is defined inductively, using the fact that the construction of $R[u]^{(n-1)}$ involves $n-1$ tensor factors only.

Since the tensor $-\otimes -$ commutes with pushouts in both variables, there are canonical maps of  $R[u]^{(p)}\otimes R[u]^{(q)}\to R[u]^{(p+q)}$. Since the tensor $-\otimes-$ commutes with sequential colimits in both variables, these maps induce the structure of a monoid on the colimit $\colim_nR[u]^{(n)}$. A proof that this monoid has indeed the universal property of $R[u]$ has been sketched in \cite{SS}. We obtain in Theorem \ref{SS} below a far-reaching generalisation of this result.

We shall now prove that, for each $n>0$, the map $R[u]^{(n-1)}\to R[u]^{(n)}$ is a (trivial) $\otimes$-cofibration whenever $u$ is a (trivial) cofibration. The considered map derives from $Y^{(n)}_-\to Y^{(n)}$ through a cobase change. Collecting all tensor factors $R$, the map $Y^{(n)}_-\to Y^{(n)}$ may be identified with an iterated pushout-product map along $u$, tensored with $R^{\otimes n+1}$. Therefore, $Y^{(n)}_-\to Y^{(n)}$ is a $\otimes$-cofibration and its cobase change $R[u]^{(n-1)}\to R[u]^{(n)}$ as well. If $u$ is a trivial cofibration, the iterated pushout-product map is a trivial cofibration, and its tensor product with $R^{\otimes n+1}$ is a couniversal weak equivalence by the monoid axiom; thus $R[u]^{(n-1)}\to R[u]^{(n)}$ is a trivial $\otimes$-cofibration. This proves that the free monoid monad is $\otimes$-admissible.

For the proof of (a) we consider for each $n>0$, the following commutative cube in $\Ee$
\begin{gather}\label{cube}\begin{diagram}[size=1.75em,silent,UO]&&Z^{(n)}_-&\rTo&&&S[u]^{(n-1)}\\&\ruTo&\vLine&&&\ruTo&\dTo\\Y^{(n)}_-&&&\rTo&R[u]^{(n-1)}&&\\
\dTo&&\dTo&&\dTo&&\\&&Z^{(n)}&\hLine&\VonH&\rTo&S[u]^{(n)}\\&\ruTo&&&&\ruTo&\\Y^{(n)}&\rTo&&&R[u]^{(n)}&&\end{diagram}\end{gather}
in which $Z^{(n)}_-\to Z^{(n)}$ is defined like $Y^{(n)}_-\to Y^{(n)}$ just replacing $R$ with $S$, and where we assume that $U_T(R)$ and $U_T(S)$ are cofibrant, and that $u$ has a cofibrant domain.

Front and back square of the cube are pushouts; in particular the right vertical maps are cofibrations since the left vertical maps are so by the pushout-product axiom. The natural transformation from front to back square is induced by tensor powers of $f:R\to S$. By induction, it suffices now to show that $R[u]^{(n)}\to S[u]^{(n)}$ is a weak equivalence whenever $R[u]^{(n-1)}\to S[u]^{(n-1)}$ is. Indeed, all objects of the cube (\ref{cube}) are cofibrant, the two left vertical maps are cofibrations, and the two left horizontal maps are weak equivalences. Proposition \ref{gluing}a implies then that $R[u]^{(n)}\to S[u]^{(n)}$ is a weak equivalence as required.

For the proof of (b) we consider the same cube assuming that $\Ee$ is strongly $h$-monoidal and that $f:R\to S$ is a weak equivalence of monoids. We have seen that $Y^{(n)}_-\to Y^{(n)}$ and $Z^{(n)}_-\to Z^{(n)}$ are $\otimes$-cofibrations and hence $h$-cofibrations by Proposition \ref{couniv0}.

Since $\Ee$ is strongly $h$-monoidal, the tensor power $f^{\otimes n+1}:R^{\otimes n+1}\to S^{\otimes n+1}$ is again a weak equivalence. Hence, for any vertex $(i_1,\dots,i_n)$ of the $n$-cube, the map \begin{gather}\label{vertex}R\otimes Y_{i_1}\otimes \cdots\otimes R\otimes Y_{i_n}\otimes R\to S\otimes Y_{i_1}\otimes \cdots\otimes S\otimes Y_{i_n}\otimes S\end{gather}
is a weak equivalence. Recollecting the tensor factors as above shows that the maps $Y^{(n)}\to Z^{(n)}$ and $Y^{(n)}_-\to Z^{(n)}_-$ are weak equivalences. Proposition \ref{gluing}b implies then that if $R[u]^{(n-1)}\to S[u]^{(n-1)}$ is a weak equivalence then so is $R[u]^{(n)}\to S[u]^{(n)}$.\end{proof}


\section{Diagram categories and Day convolution}

As a first application of our methods we observe that the class of compactly generated (strongly) $h$-monoidal categories $\Ee$ is closed under taking diagram categories over a small $\,\Ee$-enriched category $\CC$. More precisely, let $\Ee$ be a monoidal model category and $\CC$ be a small $\Ee$-enriched category. Let $[\CC,\Ee]$ be the category of $\Ee$-enriched functors and $\Ee$-natural transformations. Let $\CC_0$ be the set of objects of $\CC$, considered as a discrete $\Ee$-category. We have an inclusion $\Ee$-functor $i:\CC_0\to\CC.$ The category $[\CC_0,\Ee]\cong\Ee^{\CC_0}$ has an obvious product model structure.

There is a monad $i^*i_!$ on $[\CC_0,\Ee]$ where $i^*$ denotes the restriction functor and $i_!$ its left adjoint. The restriction functor $i^*$ is monadic, and the \emph{projective model structure} on $[\CC,\Ee]$ is by definition the model structure which is transferred from $[\CC_0,\Ee]$ along the adjunction $i_!:[\CC_0,\Ee]\leftrightarrows[\CC,\Ee]:i^*$ if such a transfer exists.

We shall call an object of $\Ee$ \emph{discrete} if it is a coproduct of copies of the unit of $\Ee$. Clearly, any tensor product of discrete objects is again discrete.

\begin{theorem}\label{projectivemodel}Let $\,\Ee$ be a compactly generated monoidal model category, and let $\CC$ be a small $\,\Ee$-enriched category. Then the projective model structure on $[\CC,\Ee]$ exists in each of the following three cases:

\begin{itemize}\item[(i)]all hom-objects of $\,\CC$ are discrete in $\Ee$;
\item[(ii)]all hom-objects of $\,\CC$ are cofibrant in $\Ee$;\item[(iii)]the monoid axiom holds in $\,\Ee$.\end{itemize}

The projective model structure on $[\CC,\Ee]$ is left proper if either $\,\Ee$ is $h$-monoidal (and hence (iii) holds), or if $\,\Ee$ is just left proper, but (i) or (ii) holds. If moreover $\,\CC$ is equipped with a symmetric monoidal structure, then $[\CC,\Ee]$ is a compactly generated monoidal model category with respect to Day's convolution product, and
\begin{itemize}
\item[(a)]the monoid axiom holds in $[\CC,\Ee]$ whenever it holds in $\,\Ee$;
\item[(b)]$[\CC,\Ee]$ is (strongly) $h$-monoidal whenever $\,\Ee$ is (strongly) $h$-monoidal;
\item[(c)]all objects in $[\CC,\Ee]$ are $h$-cofibrant whenever all objects in $\,\Ee$ are $h$-cofibrant.
\end{itemize}
\end{theorem}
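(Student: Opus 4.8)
The strategy is to reduce each assertion about the functor category $[\CC,\Ee]$ to the corresponding property of $\Ee$ itself, using that the projective model structure is \emph{transferred} along the free-forgetful adjunction $i_!\dashv i^*$ and that, after passing to the discrete object-set $\CC_0$, colimits and tensor products are computed objectwise. First I would handle the three existence cases (i)--(iii). In each case the relevant monad is $T=i^*i_!$ on the product model category $[\CC_0,\Ee]\cong\Ee^{\CC_0}$, and by Theorem \ref{transfer} (together with Theorem \ref{adequate}) it suffices to verify that $T$ is $\otimes$-admissible, equivalently that free $T$-algebra extensions admit a Schwede-Shipley-type filtration whose layers are $\otimes$-cofibrations (resp.\ couniversal weak equivalences for trivial cofibrations). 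The point is that a free extension is built from tensor products $\CC(c,c')\otimes(-)$ with hom-objects of $\CC$; so in case (i) the hom-objects being discrete makes these tensor factors coproducts of copies of the unit, hence harmless; in case (ii) cofibrancy of the hom-objects lets the pushout-product axiom do the work; and in case (iii) the monoid axiom directly yields the couniversal weak equivalences needed for the trivial-cofibration half of admissibility.

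For left properness I would invoke the last clause of Theorem \ref{transfer}: it is enough to show that $i_!$ takes projective cofibrations to $h$-cofibrations in $[\CC,\Ee]$. When $\Ee$ is $h$-monoidal this follows because the generating free extensions are again assembled from tensors of cofibrations with hom-objects, which are $h$-cofibrations by $h$-monoidality and the closure properties of Lemma \ref{coprodhcof}; and an $h$-monoidal $\Ee$ satisfies the monoid axiom by Proposition \ref{couniv0}, so case (iii) applies and the transfer exists. In the weaker situation where $\Ee$ is merely left proper but (i) or (ii) holds, cofibrations of $\Ee$ are already $h$-cofibrations by Lemma \ref{leftproper}, and the discreteness resp.\ cofibrancy hypothesis again ensures the tensor factors do not destroy this, so $i_!$ lands in $h$-cofibrations.

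For the symmetric monoidal part, I would first note that Day convolution makes $[\CC,\Ee]$ a closed symmetric monoidal category and that the projective model structure satisfies the pushout-product axiom because convolution cofibrations are generated by $\CC(c,-)\otimes f$ for generating cofibrations $f$ of $\Ee$, for which the pushout-product reduces to the pushout-product axiom in $\Ee$. Statement (a) then follows from the explicit description of the monoidal saturation of the trivial cofibrations: a generating trivial convolution cofibration tensored with an arbitrary functor is, objectwise, a coproduct of maps of the form (trivial cofibration)$\,\otimes\,$(object) in $\Ee$, and the monoid axiom in $\Ee$ together with Corollary \ref{couniv1} shows these are couniversal weak equivalences; closure under the saturation operations is then automatic. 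For (b) I would check the defining condition of Definition \ref{h-monoidal}: tensoring a convolution (trivial) cofibration with an arbitrary object is, again objectwise, a coproduct of tensors of (trivial) cofibrations with objects of $\Ee$, hence a (trivial) $h$-cofibration by $h$-monoidality of $\Ee$ and closure of $h$-cofibrations under coproducts (Lemma \ref{coprodhcof}); the strong version adds closure of weak equivalences under convolution tensor, which reduces objectwise to the same closure in $\Ee$. Finally (c) follows from Lemma \ref{charhcof}(a): an object of $[\CC,\Ee]$ is $h$-cofibrant iff taking coproduct with it preserves weak equivalences, and since coproducts in $[\CC,\Ee]$ are computed objectwise this reduces to $h$-cofibrancy of all objects of $\Ee$.

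The main obstacle, I expect, is the bookkeeping behind the objectwise reduction. Unlike the free monoid case in Theorem \ref{SSformonoids}, here the free $T$-algebra extensions and the convolution tensor products must be analysed through the two-sided bar-type filtration involving the hom-objects $\CC(c,c')$, and one must confirm that the colimits defining these filtration layers commute with evaluation at each object of $\CC$ so that $\otimes$-cofibrancy, couniversality, and $h$-cofibrancy can genuinely be checked one object at a time. Once this objectwise description is established, every individual verification is a direct appeal to the corresponding property of $\Ee$ via the lemmas of Section \ref{Hcof}.
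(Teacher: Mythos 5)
Your existence and left-properness arguments are essentially the paper's, but over-engineered in one respect: since $\Alg_{i^*i_!}=[\CC,\Ee]$ is itself a diagram category, the forgetful functor to $[\CC_0,\Ee]$ preserves \emph{all} colimits, so a free algebra extension is a single pushout in $\Ee$ at each object $a$, along the explicit map $\coprod_{b}\CC(b,a)\otimes u$. No Schwede--Shipley-type filtration and no appeal to Theorem \ref{adequate} is needed (the only transfinite argument occurring is the decomposition of this one coproduct into pushouts of single maps $\CC(b,a)\otimes u$, used in case (iii) to invoke the monoid axiom); in particular the ``two-sided bar-type filtration'' you anticipate as the main obstacle never arises. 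Within this pointwise picture your case analysis (i)--(iii) and your properness argument match the paper, with one caveat: Lemma \ref{coprodhcof} closes $h$-cofibrations only under \emph{finite} coproducts, while $\CC_0$ may be infinite. The paper instead observes that $\coprod_b\CC(b,a)\otimes u$ is a $\otimes$-cofibration and deduces that it is an $h$-cofibration from Proposition \ref{couniv0}, which uses compact generation; you should do the same.

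The genuine gap is in the Day-convolution part. Your key claim --- that a generating (trivial) projective cofibration convolved with an \emph{arbitrary} functor $Z$ is objectwise a coproduct of maps of the form (trivial cofibration)$\,\otimes\,$(object) --- is false: Day convolution is a coend, and $(\CC(a,-)\otimes f)\boxx Z$ evaluated at $c$ is $\int^{v}\CC(a\odot v,c)\otimes f\otimes Z(v)$, which collapses to a coproduct only when $Z$ is itself free. The missing idea is the paper's retract trick: $f\boxx Z$ is a \emph{pointwise retract} of $(i_!i^*f)\boxx(i_!i^*Z)$, and for free functors the convolution really is the coproduct (\ref{ccprd}), $\coprod_{a,b}\CC(a\odot b,c)\otimes X(a)\otimes Z(b)$; since $\otimes$-cofibrations, couniversal weak equivalences and $h$-cofibrations are all closed under (pointwise) retract, the free case suffices. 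Without this step your verifications of compact generation, of (a) via Corollary \ref{couniv1}, and of (b) only test the required properties against free $Z$, whereas the monoid axiom and $h$-monoidality quantify over arbitrary objects. Note also that even after reduction to the coproduct (\ref{ccprd}), the conclusion is not ``automatic from saturation'': the coproduct must be rewritten as a transfinite composition of pushouts of its components and Lemma \ref{transfinite} (i.e.\ $\otimes$-perfectness of the weak equivalences) applied, since couniversal weak equivalences need not be closed under infinite coproducts. Your verification of the pushout-product axiom on generators, using $(\CC(a,-)\otimes f)\,\square\,(\CC(b,-)\otimes g)\cong\CC(a\odot b,-)\otimes(f\square g)$, is a legitimate and more elementary alternative to the paper's citation of Barwick, precisely because there both factors are free.
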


\begin{proof}The existence and left properness of the projective model structure on $[\CC,\Ee]$ follows from Theorem \ref{transfer} if we prove that the monad $i^*i_!$ is $K$-admissible, where $K$ is the class of pointwise $\otimes$-cofibrations.  Note that the class of weak equivalences in $[\CC_0,\Ee]$ is $K$-perfect. Moreover, all objects of $[\CC_0,\Ee]$ are $K$-small, so that $[\CC_0,\Ee]$ is a $K$-compactly generated model category. Note also that the monad $i^*i_!$ is finitary because the right adjoint $i^*$ is left adjoint as well and thus preserves all colimits.

Now, for any object $X$  of $[\CC_0,\Ee]$, we have $(i_!X)(a)=\sqcup_{b\in \CC_0}\CC(b,a)\otimes X(b).$ Let $u:X\to Y$ be a morphism in $[\CC_0,\Ee]$ and let $\alpha:i_!X\to R$ be a morphism in $[\CC,\Ee]$. This defines for each $a\in\CC_0$ the following pushout
\begin{diagram}[small]\coprod_{b\in\CC}\CC(b,a)\otimes X(b)&\rTo^\alpha&R(a)\\\dTo & &\dTo^u\\  \coprod_{b\in\CC}\CC(b,a)\otimes Y(b) &\rTo&\NWpbk R[u,\alpha](a)
\end{diagram}
in $\Ee$. For the $K$-admissibilty of $i^*i_!$ we have to show that the right vertical map is a $\otimes$-cofibration (resp. weak equivalence) if $u:X\to Y$ is a cofibration (resp. trivial cofibration). This is obviously the case under assumptions (i) and (ii). For case (iii), note first that a pushout $u:R(a)\to R[u,\alpha](a)$ like above can be realised as a transfinite composition of pushouts of single maps $\CC(b,a)\otimes X(b)\to\CC(b,a)\otimes Y(b)$. For any cofibration $u$, such a pushout is a $\otimes$-cofibration, and hence a transfinite composition of them is again a $\otimes$-cofibration. For a trivial cofibration $u$, the analogous transfinite composition belongs to the monoidal saturation of the class of trivial cofibrations, and is therefore a weak equivalence under assumption (iii).

If $\Ee$ is left proper and $\CC$ satisfies (i) or (ii) then the left vertical map above is a cofibration, and left properness of $\Ee$ implies left properness of $[\CC,\Ee]$. Under assumption (iii) and assuming that $\Ee$ is $h$-monoidal, Proposition \ref{couniv0} shows that the left vertical map above is an $h$-cofibration which implies left properness of $[\CC,\Ee]$.

From now on we assume that $\CC$ is a symmetric monoidal category with tensor $$\odot:\CC\otimes\CC\to\CC$$ and we endow $[\CC,\Ee]$ with the Day convolution product. There is an external tensor product $\bar{\otimes}:[\CC,\Ee]\otimes[\CC,\Ee]\to [\CC\otimes \CC,\Ee]$ which is a Quillen functor of two variables with respect to the projective model structures on both sides, cf. Barwick \cite{Barwick}. Left Kan extension along the tensor $\odot:\CC\otimes\CC\to\CC$ also yields a left Quillen functor $\odot_!:[\CC\otimes\CC,\Ee]\to[\CC,\Ee]$. Therefore, the composite functor\begin{diagram}-\boxx-: [\CC,\Ee]\otimes [\CC,\Ee]&\rTo^{\bar{\otimes}}&[\CC\otimes \CC,\Ee]&\rTo^{\odot_!} &[\CC,\Ee]\end{diagram}which may be identified with the Day convolution product, is a left Quillen functor of two variables, hence $[\CC,\Ee]$ satisfies the pushout-product axiom. The unit axiom for $[\CC,\Ee]$ follows from the unit axiom for $\Ee$.

For the compact generation of $[\CC,\Ee]$, note first that the projective model structure on $[\CC,\Ee]$ is $K$-compactly generated for the saturated class $K$ of pointwise $\otimes$-cofibrations, since the weak equivalences of $[\CC,\Ee]$ are pointwise weak equivalences, and colimits in $[\CC,\Ee]$ are computed pointwise. Therefore, it suffices to show that each generating cofibration of $[\CC,\Ee]$ belongs to $K$, and that $K$ is stable under Day convolution $-\boxx Z$ with an arbitrary object $Z$. The aforementioned formula for the left adjoint $i_!$ shows that $i_!$ takes cofibrations in $[\CC_0,\Ee]$ to pointwise $\otimes$-cofibrations in $[\CC,\Ee]$. Observe furthermore that $X\boxx Z$ is a pointwise retract of $(i_!i^*X)\boxx(i_!i^*Z),$ and hence $f\boxx Z$ is a pointwise retract of $(i_!i^*f)\boxx (i_!i^*Z)$ for any map $f:X\to Y$ in $[\CC,\Ee]$. The latter morphism evaluated at $c\in \CC_0$ is given by
\begin{equation}\label{ccprd}\coprod_{a,b}\CC(a\odot b,c)\otimes X(a)\otimes Z(b) \to \coprod_{a,b}\CC(a\odot b,c)\otimes Y(a)\otimes Z(b)\end{equation}which is a $\otimes$-cofibration whenever $f:X\to Y$ is a pointwise $\otimes$-cofibration. Hence, the pointwise retract $f\boxx Z$ is also a pointwise $\otimes$-cofibration, as required.

For statement (a), it will now be enough to apply Corollary \ref{couniv1} and to show that for a trivial cofibration $f:X \to Y$, we get a couniversal weak equivalence $f\boxx Z: X\boxx Z\to Y\boxx Z$ in $[\CC,\Ee]$. For this, observe that like before $f\boxx Z$ is a pointwise retract of $(i_!i^*f)\boxx (i_!i^*Z).$ The latter evaluated at $c\in \CC_0$ is given by coproduct (\ref{ccprd}) above. Since the monoid axiom holds in $\Ee$, each component of this coproduct is as well a couniversal weak equivalence as well a $\otimes$-cofibration. Writing this coproduct as a transfinite composition of pushouts of its components shows (in virtue of Lemma \ref{transfinite}) that the coproduct itself is a couniversal weak equivalence. Since couniversal weak equivalences in $[\CC,\Ee]$ are pointwise couniversal weak equivalences and since they are closed under retract, $f\boxx Z$ is indeed a couniversal weak equivalence.

For statement (b), observe first that since colimits in $[\CC,\Ee]$ are computed pointwise, and since the weak equivalences in $[\CC,\Ee]$ are the pointwise weak equivalences, the $h$-cofibrations in $[\CC,\Ee]$ are precisely the pointwise $h$-cofibrations. Therefore, a similar argument as above (based on Proposition \ref{couniv0}) yields (b). Statement (c) follows easily from Lemma \ref{charhcof}ii.\end{proof}

\begin{remark}This theorem recovers and strengthens Theorem 4.4 and Corollary 4.8 of Dundas-{\O}stv{\ae}r-R\"ondigs \cite{DOR}. We do not talk about right properness here but right properness is preserved under any transfer.

If $\Ee$ possesses a sufficiently nice system of \emph{spheres} (with \emph{symmetries}) then the formalism of \cite{DOR} enables one to define \emph{(symmetric) spectra} in $\Ee$, as $\CC$-enriched functors on a certain $\Ee$-enriched category $\CC$ which satisfies assumption (ii) above. Therefore, there exists a levelwise projective model structure on (symmetric) spectra in any compactly generated monoidal model category $\Ee$ with nice system of spheres (with symmetries). This projective model structure is thus $h$-monoidal whenever $\Ee$ is. In this special case, $h$-monoidality could also be derived from Proposition \ref{twin1}, since there is a suitable \emph{injective} model structure on (symmetric) spectra witnessing the fact that $\CC$ is a (generalised) $\Ee$-enriched Reedy category.
\end{remark}

\begin{remark}Any one-object $\Ee$-enriched symmetric monoidal category $\CC$ can be viewed as a commutative monoid in $\Ee$ and vice-versa. In this case, the diagram category $[\CC,\Ee]$ (equipped with the Day convolution product) may be identified with the category of $\CC$-modules (equipped with the usual tensor product of $\CC$-modules). Theorem \ref{projectivemodel} for this special case recovers one of the results of Schwede-Shipley \cite{SS}.\end{remark}

\part{Algebras over tame polynomial monads}

In this second part we study algebras over polynomial monads and show that the techniques of Part 1 are applicable to them. Polynomial monads are intermediate between non-symmetric and symmetric coloured operads. They have remarkable properties which among others allow a thorough combinatorial analysis of free algebra extensions. Beside the prototypical example of the free monoid monad, most of the currently used notions of operads are expressable as algebras over polynomial monads. Part 3 treats these examples in more detail. The reader may wish to go forth and back between Parts 2 and 3 so as to have concrete examples at hand.

The main new result of this second part is a combinatorial condition under which a polynomial monad is relatively $\otimes$-adequate (resp. $\otimes$-adequate) whenever the ambient compactly generated monoidal model category is $h$-monoidal (resp. strongly $h$-monoidal). In particular we get a (relatively) left proper model structure on the algebras over this monad. Polynomial monads which satisfy this condition will be called {\it tame}. At the end of this second part we study the Quillen adjunction induced by a cartesian morphism of tame polynomial monads, and describe the total left derived functor of such an adjunction as a homotopy colimit.

There are other techniques for establishing the existence of a transferred model structure on algebras. One of the most popular and powerful methods, applicable to algebras over symmetric operads, goes back to a joint paper of the second author and Ieke Moerdijk \cite{BergerMoerdijk}. This method was generalised further in \cite{BergerMoerdijk1,Fresse,JY}. Since polynomial monads can be considered as a particular kind of coloured symmetric operad, it follows that under the Berger-Moerdijk conditions (the existence in $\Ee$ of a cocommutative interval and of a symmetric monoidal fibrant replacement functor), the category of algebras over any polynomial monad admits a transferred model structure. This is in particular the case for the category of chain complexes over a field of characteristic $0$, or the categories of simplicial sets, resp. compactly generated topological spaces.

These conditions are however not satisfied in all cases of interest, nor do they provide a clue for approaching the problem of left properness of transferred model structures. It makes therefore sense to consider the smaller class of tame polynomial monads for which a transfer exists under less restrictive conditions on $\Ee$, and for which the transferred model structures are at least relatively left proper.

\section{Cartesian monads and their internal algebra classifiers}\label{cartesianadjunction}

In this section we recall the theory of \emph{internal algebra classifiers} of the first author \cite{EHBat} including its recent developments (cf. \cite{W,W2}). This tool is fundamental for us because it enables us (cf. Section 7) to replace free algebra extensions with \emph{left Kan extensions} which are easier to handle. We formulate the theory for general cartesian monads, though later on we shall only apply it to polynomial monads.

The reader less familiar with $2$-category theory can skip the $2$-categorical Definitions \ref{intalg1} and \ref{intalg2}, and take Theorems \ref{bar1} and \ref{TST} as definitions. Indeed, as far as cartesian monads are concerned, the theory of internal algebra classifiers can be considered as a $2$-categorical characterisation of the well-known \emph{simplicial bar resolution} (cf. \cite{May}), the internal algebra classifier being a \emph{categorified} version of it.

It should however be observed that the whole theory applies to arbitrary $2$-monads and yields here internal algebra classifiers which in general are not anymore categorified bar resolutions (cf. \cite{W,W2}).

\subsection{Internal categories and their simplicial nerves}--\vspace{1ex}

Let $\CC$ be a category with pullbacks. An {\it internal category} $C$ in $\CC$ is a reflexive graph $C_1\pile{\rTo\\\lTo\\\rTo}C_0$ equipped with a unitary and associative multiplication $m:C_1\times_{C_0}C_1\to C_1$. This data assembles into a $2$-truncated simplicial object

{\unitlength=1mm

\begin{picture}(0,15)(-35,18.3)

\put(4,25){\makebox(0,0){\mbox{$C_0$}}}
\put(11,28.7){\makebox(0,0){\mbox{$\scriptstyle s$}}}
\put(11,23.8){\makebox(0,0){\mbox{$\scriptstyle t$}}}
\put(10.5,26.25){\makebox(0,0){\mbox{$\scriptstyle i$}}}
\put(14,27.2){\vector(-1,0){7}}
\put(14,22.5){\vector(-1,0){7}}
\put(7,25){\vector(1,0){7}}
\put(27,28){\vector(-1,0){7}}
\put(27,22){\vector(-1,0){7}}
\put(27,25){\vector(-1,0){7}}
\put(37,25){\makebox(0,0){\mbox{$C_1\times_{C_0}C_1$}}}
\put(24,29.5){\makebox(0,0){\mbox{$\scriptstyle pr_2$}}}
\put(24,23.4){\makebox(0,0){\mbox{$\scriptstyle pr_1$}}}
\put(24,26.4){\makebox(0,0){\mbox{$\scriptstyle m$}}}
\put(17,25){\makebox(0,0){\mbox{$C_1$}}}

\end{picture}}

\noindent enjoying the additional property that the following square\begin{diagram}[small]C_1\times_{C_0} C_1\times_{C_0}C_1&\rTo^{id_{C_1}\times_{C_0} m}&C_1\times_{C_0}C_1\\\dTo^{m\times_{C_0} id_{C_1}}&&\dTo_m\\C_1\times_{C_0}C_1&\rTo_m&C_1\end{diagram}commutes. An \emph{internal functor} between internal categories is a natural transformation of the associated $2$-truncated simplicial objects. Internal categories and internal functors form a category, denoted $\Cat(\CC).$ Each internal category $C$ extends to a simplicial object $C_\bullet$ in $\CC$, the so-called \emph{nerve} of $C$, by $$C_n=\overbrace{C_1\times_{C_0}\cdots\times_{C_0}C_1}^n,\quad n\geq 0,$$with obvious simplicial operators. The nerve functor $(\!-\!)_\bullet:\Cat(\CC)\to\CC^{\Delta^\op}$ is fully faithful and its essential image consists of those simplicial objects $C_\bullet$ of $\CC$ for which the \emph{Segal maps}$$C_n\longrightarrow \overbrace{C_1\times_{C_0}\cdots\times_{C_0}C_1}^n,\quad n\geq 0,$$are invertible. A simplicial object with invertible Segal maps will be called a \emph{strict Segal object}.  The nerve functor induces thus an equivalence of categories between internal categories and strict Segal objects. Accordingly, the $2$-truncation of a strict Segal object will be called its \emph{underlying internal category}.

Each internal category $C$ possesses an internal \emph{arrow category} $\Arr(C)$ with $\Arr(C)_0=C_1$ and $\Arr(C)_1=C_2\times_{C_1}C_2$. The latter object represents ``commuting squares'' in $C$ so that ``projecting in one direction'' yields the structural source/target maps $\Arr(C)_1\rightrightarrows\Arr(C)_0$ while ``projecting in the other direction'' yields two internal functors $s_C,t_C:\Arr(C)\rightrightarrows C$. For two internal functors $f,g:C\rightrightarrows D$, an \emph{internal natural transformation} $\phi:f\Rightarrow g$ is given by an internal functor $\phi:C\to\Arr(D)$ such that $f=s_D\phi$ and $g=t_D\phi$.

Internal categories, internal functors and internal natural transformations form a $2$-category, still denoted $\Cat(\CC)$. For $\CC=\Set$ we obtain the familiar $2$-category $\Cat=\Cat(\Set)$ of small categories, functors and natural transformations.

\subsection{Strict and lax morphisms of categorical $T$-algebras}\label{strictlax}--\vspace{1ex}

A natural transformation is called {\it cartesian} if all naturality squares are pullbacks. A \emph{monad} $T=(T,\mu,\eta)$ on a category $\CC$ with pullbacks is called \emph{cartesian} if $T$ preserves pullbacks, and as well the multiplication $\mu:T^2\Rightarrow T$ as well the unit $\eta:id_\CC\Rightarrow T$ of the monad are cartesian natural transformations.

Let $T$ be a cartesian monad on a finitely complete category $\CC.$ Since $T$ preserves pullbacks it induces a monad on the category $\Cat(\CC)$ of internal categories. Algebras for this extended monad (also denoted $T$) are called {\it categorical $T$-algebras}. As usual, categorical $T$-algebras can either be considered as $T$-algebras in internal categories, or as internal categories in $T$-algebras. According to the preceding subsection a categorical $T$-algebra can thus also be viewed as a strict Segal object in $T$-algebras. For instance, since the multiplication of $T$ is cartesian, any $T$-algebra $(X,\xi_X:T(X)\to X)$ has a simplicial bar resolution $B_\bullet(T,T,X)=T^{\bullet+1}(X)$ which is such a strict Segal object in $T$-algebras, cf. the proof of Theorem \ref{bar1}.

The monad $T$ on $\Cat(\CC)$ takes an internal natural transformation $\phi:f\Rightarrow g$ to an internal natural transformation $T(\phi):T(f)\Rightarrow T(g).$ Such a monad on a $2$-category gives rise to a so-called \emph{$2$-monad}. This allows the definition of two different notions of morphisms of categorical $T$-algebras. Beside the classical \emph{strict morphisms} there are also the \emph{lax morphisms},
cf. \cite{Lack2,W}. Let $(A, \xi_A),\, (B, \xi_B)$ be categorical $T$-algebras. A lax morphism $(A,\xi_A)\to(B,\xi_B)$ is given by a pair $(f,\phi)$ consisting of an internal functor $f:A\to B$ and an internal natural transformation

\scalebox{0.8} 
{
\begin{pspicture}(-4.5,-1)(1,1.3)
\psline[linewidth=0.022cm,arrowsize=0.1cm 2.0,arrowlength=1.4,arrowinset=0.4]{->}(0.6,0.75)(2,0.77)
\psline[linewidth=0.022cm,arrowsize=0.1cm 2.0,arrowlength=1.4,arrowinset=0.4]{->}(0.3,0.6)(0.3,-0.65)
\psline[linewidth=0.022cm,arrowsize=0.1cm 2.0,arrowlength=1.4,arrowinset=0.4]{->}(0.6,-0.85)(2.15,-0.85)
\psline[linewidth=0.022cm,arrowsize=0.1cm 2.0,arrowlength=1.4,arrowinset=0.4]{->}(2.45,0.6)(2.45,-0.65)
\usefont{T1}{ptm}{m}{n}
\rput(0.2,0.8){$T(A)$}
\usefont{T1}{ptm}{m}{n}
\rput(0.3,-0.8){$A$}
\usefont{T1}{ptm}{m}{n}
\rput(2.4,0.75){$T(B)$}
\usefont{T1}{ptm}{m}{n}
\rput(2.4,-0.85){$B$}
\psline[linewidth=0.026cm,arrowsize=0.033cm 1.59,arrowlength=1.42,arrowinset=0.51,doubleline=true,doublesep=0.03]{->}(1.45,0.1)(1.15,-0.3)
\usefont{T1}{ptm}{m}{n}
\rput(1.0,0.15){$\phi$}
\usefont{T1}{ptm}{m}{n}
\rput(1.35,1.0){$T(f)$}
\usefont{T1}{ptm}{m}{n}
\rput(1.35,-0.6){$f$}
\usefont{T1}{ptm}{m}{n}
\rput(0.05,0.0){$\xi_A$}
\usefont{T1}{ptm}{m}{n}
\rput(2.7,0.0){$\xi_B$}

\end{pspicture}
}

\noindent such that the following conditions hold:

\scalebox{0.8} 
{
\begin{pspicture}(-2,-2.2)(7.6948533,2)
\psline[linewidth=0.022cm,arrowsize=0.1cm 2.0,arrowlength=1.4,arrowinset=0.4]{->}(0.8,-0.15)(2.2,-0.15)
\psline[linewidth=0.022cm,arrowsize=0.1cm 2.0,arrowlength=1.4,arrowinset=0.4]{->}(0.45,-0.3)(0.45,-1.55)
\psline[linewidth=0.022cm,arrowsize=0.1cm 2.0,arrowlength=1.4,arrowinset=0.4]{->}(0.75,-1.75)(2.35,-1.75)
\psline[linewidth=0.022cm,arrowsize=0.1cm 2.0,arrowlength=1.4,arrowinset=0.4]{->}(2.6,-0.3)(2.6,-1.55)
\usefont{T1}{ptm}{m}{n}
\rput(0.4,-0.1){$T(A)$}
\usefont{T1}{ptm}{m}{n}
\rput(0.45,-1.75){$A$}
\usefont{T1}{ptm}{m}{n}
\rput(2.55,-0.1){$T(B)$}
\usefont{T1}{ptm}{m}{n}
\rput(2.55,-1.75){$B$}
\usefont{T1}{ptm}{m}{n}
\rput(0.2,-0.9){$\xi_A$}
\usefont{T1}{ptm}{m}{n}
\rput(2.9,-0.9){$\xi_B$}

\psline[linewidth=0.026cm,arrowsize=0.033cm 1.59,arrowlength=1.42,arrowinset=0.51,doubleline=true,doublesep=0.03]{->}(1.65,-0.8)(1.33,-1.2)
\usefont{T1}{ptm}{m}{n}
\rput(1.25,-0.75){$\phi$}
\usefont{T1}{ptm}{m}{n}
\rput(1.55,0.1){$T(f)$}
\usefont{T1}{ptm}{m}{n}
\rput(1.55,-1.5){$f$}
\psline[linewidth=0.022cm,arrowsize=0.1cm 2.0,arrowlength=1.4,arrowinset=0.4]{->}(0.8,1.5)(2.2,1.5)
\psline[linewidth=0.022cm,arrowsize=0.1cm 2.0,arrowlength=1.4,arrowinset=0.4]{->}(0.45,1.35)(0.5,0.1)
\psline[linewidth=0.022cm,arrowsize=0.1cm 2.0,arrowlength=1.4,arrowinset=0.4]{->}(2.6,1.35)(2.6,0.1)
\usefont{T1}{ptm}{m}{n}
\rput(0.45,1.55){$A$}
\usefont{T1}{ptm}{m}{n}
\rput(2.55,1.55){$B$}
\usefont{T1}{ptm}{m}{n}
\usefont{T1}{ptm}{m}{n}
\rput(1.4,1.7){$f$}
\usefont{T1}{ptm}{m}{n}
\rput(0.2,0.7){$\eta_A$}
\usefont{T1}{ptm}{m}{n}
\rput(2.9,0.7){$\eta_B$}
\usefont{T1}{ptm}{m}{n}
\rput(3.75,-0.15){=}
\psline[linewidth=0.022cm,arrowsize=0.1cm 2.0,arrowlength=1.4,arrowinset=0.4]{->}(5.2,-1.75)(6.75,-1.75)
\usefont{T1}{ptm}{m}{n}
\rput(4.9,-1.75){$A$}
\usefont{T1}{ptm}{m}{n}
\rput(7.0,-1.75){$B$}
\usefont{T1}{ptm}{m}{n}
\rput(5.95,-1.5){$f$}
\psline[linewidth=0.022cm,arrowsize=0.1cm 2.0,arrowlength=1.4,arrowinset=0.4]{->}(5.2,1.5)(6.6,1.5)
\psline[linewidth=0.022cm,arrowsize=0.1cm 2.0,arrowlength=1.4,arrowinset=0.4]{->}(4.9,1.3)(4.9,-1.5)
\psline[linewidth=0.022cm,arrowsize=0.1cm 2.0,arrowlength=1.4,arrowinset=0.4]{->}(7.0,1.3)(7.0,-1.5)
\usefont{T1}{ptm}{m}{n}
\rput(4.8,1.5){$A$}
\usefont{T1}{ptm}{m}{n}
\rput(7.0,1.5){$B$}
\usefont{T1}{ptm}{m}{n}
\usefont{T1}{ptm}{m}{n}
\rput(5.9,1.7){$f$}
\usefont{T1}{ptm}{m}{n}
\rput(4.6,-0.16){$id_A$}
\usefont{T1}{ptm}{m}{n}
\rput(7.35,-0.15){$id_B$}
\end{pspicture}
}

\noindent and

\scalebox{0.8} 
{
\begin{pspicture}(0,-4.5)(10.7,0.5)
\psline[linewidth=0.022cm,arrowsize=0.1cm 2.0,arrowlength=1.4,arrowinset=0.4]{->}(0.9,-1.3)(2.3,-2.3)
\psline[linewidth=0.022cm,arrowsize=0.1cm 2.0,arrowlength=1.4,arrowinset=0.4]{->}(0.45,-1.4)(0.45,-2.6)
\psline[linewidth=0.022cm,arrowsize=0.1cm 2.0,arrowlength=1.4,arrowinset=0.4]{->}(0.75,-3.15)(2.2,-4.15)
\psline[linewidth=0.022cm,arrowsize=0.1cm 2.0,arrowlength=1.4,arrowinset=0.4]{->}(2.55,-2.75)(2.55,-3.95)
\usefont{T1}{ptm}{m}{n}
\rput(0.4,-1.2){$T(A)$}
\usefont{T1}{ptm}{m}{n}
\rput(0.45,-2.85){$A$}
\usefont{T1}{ptm}{m}{n}
\rput(2.5,-2.5){$T(B)$}
\usefont{T1}{ptm}{m}{n}
\rput(2.55,-4.25){$B$}
\usefont{T1}{ptm}{m}{n}
\rput(0.25,-2.0){$\xi_A$}
\usefont{T1}{ptm}{m}{n}
\rput(2.85,-3.3){$\xi_B$}
\usefont{T1}{ptm}{m}{n}
\rput(4.15,-2.0){$\mu_B$}
\psline[linewidth=0.022cm,arrowsize=0.07cm 1.6,arrowlength=1.42,arrowinset=0.51,doubleline=true,doublesep=0.03]{->}(1.8,-2.75)(1.4,-3.1)
\usefont{T1}{ptm}{m}{n}
\rput(1.35,-2.65){$\phi$}
\usefont{T1}{ptm}{m}{n}
\rput(1.75,-1.55){$T(f)$}
\usefont{T1}{ptm}{m}{n}
\rput(1.3,-3.9){$f$}
\usefont{T1}{ptm}{m}{n}
\rput(3.9,-3.9){$\xi_B$}
\usefont{T1}{ptm}{m}{n}
\rput(3.9,-0.25){$T^2(f)$}
\psline[linewidth=0.022cm,arrowsize=0.1cm 2.0,arrowlength=1.4,arrowinset=0.4]{->}(4.0,-1.4)(2.8,-2.3)
\psline[linewidth=0.022cm,arrowsize=0.1cm 2.0,arrowlength=1.4,arrowinset=0.4]{->}(2.0,-0.15)(0.85,-0.9)
\psline[linewidth=0.022cm,arrowsize=0.1cm 2.0,arrowlength=1.4,arrowinset=0.4]{->}(3.15,-0.13)(4.2,-0.9)
\psline[linewidth=0.022cm,arrowsize=0.1cm 2.0,arrowlength=1.4,arrowinset=0.4]{->}(4.4,-1.45)(4.4,-2.65)
\usefont{T1}{ptm}{m}{n}
\rput(2.57,0.0){$T^2(A)$}
\usefont{T1}{ptm}{m}{n}
\rput(4.35,-1.17){$T^2(B)$}
\usefont{T1}{ptm}{m}{n}
\rput(4.3,-2.85){$T(B)$}
\usefont{T1}{ptm}{m}{n}
\rput(3.2,-1.7){$\mu_B$}
\usefont{T1}{ptm}{m}{n}
\rput(1.3,-0.3){$\mu_A$}
\psline[linewidth=0.022cm,arrowsize=0.1cm 2.0,arrowlength=1.4,arrowinset=0.4]{->}(4.35,-3.05)(3.0,-4.15)
\usefont{T1}{ptm}{m}{n}
\rput(5.33,-1.95){=}
\psline[linewidth=0.022cm,arrowsize=0.1cm 2.0,arrowlength=1.4,arrowinset=0.4]{->}(6.3,-1.4)(6.3,-2.6)
\psline[linewidth=0.022cm,arrowsize=0.1cm 2.0,arrowlength=1.4,arrowinset=0.4]{->}(6.7,-3.2)(8.0,-4.15)
\psline[linewidth=0.022cm,arrowsize=0.1cm 2.0,arrowlength=1.4,arrowinset=0.4]{->}(7.7,-2.15)(6.65,-2.85)
\usefont{T1}{ptm}{m}{n}
\rput(6.2,-1.2){$T(A)$}
\usefont{T1}{ptm}{m}{n}
\rput(6.35,-2.85){$A$}
\usefont{T1}{ptm}{m}{n}
\rput(8.2,-2.0){$T(A)$}
\usefont{T1}{ptm}{m}{n}
\rput(8.35,-4.25){$B$}
\psline[linewidth=0.026cm,arrowsize=0.033cm 1.59,arrowlength=1.42,arrowinset=0.51,doubleline=true,doublesep=0.03]{->}(8.55,-3.0)(8.15,-3.35)
\usefont{T1}{ptm}{m}{n}
\rput(8.0,-3.0){$\phi$}
\usefont{T1}{ptm}{m}{n}
\rput(6.9,-3.8){$f$}
\usefont{T1}{ptm}{m}{n}
\rput(6.9,-0.3){$\mu_A$}
\usefont{T1}{ptm}{m}{n}
\rput(7.1,-2.2){$\xi_A$}
\usefont{T1}{ptm}{m}{n}
\rput(9.5,-2.3){$T(f)$}
\usefont{T1}{ptm}{m}{n}
\rput(6.55,-1.85){$\xi_A$}
\usefont{T1}{ptm}{m}{n}
\rput(10.55,-1.95){$\mu_B$}
\usefont{T1}{ptm}{m}{n}
\rput(7.95,-1.0){$\mu_A$}
\psline[linewidth=0.022cm,arrowsize=0.1cm 2.0,arrowlength=1.4,arrowinset=0.4]{->}(8.7,-2.15)(9.7,-2.8)
\psline[linewidth=0.022cm,arrowsize=0.1cm 2.0,arrowlength=1.4,arrowinset=0.4]{->}(7.6,-0.15)(6.4,-0.95)
\psline[linewidth=0.022cm,arrowsize=0.1cm 2.0,arrowlength=1.4,arrowinset=0.4]{->}(8.8,-0.15)(9.9,-0.85)
\usefont{T1}{ptm}{m}{n}
\rput(8.2,0.0){$T^2(A)$}
\usefont{T1}{ptm}{m}{n}
\rput(10.25,-1.15){$T^2(B)$}
\usefont{T1}{ptm}{m}{n}
\rput(10.2,-2.9){$T(B)$}
\usefont{T1}{ptm}{m}{n}
\rput(9.5,-3.8){$\xi_B$}
\usefont{T1}{ptm}{m}{n}
\rput(9.6,-0.25){$T^2(f)$}
\psline[linewidth=0.022cm,arrowsize=0.1cm 2.0,arrowlength=1.4,arrowinset=0.4]{->}(9.8,-3.2)(8.65,-4.1)
\psline[linewidth=0.022cm,arrowsize=0.1cm 2.0,arrowlength=1.4,arrowinset=0.4]{->}(10.25,-1.35)(10.25,-2.65)
\psline[linewidth=0.022cm,arrowsize=0.1cm 2.0,arrowlength=1.4,arrowinset=0.4]{->}(8.2,-0.3)(8.2,-1.7)
\psline[linewidth=0.026cm,arrowsize=0.033cm 1.59,arrowlength=1.42,arrowinset=0.51,doubleline=true,doublesep=0.03]{->}(9.45,-1.45)(9.05,-1.8)
\usefont{T1}{ptm}{m}{n}
\rput(8.85,-1.4){$T(\phi)$}
\end{pspicture}
}

\noindent A $T$-natural transformation between two lax morphisms $(f,\phi),(g,\psi): A \rightrightarrows B$  is given by an internal natural transformation $\rho: f\Rightarrow g$ such that

\scalebox{0.8} 
{
\begin{pspicture}(-1.5,-1.85)(6.55,2)
\psline[linewidth=0.022cm,arrowsize=0.1cm 2.0,arrowlength=1.4,arrowinset=0.4]{->}(0.45,0.6)(0.45,-0.6)
\psline[linewidth=0.022cm,arrowsize=0.1cm 2.0,arrowlength=1.4,arrowinset=0.4]{->}(2.6,0.6)(2.6,-0.6)
\usefont{T1}{ptm}{m}{n}
\rput(0.2,0.0){$\xi_A$}
\usefont{T1}{ptm}{m}{n}
\rput(2.35,-0.1){$\xi_B$}
\usefont{T1}{ptm}{m}{n}
\rput(0.4,0.8){$T(A)$}
\usefont{T1}{ptm}{m}{n}
\rput(0.45,-0.8){$A$}
\usefont{T1}{ptm}{m}{n}
\rput(2.55,0.8){$T(B)$}
\usefont{T1}{ptm}{m}{n}
\rput(2.55,-0.8){$B$}
\psline[linewidth=0.026cm,arrowsize=0.033cm 1.59,arrowlength=1.42,arrowinset=0.51,doubleline=true,doublesep=0.03]{->}(1.75,-0.6)(1.45,-1.0)
\usefont{T1}{ptm}{m}{n}
\rput(1.3,-0.6){$\psi$}
\usefont{T1}{ptm}{m}{n}
\rput(1.5,1.65){$T(f)$}
\usefont{T1}{ptm}{m}{n}
\rput(1.55,-1.6){$g$}
\psbezier[linewidth=0.022,arrowsize=0.1cm 2.0,arrowlength=1.4,arrowinset=0.4]{->}(0.55,-1.0)(0.85,-1.55)(2.3,-1.5)(2.55,-0.95)
\psbezier[linewidth=0.022,arrowsize=0.1cm 2.0,arrowlength=1.4,arrowinset=0.4]{->}(0.55,0.6)(0.85,0.05)(2.3,0.1)(2.55,0.65)
\psbezier[linewidth=0.022,arrowsize=0.1cm 2.0,arrowlength=1.4,arrowinset=0.4]{->}(0.55,1.05)(0.85,1.6)(2.3,1.6)(2.5,1.0)
\psline[linewidth=0.026cm,arrowsize=0.033cm 1.59,arrowlength=1.42,arrowinset=0.51,doubleline=true,doublesep=0.03]{->}(1.75,1.0)(1.4,0.55)
\usefont{T1}{ptm}{m}{n}
\rput(1.3,1.0){$T(\rho)$}
\usefont{T1}{ptm}{m}{n}
\rput(1.6,0.0){$T(g)$}
\usefont{T1}{ptm}{m}{n}
\rput(3.35,-0.15){=}
\psline[linewidth=0.022cm,arrowsize=0.1cm 2.0,arrowlength=1.4,arrowinset=0.4]{->}(4.0,0.6)(4.0,-0.6)
\usefont{T1}{ptm}{m}{n}
\rput(3.95,0.8){$T(A)$}
\usefont{T1}{ptm}{m}{n}
\rput(4.0,-0.8){$A$}
\usefont{T1}{ptm}{m}{n}
\rput(6.1,0.8){$T(B)$}
\usefont{T1}{ptm}{m}{n}
\rput(6.1,-0.8){$B$}
\psline[linewidth=0.026cm,arrowsize=0.033cm 1.59,arrowlength=1.42,arrowinset=0.51,doubleline=true,doublesep=0.03]{->}(5.3,-0.7)(5.0,-1.1)
\usefont{T1}{ptm}{m}{n}
\rput(5.05,1.65){$T(f)$}
\usefont{T1}{ptm}{m}{n}
\rput(5.25,-1.6){$g$}
\psbezier[linewidth=0.022,arrowsize=0.1cm 2.0,arrowlength=1.4,arrowinset=0.4]{->}(4.1,-1.0)(4.4,-1.6)(5.9,-1.5)(6.1,-1.0)
\psbezier[linewidth=0.022,arrowsize=0.1cm 2.0,arrowlength=1.4,arrowinset=0.4]{->}(4.1,-0.6)(4.4,-0.05)(5.85,-0.1)(6.1,-0.65)
\psbezier[linewidth=0.022,arrowsize=0.1cm 2.0,arrowlength=1.4,arrowinset=0.4]{->}(4.1,1.0)(4.4,1.6)(5.85,1.55)(6.1,1.0)
\psline[linewidth=0.026cm,arrowsize=0.033cm 1.59,arrowlength=1.42,arrowinset=0.51,doubleline=true,doublesep=0.03]{->}(5.4,0.8)(5.05,0.35)
\usefont{T1}{ptm}{m}{n}
\rput(4.25,0.1){$\xi_A$}
\usefont{T1}{ptm}{m}{n}
\rput(6.45,0.1){$\xi_B$}
\usefont{T1}{ptm}{m}{n}
\rput(4.85,-0.7){$\rho$}
\usefont{T1}{ptm}{m}{n}
\rput(5.15,0.0){$f$}
\psline[linewidth=0.022cm,arrowsize=0.1cm 2.0,arrowlength=1.4,arrowinset=0.4]{->}(6.2,0.6)(6.2,-0.6)
\usefont{T1}{ptm}{m}{n}
\rput(4.9,0.75){$\phi$}
\end{pspicture}
}

For a cartesian monad $T$ on a category $\CC$ with pullbacks, categorical $T$-algebras, strict morphisms of categorical $T$-algebras and $T$-natural transformations form a $2$-category which we shall denote $\Alg_T(\Cat(\CC))$ or $\Cat(\Alg_T(\CC))$.

If $\CC$ has a terminal object $1$  and hence is finitely complete then $\Cat(\CC)$ as well has a terminal object and is finitely complete. An internal category $C$ is terminal if and only if $C_0$ and $C_1$ are terminal. A terminal internal category has a unique $T$-algebra structure; the latter promotes it to a terminal categorical  $T$-algebra. All terminal objects will be denoted $1$ hoping that this will cause no confusion.

\begin{defin}[{\cite{EHBat}}]\label{intalg1}Let $A$ be a categorical $T$-algebra for a cartesian monad $T$.

An \emph{internal $T$-algebra in $A$} is a \emph{lax morphism} of categorical $T$-algebras from the \emph{terminal} categorical $T$-algebra to $A$.

Internal $T$-algebras in $A$ and $T$-natural transformations form a category $\Int_T(A)$ and this construction extends to a $2$-functor $\Int_T:\Alg_T(\Cat(\CC))\to\Cat.$\end{defin}

\begin{theorem}[{\cite{EHBat}}]\label{bar1}The $2$-functor $\,\Int_T$ is representable. The representing categorical $T$-algebra $\HT$ in $\Alg_T(\Cat(\CC))$ is the underlying internal category
\begin{equation}\label{HT}\begin{diagram}T(1)&\pile{\lTo\\\rTo\\\lTo}&T^2(1)&\pile{\lTo\\\lTo\\\lTo}&T^3(1)\end{diagram} \end{equation}
\noindent of the simplicial bar resolution $B_\bullet(T,T,1)$ of the terminal $T$-algebra in $\Alg_T(\CC)$.
\end{theorem}

\noindent This categorical $T$-algebra $\HT$ will be called the \emph{internal algebra classifier} of $T$ because morphisms of categorical $T$-algebras $\HT\to A$ correspond one-to-one to lax morphisms of categorical $T$-algebras $1\to A$, i.e. to internal $T$-algebras in $A$.

\begin{proof}The free-forgetful adjunction $F_T:\CC\lra\Alg_T(\CC):U_T$ induces a simplicial bar resolution $B_\bullet(T,T,1)$ of the terminal $T$-algebra $1$ in $\Alg_T(\CC)$. Explicitly, $B_\bullet(T,T,1)=T^{\bullet+1}(1)$ with the usual simplicial operators, induced by multiplication and unit of $T$. Since $T$ has a cartesian multiplication, the following naturality square (where $!:T(1)\to 1$ denotes the unique $T$-algebra structure of $1$)\begin{diagram}[small]T^3(1)&\rTo^{T^2(!)}&T^2(1)\\\dTo^{\mu_ {T(1)}}&&\dTo_{\mu_1}\\T^2(1)&\rTo_{T(!)}&T(1)\end{diagram}
is a pullback in $\Alg_T(\CC)$, and hence the Segal map $T^3(1)\to T^2(1)\times_{T(1)}T^2(1)$ is invertible. Notice that this pullback square realises the identity of simplicial face operations $\partial^0\partial^2=\partial^1\partial^1:B_2(T,T,1)\to B_0(T,T,1)$. Similarly, the cartesianess of $\mu:T^2\Rightarrow T$ implies that the identity of simplicial face operations $\partial^0\partial^n=\partial^{n-1}\partial^0:B_n(T,T,1)\to B_{n-2}(T,T,1)$ is realised by a pullback square in $\Alg_T(\CC)$. It follows then by induction on $n$ that all higher Segal maps$$B_n(T,T,1)\longrightarrow \overbrace{B_1(T,T,1)\times_{B_0(T,T,1)}\cdots\times_{B_0(T,T,1)}B_1(T,T,1)}^n$$are invertible, so that (\ref{HT}) is the $2$-truncation of a strict Segal object in $T$-algebras, and therefore represents indeed a categorical $T$-algebra $\HT$.

It remains to be shown that $\HT$ has the asserted universal property, namely that the category of strict morphisms $\HT\to A$ is canonically isomorphic to the category of lax morphisms $1\to A$. Our proof closely follows Lack \cite[Section 2]{Lack2}.

A lax morphism of categorical $T$-algebras $1\to A$ consists of an internal functor $a:1\to A$ together with an internal natural transformation $\phi$ from $T(1)\stackrel{T(a)}{\to} T(A)\stackrel{\xi_A}{\to} A$ to $T(1)\to 1\stackrel{a}{\to} A$ fulfilling coherence conditions. Now, the existence of the $1$-cell $a:1\to A$ in $\Cat(\CC)$ amounts to the existence of the $1$-cell $\bar{a}:T(1)\stackrel{T(a)}{\to} T(A)\stackrel{\xi_A}{\to} A$ in $\Alg_T(\Cat(\CC))$. Moreover, the existence of the $2$-cell $\phi$ in $\Cat(\CC)$ amounts to the existence of the $2$-cell $\bar{\phi}=\xi_A\cdot T(\phi)$ in $\Alg_T(\Cat(\CC))$.

It can be checked that the coherence conditions of a lax morphism $1\to A$ translate under this correspondence into the coherence conditions of a morphism of categorical $T$-algebras $(\bar{a},\bar{\phi},\bar{\phi}\times_{\bar{a}}\bar{\phi}):\HT\to A$. This correspondence respects the category structures on both sides and is $2$-functorial in $A$, see \cite{Lack2} for details.\end{proof}

\begin{example}\label{Benabou}The previous theorem applies to the \emph{free monoid monad} $T$ which is a cartesian monad on $\Set.$ In this case, categorical $T$-algebras are strict monoidal small categories. B\'enabou \cite{Benabou} noticed that a lax monoidal functor from the terminal monoidal category to a monoidal category $A$ is the same as a monoid in $A.$ The category of internal $T$-algebras in $A$ is thus the category of monoids in $A.$

On the other hand, the internal algebra classifier $\HT$ is easily identified with a skeleton of the category of finite ordinals and order-preserving maps, also known as the \emph{augmented simplex category} $\Delta_+$. This is a strict monoidal category with unit the empty ordinal, and tensor given by the join of ordinals. This strict monoidal category contains a generic monoid (the first non-empty ordinal) and Theorem \ref{bar1} recovers the well-known fact that monoids in a (strict) monoidal category $A$ are the same as (strict) monoidal functors from $\Delta_+$ to $A$.\end{example}

\subsection{Monad morphisms}

Let $S$ (resp. $T$) be a finitary  monad on a cocomplete category $\DD$ (resp. $\CC$). For any functor $d: \CC \rightarrow \DD$ with left adjoint $c:\DD\to\CC$ the following three conditions are equivalent:

 \begin{enumerate}

\item There exists a  functor $d': Alg_T\rightarrow Alg_S$ such that $U_Sd'=dU_T$;

\item  There exists a natural transformation $\Psi: Sd\Rightarrow dT$ compatible with the multiplication and unit of $S$ and $T$;

\item\label{Phi} There exists a morphism of monads $\Phi: S\Rightarrow dTc$.\end{enumerate}

The equivalence between (1) and (2) is classical and does not require the existence of a left adjoint $c$. For the equivalence between (2) and (3), use unit $\eta:id_\DD\Rightarrow dc$ and counit $\epsilon:cd\Rightarrow id_\CC$ of the adjunction to define $\Phi=\Psi c\circ S\eta$, resp. $\Psi=dT\epsilon\circ\Phi d$. A $2$-categorical diagram chase shows that these two assignments are mutually inverse.

If these conditions are satisfied then by the adjoint lifting theorem the functor $d'$ has a left adjoint $c'$ such that the following square of adjoint functors commutes:
\vspace{13mm}

\begin{equation}\label{poladj} \end{equation}
{\unitlength=1mm

\begin{picture}(40,10)(-19,0)
\put(49,15){\shortstack{\mbox{$\scriptstyle U_T$}}}
\put(57,15){\shortstack{\mbox{$\scriptstyle F_T$}}}
\put(36,3){\shortstack{\mbox{$ c$}}}
\put(36,9){\shortstack{\mbox{$ d$}}}
\put(13,15){\shortstack{\mbox{$\scriptstyle U_S$}}}
\put(20,15){\shortstack{\mbox{$\scriptstyle F_S$}}}
\put(18,25){\makebox(0,0){\mbox{${\Alg_S}$}}}
\put(19,11){\vector(0,1){10}}
\put(17,21){\vector(0,-1){10}}
\put(26,24){\vector(1,0){24}}
\put(49,26){\vector(-1,0){24}}
\put(36,27){\shortstack{\mbox{$ d'$}}}
\put(36,21.5){\shortstack{\mbox{$ c'$}}}
\put(56,25){\makebox(0,0){\mbox{$\Alg_T$}}}
\put(54,21){\vector(0,-1){10}}
\put(56,11){\vector(0,1){10}}
\put(18,7){\makebox(0,0){\mbox{$\DD$}}}
\put(23,6){\vector(1,0){26}}
\put(49,8){\vector(-1,0){26}}
\put(56,7){\makebox(0,0){\mbox{$\CC$}}}
\end{picture}}

The natural transformation $\Phi:S\Rightarrow dTc$ yields (by adjunction) a natural transformation
$\Phi':cS \Rightarrow Tc$ and hence a natural transformation $T\Phi':TcS\Rightarrow T^2c$ which, after composition with $\mu c:T^2c\Rightarrow Tc$, gives rise to a natural transformation
\begin{gather*}\theta:TcS\Rightarrow Tc\end{gather*}inducing the structure of a \emph{right $S$-module} on the composite functor $Tc:\DD\to\CC$.

\begin{defin}\label{Phic}We will say that $\Phi:S\Rightarrow dTc$ is a cartesian monad morphism if $\Phi$ is a cartesian natural transformation and $c\dashv d$ is a cartesian adjunction  (i.e. unit and counit are cartesian natural transformations and $c$ preserves pullbacks).\end{defin}

\begin{lem}\label{polsq}For a natural transformation $\Phi:S\Rightarrow dTc$ between cartesian monads such that $c\dashv d$ is a cartesian adjunction, the following conditions are equivalent: \begin{enumerate}\item[(i)]the natural transformation $\Phi:S\Rightarrow dTc$ is cartesian; \item[(ii)]the natural transformation $\Phi':cS\Rightarrow Tc$ is cartesian;\item[(iii)]the natural transformation $\theta:TcS\Rightarrow Tc$ is cartesian.\end{enumerate}
 \end{lem}
\begin{proof}$\Phi$ is cartesian if and only if $\Phi'$ is because of the identities $\Phi'=(\epsilon Tc)(c\Phi)$ and $\Phi=(d\Phi')(\eta S)$ and the hypothesis that $c$ preserves pullbacks. Since $\mu:T^2\Rightarrow T$ is cartesian and $\theta=(\mu c)\Phi'$ we see that $\Phi'$ is cartesian if and only if $\theta$ is.\end{proof}

\begin{defin}[{\cite{EHBat}}]\label{intalg2}Let $\Phi:S\Rightarrow dTc$ be a cartesian monad morphism.

An \emph{internal $S$-algebra in a categorical $T$-algebra $A$} is a \emph{lax morphism} of categorical $S$-algebras from the \emph{terminal} categorical $S$-algebra to $d'(A)$.

Internal $S$-algebras in $A$ and $S$-natural transformations form a category $\Int_S(A)$. This construction extends to a $2$-functor
$$\Int_S: \Alg_T(\Cat(\CC))\to\Cat.$$ \end{defin}

\begin{theorem}[{\cite{EHBat}}]\label{TST}The $2$-functor $\,\Int_S$ is representable. The representing categorical $T$-algebra $\HS$ is the underlying internal category

\begin{equation}\label{objects}\begin{diagram}Tc(1)&\pile{\lTo\\\rTo\\\lTo}&TcS(1)&\pile{\lTo\\\lTo\\\lTo}&TcS^2(1)\end{diagram}\end{equation}

\noindent of the two-sided simplicial bar construction $B_\bullet(Tc,S,1)=TcS^\bullet(1)$ of the terminal $S$-algebra, where the right action of $S$ on $Tc$ is given by $\theta:TcS\Rightarrow Tc$.\end{theorem}

\noindent This categorical $T$-algebra $\HS$ will be called the \emph{internal $S$-algebra classifier} of $T$ because morphisms of categorical $T$-algebras $\HS\to A$ correspond one-to-one to lax morphisms of categorical $S$-algebras $1\to d'(A)$, i.e. to internal $S$-algebras in $A$.

\begin{proof}Since $\Phi:S\Rightarrow dTc$ is a cartesian monad morphism,  the right action $\theta:TcS\Rightarrow Tc$ is cartesian as well (cf. Lemma \ref{polsq}), so that $B_\bullet(Tc,S,1)$ is a strict Segal object of $\Alg_T(\CC)$ by the same inductive argument as in the proof of Theorem \ref{bar1}.

The universal property of the underlying categorical $T$-algebra $\HS$ follows now from the following adjunction argument. Lax morphisms $1\to d'(A)$ correspond by Theorem \ref{bar1} one-to-one to simplicial maps $B_\bullet(S,S,1)\to d'(A)_\bullet$; the latter correspond via the adjunction $c'\dashv d'$ to simplicial maps $c'B_\bullet(S,S,1)\to A_\bullet$. The simplicial isomorphism $c'B_\bullet(S,S,1)\cong B_\bullet(Tc,S,1)$ permits us to conclude.\end{proof}

\subsection{Internal left Kan extensions}For each categorical $T$-algebra $A$, the cartesian monad morphism $\Phi:S\Rightarrow dTc$ induces a functor $$\delta_A^{\Phi}: \Int_T(A)\to\Int_S(A),$$ taking an internal $T$-algebra $X:1\to A$ to the internal $S$-algebra $d'(X):1\to d'(A)$.

In good cases, the functor $\delta^\Phi_A$ admits a left adjoint $$\gamma^{\Phi}_A:\Int_S(A)\to\Int_T(A).$$It is one of the crucial observations of \cite{EHBat} that this left adjoint $\gamma^\Phi_A$ (if it exists) can be computed as an internal left Kan extension. Indeed, $\delta^{\Phi}_A$ may be identified with restriction $(\HPhi)^*$ along a certain internal functor of classifiers
$$\HPhi:\HS\to\HT$$ in the following way: represent an internal $T$-algebra $X$ in $A$ by $\tilde{X}:\HT\to A.$ Then the internal $S$-algebra $\delta^{\Phi}_A(X)$ is represented by the composite morphism\begin{diagram}[small]\HS&\rTo^\HPhi&\HT&\rTo^{\tilde{X}}&A.\end{diagram}Accordingly, the left adjoint $\gamma_A^\Phi$ may be identified with left Kan extension $(\HPhi)_!$ along the same internal functor. On objects, the internal functor $\HPhi:\HS\to\HT$ is given by $T(!):Tc(1)\to T(1),$ where $!:c(1)\to 1$ while on morphisms $\HPhi$ is given by $$TcS(1)\stackrel{T\Phi'_1}{\longrightarrow} T^2c(1) \ \stackrel{T^2(!)}{\longrightarrow} T^2(1).$$

\begin{pro}[{\cite{EHBat}}]\label{Kanextension}Let $A$ be a categorical $T$-algebra and $\Phi:S\Rightarrow dTc$ be a cartesian monad morphism.

The adjoint pair $\gamma_A^\Phi:\Int_S(A)\lra\Int_T(A):\delta_A^\Phi$ is represented by left Kan extension and restriction along the internal functor $\HPhi:\HS\to\HT$ in $T$-algebras.\end{pro}

\begin{proof}It suffices to show that $\delta_A^\Phi$ is represented by restriction along $\HPhi$.

The image $\delta_A^\Phi(X)$ of an internal $T$-algebra $X:1\to A$ is the lax morphism of categorical $S$-algebras $d'(X):1=d'(1)\to d'(A)$. By Theorem \ref{bar1}, such a lax morphism is represented by a simplicial map $B_\bullet(S,S,1)\to d'(A)_\bullet$. In virtue of the adjunction $c'\dashv d'$, the latter corresponds to a simplicial map $c'B_\bullet(S,S,1)\cong B_\bullet(Tc,S,1)\to A_\bullet$. It remains to be shown that this simplicial map factors through the simplicial map $B_\bullet(T,T,1)\to A_\bullet$ representing the internal $T$-algebra $X$.

The required simplicial map $c'B_\bullet(S,S,1)\to B_\bullet(T,T,1)$ is adjoint to a simplicial map $B_\bullet(S,S,1)\to d'B_\bullet(T,T,1)$. The latter derives from the natural transformation $\Psi:Sd\Rightarrow dT$ so that the former derives from its ``mate'' $\Phi':cS\Rightarrow Tc$. As underlying internal functor we get the functor $\HPhi:\HS\to\HT$ defined above.\end{proof}

\begin{defin}[{\cite{W2}}]\label{cocomplete}Let $(A,\xi_A)$ be a categorical $T$-algebra and $\xi:B\to C$ be a morphism of categorical $T$-algebras. The algebra $A$ is called \emph{cocomplete} with respect to $\xi$ if for any morphism of $T$-algebras $F:B\to A$ the following pointwise left Kan extension
{\unitlength=1mm

\begin{picture}(40,29)(0,0)
\put(48,15){\shortstack{\mbox{$\scriptstyle \xi$}}}
\put(51,25){\makebox(0,0){\mbox{$B$}}}
\put(51,21){\vector(0,-1){10}}
\put(51,7){\makebox(0,0){\mbox{$C$}}}
\put(70,25){\makebox(0,0){\mbox{$A$}}}
\put(54,25){\vector(1,0){14}}
\put(55,9){\vector(1,1){13}}
\put(60,26){\shortstack{\mbox{$\scriptstyle  F$}}}
\put(61,13){\shortstack{\mbox{$\scriptstyle  G$}}}
\put(58,16){\vector(0,-1){0.1}}
\put(57.85,19){\line(0,-1){2.2}}
\put(58.15,19){\line(0,-1){2.2}}
\put(55,17){\shortstack{\mbox{$\scriptstyle  \phi$}}}
\end{picture}}

\noindent exists in $\Cat(\CC)$ and the induced diagram
{\unitlength=1mm

\begin{picture}(40,30)(0,0)
\put(45,16){\shortstack{\mbox{$\scriptstyle T(\xi)$}}}
\put(51,25){\makebox(0,0){\mbox{$T(B)$}}}
\put(51,21){\vector(0,-1){10}}
\put(51,7){\makebox(0,0){\mbox{$T(C)$}}}
\put(72,25){\makebox(0,0){\mbox{$T(A)$}}}
\put(56,25){\vector(1,0){11}}
\put(56,10){\vector(1,1){12}}
\put(58,26){\shortstack{\mbox{$\scriptstyle  T({F})$}}}
\put(61,13){\shortstack{\mbox{$\scriptstyle  T(G)$}}}
\put(59,16){\vector(0,-1){0.1}}
\put(58.85,19){\line(0,-1){2.2}}
\put(59.15,19){\line(0,-1){2.2}}
\put(52,17){\shortstack{\mbox{$\scriptstyle  T(\phi)$}}}
\put(77,25){\vector(1,0){11}}
\put(90,25){\makebox(0,0){\mbox{$A$}}}
\put(80,26){\shortstack{\mbox{$\scriptstyle \xi_A$}}}
\end{picture}}

\noindent exhibits $\xi_A\cdot T(G)$ as the pointwise left Kan extension of $\xi_A\cdot T(F)$ in $\Cat(\CC).$\end{defin}

\begin{theorem}[{ \cite{W2}}]\label{polkan}
Let $A$ be a categorical $T$-algebra which is cocomplete with respect to $\HPhi:\HS\to\HT$ and let $\,Y$ be an internal $S$-algebra in $A$. Then the pointwise left Kan extension of $\,\widetilde{Y}$ along $\HPhi$ in $\Alg_T(\Cat(\CC))$  exists and its underlying functor is the pointwise left Kan extension of $\,U_T(\widetilde{Y})$ along $U_T(\HPhi)$.

In particular,$$U_T((\HPhi)_!(Y)) = \colim_{U_T(\HS)}U_T(\widetilde{Y})\quad\mathrm{in}\quad\Cat(\CC).$$\end{theorem}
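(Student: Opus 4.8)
The plan is to prove the theorem by \emph{lifting} the underlying pointwise left Kan extension from $\Cat(\CC)$ to the $2$-category $\Alg_T(\Cat(\CC))$. The essential observation is that the cocompleteness hypothesis of Definition~\ref{cocomplete} is exactly a \emph{preservation} statement: it guarantees not only that the pointwise left Kan extension $L_0=(U_T\HPhi)_!(U_T\widetilde Y)$ exists in $\Cat(\CC)$, with unit $2$-cell $\phi:U_T\widetilde Y\Rightarrow L_0\circ U_T\HPhi$, but also that $k\circ T(L_0)$ is again the pointwise left Kan extension of $k\circ T(U_T\widetilde Y)$ along $T(U_T\HPhi)$, where $k:T(A)\to A$ is the structure map of $A$. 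The guiding slogan is that $U_T:\Alg_T(\Cat(\CC))\to\Cat(\CC)$ \emph{creates} precisely those pointwise left Kan extensions that are preserved by $k\circ T(-)$.

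First I would endow $L_0$ with the structure of a morphism of categorical $T$-algebras. Writing $\kappa:T(\HT)\to\HT$ and $\lambda:T(\HS)\to\HS$ for the structure maps and using that $\HPhi$ and $\widetilde Y$ are $T$-algebra morphisms, whiskering $\phi$ by $\lambda$ produces a $2$-cell $k\circ T(U_T\widetilde Y)\Rightarrow(L_0\circ\kappa)\circ T(U_T\HPhi)$; feeding it into the universal property of the Kan extension $k\circ T(L_0)$ yields a canonical comparison between $k\circ T(L_0)$ and $L_0\circ\kappa$. This comparison is the candidate algebra-morphism structure on $L_0$; one then checks it is compatible with the multiplication and unit of the $2$-monad $T$, so that $L=(L_0,\ldots)$ is a genuine $1$-cell $\HT\to A$ in $\Alg_T(\Cat(\CC))$ with $U_T(L)=L_0$, and that $\phi$ is $T$-natural, hence a $2$-cell there.

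Next I would verify the universal property of $(L,\phi)$ in $\Alg_T(\Cat(\CC))$. Given any $T$-algebra morphism $G:\HT\to A$ and $T$-natural $\beta:\widetilde Y\Rightarrow G\circ\HPhi$, the underlying extension supplies a unique $2$-cell $\bar\beta_0:L_0\Rightarrow U_TG$ with $(\bar\beta_0\,U_T\HPhi)\circ\phi=U_T\beta$, and the point is to show $\bar\beta_0$ is $T$-natural. Its $T$-naturality is an equality of $2$-cells out of $T(\HT)$ which is forced by the \emph{uniqueness} clause of the universal property of $k\circ T(L_0)$ as a pointwise Kan extension, using the $T$-naturality of $\beta$ and of $\phi$. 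As $U_T$ is locally faithful, this both produces the factorisation $\bar\beta:L\Rightarrow G$ and transfers its uniqueness from $\Cat(\CC)$. This establishes the first two assertions, the identification of the underlying functor with $L_0$ being built into the construction.

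The ``In particular'' is then a corollary. By Proposition~\ref{Kanextension}, $\gamma^\Phi_A(Y)$ is the composite $1\to\HT\xrightarrow{L}A$ of the universal internal $T$-algebra with $L=(\HPhi)_!(\widetilde Y)$, so $U_T(\gamma^\Phi_A(Y))$ is the value of the pointwise extension $L_0$ at the object $\ast$ of $\HT$ selected by $1\to\HT$; by pointwiseness this is the colimit of $U_T\widetilde Y$ over the comma category $U_T\HPhi\downarrow\ast$, which collapses to all of $U_T\HS$ because $\ast$ is terminal in $\HT$, giving $U_T(\gamma^\Phi_A(Y))=\colim_{U_T\HS}U_T\widetilde Y$. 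The main obstacle is the transport of the algebra structure in the second step together with the $T$-naturality check in the third: these are exactly the points where one cannot argue object-by-object but must invoke the full preservation clause of cocompleteness --- that $k\circ T(-)$ sends the Kan extension to a Kan extension --- in order to define the structure $2$-cell and to verify the naturality equations via the universal property of a single Kan extension out of $T(\HT)$.
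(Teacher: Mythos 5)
A preliminary remark: this paper never proves Theorem \ref{polkan}; the statement is imported from \cite{BatWeb} (``in preparation''), so your argument can only be judged on its own terms rather than against a proof in the text. Judged so, your construction of the comparison $2$-cell $\theta\colon k\circ T(L_0)\Rightarrow L_0\circ\kappa$ out of the cocompleteness hypothesis is correct, but the proof has a genuine gap: a strict/lax mismatch. In the $2$-category $\Alg_T(\Cat(\CC))$ of this paper the $1$-cells are \emph{strict} morphisms of categorical $T$-algebras (this is precisely what makes $\HT$ and $\HS$ classifiers: lax morphisms $1\to A$ correspond to strict morphisms out of them), and the Kan extension asserted by the theorem lives in that strict $2$-category. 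The pair $(L_0,\theta)$ is only a \emph{lax} morphism: nothing in your sketch makes $\theta$ invertible, let alone an identity, and an identity is what being a $1$-cell of $\Alg_T(\Cat(\CC))$ requires. Nor can $\theta$ be an identity for an arbitrarily chosen pointwise Kan extension $L_0$: already for the free monoid monad the equation $k\circ T(L_0)=L_0\circ\kappa$ reads $L_0(x)\otimes L_0(y)=L_0(x\otimes y)$, which a colimit formula satisfies only up to canonical isomorphism. The same mismatch breaks your verification of the universal property, since $T$-naturality of $\bar\beta_0\colon L_0\Rightarrow U_TG$ is the equation $\bar\beta_0\ast\kappa=k\ast T(\bar\beta_0)$, whose two sides do not even have equal domain and codomain unless $L$ is strict. (A secondary point: your appeal to the universal property of $k\circ T(L_0)$ to check the coherence axioms of $\theta$ works for the unit axiom, but the multiplication axiom needs $k\circ T(-)$ to send Kan extensions along $T(U_T\HPhi)$ to Kan extensions, which Definition \ref{cocomplete} does not literally supply.)

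The missing idea is that the theorem cannot be an instance of abstract creation along $U_T$ for an arbitrary strict map $\xi\colon B\to C$; it uses the special, levelwise free structure of the classifiers ($U_T\HT$ has objects $T(1)$ and morphisms $T^2(1)$, $U_T\HS$ has objects $Tc(1)$ and morphisms $TcS(1)$; both arise from bar resolutions). Because of this freeness one can construct the \emph{strict} $1$-cell $L\colon\HT\to A$ directly --- for instance by using cocompleteness to put an internal $T$-algebra structure on the family of colimits $\colim_{{\bf T}^{\tt S}_i}U_T\widetilde Y$ and invoking the representability $\Int_T(A)\cong\Alg_T(\Cat(\CC))(\HT,A)$ --- and only afterwards identify $U_T(L)$ as a pointwise left Kan extension of $U_T\widetilde Y$ along $U_T\HPhi$ and verify its universal property among strict morphisms. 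Your final ``in particular'' step is fine as it stands: evaluation at the terminal objects $1_i$ of the components of $\HT$ does collapse the comma categories to the ${\bf T}^{\tt S}_i$, exactly as in the proof of Theorem \ref{colim}.
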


\noindent A useful generalisation is the following relative version.  Let

  {\unitlength=1mm

\begin{picture}(40,17)(0,13)

\put(51,25){\makebox(0,0){\mbox{$R$}}}
\put(53,23){\vector(1,-1){5}}
\put(60.5,16){\makebox(0,0){\mbox{$T$}}}
\put(68,23){\vector(-1,-1){5}}
\put(70,25){\makebox(0,0){\mbox{$S$}}}
\put(54,25){\vector(1,0){14}}
\put(60,26){\shortstack{\mbox{$\scriptstyle  \Phi$}}}

\end{picture}}

\noindent be a commutative triangle of cartesian monad morphisms (cf. Definition \ref{Phic}). As above it induces a morphism of categorical $T$-algebras  $\HPhi: {\bf T}^{\tt R}\to {\bf T}^{\tt S}.$

\begin{theorem}[{\cite{W2}}]\label{RTS}
Let $A$ be a  categorical $T$-algebra which is cocomplete with respect to $\HPhi: {\bf T}^{\tt R}\to {\bf T}^{\tt S}$ and let $\,Y$ be an internal $R$-algebra in $A$. Then the pointwise left Kan extension of $\,\tilde{Y}$ along $\HPhi$ in $\,\Alg_T(\Cat(\CC))$  exists and its underlying functor is the pointwise left Kan extension of $U_T(\widetilde{Y})$ along $U_T(\HPhi)$.

In other words, the following diagram of adjoint functors commutes:

\begin{diagram}\Int_R(A)&\pile{\lTo^{ (\HPhi)^*}\\\rTo_{ (\HPhi)_!}}&\Int_S(A)\\\dTo^{U_T}&&\dTo_{U_T}\\[U_T({\bf T}^{\tt R}),U_T(A)]&\pile{\lTo^{U_T(\HPhi)^*}\\\rTo_{U_T(\HPhi)_!}}&[U_T({\bf T}^{\tt S}),U_T(A)]\end{diagram}\end{theorem}




\begin{remark}\label{units}Theorems \ref{polkan} and \ref{RTS} are valid more generally in a $2$-monadic setting, and a conceptual proof is based on an internalisation of Guitart's theory of exact squares \cite{Guitart}. A detailed exposition would lead us too far from our main purpose and we refer the reader to Weber \cite{W2}. Theorem 2.4.4 and Theorem 5.7.2 in \emph{loc. cit.} yield a comparatively short proof in the special case of cartesian monads. The two main ideas entering in the proof may be summarised as follows:

(a) If in the situation of Definition \ref{cocomplete} the additional assumption is made that the commuting square of internal functors\begin{diagram}[small]T(B)&\rTo^{T(\xi)}&T(C)\\\dTo^{\xi_B}&&\dTo_{\xi_C}\\B&\rTo_\xi&C\end{diagram}
is \emph{exact} (cf. \cite[Definition 2.4.1]{W2}) then the pair of pointwise left Kan extensions $$(T(G),G):(T(C),C)\to(T(A),A)$$ is a pseudomorphism of $T$-algebras and the forgetful functor $\Alg_T(\Cat(\CC))\to\Cat(\CC)$ preserves the pointwise left Kan extension of $F:B\to A$ along $\xi:B\to C$.

(b) Under the hypotheses of Theorem \ref{polkan} (and similarily for Theorem \ref{RTS}) the commuting square of internal functors\begin{diagram}[small]T(\HS)&\rTo^{T(\HPhi)}&T(\HT)\\\dTo^{\xi_{\HS}}&&\dTo_{\xi_{\HT}}\\\HS&\rTo_\HPhi&\HT\end{diagram}is indeed exact in $\Cat(\CC)$ by \cite[Proposition 4.3.4]{W2} since it is a \emph{pullback square} and the vertical functors are \emph{discrete fibrations}.

It is remarkable that the cartesianness of the unit of the monads $S$ and $T$ is not needed, neither for the existence of the internal algebra classifiers nor for the validity of Theorems \ref{polkan} and \ref{RTS}. We are however not aware of a single example of a pullback preserving monad, whose multiplication is cartesian and whose unit is not. So, we preferred to keep working with cartesian monads.\end{remark}

\section{Polynomial and tame polynomial monads}\label{polynomialmonad}

In this section we recall the definition of a polynomial monad in sets and of its associated coloured symmetric operad. We also introduce the new concept of a tame polynomial monad which will be crucial for us. For a nice and instructive account of polynomial functors we recommend Kock's article \cite{Kock} from which we shall borrow the idea of representing coloured bouquets as certain special polynomials. For a general treatment of polynomial monads in locally cartesian closed categories the reader may consult Gambino-Kock \cite{GK}. An extension to categories with pullbacks has been studied by Weber \cite{WPol}. Earlier appearances of polynomial functors can be found in the articles of Tambara \cite{T} and of Moerdijk-Palmgren \cite{MP}.

\subsection{Polynomial functors}For any set $I$ we denote $Set/I$ the comma category over $I.$ Objects of $\Set/I$ are mappings $\pi:X\to I,$ and morphisms of $\Set/I$ are commuting triangles over $I$. For each $i\in I$, the preimage $\pi^{-1}(i)$ will be called the \emph{fibre} of $\pi$ over $i.$ The mapping $\pi$ is completely determined by its fibres, and hence the category $\Set/I$ may be identified with the category of $I$-indexed families of sets $(X_i)_{i\in I}.$ This will be our favourite notation for the objects of $\Set/I.$

\begin{defin}A polynomial $P=(s,p,t)$ is a diagram in sets of the form \begin{diagram}J&\lTo^s&E&\rTo^p&B&\rTo^t&I\end{diagram}
A polynomial is of finite type if all fibres of the middle arrow $p$ are finite.\end{defin}

\noindent Each polynomial $P$ generates a functor between overcategories
$$\underline{P}:\Set/J \to \Set/I$$
which is defined as the composite functor
\begin{diagram}\Set/J&\rTo^{s^*}&\Set/E&\rTo^{p_*}&\Set/B&\rTo^{t_!}&\Set/I\end{diagram}
where $s^*$ is the pullback functor,
$$s^*(X)_e = X_{s(e)},$$
 $p_*$ is right adjoint to $p^*$,
 $$p_*(X)_b = \prod_{e\in p^{-1}(b)} X_e,$$
 and $t_!$ is left adjoint to $t^*$,
 $$t_!(X)_i = \coprod_{b\in t^{-1}(i)} X_b.$$
So, the functor $\underline{P}$ is given by the formula
\begin{equation}\label{PPP}  \underline{P}(X)_i = \coprod_{b\in t^{-1}(i)} \prod_{e\in p^{-1}(b)} X_{s(e)}, \end{equation}
which explains the name `polynomial' that is  a sum of products of formal variables.

Any functor $\underline{P}$ generated by a polynomial $P$ is called a {\it polynomial functor}. In particular, polynomial functors preserve \emph{connected limits}. This property \emph{characterises} polynomial functors from $\Set/J$ to $\Set/I .$ For this and other characterisations of polynomial functors we refer the reader to \cite{Kock,GK}. In particular, polynomial functors compose. The composite functor $\underline{P}\circ\underline{Q}$ is the polynomial functor $\underline{PQ}$ generated by an up to unique isomorphism uniquely determined polynomial $PQ$. Cartesian natural transformations of polynomial functors correspond bijectively to commutative  diagrams of the form\begin{diagram}J&\lTo^{s'}&E'\SEpbk&\rTo^{p'}&B'&\rTo^{t'}&I\\\dTo^{1_J}&&\dTo&&\dTo&&\dTo_{1_I}\\J&\lTo^s&E&\rTo^p&B&\rTo^t&I\end{diagram}
in which the horizontal lines are polynomials and the middle square is a pullback square. This defines a $2$-category  $\Poly$ with $0$-cells the overcategories $\Set/I$, with $1$-cells the polynomial functors, and with $2$-cells the cartesian natural transformations. We denote $\Poly(I)$ the category of polynomial endofunctors $\Set/I\to\Set/I$ and cartesian natural transformations. It is a monoidal category for composition of endofunctors.

\begin{defin}A polynomial monad is a monad in the $2$-category $\bf Poly.$\end{defin}
Hence a polynomial monad $T$ over $I$ is a monoid in $(\Poly(I),\circ)$. Each polynomial monad over $I$ is a cartesian monad on $\Set/I.$ The polynomial monad is finitary if and only if the generating polynomial is of finite type.

From now on we always assume that our polynomial monads are finitary.

\begin{remark}\label{operation}Let $T$ be the polynomial monad generated by the polynomial\begin{diagram}I&\lTo^s&E&\rTo^p&B&\rTo^t&I\end{diagram}

We shall outline how to define an \emph{$I$-coloured symmetric operad} $\Oo_T$ whose associated monad is $T$. Each element $b\in B$ comes equipped with a target $t(b)=i\in I,$ and a fibre $p^{-1}(b)\subset E.$ The elements $e\in p^{-1}(b)$ of the fibre have sources $s(e)\in I.$

A {\it $T$-operation} is a pair $(b,\sigma)$ consisting of an element $b\in B$ and a bijection $\sg:\{1,2,\dots,k\}\to p^{-1}(b)$. We shall refer to $\sg$ as a \emph{linear ordering of the fibre}. The $I$-coloured symmetric operad $\Oo_T$ associated to $T$ consists precisely of all $T$-operations. Such a $T$-operation $(b,\sg)$ belongs to $\Oo_T(i_1,\dots,i_k;i)$ if and only if $$t(b)=i\text{ and }(s(\sg(1)),s(\sg(2)),\ldots,s(\sg(k))=(i_1,\dots,i_k).$$ We shall write $s(b,\sg)=(i_1,\dots,i_k)$ and $t(b,\sg)=i$ and call them source and target of the $T$-operation $(b,\sg)$.

A {\it composite $T$-operation} consists of a list  $((b,\sg);(b_1,\sg_1),(b_2,\sg_2),\ldots,(b_k,\sg_k))$ such that $(t(b_1,\sg_1),\dots,t(b_k,\sg_k)) = s(b,\sg)$. The multiplication of the operad $\Oo_T$ is induced by the multiplication of the polynomial monad $T$ (cf. Remark \ref{bouquet} below) and associates to a composite $T$-operation a single $T$-operation $(b,\sg)((b_1,\sg_1),\dots,(b_k,\sg_k))$ with same target as $(b,\sg)$ and with source-list $(s(b_1,\sg_1),s(b_2,\sg_2),\ldots,s(b_k,\sg_k))$ linearly ordered in the obvious way. This multiplication satisfies the usual associativity, unitarity and equivariance constraints of an $I$-coloured symmetric operad.

We shall denote the composed $T$-operation by$$(b,\sg)((b_1,\sg_1),\dots,(b_k,\sg_k))=(b^\sg(b_1,\dots,b_k),\sg(\sg_1,\dots,\sg_k))$$ to remind that the linear ordering $\sg$ of the fibre of $b$ determines the way in which the individual $T$-operations $(b_i,\sg_i)$ are ``inserted'' into the leading $T$-operation $(b,\sg)$.\end{remark}

\begin{remark}\label{bouquet}
The process just described constitutes a functor $\Oo$ from the category of finitary polynomial monads over $I$ to the category of symmetric $I$-coloured operads in sets with freely acting symmetry groups. Kock \cite{Kock} and Szawiel-Zawadowski \cite{Zav} showed that this functor is an \emph{equivalence of categories}. For a deeper study of the relationship between polynomial monads and operads, the reader may consult the recent article \cite{WOper} of Weber.

Let us briefly recall the argument given by Kock in \cite{Kock}.

\begin{defin}[{\cite{Kock}}]\label{bouquetdef}An $I$-coloured bouquet of arity $k$ is a polynomial\begin{diagram}I&\lTo^s&\{1,2,\dots,k\}&\rTo^p&\{1\}&\rTo^t&I.\end{diagram}The latter will be represented by the $(k+1)$-tuple $(s(1),\dots,s(k);t(1))\in I^{k+1}$.

The full subcategory  of $\Poly(I)$ spanned by $I$-coloured bouquets will be denoted $\Bq(I)$. The associated nerve functor is denoted

$$\begin{array}{rccl}\Oo:\Poly(I)&\to &\Coll(I)&=\Set^{\Bq(I)^\op}\\P&\mapsto&\Oo_P&=\Hom_{\Poly(I)}(-,P)\end{array}$$\end{defin}

\noindent The subcategory $\Bq(I)$ is dense in $\Poly(I)$, i.e. the nerve functor is fully faithful. Moreover, $\Bq(I)$ is a groupoid: the symmetry group of a bouquet of arity $k$ may be identified with a certain subgroup of the symmetry group of $\{1,\dots,k\}$. The essential image of the nerve functor consists of those $I$-coloured collections in $\Coll(I)$ for which the automorphisms in $\Bq(I)$ act freely, cf. \cite[Theorem 2.4.10]{Kock}.

There is a substitutional $\circ$-product on $\Coll(I)$ for which the monoids are precisely the \emph{$I$-coloured symmetric operads} in sets, cf. the appendix of \cite{BergerMoerdijk1}, where the category of $I$-coloured bouquets $\Bq(I)$ is denoted $\mathbb{F}^\leq(I)$. It can be checked by hand that the nerve functor is a \emph{monoidal} functor $$\Oo:(\Poly(I),\circ)\to(\Coll(I),\circ)$$and therefore takes polynomial monads over $I$ to $I$-coloured symmetric operads. It follows from \cite[Theorem 2.2.12]{Kock} that this ``enhanced'' nerve functor induces an equivalence between the category of finitary polynomial monads over $I$ and the category of $I$-coloured symmetric operads with freely acting symmetry groups, cf. also \cite{Zav}.
\end{remark}

\subsection{Algebras over polynomial monads}The category of $T$-algebras $\Alg_T$ for a polynomial monad $T$ on $\Set/I$ coincides with the category of $\Oo_T$-algebras of the associated coloured symmetric operad $\Oo_T$. Explicitly a \emph{$T$-algebra} in sets is given by an $I$-indexed family of sets $(A_i)_{i\in I}$ together with structural maps$$m_{(b,\sg)}: A_{s(\sg(1))}\times\cdots\times A_{s(\sg(k))}\rightarrow A_{t(b)}$$for each operation $(b,\sg)$ of $T$. These structure maps satisfy the usual associativity, unitarity and equivariance conditions of an algebra for a coloured symmetric operad.

Given a cocomplete symmetric monoidal category  $(\Ee,\otimes,e)$, the strong symmetric monoidal functor $$\Set\to\Ee:X\mapsto\coprod_Xe$$ takes the coloured symmetric operad $\Oo_T$ to a coloured symmetric operad in $\Ee$ and thus defines a category $\Alg_T(\Ee)$ of $T$-algebras in $\Ee$. Explicitly, a $T$-algebra $A$ in $\Ee$ is an $I$-indexed family $(A_i)_{i\in I}$ of objects of $\Ee$ together with structural maps$$m_{(b,\sg)}: A_{s(\sg(1))}\otimes \cdots \otimes A_{s(\sg(k))} \rightarrow A_{t(b)}$$for each operation $(b,\sg)$ of $T$, subject to the same  associativity, unitarity and equivariance conditions as above.

\subsection{Internal $T$-algebras in cocomplete symmetric monoidal categories.}\label{cocompletesymm}

Since each polynomial monad $T$ is cartesian it generates a $2$-monad on the category $\Cat/{ I}$ where  $I$  is considered as a discrete category. The category of strict algebras for this $2$-monad is by definition the category $\Alg_T(\Cat)$ of categorical $T$-algebras. There is also a $2$-category of \emph{pseudo-$T$-algebras} associated to the $2$-monad $T$. A general strictification theorem of Power implies that any pseudo-$T$-algebra is equivalent to a strict $T$-algebra (cf. \cite{Power,Lack2,EHBat}). We tacitly apply this strictification whenever necessary.

A categorical $T$-algebra $(A_i)_{i\in I}$ is cocomplete with respect to morphisms between small categorical $T$-algebras in the sense of Definition \ref{cocomplete} if and only if each $A_i$ is a cocomplete category and the structure maps $m_{(b,\sg)}: A_{s(\sg(1))}\times\cdots\times A_{s(\sg(k))} \rightarrow A_{t(b)}$ preserve colimits in each variable.

Let $A$ be a categorical $T$-algebra. Then an internal $T$-algebra in $A$ can be explicitly given by a collection of objects $a_i\in A_i$ together with a morphism$$\mu_{(b,\sg)}:m_{(b,\sg)}(a_{s(\sigma(1))},\ldots, a_{s(\sigma(k))})\rightarrow a_{t(b)},$$for each operation $(b,\sigma),$ which satisfies obvious associativity, unitarity and equivariancy conditions. Here, $m_{(b,\sigma)}$ is the structure functor of $A.$

To any symmetric monoidal category $(\Ee,\otimes,e)$ we associate the categorical pseudo-$T$-algebra  $\Ee_T^{\bullet}$ with constant underlying collection $$\Ee_T^{\bullet}(i) = \Ee,\quad i\in { I}.$$Nullary $T$-operations act as unit $e: 1\rightarrow \Ee,$ unary $T$-operations act as identity and $T$-operations of arity $n\geq 2$ acts as iterated tensor product $\otimes^n.$ This pseudo-$T$-algebra $\Ee_T^{\bullet}$ is cocomplete with respect to morphisms between small pseudo-$T$-algebras if and only if $\Ee$ is cocomplete as a category, and moreover the tensor of $\Ee$ commutes with colimits in both variables, cf. \cite[Proposition 2.3.3]{W2}. Recall that the latter holds in particular if $\Ee$ is closed symmetric monoidal.

The assignment $\Ee\mapsto\Ee_T^{\bullet}$ is the right adjoint part of an adjunction between categorical $T$-algebras and symmetric monoidal categories. This adjunction is induced by a map $T\to P$ of polynomial monads in $\Cat$  where $P$ denotes the monad for symmetric monoidal categories. The existence of this adjunction provides a conceptual reason for the existence of an enrichment over $\Ee$ of the category of $T$-algebras. The interested reader may find more details in \cite{W,W2}. We will not pursue this point of view any further here. However, it will be essential for us to represent $T$-algebras in $\Ee$ as internal $T$-algebras in $\Ee_T^\bullet$, based on the following proposition.

\begin{pro}\label{internalinmonoidal}The category of $\,T$-algebras in $\Ee$ is isomorphic to the category of internal $T$-algebras in $\Ee_T^{\bullet}$.\end{pro}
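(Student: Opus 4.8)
The plan is to prove the isomorphism by unwinding both definitions and observing that they carry literally the same data subject to literally the same axioms; the content is essentially bookkeeping, and the only genuine work lies in matching the coherence constraints. First I would specialise the explicit description of an internal $T$-algebra, recalled just above the statement, to the particular categorical $T$-algebra $A=\Ee_T^\bullet$. Since $\Ee_T^\bullet(i)=\Ee$ for every $i\in I$, an internal $T$-algebra consists of a choice of object $a_i\in\Ee$ for each colour $i$, that is, exactly an $I$-indexed family $(a_i)_{i\in I}$ of objects of $\Ee$, which is precisely the underlying collection of a $T$-algebra in $\Ee$.

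Next I would identify the structure morphisms. By the definition of $\Ee_T^\bullet$, the structure functor $m_{(b,\sg)}$ attached to an operation $(b,\sg)$ of arity $k$ acts as the iterated tensor $\otimes^k$ when $k\ge 2$, as the identity when $k=1$, and as the unit $e$ when $k=0$. Hence the internal structure morphism $\mu_{(b,\sg)}:m_{(b,\sg)}(a_{s(\sg(1))},\dots,a_{s(\sg(k))})\to a_{t(b)}$ is nothing but a map $a_{s(\sg(1))}\otimes\cdots\otimes a_{s(\sg(k))}\to a_{t(b)}$ in $\Ee$, which is exactly the datum $m_{(b,\sg)}$ of a $T$-algebra in $\Ee$. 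This gives a bijection between the structure data of the two kinds of object on the nose.

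It then remains to check that the two packages of axioms agree. Here I would use the explicit description of the monad multiplication given in Remark \ref{operation}: on operations it is exactly operad substitution, sending $((b,\sg);(b_1,\sg_1),\dots,(b_k,\sg_k))$ to $(b(b_1,\dots,b_k),\sg(\sg_1,\dots,\sg_k))$. Associativity and unitarity of an internal $T$-algebra are imposed relative to this multiplication and to the structure functors of $\Ee_T^\bullet$; since those functors are iterated tensor products, the resulting conditions are verbatim the associativity and unitarity axioms for an algebra over the coloured symmetric operad $\Oo_T$. The equivariance condition requires slightly more care: a change of the linear ordering $\sg$ for a fixed $b$ is a symmetry of the corresponding bouquet, and I would verify that the pseudo-$T$-algebra coherence isomorphisms of $\Ee_T^\bullet$ (ultimately the symmetry constraints of $\otimes$) translate this into exactly the equivariance axiom relating $m_{(b,\sg)}$ and $m_{(b,\sg')}$. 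Matching these coherence constraints, rather than the trivial matching of underlying data, is the one step demanding genuine attention, and is where the role of $\Ee_T^\bullet$ as a (strictified) pseudo-$T$-algebra enters.

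Finally I would record that a morphism of internal $T$-algebras is a family of maps $a_i\to a_i'$ commuting with the $\mu_{(b,\sg)}$, which under the above identification is precisely a morphism of $T$-algebras in $\Ee$. The two resulting functors between the categories are mutually inverse by construction, yielding the asserted isomorphism of categories.
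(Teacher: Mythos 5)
Your proof is correct and coincides with the argument the paper intends: the explicit description of internal $T$-algebras and the definition of $\Ee_T^\bullet$ recalled immediately before the proposition are exactly the two sides you match up term by term, with the monad multiplication of Remark \ref{operation} accounting for associativity/unitarity and the symmetry constraints (via strictification of the pseudo-$T$-algebra) accounting for equivariance. Note that the paper itself gives no written proof — it cites the result from \cite{BatWeb}, which is in preparation — so your unwinding supplies precisely what the paper leaves implicit.
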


\begin{proof}An internal $T$-algebra in $\Ee_T^{\bullet}$ is by definition a lax morphism of categorical $T$-algebras $1\to \Ee_T^{\bullet}$. Such a lax morphism is given by an $I$-collection $(X_i)_{i\in I}$ of objects of $\,\Ee$ together with coherence data fulfilling coherence conditions. The coherence data consists of a $2$-cell from $T(1)\to T(\Ee_T^{\bullet})\to\Ee_T^\bullet$ to $T(1)\to 1\to\Ee_T^\bullet$ which amounts precisely to $\Sigma_k$-equivariant structure maps in $\,\Ee$ $$X_{i_1}\otimes\cdots\otimes X_{i_k}\to X_i$$one for each orbit of $T$-operations $b\in T(1)$ of type $(i_1,\dots,i_k;i)$, cf. Remark \ref{operation} and Section \ref{graphical}. The unit and associativity constraints of this $2$-cell (cf. Section \ref{strictlax}) translate into the familiar unit and associativity constraints of an operad action.\end{proof}

\subsection{Representing $T$-algebras as internal functors $\HT\to\Ee_T^\bullet$}--\vspace{1ex}

According to Proposition \ref{internalinmonoidal} and Theorem \ref{bar1}, $T$-algebras in $\Ee$ can be represented as internal functors $\HT\to\Ee_T^\bullet$. By Theorem \ref{bar1}, the objects of the classifier $\HT$ are the elements of the free $T$-algebra $T(1)\cong\Oo_T(1)$. These elements correspond bijectively to orbits of $T$-operations (cf. Remark \ref{operation}), i.e. to elements of $B$.

A morphism from $b'$ to $b$ is given by an element of $T^2(1)$ with correct source and target. Since $T^2(1)\cong (\Oo_T\circ\Oo_T)(1)$, such an element corresponds to an orbit of composite $T$-operations $((b,\sigma);(b_1,\sg_1),\dots,(b_k,\sg_k))$ fulfilling $b'=b^\sg(b_1,\dots,b_k)$ in the notation of Remark \ref{operation}. It is important to observe that each such orbit can be represented by a $(k+1)$-tuple $(b;b_1,\dots,b_k)\in B^{k+1}$ together with a colour-preserving bijection between the source-list of $b$ and the list of targets of the individual $b_i$. This representation of the elements of $T^2(1)$ is unique up to a reordering of the $b_i$ going along with the appropriate change of bijection.

The unit of the polynomial monad $T$ defines an $I$-indexed collection $(1_i)_{i\in I}$ of special elements $1_i\in B$. The latter have singleton fibre with source and target $i\in I$. They induce two families of morphisms in $\HT$, namely the identities\begin{diagram}b&\rTo^{(b;1_{i_1},\dots,1_{i_k})}&b\end{diagram}and the  morphisms\begin{diagram}b&\rTo^{(1_i;b)}&1_i.\end{diagram}

Now, let $(X_i)_{i\in I}$ be the $I$-collection underlying the $T$-algebra $X$ in $\Ee$. Then the representing functor of categorical $T$-algebras
$$\tilde{X}:\HT\to\Ee_T^\bullet$$
is constructed as follows. We have $\tilde{X}(1_i)=X_i$ and $\tilde{X}(b)=X_{i_1}\otimes\cdots\otimes X_{i_k}$ where $(i_1,\dots,i_k)$ is the source-list of the fibre $p^{-1}(b)$  for a fixed linear ordering $\sg$. Then the map $\tilde{X}(b\to 1_i)$ in $\Ee^\bullet_T(i)$ represents the $T$-action $m_{(b,\sg)}:X_{i_1}\otimes\cdots\otimes X_{i_k}\to X_i$ on $X$, and the functoriality of $\tilde{X}:\HT\to\Ee_T^\bullet$ amounts precisely to the equivariance, associativity and unitarity constraints of this $T$-action.

\begin{remark}\label{convention}The $T$-algebra structure on the internal algebra classifier $\HT$ splits the latter into components ${\bf T}^{\tt T}_i$ spanned by those objects $b\in B$ with target $t(b)=i$. The restrictions of the representing functor $\tilde{X}:\HT\to\Ee_T^\bullet$ to these components will be denoted $\tilde{X}:{\bf T}^{\tt T}_i\to\Ee$. This amounts to removing the forgetful functor $U_T:\Alg_T(\Cat)\to \Cat$ from our notation.

In other words, we will identify the categorical $T$-algebra $\HT$ with its underlying $I$-collection of categories $({\bf T}^{\tt T}_i)_{i\in I}$, and will make no notational distinction between the representing functor $\tilde{X}:\HT\to\Ee^{\bf \bullet}_T$ and its components $\tilde{X}:{\bf T}^{\tt T}_i\to\Ee$. We hope this will cause no confusion.

The reader should observe that the object $1_i$ is terminal in its component ${\bf T}^{\tt T}_i$, so that the component $X_i$ of the $I$-collection $(X_i)_{i\in I}$ can be recovered as$$X_i=\tilde{X}(1_i)=\colim_{b\in{\bf T}^{\tt T}_i}\tilde{X}(b).$$\end{remark}

\begin{example}At this point, we should mention two examples which have been decisive for the elaboration of the whole theory. If $T$ is the free monoid monad (see Section \ref{SSM}) then $\HT$ is isomorphic to the augmented simplex category $\Delta_+$, cf. B\'enabou \cite{Benabou}, and we recover the fact, mentioned in Example \ref{Benabou}, that monoids in $\Ee$ correspond to strict monoidal functors $\Delta_+\to\Ee.$ If $T$ is the free symmetric operad monad (see Section \ref{symmetricoperads}) then $\HT$ is isomorphic to a category of labelled rooted planar trees ${\bf RTr}^{\tt RTr}$ which goes back to Ginzburg-Kapranov \cite[Section 1.2]{GinKap}, and which again has the characteristic property that symmetric operads in $\Ee$ correspond to certain functors ${\bf RTr}^{\tt RTr}\to\Ee$.
\end{example}

\subsection{Graphical formalism}\label{graphical}It is convenient to use some sort of \emph{graphical formalism} (cf. \cite{KJBM,MP}) to visualise objects and morphisms of $\HT.$  An object of $\HT$ (i.e. an orbit of $T$-operations) will be represented as a \emph{non-planar corolla}

{\unitlength=1mm

\begin{picture}(20,20)(-40,13)

\put(14,22.5){\circle{5}}
\put(2.5,28.5){\makebox(0,0){\mbox{$\scriptscriptstyle i_1$}}}
\put(26,28.5){\makebox(0,0){\mbox{$\scriptscriptstyle i_5$}}}
\put(7,30){\makebox(0,0){\mbox{$\scriptscriptstyle i_2$}}}
\put(21,30){\makebox(0,0){\mbox{$\scriptscriptstyle i_4$}}}
\put(14,31){\makebox(0,0){\mbox{$\scriptscriptstyle i_3$}}}
\put(16.5,23.5){\line(2,1){8}}
\put(11.5,23.5){\line(-2,1){8}}
\put(12.1,24.4){\line(-1,1){4}}
\put(14,25){\line(0,1){4}}
\put(15.9,24.4){\line(1,1){4}}
\put(14,20){\line(0,-1){6}}
\put(14,13){\makebox(0,0){\mbox{$\scriptscriptstyle i $}}}
\put(14,22.5){\makebox(0,0){\mbox{$\scriptstyle b $}}}

\end{picture}}

\noindent where $b\in B$, the incoming edges are decorated by the source-list of the fibre $s(p^{-1}(b))=(i_1,i_2,\dots,i_5)$ and the outgoing edge is decorated by the target $t(b)=i$. This orbit of $T$-operations $b$ is said to be of type $(i_1,\dots,i_5;i)$, cf. Definition \ref{bouquetdef}.

A morphism of $\HT$ (i.e. an orbit of composite $T$-operations) will be represented by a \emph{non-planar  bicorolla} (i.e. corolla of corollas) as for instance

{\unitlength=1mm

\begin{picture}(60,30)(-30,-4)

\begin{picture}(10,10)(2.5,8.5)

\put(14,22.5){\circle{5}}
\put(14,25){\makebox(0,0){\mbox{$ $}}}
\put(15.9,20.8){\line(1,-1){5}}
\put(12.1,24.4){\line(-1,1){5}}
\put(14,25){\line(0,1){6}}
\put(15.9,24.4){\line(1,1){5}}
\put(14,32){\makebox(0,0){\mbox{$\scriptscriptstyle  i_{12}$}}}
\put(21,30.5){\makebox(0,0){\mbox{$\scriptscriptstyle  i_{13}$}}}
\put(7,30.5){\makebox(0,0){\mbox{$\scriptscriptstyle  i_{11}$}}}
\put(14,22.5){\makebox(0,0){\mbox{$\scriptstyle b_1$}}}

\end{picture}

\begin{picture}(10,10)(5,17)

\put(14,22.5){\circle{5}}
\put(14,25){\makebox(0,0){\mbox{$ $}}}
\put(14,20){\line(0,-1){6}}
\put(15.9,24.4){\line(1,1){5}}
\put(14,22.5){\makebox(0,0){\mbox{$\scriptstyle b $}}}
\put(14,13){\makebox(0,0){\mbox{$\scriptscriptstyle i $}}}
\put(17.5,28){\makebox(0,0){\mbox{$\scriptscriptstyle i_2 $}}}
\put(10.5,28){\makebox(0,0){\mbox{$\scriptscriptstyle i_1 $}}}

\end{picture}

\begin{picture}(10,10)(7.5,8.5)

\put(14,22.5){\circle{5}}
\put(14,25){\makebox(0,0){\mbox{$ $}}}
\put(13,25){\line(-1,2){3}}
\put(15.9,24.4){\line(1,1){6}}
\put(22,32){\makebox(0,0){\mbox{$\scriptscriptstyle  i_{22}$}}}
\put(10,32){\makebox(0,0){\mbox{$\scriptscriptstyle  i_{21}$}}}
\put(14,22.5){\makebox(0,0){\mbox{$\scriptstyle b_2 $}}}

\end{picture}

\end{picture}}

\noindent where $b$ is of type $(i_1,i_2;i)$ and $b_1,b_2$ are respectively of type $(i_{11},i_{12},i_{13};i_1)$ and $(i_{21},i_{22};i_2)$. This bicorolla represents a morphism $b(b_1,b_2)\to b$ in $\HT$ with source $b(b_1,b_2)$ obtained by ``contracting'' inner edges, i.e. by inserting $b_1$ and $b_2$ into $b$ according to the multiplication of the polynomial monad $T$ and the source/target matching displayed in the bicorolla. The notation $b(b_1,b_2)\to b$ for morphisms in $\HT$ is slightly abusive, insofar as it does not explicitly mention the bijection between the list of targets of the $b_i$ and the source-list of $b$. The graphical representation incorporates this bijection and we tacitly assume that such a bijection is given.

\subsection{Cartesian morphisms of polynomial monads.}\label{cartesian} We need a more general notion of map between polynomials which includes the possibility of base-change. Let $S$ be a polynomial monad generated by  a polynomial
\begin{diagram}\label{polynomS} J&\lTo^{s'}&E'&\rTo^{p'}&B'&\rTo^{t'}&J.\end{diagram}
A cartesian morphism $\Phi=(\delta,\psi,\phi)$ from $S$ to $T$ is a commutative diagram
\begin{diagram}J&\lTo^{s'}&E'\SEpbk&\rTo^{p'}&B'&\rTo^{t'}&J\\\dTo^\delta&&\dTo^\psi&&\dTo^\phi&&\dTo^\delta\\I&\lTo^s&E&\rTo^p&B&\rTo^t&I\end{diagram}
in sets in which the middle square is a pullback, the horizontal lines generate the polynomial monads $S$ and $T$ and
 the diagram

  {\unitlength=1mm

\begin{picture}(40,30)(-19,0)

\put(48,15){\shortstack{\mbox{$\scriptstyle \phi$}}}
\put(37,8){\shortstack{\mbox{$\scriptstyle p$}}}
\put(20,15){\shortstack{\mbox{$\scriptstyle \psi$}}}
\put(6,16){\shortstack{\mbox{$\scriptstyle \delta\circ s'$}}}
\put(63,16){\shortstack{\mbox{$\scriptstyle \delta\circ t'$}}}
\put(23,25){\makebox(0,0){\mbox{${E'}$}}}
\put(20,23){\vector(-1,-1){14}}
\put(54,23){\vector(1,-1){14}}
\put(3,7){\makebox(0,0){\mbox{${I}$}}}
\put(70,7){\makebox(0,0){\mbox{${I}$}}}
\put(20,7){\vector(-1,0){14}}
\put(54,7){\vector(1,0){14}}
\put(23,21){\vector(0,-1){10}}
\put(26,25){\vector(1,0){22}}
\put(37,26){\shortstack{\mbox{$\scriptstyle p'$}}}
\put(51,25){\makebox(0,0){\mbox{$B'$}}}
\put(51,21){\vector(0,-1){10}}
\put(23,7){\makebox(0,0){\mbox{$E$}}}
\put(26,7){\vector(1,0){22}}
\put(51,7){\makebox(0,0){\mbox{$B$}}}

\end{picture}}

\noindent  generates a morphism of polynomial monads. The mapping $\delta: J\to I$ induces a cartesian adjunction $c\dashv d$ where $d = \delta^*:\Set/I \to\Set /J$ is the pullback functor and $c = \delta_!:\Set/J\to\Set/I$ is its left adjoint, so that for an object $Y$ of $\Set/J :$
\begin{equation}\label{ccc} c(Y)_i = \coprod_{j\in \delta^{-1}(i)} Y_j.\end{equation}

Then the equivalent conditions of Lemma \ref{polsq} are fulfilled and $\Phi$ generates a cartesian monad morphism $\Phi:S\Rightarrow dTc$ in the sense of Definition \ref{Phic}.

For a symmetric monoidal category $\Ee$ we get a restriction functor$$\delta_\Ee^{\Phi}: Alg_T(\Ee)\rightarrow Alg_S(\Ee).$$
Observe that $d'(\Ee_T^{\bullet}) = \Ee_S^{\bullet}$ so that an internal $S$-algebra in $\Ee_T^{\bullet}$ is the same as an ordinary $S$-algebra in $\Ee.$ Therefore, the restriction functor $\delta_\Ee^{\Phi}$ is induced by a functor of internal algebra classifiers $\HPhi:\HS\to\HT$, and its left adjoint $$\gamma_\Ee^\Phi:\Alg_S(\Ee)\to\Alg_T(\Ee)$$can be calculated as a left Kan extension along the same functor. To carry out such a program we need a description of the internal $S$-algebra classifier $\HS$ and of the canonical functor $\HPhi:\HS\to\HT$ in terms of the given map $\Phi$ between the generating polynomials.

\subsection{The category $\HS$}\label{TS}\vspace{1ex}

By Theorem \ref{TST}, the objects of $\HS$ are the elements of $Tc(1)$. According to (\ref{PPP}) and (\ref{ccc}), these elements can be understood as $J$-coloured $T$-operations. In order to distinguish them from the objects of $\HT$ we denote them by bold letters. A $J$-coloured $T$-operation $\bc$ is given by an element  $b\in B$ together with a colour $j\in \delta^{-1}(s(e))$ for each $e\in p^{-1}(b)$. The internal functor $\HPhi:\HS\rightarrow\HT$ replaces the $J$-colouring with an $I$-colouring by applying $\delta$. So, an object $\bc$ of $\HS$ is determined by its image $\HPhi(\bc)$ together with a compatible $J$-colouring.

In terms of our graphical formalism, such an object is represented by a non-planar corolla

 {\unitlength=1mm
  \begin{picture}(20,20)(-40,13)
\put(14,22.5){\circle{5}}

\put(2.5,28.5){\makebox(0,0){\mbox{$\scriptscriptstyle j_1$}}}
\put(26,28.5){\makebox(0,0){\mbox{$\scriptscriptstyle j_5$}}}

\put(7,30){\makebox(0,0){\mbox{$\scriptscriptstyle j_2$}}}
\put(21,30){\makebox(0,0){\mbox{$\scriptscriptstyle j_4$}}}
\put(14,31){\makebox(0,0){\mbox{$\scriptscriptstyle j_3$}}}
\put(16.5,23.5){\line(2,1){8}}
\put(11.5,23.5){\line(-2,1){8}}
\put(12.1,24.4){\line(-1,1){4}}
\put(14,25){\line(0,1){4}}
\put(15.9,24.4){\line(1,1){4}}
\put(14,20){\line(0,-1){6}}

\put(14,13){\makebox(0,0){\mbox{$\scriptscriptstyle i $}}}
\put(14,22.5){\makebox(0,0){\mbox{$\scriptstyle b $}}}

\end{picture}}

\noindent with $J$-decorated incoming edges and $I$-decorated outgoing edge. The component $\theta_1$ of the right $S$-action $\theta:TcS\Rightarrow Tc$ is defined as follows, where we freely use our slightly abusive notation for composite operations, cf. Section \ref{graphical}. Note first that the elements of $TcS(1)$ can be understood as $J$-coloured $T$-operations $\bc$ together with a compatible family of $S$-operations $d_1,\dots,d_k\in B'$ in the sense that $(t'(d_1),\dots,t'(d_k))$ coincides with the $J$-colouring of $\bc$. Then $$\HPhi(\theta(\bc; d_1,\ldots, d_k)) = \HPhi(\bc)( \phi(d_1),\ldots,\phi(d_k)),$$ and the $J$-colouring of $\theta(\bc; d_1,\ldots, d_k)$ is inherited from (the sources of) the fibres of $d_1,\dots,d_k$ in an obvious way.

A morphism $\bc'\rightarrow \bc$ in $\HS$  is given by an element $(\bc;d_1,\ldots,d_k)$ of $TcS(1)$ such that $\bc'=\theta(\bc; d_1,\ldots, d_k).$
The effect of $\HPhi$ on a  morphism $\bc'\rightarrow \bc$ is obvious: if $\bc' =  \theta(\bc; d_1,\ldots, d_k)$ then the identity
$\HPhi(\bc') = \HPhi(\bc)(\phi(d_1),\ldots,\phi(d_k))$ represents a morphism $\HPhi(\bc')\to\HPhi(\bc)$ in $\HT$.

\subsection{Representing $S$-algebras as internal functors $\HS\to\Ee_T^\bullet$}\label{colimsec}--\vspace{1ex}

Let $X$ be an $S$-algebra in $\Ee$. By the universal property of $\HS$, the corresponding internal $S$-algebra in $\Ee_T^\bullet$ is represented by a morphism of $T$-algebras $\tilde{X}:\HS\to \Ee_T^{\bullet}$, which can be can be described as follows: Let $\bc$ be an object of $\HS$. Then
\begin{equation}\label{tilde}\tilde{X}(\bc)= X_{j_1}\otimes\cdots\otimes X_{j_k}\end{equation}where $(j_1,\dots,j_k)$ is the $J$-colouring of $\bc$. The morphism $\tilde{X}(\bc'\to\bc)$ in $\Ee$ is defined by the action of the $S$-operations $d_1,\dots,d_k$ on $X,$ where $\bc'=\theta(\bc; d_1,\ldots, d_k).$

The $T$-algebra structure on the internal algebra classifier $\HS$ splits the latter into components ${\bf T}^{\tt S}_i$ spanned by those objects $\bc$ with target $t(\bc)=i$. The restrictions of the representing functor $\tilde{X}:\HS\to\Ee_T^\bullet$ to these components will be denoted $\tilde{X}:{\bf T}^{\tt S}_i\to\Ee$. This amounts to removing the forgetful functor $U_T:\Alg_T(\Cat)\to \Cat$ from our notation.

In other words, we will identify the categorical $T$-algebra $\HS$ with its underlying $I$-collection of categories $({\bf T}^{\tt S}_i)_{i\in I}$, and will make no notational distinction between the representing functor $\tilde{X}:\HS\to\Ee^{\bf \bullet}_T$ and its components $\tilde{X}:{\bf T}^{\tt S}_i\to\Ee$. We hope this will cause no confusion.

\begin{theorem}\label{colim}Let $\Ee$ be a cocomplete, closed symmetric monoidal category and let $\Phi:S\Rightarrow dTc$ be a cartesian monad morphism between polynomial monads. Then restriction $\delta^\Phi_\Ee:\Alg_T(\Ee)\to\Alg_S(\Ee)$ has a left adjoint $\gamma_\Ee^\Phi:\Alg_S(\Ee)\to\Alg_T(\Ee).$

For any $S$-algebra $X$ in $\Ee$, the underlying $I$-collection of the $T$-algebra $\gamma_\Ee^{\Phi}(X)$  can be calculated as the following colimit:
\begin{equation}\label{polcolimit}\gamma_\Ee^{\Phi}(X)_i=(\HPhi)_!(\tilde{X})(1_i)=\colim_{\bc\in{\bf T}^{\tt S}_i}\tilde{X}(\bc)\quad(i\in I).\end{equation}\end{theorem}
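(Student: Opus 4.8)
The plan is to realise both functors in the adjunction through internal algebra classifiers and then to feed everything into the pointwise left Kan extension theorem of \cite{BatWeb} (Theorem \ref{polkan}). First I would pass from ordinary to internal algebras: by the proposition of Section \ref{cocompletesymm} the category $\Alg_T(\Ee)$ is isomorphic to $\Int_T(\Ee_T^\bullet)$, and since $d'(\Ee_T^\bullet)=\Ee_S^\bullet$ the category $\Alg_S(\Ee)$ is isomorphic to $\Int_S(\Ee_T^\bullet)$. Under these two isomorphisms the restriction functor $\delta^\Phi_\Ee$ becomes the functor $\delta^\Phi_A:\Int_T(A)\to\Int_S(A)$ for $A=\Ee_T^\bullet$, which is precisely restriction (precomposition) along $\HPhi:\HS\to\HT$ (see the discussion preceding Proposition \ref{Kanextension}). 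Thus producing a left adjoint $\gamma^\Phi_\Ee$ amounts to producing a left Kan extension along $\HPhi$ inside $\Alg_T(\Cat(\CC))$.

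The decisive input is that $A=\Ee_T^\bullet$ is a \emph{cocomplete} categorical $T$-algebra. This is exactly where the standing hypothesis on $\Ee$ is consumed: by Section \ref{cocompletesymm}, $\Ee_T^\bullet$ is cocomplete precisely when $\Ee$ is cocomplete as a symmetric monoidal category, i.e. $\Ee$ is cocomplete and its tensor preserves colimits in each variable. Granting this, Theorem \ref{polkan} applies with $A=\Ee_T^\bullet$ and $Y=X$ (viewed as an internal $S$-algebra in $\Ee_T^\bullet$): the pointwise left Kan extension $(\HPhi)_!(\tilde{X})$ exists in $\Alg_T(\Cat(\CC))$. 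Since a left Kan extension along $\HPhi$ is, when it exists, left adjoint to restriction along $\HPhi$, this furnishes the desired left adjoint $\gamma^\Phi_\Ee=(\HPhi)_!$ to $\delta^\Phi_\Ee$ (cf. Proposition \ref{Kanextension}), and moreover Theorem \ref{polkan} guarantees that its underlying functor is computed \emph{pointwise}, componentwise in $\Ee$.

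It remains to extract the stated formula one colour at a time. By Remark \ref{convention} the $i$-th component of the $T$-algebra represented by a functor $\HT\to\Ee_T^\bullet$ is its value at the terminal object $1_i$ of the component ${\bf T}^{\tt T}_i$; hence $\gamma^\Phi_\Ee(X)_i=(\HPhi)_!(\tilde{X})(1_i)$. Now I would evaluate the pointwise left Kan extension at $1_i$ by the usual comma-category colimit, $(\HPhi)_!(\tilde{X})(1_i)=\colim_{(\bc,\,\HPhi(\bc)\to 1_i)}\tilde{X}(\bc)$, and identify the indexing comma category $\HPhi/1_i$ with ${\bf T}^{\tt S}_i$. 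This identification is the genuinely combinatorial step, and it uses that $1_i$ is terminal in ${\bf T}^{\tt T}_i$: an object $\bc$ of $\HS$ admits a (necessarily unique) morphism $\HPhi(\bc)\to 1_i$ in $\HT$ exactly when $\HPhi(\bc)$ lies in the component ${\bf T}^{\tt T}_i$, i.e. exactly when $\bc$ lies in ${\bf T}^{\tt S}_i$; terminality also forces every triangle to commute, so the forgetful functor $\HPhi/1_i\to{\bf T}^{\tt S}_i$ is an isomorphism of categories. Substituting yields $\gamma^\Phi_\Ee(X)_i=\colim_{\bc\in{\bf T}^{\tt S}_i}\tilde{X}(\bc)$, as claimed.

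I expect the main obstacle to be not any single calculation but the careful translation of the abstract pointwise left Kan extension of Theorem \ref{polkan}, which a priori lives in $\Cat(\CC)$, into an honest colimit in $\Ee$ after evaluation at $1_i$. Concretely, this hinges on the compatibility of two facts: that $\Ee_T^\bullet$ decomposes into constant components each equal to $\Ee$, so that the comma-category colimit at $1_i$ is computed in $\Ee$; and that this comma category reduces, via terminality of $1_i$, to the full component ${\bf T}^{\tt S}_i$. Once these bookkeeping identifications are in place, the theorem is a direct assembly of Theorem \ref{polkan}, Proposition \ref{Kanextension} and Remark \ref{convention}.
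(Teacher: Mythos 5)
Your proposal is correct and takes essentially the same approach as the paper: both represent algebras as functors out of the classifiers, apply Theorem \ref{polkan} to the cocomplete categorical $T$-algebra $\Ee_T^\bullet$ (with $d'(\Ee_T^\bullet)=\Ee_S^\bullet$ identifying $S$-algebras in $\Ee$ with internal $S$-algebras), and exploit the terminality of $1_i$ in ${\bf T}^{\tt T}_i$ via Remark \ref{convention}. The only cosmetic divergence is the final bookkeeping: you evaluate the pointwise Kan extension at $1_i$ through the comma-category identification $\HPhi/1_i\cong{\bf T}^{\tt S}_i$, whereas the paper factors the left Kan extension along ${\bf T}^{\tt S}_i\to 1$ as extension along $(\HPhi)_i$ followed by extension along ${\bf T}^{\tt T}_i\to 1$ --- two standard, equivalent manipulations resting on the same terminality.
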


\begin{proof}The first identification follows from Theorem \ref{polkan}, cf. Remark \ref{convention} for our notations and Section \ref{cocompletesymm} for applicability. The second identification just expresses that left Kan extension along ${\bf T}^{\tt S}_i\to 1$ (which calculates the colimit on the right) can be achieved in two steps: first left Kan extension along $(\HPhi)_i:{\bf T}^{\tt S}_i\to{\bf T}^{\tt T}_i$ then left Kan extension along ${\bf T}^{\tt T}_i\to 1$ (which calculates evaluation at $1_i$).\end{proof}

\subsection{Tame polynomial monads}\label{tame}Let $T$ be a finitary monad on a cocomplete category $\CC$. We denote by $T+1$ the finitary monad on $\CC\times\CC$ given by \begin{align*}(T+1)(X,Y)&=(TX,Y)\\(T+1)(\phi,\psi)&=(T\phi,\psi)\end{align*} with evident multiplication and unit. We get the following square of adjunctions\begin{gather}\label{adjoint}\begin{diagram}\Alg_T\times\CC&\pile{\lTo^{(id_{\Alg_T}\times U_T)\Delta_{\Alg_T}}\\\rTo_{-\vee F_T(-)}}&\Alg_T\\\dTo^{U_T\times id_\CC}\uTo_{F_T\times id_\CC}&&\dTo^{U_T}\uTo_{F_T}\\\CC\times\CC&\pile{\lTo^{\Delta_\CC}\\\rTo_{-\sqcup-}}&\CC\end{diagram}\end{gather}in which the right adjoints commute by definition.

If $\CC$ has pullbacks which commute with coproducts, and $T$ is a cartesian monad, then it is straightforward to verify that $T+1$ is a cartesian monad as well, and that the adjoint square (\ref{adjoint}) induces a cartesian morphism from $T+1$ to $T$ in the sense of Definition \ref{Phic}.

If $T$ is a polynomial monad on $\Set/I$ generated by the polynomial\begin{diagram}I&\lTo^s&E&\rTo^p&B&\rTo^t&I\end{diagram}then $T+1$ is a polynomial monad on $\Set/I\times\Set/I=\Set/(I\sqcup I)$ generated by \begin{diagram}I\sqcup I&\lTo^{s\sqcup 1_I}&E\sqcup I&\rTo^{p\sqcup 1_I}&B\sqcup I&\rTo^{t\sqcup 1_I}&I\sqcup I\end{diagram}

More precisely, the adjoint square (\ref{adjoint}) for a polynomial monad $T$ on $\Set/I$ is induced by the following cartesian morphism  of polynomials (cf. \ref{cartesian})\begin{diagram}I\sqcup I&\lTo^{s\sqcup 1_I}&E\sqcup I\SEpbk&\rTo^{p\sqcup 1_I}&B\sqcup I&\rTo^{t\sqcup 1_I}&I\sqcup I\\\dTo^{\nabla_I}&&\dTo^\psi&&\dTo^\phi&&\dTo^{\nabla_I}\\I&\lTo^s&E&\rTo^p&B&\rTo^t&I\end{diagram}in which $\nabla_I$ is the identity on each copy of $I$, and $\phi$ (resp. $\psi$) is the identity on $B$ (resp. $E$) and the unit  $\eta$ of $T$ on $I$.

\begin{defin}A semi-free coproduct of $T$-algebras is a coproduct $X\vee F_T(K)$ of a $T$-algebra $X$ and a free $T$-algebra $F_T(K)$.

A polynomial monad $T$ is said to be \emph{tame} if the internal classifier for semi-free coproducts $\cop$ is a coproduct of categories with terminal object.\end{defin}

\subsection{The category $\cop$ explicitly}\label{semifreeclassifier}
The generating polynomial of $T+1$\begin{diagram}I\sqcup I&\lTo^{s\sqcup 1_I}&E\sqcup I&\rTo^{p\sqcup 1_I}&B\sqcup I&\rTo^{t\sqcup 1_I}&I\sqcup I\end{diagram} can be described in terms of our graphical formalism. Its set of operations $B\sqcup I$ consists of corollas of two types

 {\unitlength=1mm
  \begin{picture}(20,20)(-25,13)
\put(14,22.5){\circle{5}}
\put(2.5,28.5){\makebox(0,0){\mbox{$\scriptscriptstyle X$}}}
\put(26,28.5){\makebox(0,0){\mbox{$\scriptscriptstyle X$}}}
\put(7,30){\makebox(0,0){\mbox{$\scriptscriptstyle X$}}}
\put(21,30){\makebox(0,0){\mbox{$\scriptscriptstyle X$}}}
\put(14,31){\makebox(0,0){\mbox{$\scriptscriptstyle X$}}}
\put(16.5,23.5){\line(2,1){8}}
\put(11.5,23.5){\line(-2,1){8}}
\put(12.1,24.4){\line(-1,1){4}}
\put(14,25){\line(0,1){4}}
\put(15.9,24.4){\line(1,1){4}}
\put(14,20){\line(0,-1){6}}
\put(14,13){\makebox(0,0){\mbox{$\scriptscriptstyle X $}}}
\put(14,22.5){\makebox(0,0){\mbox{$\scriptstyle b $}}}
\put(30,22.5){\makebox(0,0){\mbox{$\rm and$}}}
\put(44,22.5){\circle{5}}
\put(44,31){\makebox(0,0){\mbox{$\scriptscriptstyle K$}}}
\put(44,25){\line(0,1){4}}
\put(44,20){\line(0,-1){6}}
\put(44,13){\makebox(0,0){\mbox{$\scriptscriptstyle K $}}}
\put(44,22.5){\makebox(0,0){\mbox{$\scriptstyle 1_i $}}}

\end{picture}}

\noindent where $b\in B$, and $1_i\in B$ for $i\in I$ represents the unit of $B=T(1).$

According to Section \ref{TS} an object $\bc$ of $\cop$ is then represented by a corolla

 {\unitlength=1mm
  \begin{picture}(20,20)(-40,13)
\put(14,22.5){\circle{5}}
\put(2.5,28.5){\makebox(0,0){\mbox{$\scriptscriptstyle K$}}}
\put(26,28.5){\makebox(0,0){\mbox{$\scriptscriptstyle K$}}}
\put(7,30){\makebox(0,0){\mbox{$\scriptscriptstyle X$}}}
\put(21,30){\makebox(0,0){\mbox{$\scriptscriptstyle K$}}}
\put(14,31){\makebox(0,0){\mbox{$\scriptscriptstyle X$}}}
\put(16.5,23.5){\line(2,1){8}}
\put(11.5,23.5){\line(-2,1){8}}
\put(12.1,24.4){\line(-1,1){4}}
\put(14,25){\line(0,1){4}}
\put(15.9,24.4){\line(1,1){4}}
\put(14,20){\line(0,-1){6}}
\put(14,13){\makebox(0,0){\mbox{$\scriptscriptstyle t(b) $}}}
\put(14,22.5){\makebox(0,0){\mbox{$\scriptstyle b $}}}
\end{picture}}

\noindent with incoming edges coloured by $X$ and $K$ and outgoing edge coloured by the target $t(b)$ of $b$.  The $X$-edges correspond to the operations on the $T$-algebra summand of the semi-free coproduct, while the $K$-edges correspond to the free summand.

A morphism $g:\bc'\to\bc$ in $\cop$ is given by a set of elements $b_1,\ldots,b_k\in B$, one for each $X$-coloured edge of $\bc,$ where the $1'$s correspond to $K$-edges and the $b_i'$s correspond to $X$-edges of $\bc.$   In the formalism of Section \ref{TS}  a typical morphism in $\cop$ is therefore a bicorolla

{\unitlength=1mm

\begin{picture}(60,30)(-30,-4)

\begin{picture}(10,10)(2.5,8.5)
\put(14,22.5){\circle{5}}
\put(14,25){\makebox(0,0){\mbox{$ $}}}
\put(15.9,20.8){\line(1,-1){5}}
\put(12.1,24.4){\line(-1,1){5}}
\put(14,25){\line(0,1){6}}
\put(15.9,24.4){\line(1,1){5}}
\put(14,32){\makebox(0,0){\mbox{$\scriptscriptstyle  X$}}}
\put(21,30.5){\makebox(0,0){\mbox{$\scriptscriptstyle  X$}}}
\put(7,30.5){\makebox(0,0){\mbox{$\scriptscriptstyle  X$}}}
\put(14,22.5){\makebox(0,0){\mbox{$\scriptstyle b_1$}}}

\put(5.2,22.5){\circle{5}}
\put(7.1,20.7){\line(2,-1){13}}
\put(3.8,24.5){\line(-1,1){5}}
\put(40,22.5){\circle{5}}
\put(38.2,20.7){\line(-2,-1){13}}
\put(-1.5,30.5){\makebox(0,0){\mbox{$\scriptscriptstyle  K$}}}
\put(5.2,22.5){\makebox(0,0){\mbox{$\scriptstyle 1$}}}
\put(41.5,24.5){\line(1,1){5}}
\put(47,30.5){\makebox(0,0){\mbox{$\scriptscriptstyle  K$}}}
\put(40,22.5){\makebox(0,0){\mbox{$\scriptstyle 1$}}}
\end{picture}

\begin{picture}(10,10)(5,17)
\put(14,22.5){\circle{5}}

\put(10.5,28){\makebox(0,0){\mbox{$\scriptscriptstyle X$}}}
\put(18,28){\makebox(0,0){\mbox{$\scriptscriptstyle X$}}}
\put(3,28){\makebox(0,0){\mbox{$\scriptscriptstyle K$}}}
\put(25,28){\makebox(0,0){\mbox{$\scriptscriptstyle K$}}}
\put(14,20){\line(0,-1){6}}
\put(15.9,24.4){\line(1,1){5}}
\put(14,22.5){\makebox(0,0){\mbox{$\scriptstyle b $}}}
\put(14,13){\makebox(0,0){\mbox{$\scriptscriptstyle X $}}}
\end{picture}

\begin{picture}(10,10)(7.5,8.5)
\put(14,22.5){\circle{5}}
\put(14,25){\makebox(0,0){\mbox{$ $}}}
\put(13,25){\line(-1,2){3}}
\put(15.9,24.4){\line(1,1){6}}
\put(22,31.5){\makebox(0,0){\mbox{$\scriptscriptstyle  X$}}}
\put(10,32){\makebox(0,0){\mbox{$\scriptscriptstyle  X$}}}
\put(14,22.5){\makebox(0,0){\mbox{$\scriptstyle b_2 $}}}
\end{picture}

\end{picture}}

\noindent In this picture we tacitly assume that $1$ is the unit element $1_{s(e)}$ where $e$ belongs to the fibre $p^{-1}(b).$  The corolla representing the source of $g:\bc'\to\bc$ is obtained by contracting inner edges of the bicorolla according to the multiplication $b'= b(1,\ldots,b_1,1,\ldots, b_k,\ldots,1)$ of the polynomial monad $T,$ cf. Section \ref{graphical}.

\begin{remark}If $T$ is a tame polynomial monad then semi-free coproducts admit a functorial polynomial formula similar to formula (\ref{semfreemon}) of the introduction.  Indeed, Theorem \ref{colim} applied to the adjoint square (\ref{adjoint}), shows that the underlying object of the semi-free coproduct $X\vee F_T(K)$ is the colimit of a functor $\tilde{X}$ defined on $\cop$. If $\cop$ has a terminal object in each connected component, then the colimit of $\tilde{X}$ is the coproduct of the values of $\tilde{X}$ at these terminal objects. These values are tensor products of as many $X'$s and $K'$s, as there are $X$- resp. $K$-edges in the corollas representing the terminal objects of the different connected components of $\cop$.

More precisely, there is a (uniquely determined) polynomial functor$$P:\Set/I\times\Set/I \to\Set/I$$rendering the following diagram\begin{diagram}[small]\Alg_T\times\Set/I&\rTo^{(-)\vee F_T(-)}&\Alg_T\\\dTo^{U_T\times id_{\Set/I}}&&\dTo_{U_T}\\\Set/I\times\Set/I&\rTo^P&\Set/I\end{diagram}

\noindent commutative with generating polynomial given by
\begin{diagram}I\sqcup I&\lTo^s&\pi_0(\cop)^*&\rTo^p&\pi_0(\cop)&\rTo^t I.\end{diagram}
Here we identify the set $\pi_0(\cop)$ of connected components of $\cop$ with a representative set of objects of $\cop$ which are terminal in their component. Such an object of $\cop$ is represented by a corolla decorated by an element $b\in B$ with edges having colours $X$ or $K.$ The target of $b$ gives a map $t:\pi_0(\cop)\to I.$ The set $\pi_0(\cop)^*$ is the set of corollas as above with one edge marked. The map $p:\pi_0(\cop)^*\to \pi_0(\cop)$ simply forgets the marking. The source map $s:\pi_0(\cop)^*\to I\sqcup I$ returns the colour of the marked edge of $b$ and places it to the first copy of $I$ if the edge-colour is $X$ and to the second if the edge-colour is $K.$

See Sections \ref{SSM} and \ref{SSP} for explicit examples.

\end{remark}

\section{Free algebra extensions}\label{coprodandpushout}

In this central section we apply the theory of internal algebra classifiers to get an explicit formula for free algebra extensions over tame polynomial monads. This formula generalises the Schwede-Shipley formula \cite{SS} for free monoid extensions and involves a careful analysis of the internal classifier $\h$ for free algebra extensions over a tame polynomial monad $T$. We show in particular that a good behaviour of semi-free coproducts of $T$-algebras (the \emph{tameness} of $T$) is sufficient to express the underlying object of a free $T$-algebra extension as a sequential colimit of pushouts.

\subsection{Internal classifier for free algebra extensions}Let $T$ be a finitary monad on a cocomplete category $\CC$. Let ${\mathcal P}_{f,g}$ be the category whose objects are quintuples
$(X,K,L,g,f),$  where $X$ is a $T$-algebra, $K,L$  are objects in $\CC$ and $g:K\rightarrow U_T(X),\,f:K\rightarrow L$ are morphisms in $\CC.$ There is an obvious forgetful functor
$$\Uu_{f,g}:{\mathcal P}_{f,g} \rightarrow \CC\times\CC\times\CC,$$
taking the quintuple $(X,K,L,f,g)$ in $\Pp_{f,g}$ to the triple $(U_T(X),K,L)$ in $\CC\times\CC\times\CC$.

\begin{pro}\label{Cart}Let $T$ be a finitary monad on a cocomplete category $\CC$.

\begin{itemize} \item[(i)] The functor $\Uu_{f,g}$ is monadic and the induced monad $T_{f,g}$ is finitary;
\item[(ii)] There is a commutative square of adjunctions

\vspace{-1mm}
\begin{equation}\label{adjunctionsquare} \end{equation}
{\unitlength=1mm
\begin{picture}(40,19)(-15,-1)
\put(49,15){\shortstack{\mbox{$\scriptstyle U_T$}}}
\put(57,15){\shortstack{\mbox{$\scriptstyle F_T$}}}
\put(38,3){\shortstack{\mbox{$\scriptstyle \sqcup$}}}
\put(38,9){\shortstack{\mbox{$ \Delta$}}}
\put(11,15){\shortstack{\mbox{$\scriptstyle \Uu_{f,g}$}}}
\put(20,15){\shortstack{\mbox{$\scriptstyle \Ff_{f,g}$}}}
\put(18,25){\makebox(0,0){\mbox{${\mathcal P}_{f,g}$}}}
\put(19,11){\vector(0,1){10}}
\put(17,21){\vector(0,-1){10}}
\put(26,24){\vector(1,0){24}}
\put(49,26){\vector(-1,0){24}}
\put(38,27){\shortstack{\mbox{$ \Delta'$}}}
\put(38,21.5){\shortstack{\mbox{$\scriptstyle \sqcup'$}}}
\put(56,25){\makebox(0,0){\mbox{$\Alg_T$}}}
\put(54,21){\vector(0,-1){10}}
\put(56,11){\vector(0,1){10}}
\put(17,7){\makebox(0,0){\mbox{$ \CC\times\CC \times\CC$}}}
\put(31,6){\vector(1,0){16}}
\put(47,8){\vector(-1,0){16}}
\put(56,7){\makebox(0,0){\mbox{$\CC$}}}
\end{picture}}

\noindent in which  $\Delta:\CC\rightarrow\CC\times\CC\times\CC$ is the diagonal and $\Delta'$ is given by $$\Delta'(Y)= (Y,U_T(Y),U_T(Y),1_{U_T(Y)},1_{U_T(Y)}).$$

\item[(iii)]The left adjoint $\sqcup$ to $\Delta$ is given by coproduct in $\CC;$ the left adjoint $\sqcup'$ to $\Delta'$ is given by the following pushout in $\Alg_T$:

\begin{gather}\label{pushout}\begin{diagram}[small]F_T(K)&\rTo^{F_T(f)} & F_T(L) \\\dTo^{\hat{g}}&&\dTo_{}\\X&\rTo&\NWpbk \sqcup'(X,K,L,g,f)\end{diagram}\end{gather}

\noindent in which $\hat{g}$ is adjoint to $g$.
\item[(iv)]If $\CC$ has pullbacks which commute with coproducts and $T$ is a cartesian monad, then $T_{f,g}$ is a cartesian monad as well, and the adjoint square (\ref{adjunctionsquare}) induces a cartesian morphism from $T_{f,g}$ to $T$ in the sense of Definition \ref{Phic}.\item[(v)] If $T$ is a polynomial monad on $\Set/I$ then $T_{f,g}$ is a polynomial monad on $\Set/(I\sqcup I\sqcup I).$\end{itemize}

\end{pro}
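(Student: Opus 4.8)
The plan is to reduce all five assertions to one explicit computation: the left adjoint $\Ff_{f,g}$ of $\Uu_{f,g}$ and the resulting monad $T_{f,g}=\Uu_{f,g}\Ff_{f,g}$. First I would guess and verify that, for a triple $(A,K,L)$ in $\CC\times\CC\times\CC$, the free quintuple is
\[
\Ff_{f,g}(A,K,L)=\bigl(F_T(A\sqcup K),\,K,\,L\sqcup K,\,g_0,\,f_0\bigr),
\]
where $g_0\colon K\to U_TF_T(A\sqcup K)$ is the unit composed with the coproduct injection of $K$, and $f_0\colon K\to L\sqcup K$ is the coproduct injection. The universal property is a one-line adjunction chase: a $\Pp_{f,g}$-morphism out of $\Ff_{f,g}(A,K,L)$ consists of a $T$-algebra map $h\colon F_T(A\sqcup K)\to X'$ together with $\kappa\colon K\to K'$ and $\lambda\colon L\sqcup K\to L'$, and the two compatibility squares force the $K$-summand of the adjunct $\bar h$ and the $K$-summand of $\lambda$ to equal $g'\kappa$ and $f'\kappa$ respectively, leaving exactly the free data $(a\colon A\to U_TX',\kappa,\ell\colon L\to L')$, i.e. a $\CC^3$-morphism into $\Uu_{f,g}(X',K',L',g',f')$. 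Applying $\Uu_{f,g}$ then yields the clean formula $T_{f,g}(A,K,L)\cong(T(A\sqcup K),\,K,\,L\sqcup K)$, which drives everything else.

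For (i), finitariness is immediate since $T$ and finite coproducts preserve filtered colimits. For monadicity I would avoid Beck and instead show directly that the comparison functor $\Pp_{f,g}\to\Alg_{T_{f,g}}$ is an isomorphism of categories, by unwinding a $T_{f,g}$-algebra structure $\xi$ on $(A,K,L)$: writing $\xi=(\xi_A,\xi_K,\xi_L)$, the unit law forces $\xi_K=\mathrm{id}_K$ and $\xi_L|_L=\mathrm{id}_L$, so that $\xi_L$ amounts to a single map $f:=\xi_L|_K\colon K\to L$; the associativity law then forces $\xi_A\colon T(A\sqcup K)\to A$ to be the canonical extension of a genuine $T$-algebra structure $m:=\xi_A\circ T(\mathrm{inj}_A)$ on $A$ together with the map $g:=\xi_A\circ\eta_T\circ\mathrm{inj}_K\colon K\to A$. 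This recovers precisely the quintuple $(X,K,L,g,f)$ with $X=(A,m)$, and reverses functorially. For (ii), one observes that the two right adjoints commute strictly, $\Uu_{f,g}\circ\Delta'=\Delta\circ U_T$ (both send $Y$ to $(U_TY,U_TY,U_TY)$), whence by the mate correspondence the square of left adjoints commutes up to canonical isomorphism; here $\Delta$ is the diagonal and $\Delta'(Y)=(Y,U_TY,U_TY,1,1)$ as stated.

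Statement (iii) is then a matter of identifying the two left adjoints. The left adjoint of $\Delta$ is the ternary coproduct. For $\sqcup'$ I would compare the universal property of the pushout (\ref{pushout}) with that of the left adjoint to $\Delta'$: a $T$-algebra map $\sqcup'(X,K,L,g,f)\to Y$ should be a pair consisting of a map $X\to Y$ and a map $\lambda\colon L\to U_TY$ agreeing on $K$, i.e. with $\lambda f$ equal to the image of $g$, which is exactly a cocone over the span $X\xleftarrow{\hat g}F_T(K)\xrightarrow{F_T(f)}F_T(L)$. Hence $\sqcup'(X,K,L,g,f)$ is the pushout of (\ref{pushout}).

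Finally, (iv) and (v) follow the template already used for $T+1$ in Section \ref{tame}, applied to the formula $T_{f,g}(A,K,L)=(T(A\sqcup K),K,L\sqcup K)$. Since pullbacks in $\CC^3$ are computed coordinatewise, $T_{f,g}$ preserves pullbacks because $T$ does and because coproducts with a fixed object commute with pullbacks by hypothesis; the same two facts let one check that the unit and multiplication of $T_{f,g}$ are cartesian, and then that the adjoint square (\ref{adjunctionsquare}) induces a cartesian morphism $T_{f,g}\to T$ via the criterion of Proposition \ref{polsq} (Definition \ref{Phic}). For (v), when $T$ is generated by $I\xleftarrow{s}E\xrightarrow{p}B\xrightarrow{t}I$, I would exhibit the generating polynomial of $T_{f,g}$ over $I\sqcup I\sqcup I$ explicitly: the distributive law $\prod_{e\in p^{-1}(b)}(A\sqcup K)_{s(e)}=\coprod_{\ell\colon p^{-1}(b)\to\{A,K\}}\prod_e(\cdots)_{s(e)}$ shows that the operations targeting the first copy of $I$ are pairs $(b,\ell)$ with $b\in B$ and $\ell$ a two-colouring of the fibre of $b$ (placing each input in the $A$- or the $K$-copy), while the second copy carries the identity polynomial and the third copy carries the identity polynomial together with one unary ``$K\to L$'' operation per colour; finiteness of fibres is preserved since there are only $2^{|p^{-1}(b)|}$ colourings. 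I expect the genuinely delicate point to be (iv): one must verify that the naturality squares of $\mu^{T_{f,g}}$ and $\eta^{T_{f,g}}$ remain pullbacks even though the functor mixes the coordinates through the coproducts $A\sqcup K$ and $L\sqcup K$, which is exactly where the hypothesis that coproducts commute with pullbacks becomes indispensable.
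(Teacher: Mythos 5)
Your proposal is correct, and its backbone is the same as the paper's: compute the left adjoint explicitly as $\Ff_{f,g}(A,K,L)=(F_T(A\sqcup K),K,K\sqcup L,j,i)$, read off the formula $T_{f,g}(A,K,L)=(T(A\sqcup K),K,K\sqcup L)$, identify $\sqcup'$ through the universal property of the pushout (\ref{pushout}), and deduce cartesianness of $T_{f,g}$ and of $\Phi:T_{f,g}\to\Delta\circ T\circ\sqcup$ from the hypothesis that pullbacks commute with coproducts. You differ from the paper in three local choices. For monadicity in (i) the paper simply asserts it ("straightforward to check"), while you unwind a $T_{f,g}$-algebra structure and build the inverse comparison functor by hand; your route is more self-contained, and the extraction of the quintuple $(X,K,L,g,f)$ from $(\xi_A,\xi_K,\xi_L)$ via the unit and associativity laws is exactly the content being suppressed. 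For (ii) the paper invokes the adjoint lifting theorem to get existence of $\sqcup'$, whereas you get existence directly from cocompleteness of $\Alg_T$ (finitary monad on a cocomplete category) plus your universal-property computation in (iii); this makes (ii) and (iii) a single argument rather than two. For (v) the paper argues abstractly that $T_{f,g}$ preserves connected limits (using that this characterizes polynomial functors on slices of $\Set$ and that connected limits commute with coproducts there), while you exhibit the generating polynomial over $I\sqcup I\sqcup I$ explicitly via the distributive law, with operations $(b,\ell)$ indexed by $\{X,K\}$-colourings of fibres; your construction is longer but buys the explicit description of $B'$ that the paper anyway needs later (Section \ref{extensionclassifier}), so nothing is lost and the two proofs of (v) are complementary. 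The only place where you are noticeably terser than the paper is (iv): the paper writes out all three components of $\Phi_{(X,K,L)}$ and checks each is cartesian, which is the verification your last sentence promises but does not carry out; it is routine given your formula for $T_{f,g}$, so this is a matter of detail, not a gap.
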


\begin{proof}--

(i) The left adjoint $\Ff_{f,g}$ to $\Uu_{f,g}$ takes a triple $(X,K,L)$ in $\CC\times\CC\times\CC$ to the quintuple
$(F_T(X\sqcup K), K, K\sqcup L, j , i)$ in $\Pp_{f,g}$ where $i:K\to K\sqcup L$ is the coproduct injection and $j:K\to U_TF_T(X\sqcup K)$ is the composite of the coproduct injection $K\to X\sqcup K$ and the unit of the monad $T$ at $X\sqcup K$. It is then straightforward to check that $\Ff_{f,g}$ is indeed left adjoint to $\Uu_{f,g}$, and that $T_{f,g}=\Uu_{f,g}\Ff_{f,g}$ is finitary.

(ii) It is obvious that the square of right adjoints commutes. The  existence of the left adjoint $\sqcup'$ follows from the adjoint lifting theorem.

(iii) By adjointness, a map $(X,K,L,g,f)\to\Delta'(Y)$ in $\Pp_{f,g}$ corresponds one-to-one to a pair of $T$-algebra morphisms $(\phi:X\to Y,\,\psi:F_T(L)\to Y)$ such that $\phi \hat{g}=\psi F_T(f)$. The universal property of pushout (\ref{pushout}) then implies that this pair corresponds one-to-one to a $T$-algebra morphism $\sqcup'(X,K,L,g,f)\to Y$.

(iv) We have seen in (i) that $T_{f,g}(X,K,L)=(T(X\sqcup K),K,K\sqcup L)$ so that $T_{f,g}$ is a cartesian monad, since $T$ is a cartesian monad and moreover pullbacks commute with coproducts in $\CC$. It remains to be shown that Proposition \ref{polsq} applies, i.e. that $\Phi:T_{f,g}\to\Delta\circ T\circ\sqcup$ is a cartesian natural transformation. Indeed, all three components of$$\Phi_{(X,K,L)}:(T(X\sqcup K),K,K\sqcup L)\to(T(X\sqcup K\sqcup L),T(X\sqcup K\sqcup L),T(X\sqcup K\sqcup L))$$are cartesian natural transformations. The first component is obtained by applying $T$ to the coproduct injection $X\sqcup K\inc X\sqcup K\sqcup L$, the second component is obtained as the composite of the unit $K\to T(K)$ with $T(K\inc X\sqcup K\sqcup L)$, the third component as the composite $K\sqcup L\to T(K\sqcup L)\to T(X\sqcup K\sqcup L).$ In all three cases we can conclude, since the unit of $T$ is a cartesian natural transformation, $T$ preserves pullbacks, and pullbacks commute with coproducts in $\CC$.

(v) It is enough to show that $T_{f,g}$ preserves connected limits. But in $\Set/I$ connected limits commute with coproducts so that the explicit formula for $T_{f,g}$ yields the result.\end{proof}

In virtue of the preceding proposition, Theorem \ref{polkan} allows us to compute free $T$-algebra extensions in cocomplete categorical $T$-algebras as left Kan extensions along a map of categorical $T$-algebras $\h\rightarrow \HT$.

If $\Ee$ is a cocomplete, closed symmetric monoidal category and $T$ is a polynomial monad, Theorem \ref{colim} expresses free $T$-algebra extensions in $\Ee$ as colimits of certain $\Ee$-valued functors on the free $T$-algebra extension classifier $\h.$ It is therefore vital to get a better hold on the free $T$-algebra extension classifier $\h$. To this effect it will be convenient to introduce three other monads associated to $T$, which we shall denote by $T_f,\,T_g\text{ and }T+2$ respectively.

Let ${\mathcal P}_{f}$ be the category whose objects are quadruples
$(X,K,L,f),$ where $X$ is a $T$-algebra, $K,L$  are objects in $\CC$ and $f:K\rightarrow L$ is a morphism in $\CC.$

Let ${\mathcal P}_{g}$ be the category whose objects are quadruples $(X,K,L,g),$  where $X$ is a $T$-algebra, $K,L$  are objects in $\CC$ and $g:K\to U_T(X)$ is a morphism in $\CC$.

The obvious forgetful functors $\Uu_f:\Pp_f\to\CC\times\CC\times\CC$ and $\Uu_g:\Pp_g\to\CC\times\CC\times\CC$ are monadic yielding monads $T_f$ and $T_g$ for which there are propositions analogous to Proposition \ref{Cart}. We leave the details to the reader.

Finally, recall the monad $T+1$ from Section \ref{tame}. We put $T+2=(T+1)+1$ which is also a monad on $\CC\times\CC\times\CC$ as are $T_{f,g}$, $T_f$ and $T_g$.

There is a  commutative square of forgetful functors \begin{diagram}[small]\Pp_{f,g}&\rTo&\Pp_f\\\dTo&&\dTo\\\Pp_g&\rTo&\Alg_T\times\CC\times\CC\end{diagram}over $\CC\times\CC\times\CC$. All four forgetful functors have left adjoints so that we get a commutative square
of monad morphisms going in the opposite direction\begin{diagram}[small]T_{f,g}&\lTo&T_f\\\uTo&&\uTo\\T_g&\lTo&T+2\end{diagram}and augmented over $T$ via cartesian natural transformations. We thus obtain a commutative square of categorical $T$-algebra maps of the corresponding classifiers\begin{gather}\label{classifiersquare}\begin{diagram}[small]\h&\lTo&\hi\\\uTo&&\uTo\\\hj&\lTo&\ho\end{diagram}\end{gather}
\noindent which enables us to analyse the category structure of $\h$.\vspace{1ex}

Form now on we assume that $T$ is a polynomial monad on $\Set/I$. We have seen that the monad $T_{f,g}$ is then a polynomial monad on $\Set/(I\sqcup I\sqcup I)$. Similarly,

\begin{lem}\label{Tfg}For any polynomial monad $T$ on $\Set/I$, the monads $T+2,\,T_f,\,T_g$ are polynomial monads on $\Set/(I\sqcup I\sqcup I)$.

The internal classifiers $\h,\hi,\hj,\ho$ all have the same object-set, and diagram (\ref{classifiersquare}) identifies $\hi,\hj$ with subcategories of $\h$ which intersect in $\ho$ and which generate $\h$ as a category.\end{lem}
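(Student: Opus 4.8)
The plan is to handle the two assertions of the lemma in turn, reading off everything from the explicit monad formulas together with Theorem \ref{TST} and the description of morphisms in Section \ref{TS}. First, exactly as in the proof of Proposition \ref{Cart}, the left adjoints to the forgetful functors $\Uu_f,\Uu_g$ and the iterated $+1$-construction of Section \ref{tame} give the formulas
\begin{align*}(T+2)(X,K,L)&=(TX,K,L),\\ T_f(X,K,L)&=(TX,K,K\sqcup L),\\ T_g(X,K,L)&=(T(X\sqcup K),K,L).\end{align*}
Since $T$ preserves connected limits and connected limits commute with coproducts in $\Set/I$, each of these functors preserves connected limits; being moreover cartesian monads by the argument of Proposition \ref{Cart}(iv), they are polynomial monads on $\Set/(I\sqcup I\sqcup I)$ (for $T+2$ one may instead iterate the observation of Section \ref{tame} that $(-)+1$ preserves polynomiality). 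Each carries a cartesian morphism to $T$ built, as in Proposition \ref{Cart}(iv), from the adjunction $\Delta\dashv\sqcup$ between $\CC=\Set/I$ and $\CC\times\CC\times\CC$, so that Theorems \ref{TST} and \ref{colim} apply to all four.

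For the second assertion I would first note that, by Theorem \ref{TST}, the objects of each classifier are the elements of $Tc(1)$, where $c$ is the left adjoint of the cartesian adjunction and $1$ is terminal. In all four cases this left adjoint is the single coproduct functor $c=\sqcup\colon\CC\times\CC\times\CC\to\CC$ and the terminal object is $(1,1,1)$, so every one of $\h,\hi,\hj,\ho$ has the common object-set $Tc(1)=T(1\sqcup 1\sqcup 1)$. Graphically these are corollas $b\in B$ whose incoming edges are each coloured by one of three colours $X$, $K$, $L$ (the three copies of $I$). In particular the four functors of diagram (\ref{classifiersquare}) are the identity on objects.

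Next I would describe the morphisms of each classifier via Section \ref{TS}, where a morphism is given, for each edge of its target corolla, by an operation of the relevant monad whose target colour matches that edge. Unwinding the formulas above: operations with target colour $X$ substitute $T$-operations into $X$-edges --- with $X$-inputs only for $T+2$ and $T_f$, but with inputs of colours $X$ and $K$ for $T_g$ and $T_{f,g}$ (this is the $g$-absorption $K\to U_T(X)$); operations with target colour $K$ are identities throughout; operations with target colour $L$ are identities for $T+2$ and $T_g$, but for $T_f$ and $T_{f,g}$ may also be the map $f\colon K\to L$, realising an $L$-edge of the target as a $K$-edge of the source. Because the two non-trivial effects act on disjoint edges (the $g$-effect on $X$-edges, the $f$-effect on $L$-edges), it is immediate that $\hi\to\h$ and $\hj\to\h$ are injective on morphisms, hence embed $\hi$ and $\hj$ as (not necessarily full) subcategories of $\h$, and that a morphism of $\h$ lies in both images precisely when it uses neither $f$ nor a $K$-input, i.e. precisely when it lies in $\ho$; this is the identity $\hi\cap\hj=\ho$.

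The step demanding the most care --- and the one I expect to be the main obstacle --- is that $\hi$ and $\hj$ generate $\h$. Here I would show that an arbitrary morphism $\bc'\to\bc$ of $\h$ factors as $\bc'\to\bc''\to\bc$ with the second arrow in $\hi$ and the first in $\hj$. Concretely, let $\bc''$ be obtained from $\bc$ by performing only the $f$-conversions of the given morphism (turning the designated $L$-edges into $K$-edges) while leaving all $X$-edges unexpanded; then $\bc''\to\bc$ is a morphism of $\hi$, and $\bc'$ is recovered from $\bc''$ by performing only the $g$-absorbing $X$-substitutions, giving a morphism $\bc'\to\bc''$ of $\hj$. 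Since, as noted, the $f$- and $g$-effects in the original morphism act on disjoint edges, this really does reconstruct $\bc'\to\bc$; the formal verification amounts to checking that the right actions $\theta$ are compatible with the decomposition of each $T_{f,g}$-operation into a $T_g$-operation followed by a $T_f$-operation, which is precisely the commutativity of the monad-morphism square inducing (\ref{classifiersquare}). I expect the delicate point to be purely the bookkeeping of the edge-colourings through this factorisation, rather than any conceptual difficulty.
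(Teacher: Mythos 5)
Your proposal is correct and follows essentially the same route as the paper: explicit formulas $(T+2)(X,K,L)=(T(X),K,L)$, $T_f(X,K,L)=(T(X),K,K\sqcup L)$, $T_g(X,K,L)=(T(X\sqcup K),K,L)$ plus commutation of connected limits with coproducts in $\Set/I$ for polynomiality, the common object-set $Tc(1)$ via Theorem \ref{TST}, and injectivity of the induced maps on morphism-sets (the paper phrases this via faithfulness of $c$ and $T$) for the subcategory and intersection claims. Your explicit factorisation of an arbitrary morphism of $\h$ as a $\hj$-morphism followed by an $\hi$-morphism is exactly the content the paper asserts without proof ("each morphism in $\h$ is the composite of morphisms in $\hi$ and $\hj$") and is consistent with the relations (\ref{relation1})--(\ref{relation2}) made explicit in Section \ref{extensionclassifier}.
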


\begin{proof}The first assertion follows by a similar argument as for Proposition \ref{Cart}(v) from the explicit formulas for the monads $T+2$, $T_f$, $T_g$ given below.

According to Theorem \ref{TST}, the object-set of all four internal classifiers is $Tc(1)$ where $c:\CC\times\CC\times\CC\to\CC$ is given by the coproduct in $\CC$, while the morphism-sets are $TcS(1)$ where $S$ is one of the four monads $T_{f,g},T_f,T_g,T+2$. We have seen in Proposition \ref{Cart}(i) that $T_{f,g}(X,K,L)=(T(X\sqcup K),K,K\sqcup L)$. Similarly, we have $T_f(X,K,L)=(T(X),K,K\sqcup L)$ and $T_g(X,K,L)=(T(X\sqcup K),K,L)$ as well as $(T+2)(X,K,L)=(T(X),K,L)$. Evaluating these formulas for $X=K=L=1$, and using the fact that $c$ and $T$ are faithful functors, it follows that $\hi$ and $\hj$ are subcategories of $\h$ which intersect in $\ho$. Moreover, each morphism in $\h$ is the composite of morphisms in $\hi$ and $\hj$.\end{proof}

\subsection{The category $\h$ explicitly}\label{extensionclassifier}

We begin by describing the cartesian morphism $(\nabla_I,\phi,\psi)$ of polynomials (cf. Section \ref{cartesian})
\begin{diagram}I\sqcup I\sqcup I&\lTo^{s'}&E'\SEpbk&\rTo^{p'}&B'&\rTo^{t'}&I\sqcup I\sqcup I\\\dTo^{\nabla_I}&&\dTo^\psi&&\dTo^\phi&&\dTo^{\nabla_I}\\I&\lTo^s&E&\rTo^p&B&\rTo^t&I\end{diagram}which generates the cartesian monad morphism $\Phi:T_{f,g}\Rightarrow\Delta\circ T\circ\sqcup$ described in Proposition \ref{Cart}. We use our graphical formalism to represent the elements of $B'$, compare with Section \ref{semifreeclassifier}. The set $B'$ consists of corollas of the following types
\begin{enumerate} \item[(i)] for $b\in B, b\ne 1_i,$

 {\unitlength=1mm
  \begin{picture}(20,20)(-30,13)
\put(14,22.5){\circle{5}}
\put(2.5,28.5){\makebox(0,0){\mbox{$\scriptscriptstyle K$}}}
\put(26,28.5){\makebox(0,0){\mbox{$\scriptscriptstyle K$}}}
\put(7,30){\makebox(0,0){\mbox{$\scriptscriptstyle X$}}}
\put(21,30){\makebox(0,0){\mbox{$\scriptscriptstyle X$}}}
\put(14,31){\makebox(0,0){\mbox{$\scriptscriptstyle X$}}}
\put(16.5,23.5){\line(2,1){8}}
\put(11.5,23.5){\line(-2,1){8}}
\put(12.1,24.4){\line(-1,1){4}}
\put(14,25){\line(0,1){4}}
\put(15.9,24.4){\line(1,1){4}}
\put(14,20){\line(0,-1){6}}
\put(14,22.5){\makebox(0,0){\mbox{$\scriptstyle b $}}}
\put(14,13){\makebox(0,0){\mbox{$\scriptscriptstyle X$}}}
\end{picture}}

\noindent (observe that a corolla of this type does not have $L$-coloured edges);

\item[(ii)] for $i\in I,$

 {\unitlength=1mm
  \begin{picture}(20,20)(-20,13)
\put(-10,22.5){\circle{5}}
\put(-10,30){\makebox(0,0){\mbox{$\scriptscriptstyle X$}}}
\put(-10,13){\makebox(0,0){\mbox{$\scriptscriptstyle X$}}}
\put(-10,25){\line(0,1){4}}
\put(-10,20){\line(0,-1){6}}
\put(-10,22.5){\makebox(0,0){\mbox{$\scriptstyle 1_i $}}}
\put(10,22.5){\circle{5}}
\put(10,30){\makebox(0,0){\mbox{$\scriptscriptstyle K$}}}
\put(10,13){\makebox(0,0){\mbox{$\scriptscriptstyle K$}}}
\put(10,25){\line(0,1){4}}
\put(10,20){\line(0,-1){6}}
\put(10,22.5){\makebox(0,0){\mbox{$\scriptstyle 1_i $}}}
\put(30,22.5){\circle{5}}
\put(30,30){\makebox(0,0){\mbox{$\scriptscriptstyle L$}}}
\put(30,13){\makebox(0,0){\mbox{$\scriptscriptstyle L$}}}
\put(30,25){\line(0,1){4}}
\put(30,20){\line(0,-1){6}}
\put(30,22.5){\makebox(0,0){\mbox{$\scriptstyle 1_i $}}}
\put(50,22.5){\circle{5}}
\put(50,30){\makebox(0,0){\mbox{$\scriptscriptstyle K$}}}
\put(50,13){\makebox(0,0){\mbox{$\scriptscriptstyle X$}}}
\put(50,25){\line(0,1){4}}
\put(50,20){\line(0,-1){6}}
\put(50,22.5){\makebox(0,0){\mbox{$\scriptstyle g $}}}
\put(70,22.5){\circle{5}}
\put(70,30){\makebox(0,0){\mbox{$\scriptscriptstyle K$}}}
\put(70,13){\makebox(0,0){\mbox{$\scriptscriptstyle L$}}}
\put(70,25){\line(0,1){4}}
\put(70,20){\line(0,-1){6}}
\put(70,22.5){\makebox(0,0){\mbox{$\scriptstyle f $}}}
\end{picture}}

\noindent
the first three corollas represent identity  operations, the fourth corolla  represents the operation $g:K\to U_T(X)$ in a $T_{f,g}$-algebra, and the last one represents the operation $f:K\to L$ in a $T_{f,g}$-algebra.

\end{enumerate}

The set $E'$ is the set of such corollas equipped with a marking of one of the incoming edges. The map $p':E'\to B'$ forgets this additional marking. The source map $s':E'\to I\sqcup I\sqcup I$ is determined by the source of the decoration of the corolla together with the labeling of the marked edge. In a similar way, the target map $t':B'\to I\sqcup I\sqcup I$ is determined by the decoration of the corolla together with the labeling of the root edge. We leave it to the reader to check that this polynomial generates the monad $T_{f,g}$ constructed in Proposition \ref{Cart}. On the level of the generating polynomial the multiplication of $T_{f,g}$ is induced by the obvious substitution of corollas into corollas together with the following type of relations:

 {\unitlength=0.9mm
  \begin{picture}(20,20)(0,13)
\put(14,22.5){\circle{5}}
\put(2.5,28.5){\makebox(0,0){\mbox{$\scriptscriptstyle X$}}}
\put(26,28.5){\makebox(0,0){\mbox{$\scriptscriptstyle K$}}}
\put(7,30){\makebox(0,0){\mbox{$\scriptscriptstyle X$}}}
\put(21,30){\makebox(0,0){\mbox{$\scriptscriptstyle X$}}}
\put(14,31){\makebox(0,0){\mbox{$\scriptscriptstyle K$}}}
\put(16.5,23.5){\line(2,1){8}}
\put(11.5,23.5){\line(-2,1){8}}
\put(12.1,24.4){\line(-1,1){4}}
\put(14,25){\line(0,1){4}}
\put(15.9,24.4){\line(1,1){4}}
\put(14,20){\line(0,-1){6}}
\put(14,13){\makebox(0,0){\mbox{$\scriptscriptstyle X$}}}
\put(14,22.5){\makebox(0,0){\mbox{$\scriptstyle b $}}}
\put(33,22.5){\makebox(0,0){\mbox{$ \sim $}}}
\put(52,22.5){\circle{5}}
\put(40.5,28.5){\makebox(0,0){\mbox{$\scriptscriptstyle X$}}}
\put(64,28.5){\makebox(0,0){\mbox{$\scriptscriptstyle X$}}}
\put(45,30){\makebox(0,0){\mbox{$\scriptscriptstyle X$}}}
\put(59,30){\makebox(0,0){\mbox{$\scriptscriptstyle X$}}}
\put(52,31){\makebox(0,0){\mbox{$\scriptscriptstyle X$}}}
\put(54.5,23.5){\line(2,1){8}}
\put(49.5,23.5){\line(-2,1){8}}
\put(50.1,24.4){\line(-1,1){4}}
\put(52,25){\line(0,1){4}}
\put(53.9,24.4){\line(1,1){4}}
\put(52,20){\line(0,-1){6}}
\put(52,13){\makebox(0,0){\mbox{$\scriptscriptstyle X$}}}
\put(52,22.5){\makebox(0,0){\mbox{$\scriptstyle b $}}}
\put(65,22.5){\makebox(0,0){\mbox{$ \mbox{\Huge(}$}}}
\put(115,22.5){\makebox(0,0){\mbox{$ \mbox{\Huge)}$}}}
\put(70,22.5){\circle{5}}
\put(70,30){\makebox(0,0){\mbox{$\scriptscriptstyle X$}}}
\put(70,13){\makebox(0,0){\mbox{$\scriptscriptstyle X$}}}
\put(70,25){\line(0,1){4}}
\put(70,20){\line(0,-1){6}}
\put(70,22.5){\makebox(0,0){\mbox{$\scriptstyle 1_i $}}}
\put(75,20){\makebox(0,0){\mbox{$ \mbox{\Large ,}$}}}
\put(80,22.5){\circle{5}}
\put(80,30){\makebox(0,0){\mbox{$\scriptscriptstyle X$}}}
\put(80,13){\makebox(0,0){\mbox{$\scriptscriptstyle X$}}}
\put(80,25){\line(0,1){4}}
\put(80,20){\line(0,-1){6}}
\put(80,22.5){\makebox(0,0){\mbox{$\scriptstyle 1_i $}}}
\put(85,20){\makebox(0,0){\mbox{$ \mbox{\Large ,}$}}}
\put(90,22.5){\circle{5}}
\put(90,30){\makebox(0,0){\mbox{$\scriptscriptstyle K$}}}
\put(90,13){\makebox(0,0){\mbox{$\scriptscriptstyle X$}}}
\put(90,25){\line(0,1){4}}
\put(90,20){\line(0,-1){6}}
\put(90,22.5){\makebox(0,0){\mbox{$\scriptstyle g $}}}
\put(95,20){\makebox(0,0){\mbox{$ \mbox{\Large ,}$}}}
\put(100,22.5){\circle{5}}
\put(100,30){\makebox(0,0){\mbox{$\scriptscriptstyle X$}}}
\put(100,13){\makebox(0,0){\mbox{$\scriptscriptstyle X$}}}
\put(100,25){\line(0,1){4}}
\put(100,20){\line(0,-1){6}}
\put(100,22.5){\makebox(0,0){\mbox{$\scriptstyle 1_i $}}}
\put(105,20){\makebox(0,0){\mbox{$ \mbox{\Large ,}$}}}
\put(110,22.5){\circle{5}}
\put(110,30){\makebox(0,0){\mbox{$\scriptscriptstyle K$}}}
\put(110,13){\makebox(0,0){\mbox{$\scriptscriptstyle X$}}}
\put(110,25){\line(0,1){4}}
\put(110,20){\line(0,-1){6}}
\put(110,22.5){\makebox(0,0){\mbox{$\scriptstyle g $}}}
\end{picture}}

The two mappings $\phi:B'\to B$ and $\psi:E'\to E$ forget the edge-colourings of the corollas, and identify $f$ and $g$ with identity operations. This explicit presentation of the generating map $(\nabla_I,\phi,\psi)$ in conjunction with Section \ref{TS} yields the following description of the free $T$-algebra extension classifier $\h$, cf. Section \ref{semifreeclassifier}.

The objects of $\h$ are corollas decorated by the elements of $B=T(1)$ with incoming edges coloured by one of the three colours $X, K, L:$

 {\unitlength=1mm
  \begin{picture}(20,20)(-40,13)
\put(14,22.5){\circle{5}}
\put(2.5,28.5){\makebox(0,0){\mbox{$\scriptscriptstyle K$}}}
\put(26,28.5){\makebox(0,0){\mbox{$\scriptscriptstyle L$}}}
\put(7,30){\makebox(0,0){\mbox{$\scriptscriptstyle X$}}}
\put(21,30){\makebox(0,0){\mbox{$\scriptscriptstyle X$}}}
\put(14,31){\makebox(0,0){\mbox{$\scriptscriptstyle X$}}}
\put(16.5,23.5){\line(2,1){8}}
\put(11.5,23.5){\line(-2,1){8}}
\put(12.1,24.4){\line(-1,1){4}}
\put(14,25){\line(0,1){4}}
\put(15.9,24.4){\line(1,1){4}}
\put(14,20){\line(0,-1){6}}
\put(14,22.5){\makebox(0,0){\mbox{$\scriptstyle b $}}}
\put(14,13){\makebox(0,0){\mbox{$\scriptscriptstyle t(b)$}}}
\end{picture}}

These incoming edges will be called  $X$-edges, $K$-edges or $L$-edges accordingly.

The morphisms of $\h$ can be described in terms of generators and relations. There are three types of generators. First, we have the generators coming from the $T$-algebra structure on $X$-coloured edges, with a similar description as in $\cop.$ The relations between these generators witness the relations between $T$-operations. The subcategory of $\h$ spanned by these generators coincides with $\ho$.

The next type of generators corresponds to the morphism $f:K\rightarrow L.$ Such a generator simply replaces a $K$-edge with an $L$-edge in the corolla. Generators of this kind will be called \emph{$F$-generators}. The category $\hi$ is precisely the subcategory of $\h$ generated by $\ho$ and $F$-generators.

Finally, we have generators corresponding to $g:K\rightarrow U_T(X).$ Such a generator replaces a $K$-edge with an $X$-edge. Generators of this kind will be called \emph{$G$-generators}. The category $\hj$ is precisely the subcategory of $\h$ generated by $\ho$ and $G$-generators.

The relations in $\h$ between the morphisms in $\ho$, the $F$-generators and the $G$-generators readily follow from the aforementioned description of $\h$. Most importantly for us, every span $b\stackrel{\phi}{\leftarrow} a\stackrel{\psi}{\rightarrow}a'$ in which $\phi$ is an $F$-generator (resp. $G$-generator) and  $\psi$ belongs to $\ho$, extends to a commutative square \begin{gather}\label{relation1}\begin{diagram}[small]a&\rTo^{\psi}&a'\\\dTo^{\phi}&&\dTo_{\phi'}\\b&\rTo_{\psi'}&b'\end{diagram}\end{gather}in which $\phi'$ is an $F$-generator (resp. $G$-generator) and $\psi'$ belongs to $\ho$. Indeed, $F$- (resp. $G$-)generators replace a $K$-edge by an $L$-edge (resp. $X$-edge), while the morphisms in $\ho$ only affect $X$-edges. So, we can either first apply $\psi$ and then replace the corresponding $K$-edge by an $L$-edge (resp. $X$-edge), or first apply $\phi$ and then apply the corresponding morphism in $\ho$.

A similar argument yields that every composite $a\stackrel{\phi_f}{\rightarrow} b\stackrel{\phi_g}{\rightarrow}c$ of an $F$-generator $\phi_f$ followed by a $G$-generator $\phi_g$ can be rewritten \begin{gather}\label{relation2}\begin{diagram}[small]a&\rTo^{\phi_f}&b\\\dTo^{\phi'_g}&&\dTo_{\phi_g}\\b'&\rTo_{\phi'_f}&c\end{diagram}\end{gather}as the composite of a $G$-generator $\phi'_g$ followed by an $F$-generator $\phi'_f$.

\subsection{A final subcategory of $\h$.}\label{finalsub}

Recall that a subcategory $\Aa$ of $\Bb$ is called \emph{final} if the inclusion functor $i:\Aa\inc\Bb$ is a final functor. This means that for each object $b$ of $\Bb$, the undercategory $b/\Aa$ is \emph{non-empty} and \emph{connected}. Final subcategories have the characteristic property that for functors $F:\Bb\to\Ee$ with cocomplete codomain, the canonical map $\colim_\Aa Fi \to\colim_\Bb F$ is an isomorphism.\vspace{1ex}

The existence of a terminal object in $\Bb$ is equivalent either to the existence of a right adjoint for the unique functor $\Bb\to 1$ to the terminal category $1$ or to the existence of an embedding of the terminal category $1$ as a full and final subcategory of $\Bb$. The following lemma is a ``several component'' version of this observation.

\begin{lem}\label{reflective}For any category $\Bb$ the following three conditions are equivalent:
\begin{itemize}\item[(i)]$\Bb$ is a coproduct of categories with terminal object;
\item[(ii)]$\Bb$ has a full subcategory which is discrete and final;
\item[(iii)]The connected component functor $\Bb\to \pi_0(\Bb)$ has a right adjoint.
\end{itemize}

In particular, if a reflective subcategory $\Aa$ of $\Bb$ satisfies one of the equivalent conditions (i)-(iii) then so does $\Bb$.\end{lem}

\begin{proof}The equivalence of conditions (i)-(iii) follows like in the connected case.

For the second assertion observe that the composite of two full and final inclusions is again a full and final inclusion, and that the inclusion of a reflective subcategory is a full right adjoint (and hence final) functor. Therefore $\Bb$ inherits property (ii) from $\Aa$.\end{proof}

\begin{lem}\label{final0}For any tame polynomial monad $T$, the categories $\ho,\hi,\hj$ are coproducts of categories with terminal object.
 \end{lem}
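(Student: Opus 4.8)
The plan is to deduce the three statements from the tameness of $T$---which says precisely that $\cop$ is a coproduct of categories with terminal object---by two elementary devices: a reflective subcategory for $\hi$ and $\hj$, and a discrete opfibration for $\ho$. I rely throughout on the combinatorial description of Section \ref{extensionclassifier}: all three categories have the same objects, namely corollas whose incoming edges are coloured by $X$, $K$ or $L$; the morphisms of $\ho$ only substitute $T$-operations into the $X$-edges, while $\hi$ (resp. $\hj$) is generated by $\ho$ together with the $F$-generators, which recolour a $K$-edge into an $L$-edge (resp. the $G$-generators, which recolour a $K$-edge into an $X$-edge). The crucial combinatorial facts are that $K$- and $L$-edges are left untouched by $X$-substitution, and that inside $\hi$ (resp. $\hj$) the only way to remove a $K$-edge is to apply an $F$-generator (resp. a $G$-generator).

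For $\hi$ and $\hj$ I would consider the full subcategory $\mathcal{R}$ spanned by the corollas without $K$-edges. Since $\ho$-morphisms never create $K$-edges and the generators can only act on a $K$-edge, every morphism of $\hi$ (resp. $\hj$) between two objects of $\mathcal{R}$ already lies in $\ho$ and only substitutes into $X$-edges; hence $\mathcal{R}$ is isomorphic to $\cop$, with $L$ in the role of the unique inert colour, and is a coproduct of categories with terminal object by tameness. I claim $\mathcal{R}$ is reflective. The reflector sends a corolla $\bc$ to the corolla $L(\bc)$ obtained by recolouring each of its $K$-edges as an $L$-edge (in $\hi$) resp. as an $X$-edge (in $\hj$), and the unit $\bc\to L(\bc)$ applies the corresponding generator to every $K$-edge. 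Indeed, any morphism $\bc\to d$ with $d\in\mathcal{R}$ must eliminate all $K$-edges of $\bc$, hence convert each of them by a generator; using relation (\ref{relation1}) to commute every generator to the front past the intervening $\ho$-morphisms, the morphism factors through the unit followed by a morphism of $\mathcal{R}$, and the same relations give uniqueness of this factorisation. The remark following conditions (i)--(iii) in Section \ref{finalsub} then shows that $\hi$ and $\hj$ are coproducts of categories with terminal object.

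For $\ho$ the subcategory $\mathcal{R}$ is no longer reflective, since a $K$-edge can no longer be removed; instead I would merge the two inert colours. Recolouring every $L$-edge as a $K$-edge defines a functor $q:\ho\to\cop$, well defined because $\ho$-morphisms fix the inert edges. I claim $q$ is a discrete opfibration: a morphism $f$ of $\cop$ out of $q(\bc)$ substitutes into $X$-edges and induces a canonical bijection between the inert edges of its source and target, so transporting the given $\{K,L\}$-labelling of the inert edges of $\bc$ along this bijection yields a unique lift of $f$ starting at $\bc$. Finally, a discrete opfibration $q:\mathcal{E}\to\mathcal{B}$ over a coproduct of categories with terminal object is again such a coproduct: reducing to $\mathcal{B}$ connected with terminal object $t$, each object $e$ admits a unique lift of its canonical map $q(e)\to t$, landing at an object $\rho(e)$ of the discrete fibre over $t$; since morphisms preserve $\rho$, the fibres of $\rho$ are unions of components, and each has its object over $t$ as terminal object because a map to it is forced to lie over $t$ and is then determined by the opfibration property. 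Applying this to $q:\ho\to\cop$ and invoking tameness completes the proof.

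The step I expect to be most delicate is the uniqueness of the factorisation through the unit in the case of $\hj$: because $G$-generators turn an inert $K$-edge into an active $X$-edge, they genuinely interact with later $X$-substitutions, so one must verify carefully---using both commutation relations (\ref{relation1}) and (\ref{relation2}) of Section \ref{extensionclassifier}---that pushing all $G$-generators to the front produces a well-defined factorisation independent of the order in which the generators and $\ho$-morphisms were applied.
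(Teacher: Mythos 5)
Your proof is correct and takes essentially the same route as the paper: for $\hi$ and $\hj$ it uses the same reflective subcategories isomorphic to $\cop$ (spanned by the corollas with only $X$- and $L$-edges, with reflections given by successive $F$- resp. $G$-generators) together with the closure property recorded in Section \ref{finalsub}, and for $\ho$ the same colour-merging functor $\ho\to\cop$. The only cosmetic difference is in the last step: you package it as a general lemma on discrete opfibrations over coproducts of categories with terminal object, whereas the paper observes directly that this functor maps each connected component of $\ho$ isomorphically onto a component of $\cop$ --- both formulations rest on the same rigidity of the inert ($K$- and $L$-) edges under $\ho$-morphisms, and your opfibration lemma is a sound formalization of it.
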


\begin{proof}The explicit descriptions of the categories $\cop$ and $\ho$ in Sections \ref{semifreeclassifier} and \ref{extensionclassifier} show that $\cop$ is the full subcategory of $\ho$ spanned by those corolla which do not contain any $L$-edges. This inclusion has a retraction $r:\ho\to\cop$ which preserves the arities of the corollas as well as the distinction between $X$- and non-$X$-edges, but takes $K$- and $L$-edges of a corolla in $\ho$ to $K$-edges of the image-corolla in $\cop$. Each morphism $g:\bc'\to\bc$ in $\ho$ induces a bijection between the $K$-edges of $\bc'$ and of $\bc$, and a bijection between the $L$-edges of $\bc'$ and of $\bc$. A morphism $g:\bc'\to\bc$ is completely determined by these two bijections and its image $r(g):r(\bc')\to r(\bc)$ in $\cop$. This implies that the restriction of $r$ to a connected component of $\ho$ is fully faithful and bijective on objects, and takes thus a connected component of $\ho$ isomorphically to the corresponding connected component of $\cop$. Therefore, since by tameness the category $\cop$ is a coproduct of categories with terminal object, the same holds for $\ho$.

The category $\hi$ contains a reflective subcategory isomorphic to $\cop$, namely the subcategory spanned by those corollas in $\hi$ which have only $X$- and $L$-edges. The reflection of an object of $\hi$ to this subcategory is given by successive applications of $F$-generators replacing all $K$-edges with $L$-edges. According to Lemma \ref{reflective}, this implies that $\hi$ is a coproduct of categories with terminal object.

The category $\hj$ also contains a reflective subcategory isomorphic to $\cop$, namely the subcategory spanned by those corollas in $\hj$ which have only $X$- and $L$-edges. This time, the $G$-generators define the reflection. According to Lemma \ref{reflective}, this again implies that $\hj$ is a coproduct of categories with terminal object.\end{proof}

Let $\tso$ be a final discrete full subcategory of $\ho$ obtained by choosing a terminal object in each connected component of $\ho$ (cf. Lemma \ref{final0}). We define $\ts$ to be the full subcategory of $\h$ spanned by the objects of $\tso$. In other words, the composite inclusion $\tso\inc\ho\inc\h$ can be written as
$$\tso\overset{H}{\inc}{\ts}\overset{E}{\inc}\h$$where $H$ is the identity on objects and $E$ is a full inclusion.

\begin{lem}\label{final}The category $\ts$ is a final subcategory of $\h$.\end{lem}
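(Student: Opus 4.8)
The plan is to verify directly that the inclusion $E:\ts\inc\h$ is final, i.e. that for every object $a$ of $\h$ the undercategory $a/\ts$ is non-empty and connected. Write $t(a)\in\tso$ for the terminal object of the connected component of $\ho$ containing $a$ (which exists by Lemma \ref{final0}), and let $c(a):a\to t(a)$ be the unique $\ho$-morphism to it; I will call this the \emph{maximal $X$-contraction} of $a$. Since $c(a)$ lands in $\tso\subseteq\ts$, the undercategory $a/\ts$ is non-empty. I will show moreover that $c(a)$ is \emph{initial} in $a/\ts$, which yields connectivity at once. Concretely, it suffices to prove that every morphism $u:a\to t$ of $\h$ with $t\in\ts$ factors as $u=v\circ c(a)$ for some morphism $v:t(a)\to t$ of $\ts$; such a $v$ is precisely a morphism $c(a)\to u$ in $a/\ts$.

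I would establish this factorisation by induction on the number of generators needed to express $u$, using the generators and relations for $\h$ recorded in Section \ref{extensionclassifier}. If $u$ is an identity, then $a=t$ is already maximally $X$-contracted, so $c(a)$ is an identity and the claim is trivial. For the inductive step write $u=u'\circ\gamma$ with $\gamma:a\to a''$ a single generator; by induction $u'=v'\circ c(a'')$ with $v':t(a'')\to t$ in $\ts$, so it remains to compare $c(a'')\circ\gamma:a\to t(a'')$ with $c(a):a\to t(a)$. If $\gamma$ lies in $\ho$, then $a''$ and $a$ share the same $\ho$-component, hence $t(a'')=t(a)$ and $c(a'')\circ\gamma=c(a)$ by terminality, so $u=v'\circ c(a)$. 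If $\gamma$ is an $F$-generator, then relation (\ref{relation1}), which commutes $F$-generators past $\ho$-morphisms, provides an $F$-generator $\gamma':t(a)\to t(a'')$ with $c(a'')\circ\gamma=\gamma'\circ c(a)$; here $t(a'')$ is again maximally $X$-contracted, so $\gamma'$ is a morphism of $\ts$ and $u=(v'\circ\gamma')\circ c(a)$.

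The crux, and the step I expect to be the main obstacle, is the case of a $G$-generator $\gamma$: it replaces a $K$-edge $e$ of $a$ by an $X$-edge and thereby creates a \emph{new} contractible $X$-edge which may merge with the original ones, so that $c(a'')$ contracts more than $c(a)$ does. Applying relation (\ref{relation1}) to the span $a''\stackrel{\gamma}{\leftarrow}a\stackrel{c(a)}{\rightarrow}t(a)$ produces a commuting square whose fourth corner $b'$ carries $e$ as an $X$-edge and has the original $X$-edges of $a$ already contracted, but in which $e$ may still be mergeable. Since $b'$ and $t(a'')$ lie in the same $\ho$-component, terminality of $t(a'')$ supplies a further $\ho$-morphism $b'\to t(a'')$; composing it with the $G$-generator $t(a)\to b'$ coming from the square yields $v:t(a)\to t(a'')$ with $c(a'')\circ\gamma=v\circ c(a)$. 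As $t(a)$ and $t(a'')$ lie in $\ts$, the morphism $v$ is a morphism of $\ts$, and $u=(v'\circ v)\circ c(a)$. The legitimacy of this last manipulation rests on the associativity of the monad multiplication encoded in $\ho$ — contracting the original $X$-edges first and only then merging in $e$ agrees with contracting all of them simultaneously — which is exactly what relation (\ref{relation1}) together with the terminality of objects of $\tso$ in their $\ho$-components delivers. This completes the induction, shows that $c(a)$ is initial in $a/\ts$, and hence that $\ts$ is a final subcategory of $\h$.
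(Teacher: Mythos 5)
Your proof is correct, and it is worth noting that it proves something marginally stronger than the paper's: you show that the canonical contraction $a\to t_a$ is \emph{weakly initial} in $a/\ts$ (every object $u$ of $a/\ts$ receives a morphism from it — existence, not uniqueness, of the factorisation, which is all that finality requires), whereas the paper only establishes that every object of $a/\ts$ is joined to $a\to t_a$ by a zigzag. Both arguments rest on the same two pillars, namely Lemma \ref{final0} (unique $\ho$-morphism from each object to the chosen terminal object of its $\ho$-component) and the commutation relation (\ref{relation1}), but the inductions differ: the paper inducts on the number of $K$-edges of the source object $a$, exploiting that $F$- and $G$-generators strictly decrease this number and invoking relations (\ref{relation1}) and (\ref{relation2}) to move such a generator to the front of a presentation of $\phi$, while you induct on the length of a generator presentation of the morphism $u$ itself, peeling off the first generator and pushing the canonical contraction across it. Your route dispenses with relation (\ref{relation2}), with the $K$-edge count, and with zigzags altogether; the paper's induction on a structural invariant of the object is independent of the chosen presentation of the morphism, but your word-length induction is well-founded since Section \ref{extensionclassifier} exhibits every morphism of $\h$ as a finite composite of generators, so nothing is lost. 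One small imprecision to fix: in your $F$-generator case, the fourth corner $b'$ of the square supplied by (\ref{relation1}) is terminal in its $\ho$-component (since $F$-generators do not affect $X$-edges) but need not coincide on the nose with the \emph{chosen} object $t(a'')\in\tso$; exactly as in your own $G$-case, one should compose with the unique $\ho$-morphism $m:b'\to t(a'')$, after which $m\circ\psi'=c(a'')$ by terminality and $c(a'')\circ\gamma=(m\circ\gamma')\circ c(a)$ with $m\circ\gamma'$ a morphism of the full subcategory $\ts$. With that patch both generator cases are handled uniformly and the proof is complete.
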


\begin{proof}Since the categories $\ho$ and $\h$ have the same objects, the very definition of $\tso$ implies that each object $a$ in $\h$ maps to a uniquely determined object $t_a$ in $\tso$ by a unique map $a\to t_a$ in $\ho$. Since $\tso$ is a subcategory of $\ts$, this shows that the undercategory $a/\ts$ contains at least $a\to t_a$ and is thus non-empty. We shall show that  in $a/\ts$ any object $a\to b$ is in the same connected component as $a\to t_a$ using an induction on the number of $K$-edges in the corolla representing $a.$

If $a$ has no $K$-edges then all morphisms $a\to b$ belong to $\ho$, so that $a/\ts$ contains just the canonical morphism $a\to t_a$, and there is nothing to prove. Assume now that $a$ has $p$ $K$-edges with $p>0$ and that we have already shown that $a/\ts$ is connected for all objects $a$ with less than $p$ $K$-edges.

Consider an arbitrary object $\phi:a\to b$ of $a/\ts$. In general, the morphisms in $\h$ are generated by the morphisms in $\ho$, the $F$-generators and the $G$-generators, see Section \ref{extensionclassifier}. If $\phi$ itself belongs to $\ho$ then, since $b$ is an object of $\tso$, one has necessarily $b=t_a$ and $\phi$ coincides with the canonical morphism $a\to t_a$.

If $\phi$ does not belong to $\ho$ then $\phi=\phi_2\phi_1$ where $\phi_1$ or $\phi_2$ is an $F$-generator or a $G$-generator. The relations (\ref{relation1}) and (\ref{relation2}) between the different generators of $\h$ imply that we can assume that it is $\phi_1$ which is a $F$- or $G$-generator. Since $F$- and $G$-generators decrease the number of $K$-edges, the codomain of $\phi_1:a\to a'$ has less than $p$ $K$-edges. Therefore, by induction hypothesis, the object $\phi_2:a'\to b$ is in the same connected component as $a'\to t_{a'}$ in $a'/\ts$. This implies that $\phi_2\phi_1:a\to a'\to b$ is in the same connected component as $a\to a'\to t_{a'}$ in $a/\ts$.

Applying the relation (\ref{relation1}) to the span
$t_a\leftarrow a\stackrel{\phi_1}{\to}a'$ we get a commutative square
\begin{diagram}[small]a&\rTo^{\phi_1}&a'\\\dTo^{}&&\dTo_{}\\t_a&\rTo_{\phi'_1}&b'\end{diagram}
where $a'\to b'$ is in $\ho.$ Since $b'$ is in the same connected component as $a'$ there is a canonical morphism $b'\to t_{a'}.$
Hence we have a commutative square in $\h$
\begin{diagram}[small]a&\rTo^{\phi_1}&a'\\\dTo&&\dTo\\t_a&\rTo&t_{a'}\end{diagram}
This shows that in $a/\ts$ the (arbitrarily chosen) object $\phi:a\to b$ is in the same connected component as the canonical object $a\to t_a$.\end{proof}

\subsection{Canonical filtration.}We say that an object $a$ of $\ts$ is of type $(p,q)$ if $a$ contains exactly $p$ $K$-edges and $q$ $L$-edges, and we call $p+q$ the \emph{degree} of $a$. We define $\ts^{(k)}$ (resp. $\wa^{(k)}$) to be the full subcategory of $\ts$ spanned by all objects of degree $\leq k$ (resp. of degree $k$). We define $\qa^{(k)}$ (resp. $\la^{(k)}$) to be the full subcategory of $\wa^{(k)}$ spanned by all objects of type $(p,k-p)$ such that $p\ne 0$ (resp. $p=0$).

Recall that the (categorical) {\it $k$-cube} is the category of subsets and inclusions of the set $\{1,\ldots,k\}.$ The {\it punctured $k$-cube} is the full subcategory of the $k$-cube spanned by the \emph{proper subsets} of  $\{1,\ldots,k\}$.

\begin{lem}\label{cubes}--

\begin{itemize}
\item[(i)]Each connected component of $\wa^{(k)}$ is isomorphic to a $k$-cube;
\item[(ii)]the category $\la^{(k)}$ is a final discrete full subcategory of $\wa^{(k)}.$
\item[(iii)]the category $\qa^{(k)}$ is isomorphic to a coproduct of punctured $k$-cubes.\end{itemize}\end{lem}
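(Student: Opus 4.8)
The plan is to read off all three statements from the explicit generators-and-relations description of $\h$ given in Section~\ref{extensionclassifier}, combined with the fact (from the construction preceding Lemma~\ref{final}) that the objects of $\ts$ are the chosen terminal objects $t_a$ of the connected components of $\ho$. Recall that such an object is a corolla decorated by some $b\in B$ whose incoming edges are the elements of the fiber $p^{-1}(b)$, coloured by $X$, $K$ or $L$, and that the three kinds of generating morphisms act on colours as follows: morphisms in $\ho$ only rearrange $X$-edges, an $F$-generator recolours one $K$-edge into an $L$-edge, and a $G$-generator recolours one $K$-edge into an $X$-edge. Since the degree $p+q$ counts exactly the non-$X$-edges, an $F$-generator preserves the degree, an $\ho$-morphism preserves it, and a $G$-generator lowers it by one; in particular no generator raises the degree.

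First I would analyse the morphisms inside $\wa^{(k)}$. Given objects $a,b$ of $\wa^{(k)}$ (hence of equal degree $k$) and a morphism $a\to b$ in $\h$, writing it as a composite of generators shows that the degree drops by exactly the number of $G$-generators occurring; equality of degrees therefore forces their absence, so $a\to b$ is built from $\ho$-morphisms and $F$-generators alone. Using relation~(\ref{relation1}) I can commute every $\ho$-morphism past the $F$-generators and, via the canonical retraction $c\mapsto t_c$ onto $\tso$ employed in the proof of Lemma~\ref{final}, reduce the composite to one of \emph{reduced} $F$-generators between objects of $\tso$; since $a,b\in\tso$ the retractions at the two ends are identities. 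A reduced $F$-generator leaves the decoration $b$ and all $X$-edges untouched and merely recolours one of the $k$ distinguished non-$X$-edges from $K$ to $L$. As these $k$ edges are honest, pairwise distinct elements of the fiber $p^{-1}(b)$, recolourings at different edges commute (by the relations of $\h$) and cannot be undone. Consequently, fixing the decoration and the set of $k$ non-$X$-positions, the objects of a connected component of $\wa^{(k)}$ are in bijection with the $2^k$ colourings of these positions by $\{K,L\}$, with a unique morphism from one to another exactly when the set of $L$-coloured positions of the source is contained in that of the target. This is precisely the poset of subsets of a $k$-element set, which proves~(i).

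Statements~(ii) and~(iii) then follow formally from~(i). The objects of type $(0,k)$ are those whose $k$ non-$X$-edges are all coloured $L$; in each $k$-cube component this is the unique top element, hence the terminal object of that component. Thus $\la^{(k)}$ meets every component of $\wa^{(k)}$ in exactly one, terminal, object and carries no non-identity morphisms (a non-identity $F$-generator would require a $K$-edge, of which there are none, and the other generators are excluded as above); so $\la^{(k)}$ is a final discrete subcategory of $\wa^{(k)}$, which is~(ii). Dually, $\qa^{(k)}$ consists of the objects having at least one $K$-edge, i.e. of every vertex of each cube except the top vertex $(1,\dots,1)$ corresponding to the all-$L$ colouring. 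Removing the top vertex from a $k$-cube yields the punctured $k$-cube $\{0,1\}^k-\{(1,\dots,1)\}$, so $\qa^{(k)}$ is a coproduct of punctured $k$-cubes, which is~(iii).

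I expect the only genuinely delicate point to be the verification, inside~(i), that no hidden identifications collapse the Boolean lattice: one must check both that the $\ho$-part of any degree-preserving morphism becomes trivial after reduction to $\tso$ (so that such morphisms really are generated by $K\to L$ recolourings) and that distinct colourings yield distinct objects with a single morphism between comparable ones. Both rest on the rigidity of the underlying combinatorics — the incoming edges of a corolla are literally the elements of the fiber $p^{-1}(b)$ and thus carry no symmetries to be quotiented by — so I would make this rigidity explicit, and appeal to the commutation relations of $\h$ to guarantee uniqueness of the comparison morphisms, rather than leave either point implicit.
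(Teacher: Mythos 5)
Your proof is correct and takes essentially the same route as the paper's: degree considerations rule out $G$-generators, terminality of the objects of $\tso$ in their $\ho$-components (together with the commutation relations (\ref{relation1})) rules out non-trivial $\ho$-morphisms, so every morphism of $\wa^{(k)}$ is a composite of $F$-generators, each flipping one $K$-edge to an $L$-edge, which yields the cube structure and then (ii) and (iii). The paper compresses the Boolean-lattice identification and the deduction of (ii)--(iii) into ``readily follows'' and ``immediate consequences''; your write-up merely makes these steps (and the rigidity of the edge-sets) explicit.
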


\begin{proof}Note first that the morphisms in $\wa^{(k)}$ cannot involve $G$-generators since the latter decrease the degree; therefore, they necessarily belong to $\hi$. Since the objects in $\ts$ are terminal in their connected component in $\ho$, the non-identity morphisms of $\wa^{(k)}$ cannot belong to $\ho$ either; therefore, all morphisms in $\wa^{(k)}$ are composites of $F$-generators. Recall that each $F$-generator replaces a $K$-edge with an $L$-edge. From this it readily follows that the connected components of $\wa^{(k)}$ are $k$-cubes. The assertions (ii) and (iii) are immediate consequences of (i).\end{proof}

\begin{theorem}\label{filtration}\label{SS}For any tame polynomial monad $T$ and any functor ${X}:\h \rightarrow \Ee$ with cocomplete codomain, the colimit of ${X}$ is a sequential colimit of pushouts in $\Ee$.

More precisely, for $P_k = \colim_{\ts^{(k)}} {X}|_{\ts^{(k)}}$, we get $$P=\colim_{\h} {X}\cong \colim_kP_k,$$ where the canonical map $P_{k-1}\to P_k$ is part of the following pushout square in $\Ee$

\begin{gather}\label{KLX}\begin{diagram}[small]Q_k&\rTo^{w_k}&L_k\\\dTo^{\alpha_k}&&\dTo\\P_{k-1}&\rTo&\NWpbk P_k\end{diagram}\end{gather}

 \noindent in which $Q_k$ (resp. $L_k$) is the colimit of the restriction of ${X}$ to $\qa^{(k)}$ (resp. $\la^{(k)}$).\end{theorem}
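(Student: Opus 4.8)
The plan is to reduce the colimit over $\h$ to a sequential colimit over the filtration $\ts^{(0)}\subseteq\ts^{(1)}\subseteq\cdots$ of the final subcategory $\ts$, and then to identify each filtration step with the asserted pushout by checking its universal property by hand. First I would invoke Lemma \ref{final}: since $\ts$ is final in $\h$, the comparison map yields $\colim_\h X\cong\colim_\ts X|_\ts$. Next, observe that by the description in \ref{extensionclassifier} every generator of $\ts$ either acts on $X$-edges or is an $F$-generator ($K\mapsto L$, preserving the degree $p+q$) or a $G$-generator ($K\mapsto X$, strictly decreasing it); hence no morphism of $\ts$ raises the degree, each $\ts^{(k)}$ is closed under codomains, and $\ts=\bigcup_k\ts^{(k)}$ exhausts every object and morphism. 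A cocone on $X|_\ts$ is therefore the same as a compatible sequence of cocones on the $X|_{\ts^{(k)}}$, so $\colim_\ts X|_\ts\cong\colim_k P_k$ with transition maps induced by the inclusions $\ts^{(k-1)}\inc\ts^{(k)}$.

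The heart of the argument is to show that the square (\ref{KLX}) is a pushout, which I would do by comparing cocones against an arbitrary apex $Z$. A cocone on $X|_{\ts^{(k)}}$ restricts to a cocone on $X|_{\ts^{(k-1)}}$, i.e.\ a map $f:P_{k-1}\to Z$, and to a cocone on $X|_{\wa^{(k)}}$. By Lemma \ref{cubes}(ii) the latter is determined by its restriction to the final discrete subcategory $\la^{(k)}$, i.e.\ by a map $h:L_k\to Z$; conversely any such $h$ extends uniquely over each $k$-cube by composing with the cube maps to its terminal ($L$-only) vertex. The only compatibility linking the two pieces comes from the morphisms of $\ts^{(k)}$ that leave $\wa^{(k)}$ for $\ts^{(k-1)}$. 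Using relation (\ref{relation2}) to move $F$-generators past $G$-generators (and relation (\ref{relation1}) for the $X$-edge morphisms), each such morphism can be normalised as a morphism inside $\wa^{(k)}$, followed by a single $G$-generator out of an object of $\qa^{(k)}$, followed by a morphism inside $\ts^{(k-1)}$; hence it suffices to impose the compatibility at these single $G$-generators. Encoding the two induced cocones on $X|_{\qa^{(k)}}$ as the maps $w_k:Q_k\to L_k$ (sending each object of a punctured cube to its top vertex, Lemma \ref{cubes}(iii)) and $\alpha_k:Q_k\to P_{k-1}$ (induced by the $G$-generators into $\ts^{(k-1)}$), this compatibility is exactly the equation $h\circ w_k=f\circ\alpha_k$. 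Thus cocones on $X|_{\ts^{(k)}}$ correspond bijectively to pairs $(f,h)$ with $f\circ\alpha_k=h\circ w_k$, which is precisely the universal property of $P_{k-1}\sqcup_{Q_k}L_k$; this both produces the pushout (\ref{KLX}) and identifies $P_k$ with its apex.

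The main obstacle is the well-definedness of $\alpha_k$ together with this normalisation of morphisms. An object of $\qa^{(k)}$ with several $K$-edges admits several $G$-generators into $\ts^{(k-1)}$, and one must check they induce the same map into $P_{k-1}$; this requires the commutation of $G$-generators acting on distinct $K$-edges, which in turn rests on the square relations (\ref{relation1})--(\ref{relation2}). The delicate point is that, when a morphism of $\ts$ is expanded into generators, its intermediate objects in $\h$ need not themselves lie in $\ts$, so the reduction to the single-$G$-generator compatibilities must be carried out purely through the generator relations rather than by restricting cocones to intermediate objects. Once this combinatorial bookkeeping is in place, the remaining steps (finality, the interchange of the sequential colimit with $\colim$, and the recognition of the universal property) are formal.
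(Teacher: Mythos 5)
Your first half is sound and matches the paper: finality of $\ts$ (Lemma \ref{final}), the exhaustion of $\ts$ by the degree filtration (no generator raises the degree), and the identification $\colim_\h X\cong\colim_k P_k$ via a Lemma \ref{Lack}-type interchange, with $w_k$ coming from Lemma \ref{cubes}. But the heart of your pushout argument has a genuine gap, and the specific claims carrying it are false as stated. First, a $G$-generator out of an object $a$ of $\qa^{(k)}$ does \emph{not} land in $\ts^{(k-1)}$: its codomain has degree $k-1$ but is in general no longer terminal in its connected component of $\ho$ (already for the free monoid monad, applying a $G$-generator to the alternating word $XKX$ yields $XXX$, which contracts further and is not in $\ts$). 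So ``the $G$-generators into $\ts^{(k-1)}$'' do not define $\alpha_k$. What the paper uses instead is the canonical morphism $\xi_a:a\to j(a)$ to the terminal object of the component of $a$ in $\hj$; that $j(a)$ exists and lies in $\ts^{(k-1)}$ is exactly Lemma \ref{final0}, i.e.\ the tameness hypothesis, which your sketch never invokes at this step. For the same reason your normal form for a morphism $a\to c$ exiting $\wa^{(k)}$ --- ``inside $\wa^{(k)}$, then a \emph{single} $G$-generator, then inside $\ts^{(k-1)}$'' --- fails twice over: the degree can drop by more than one, so several $G$-generators may be needed, and after the first one the intermediate objects leave $\ts$, so there is no ``morphism inside $\ts^{(k-1)}$'' to serve as the third factor.

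More importantly, even with $\alpha_k$ repaired via $\xi_a$, the backward direction of your universal-property bijection --- that the single compatibility $f\circ\alpha_k=h\circ w_k$ forces compatibility at \emph{every} morphism $a\to c$ from $\wa^{(k)}$ into $\ts^{(k-1)}$ --- is precisely the connectedness of the undercategories $a/\ts^{(k-1)}$, equivalently the finality of $\ts^{(k-1)}$ in $\overline{\qa}^{(k)}$. Because $\ts$ is a \emph{full} subcategory of $\h$ whose morphisms pass through objects outside $\ts$, this cannot be disposed of by ``generator bookkeeping'': different decompositions of the same morphism must be connected by an explicit zigzag, and constructing it is where the paper spends almost the entire proof (the factorisations $\xi_a=m\circ g_v\circ g$ and $\xi_b=m'\circ g'$, the commutation squares of type (\ref{relation1}) and (\ref{relation2}), and the terminality of $j(a)$ and $j(b)$ in their $\hj$-components). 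Your proposal correctly flags this difficulty but defers it as routine; it is the mathematical core of the theorem, and without it the gluing step is unproved. Note finally that once this finality is in hand, the paper concludes more economically than a direct cocone check: $\ts^{(k)}$ is the set-theoretic union of $\overline{\qa}^{(k)}$ and $\wa^{(k)}$ along $\qa^{(k)}$, with no morphisms crossing between the two parts away from the intersection, so Lemma \ref{Lack} applied to this pushout of categories yields the pushout square (\ref{KLX}) immediately.
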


\begin{proof}The finality of $\ts$ implies $P=\colim_{\h} {X}\cong \colim_{\ts} {X}.$ It is obvious that $\ts \cong \colim_{k} \ts^{(k)}.$ Lemmas \ref{final} and \ref{Lack} then yield $P\cong\colim_{\ts}{X}|_{\ts}\cong\colim_kP_k.$

The inclusion $\qa^{(k)}\inc\wa^{(k)}$ induces the map $w_k:Q_k\rightarrow\colim_{\wa^{(k)}}\tilde{X}|_{\wa^{(k)}}\cong L_k$ where the last isomorphism is a consequence of Lemma \ref{cubes}(ii).

In order to construct the map $\alpha_k:Q_k\rightarrow P_{k-1}$ we shall realise $\ts^{(k-1)}$ as a final subcategory of a category $\overline{\qa}^{(k)}$ which contains $\qa^{(k)}$. The map $\alpha_k$ is then simply induced by the inclusion $\qa^{(k)}\inc\overline{\qa}^{(k)}$. This category $\overline{\qa}^{(k)}$ is by definition the full subcategory of $\ts^{(k)}$ spanned by the objects not contained in $\la^{(k)}$.

To prove that $\ts^{(k-1)}$ is a final subcategory of $\overline{\qa}^{(k)}$ note first that each object $a$ of $\overline{\qa}^{(k)}$ comes equipped with a canonical map  $\xi_a:a\to j(a)$, where $j(a)$ is terminal in the connected component of $a$ in $\hj$, and hence $\xi_a:a\rightarrow j(a)$ is the unique morphism in $\hj$ with codomain $j(a)$, cf. Lemma \ref{final0}. In particular, $j(a)$ belongs $\ts^{(k-1)}$ because $a$ contains at least one $K$-edge and $G$-generators decrease the degree.

It suffices now to show that each object $a\to c$ of $a/\ts^{(k-1)}$ lies in the same connected component as $\xi_a:a\to j(a)$. If $a\to c$ belongs to $\hj$ this holds trivially since $j(a)$ is terminal in its connected component in $\hj.$ Assume that $a\to c$ does not belong to $\hj.$
 We then factor $a\to c$ as $a\to b \to c$ where $a\to b$ is a composite of $F$-generators and $b\to c$ belongs to $\hj$; in other words, we perform all replacements (inside $a\to c$) of a $K$-edge with an $L$-edge first, and perform the replacements of a $K$-edge with an $X$-edge only afterwards. This is always possible due to relations between the generating morphisms of $\h$, cf. Section \ref{extensionclassifier}.

Since $b\to c$ belongs to $\hj$ and the codomain $c$ belongs to $\ts^{(k-1)}$, the domain $b$ cannot belong to $\la^{(k)}$ so that we have a canonical map $\xi_b:b\to j(b)$ whose codomain belongs to $\ts^{(k-1)}.$ Since $b\to c$ belongs to $\hj$ and $j(b)$ is terminal in its connected component of $\hj$ we have a factorisation of $\xi_b$ as $b \to c \to j(b).$  It thus suffices to construct a zig-zag in $a/\ts^{(k-1)}$ connecting $a\to b\stackrel{\xi_b}{\to} j(b)$ and $a\stackrel{\xi_a}{\to} j(a).$

 It  also suffices to assume that the morphism $a\to b$ is equal to one of the $F$-generators $f_v:a\to b$ which replaces a $K$-edge $v$ by a $L$-edge.
 Observe that the morphism $\xi_a:a\rightarrow j(a)$ can be  factorised as
$$a\stackrel{g}{\rightarrow} a' \stackrel{g_v}{\rightarrow} a''  \stackrel{m}{\rightarrow} j(a),$$
where $g$ is a composite of $G$-generators which replaces all $K$-edges by $X$-edges  except  the $K$-edge $v,$   $g_v$ is a $G$-generator which replaces $K$-edge $v$ by an $X$-edge, and $m$ belongs to $\ho.$

The morphism $\xi_b: b\rightarrow j(b)$ can also be factored as
$$b\stackrel{g'}{\rightarrow} b'   \stackrel{m'}{\rightarrow} j(b),$$
where $g'$ is a composite of $G$-generators which  replaces all $K$-edges by $X$-edges  and $m'$ belongs to $\ho.$ Moreover, the following diagram commutes
\begin{diagram}[small]a&\rTo^{g}&a'\\\dTo^{f_v}&&\dTo_{f'_v}\\b&\rTo_{g'}&b'\end{diagram}
where the morphism $f'_v$ is a $F$-generator which replaces the $K$-edge $v$ by an $L$-edge.

Since $F$-generators commute with the morphism from $\ho$ we obtain the following commutative diagram
\begin{diagram}[small]a'&\rTo^{m''}&a'''\\\dTo^{f'_v}&&\dTo_{f''_v}\\b'&\rTo_{m'}&j(b)\end{diagram}in which  $f''_v$ is an $F$-generator and $m''$ belongs to $\ho$. Observe, that $a'''$ belongs to $\ts^{k-1}$ since it is obtained  from $j(b)$ by replacing an $L$-edge by a $K$-edge.

Finally we observe, that $a,a',a'',a'''$ are in the same connected component of $\hj$ by construction and since $j(a)$ is terminal in this connected component we have a commutative diagram
\begin{diagram}[small]a'&\rTo^{g_v}&a''\\\dTo^{m''}&&\dTo_{}\\a'''&\rTo^{}&j(a)\end{diagram}

Putting all these morphisms together we get a commutative diagram in $a/\ts^{(k-1)}$
\begin{diagram}[small,silent,UO]                                        &&&&a&&&&\\
                                                               &                  &         & \ldTo (4,2) &\dTo &   \rdTo(4,2) &       &                    &     \\
                                                             b&   \rTo        & b'     &\lTo            &a'        &\rTo              & a''  &\rTo           &j(a) \\
                                                              & \rdTo(2,2) & \dTo &                  & \dTo   &                     &      & \ruTo(4,2) &    \\
                                                              &                  & j(b)   &\lTo           & a'''       &                     &        &           &\end{diagram}
which provides us with the necessary zigzag.\vspace{1ex}

It follows from the preceding discussion that diagram (\ref{KLX}) may be obtained by restricting $X:\h\to\Ee$ to the following commutative square of categories\begin{gather}\label{tamesquare}\begin{diagram}[small]\qa^{(k)}&\rTo&\wa^{(k)}\\\dTo&&\dTo\\\overline{\qa}^{(k)}&\rTo&\ts^{(k)}\end{diagram}\end{gather}and then computing the colimits of the corresponding restrictions. This yields commutativity of square (\ref{KLX}) as well as a canonical map $P_{k-1}\cup_{Q_k}L_k\to P_k$ in $\Ee$. It remains to be shown that the latter map is invertible, i.e. that square (\ref{KLX}) is a pushout diagram in $\Ee$.

A closer inspection of square (\ref{tamesquare}) reveals that it is a categorical pushout of a special kind: the category $\ts^{(k)}$ is obtained as the set-theoretical union of the categories $\overline{\qa}^{(k)}$ and $\wa^{(k)}$ along their common intersection $\qa^{(k)}$. Indeed, away from this intersection, there are no morphisms in $\ts^{(k)}$ between objects of $\overline{\qa}^{(k)}$ and objects of $\wa^{(k)}$. In virtue of Lemma \ref{Lack} this implies that (\ref{KLX}) is a pushout square in $\Ee$. \end{proof}

\begin{remark}
In our particular situation, the inverse of $P_{k-1}\cup_{Q_k}L_k\to P_k$ is obtained by gluing together (along $\qa^{(k)}$) the two colimit cocones ${X}|_{\overline{\qa}^{(k)}}\overset{\cdot}{\longrightarrow}P_{k-1}$ and ${X}|_{\wa^{(k)}}\overset{\cdot}{\longrightarrow}L_k$ and taking the colimit of ${X}$ over $\ts^{(k)}$.\end{remark}

We are indebted to Steve Lack for pointing out to us a proof of the following categorical fact.
Let $\DD$ be a small category and let $\Phi:\DD\to\Cat$ be a $\DD$-diagram of small categories. We denote by $\CC$ the colimit of this diagram, and by $\Phi(d)\to\CC$ the components of the corresponding colimit cocone. Let ${X}:\CC\to\Ee$ be a functor with cocomplete codomain. For each object $d$ of $\DD$ we consider the restriction ${X}_d: \Phi(d)\to\CC\to\Ee$ and its colimit $\colim_{\Phi(d)}{X}_d$ in $\Ee$. This defines a functor $\colim_{\Phi(-)}{X}_-: \DD\to\Ee.$
\begin{lem}\label{Lack}
The induced map $\colim_\DD\colim_{\Phi(-)}{X}_-\to\colim_\CC{X}$ is  an isomorphism in $\Ee$.\vspace{1ex}\end{lem}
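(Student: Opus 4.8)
The plan is to establish the isomorphism by the Yoneda lemma, showing that both $\colim_\DD\colim_{\Phi(-)}{X}_-$ and $\colim_\CC{X}$ corepresent the same functor $\Ee\to\Set$, and that the natural isomorphism between these corepresentations is exactly the canonical comparison map. Everything will be phrased in terms of cocones and compatible families, so the heart of the matter is a careful use of the universal property of the colimit $\CC=\colim_\DD\Phi$ taken in $\Cat$.

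First I would record the full (two-dimensional) universal property of $\CC=\colim_\DD\Phi$. The one-dimensional version is standard: a functor $\CC\to\mathcal{A}$ is the same as a family of functors $F_d\colon\Phi(d)\to\mathcal{A}$ satisfying $F_{d'}\circ\Phi(u)=F_d$ for every $u\colon d\to d'$ in $\DD$. What I also need is that natural transformations match up. This follows from the cartesian closedness of $\Cat$: for any small category $\mathcal{B}$ the product functor $\mathcal{B}\times-$ is a left adjoint, hence preserves colimits, so $\mathcal{B}\times\colim_\DD\Phi\cong\colim_\DD(\mathcal{B}\times\Phi)$; applying $\Cat(-,\mathcal{A})$ together with the exponential adjunction and Yoneda in $\Cat$ yields, for every category $\mathcal{A}$, a natural isomorphism of functor categories $[\CC,\mathcal{A}]\cong\lim_\DD[\Phi(-),\mathcal{A}]$. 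In other words, not only functors but also natural transformations on $\CC$ correspond to compatible families on the $\Phi(d)$.

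Next I would apply this with $\mathcal{A}=\Ee$ to the functor ${X}$ and to the constant functor $\Delta_Z$, for an arbitrary object $Z$ of $\Ee$. Under the correspondence of the previous step, ${X}$ is carried to the family $({X}_d)_d$ and $\Delta_Z$ to the family of constant functors, so the two-dimensional part gives a bijection, natural in $Z$,
$$[\CC,\Ee]({X},\Delta_Z)\;\cong\;\lim_{d\in\DD}\,[\Phi(d),\Ee]({X}_d,\Delta_Z).$$
Rewriting each hom-set of cones as a hom-set out of a colimit, via the defining adjunction $\colim\dashv\Delta$, turns the left-hand side into $\Ee(\colim_\CC{X},Z)$ and each factor on the right into $\Ee(\colim_{\Phi(d)}{X}_d,Z)$; the limit over $\DD$ of the latter is $\Ee(\colim_\DD\colim_{\Phi(-)}{X}_-,Z)$. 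Thus both objects corepresent the same functor of $Z$, and the Yoneda lemma produces a canonical isomorphism. To finish I would unwind the correspondences on the colimit cocones $\Phi(d)\to\CC$ to verify that this Yoneda isomorphism is precisely the comparison map of the statement, which is a direct bookkeeping check.

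The only genuinely non-formal point is the two-dimensional universal property invoked in the first step: the identification of functors out of $\CC$ with cocones is routine, but one must ensure that natural transformations also correspond to compatible families, and this is exactly what cartesian closedness of $\Cat$ (preservation of colimits by $\mathcal{B}\times-$) supplies. Once that isomorphism $[\CC,\Ee]\cong\lim_\DD[\Phi(-),\Ee]$ is secured, the remainder is a formal chain of adjunctions, so this is the step I would treat with the most care.
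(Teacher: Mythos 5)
Your proof is correct, but it follows a genuinely different route from the paper's. The paper proves a stronger, structural fact: it considers the functor $\gamma:\Cat/\Ee\to\Ee$ sending a small category over $\Ee$ to the colimit of its structure functor, and shows $\gamma$ is a \emph{left adjoint} by factoring it through the lax slice $\Cat/\!/\Ee$ — the inclusion $\Cat/\Ee\to\Cat/\!/\Ee$ has a right adjoint given by a Grothendieck-type lax-limit construction, and the pointwise-colimit functor $\Cat/\!/\Ee\to\Ee$ has a right adjoint sending $Z\in\Ee$ to the functor $1\to\Ee$ picking out $Z$. Since colimits in $\Cat/\Ee$ are computed in $\Cat$, the diagram $d\mapsto(\Phi(d),X_d)$ has colimit $(\CC,X)$, and preservation of colimits by $\gamma$ yields the lemma for \emph{all} shapes at once. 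You instead argue by corepresentability: both objects corepresent cocones under $X$, which you verify via the two-dimensional universal property $[\CC,\Ee]\cong\lim_\DD[\Phi(-),\Ee]$, the adjunction $\colim\dashv\Delta$, and Yoneda. Your identification of the crux is accurate: the comparison of cocones is a statement about natural transformations $X\to\Delta_Z$, not merely functors, so the one-dimensional universal property of $\colim_\DD\Phi$ is insufficient, and your upgrade via cartesian closedness of $\Cat$ (testing against the interval category, so that $\mathcal{B}\times-$ preserves the colimit) is exactly what is needed; the final bookkeeping that the Yoneda isomorphism is the canonical comparison map is routine, as you say. What each approach buys: the paper's argument produces an explicit right adjoint (the lax limit) and hence a reusable preservation result, at the cost of introducing lax slice categories; yours is shorter, self-contained, and avoids the Grothendieck construction entirely, at the cost of proving only the one instance needed. (A minor size caveat applies to both arguments — $\Ee$ is large while the indexing categories are small — but in your chain every step only uses that the hom-functor $\Cat(-,\Ee)$ turns colimits of small categories into limits, so nothing breaks.)
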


\begin{proof} Consider the slice category $\Cat/\Ee.$ There is a functor $\gamma:\Cat/\Ee\to \Ee$ which takes the objects of the slice category to their colimit in $\Ee$. We have to show that $\gamma$ preserves colimits. We actually show that $\gamma$ has a right adjoint. For this we observe that $\gamma$ can be factored as follows:
$$\Cat/\Ee \stackrel{i}{\to}\Cat//\Ee\stackrel{\gamma'}{\longrightarrow}\Ee$$
where $\Cat//\Ee$ denotes the lax version of the slice category. In $\Cat//\Ee$ the morphisms from $F$ to $G$ are pairs $(f,\phi)$ defining triangles of the following form:
\begin{diagram}[small,UO]
\AA    &                          &       \rTo^f               &     &   \BB  \\
        \quad F &\rdTo(2,2)       &       \overset{\phi}{\Rightarrow}  &   \ldTo(2,2) &G\quad  \\
          &                         &       \Ee                   &      &
\end{diagram}

The functor $i$ is the obvious inclusion functor and $\gamma'$ is again given by taking objectwise the colimit.
Both functors $i$ and $\gamma'$ have right adjoints. The right adjoint of $\gamma'$ takes an object of $\Ee$ to the functor $1\to\Ee$ which picks up this object. The right adjoint of $i$ is the lax-limit functor  given by a Grothendieck type construction. For a functor $F:\AA\to\Ee$ this Grothendieck construction $P(F)$ is the category whose objects are pairs $(a,\alpha)$ consisting of an object $a$ of $\AA$ and a morphism $\alpha:F(a)\to x$ in $\Ee.$ The morphisms $(a,\alpha)\to (b,\beta)$ in $P(F)$ are pairs $(a\to a',\,x\to x')$ such that an obvious diagram commutes. There is a functor $P(F)\to\Ee$ which takes $(a,\alpha)$ to the codomain of $\alpha$ in $\Ee$. It is straightforward to check that this construction provides a right adjoint for $i.$ \end{proof}

\section{Admissibility of tame polynomial monads and Quillen adjunctions}

We are now ready to combine the results of Sections $2$, $5$, $6$ and $7$ so as to obtain the two main theorems of this article.

Throughout this section $\Ee$ denotes a monoidal model category and $I$ a set of colours. The category $\Ee/I$ is then a monoidal model category in which the cofibrations, weak equivalences and fibrations are defined pointwise, and the monoidal structure as well is defined pointwise.

\begin{theorem}\label{maintheorem}Let $(\Ee,\otimes,e)$ be a compactly generated monoidal model category and let $T$ be a tame polynomial monad on $\Set/I$. Then $\Ee/I$ is a compactly generated monoidal model category, and the monad on $\Ee/I$ induced by $T$ is
\begin{itemize}\item[(a)]relatively  $\otimes$-adequate if $\Ee$ satisfies the monoid axiom;
\item[(b)]$\otimes$-adequate if $\Ee$ is strongly $h$-monoidal.\end{itemize}
Therefore, the category of $T$-algebras in $\Ee/I$ admits a relatively left proper transferred model structure if $\Ee$ satisfies the monoid axiom (resp. a left proper transferred model structure, if $\Ee$ is strongly $h$-monoidal).\end{theorem}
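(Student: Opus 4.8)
The plan is to reduce the theorem to Theorem~\ref{adequate}: once the monad induced by $T$ on $\Ee/I$ is shown to be (relatively) $\otimes$-adequate, the final ``Therefore'' clause follows at once. First I would record that $\Ee/I$ is compactly generated. Indeed, cofibrations, weak equivalences, fibrations and the tensor product of $\Ee/I$ are all defined pointwise, so the class $I^\otimes$ of $\otimes$-cofibrations is pointwise, the weak equivalences are $I^\otimes$-perfect, and every object is $I^\otimes$-small, just as in $\Ee$.

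The core task is to construct the functorial factorisation demanded by $\otimes$-adequacy. By Proposition~\ref{Cart} the free $T$-algebra extension $R[u,\alpha]$ equals $\sqcup'(R,K,L,g,u)$, and by Theorem~\ref{colim} its underlying object is $\colim_{\h}\tilde{X}$ for the functor $\tilde{X}:\h\to\Ee$ on the free algebra extension classifier $\h$ which sends a corolla to the tensor product of its edge-labels, with $X$-edges contributing copies of $U_T(R)$, $K$-edges copies of $K$, and $L$-edges copies of $L$. Since $T$ is tame, Theorem~\ref{filtration} rewrites this colimit as the sequential colimit of the pushout squares~(\ref{KLX}); I set $R[u]^{(k)}:=P_k$, which is plainly functorial in the data.

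I would then identify the generating map $w_k:Q_k\to L_k$. By Lemma~\ref{cubes} each component of $\wa^{(k)}$ is a $k$-cube whose directions record the $k$ edges that may carry colour $K$ or $L$; collecting the $X$-coloured factors, $w_k$ is an iterated pushout-product of $u:K\to L$ tensored with a fixed tensor power of $U_T(R)$, exactly as in the monoid case of Theorem~\ref{SSformonoids}. The pushout-product axiom then makes $w_k$, hence $R[u]^{(k-1)}\to R[u]^{(k)}$, a $\otimes$-cofibration when $u$ is a cofibration; and when $u$ is a trivial cofibration the iterated pushout-product is a trivial cofibration, so the monoid axiom (via Corollary~\ref{couniv1}) renders its tensor with the $U_T(R)$-factors a couniversal weak equivalence, placing $R[u]^{(k-1)}\to R[u]^{(k)}$ in $W\cap I^\otimes$.

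It remains to verify that a weak equivalence $f:R\to S$ of $T$-algebras induces weak equivalences $R[u]^{(k)}\to S[u]^{(k)}$, and here I would transcribe the two arguments of Theorem~\ref{SSformonoids}, replacing its punctured $n$-cubes by the coproducts of punctured $k$-cubes of Lemma~\ref{cubes}. For (a) I may assume, by Proposition~\ref{rlp}, that $u$ has cofibrant domain and $U_T(R),U_T(S)$ are cofibrant; then all vertex objects are cofibrant, the left vertical maps of the comparison cube are cofibrations and its left horizontal maps are weak equivalences, so the gluing lemma and induction on $k$ finish the argument. For (b), strong $h$-monoidality makes the tensor power of $f$ a weak equivalence at each vertex, $h$-monoidality makes $w_k$ an $h$-cofibration so that the squares~(\ref{KLX}) are homotopy pushouts, and left properness allows the conclusion by gluing homotopy pushouts. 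This establishes (relative) $\otimes$-adequacy, whence Theorem~\ref{adequate} supplies the (relatively) left proper transferred model structure. The main obstacle is the explicit identification of $w_k$ as an iterated pushout-product: one must check that collecting the $X$-coloured factors across an entire component of $\wa^{(k)}$ reassembles, uniformly over the corolla combinatorics furnished by the tame monad $T$, into the cubical pushout-product diagram of $u$. Once this bookkeeping is secured, the homotopical steps are precisely those of the Schwede--Shipley case.
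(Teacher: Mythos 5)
Your proposal follows the paper's route almost exactly: reduce to Theorem \ref{adequate}, use Proposition \ref{Cart} and Theorem \ref{colim} to express the free algebra extension as $\colim_{\h}\tilde{X}$, invoke Theorem \ref{filtration} to get the filtration $P_k=R[u]^{(k)}$, and then run the Schwede--Shipley analysis of Theorem \ref{SSformonoids} on the squares (\ref{KLX}). Your identification of each component of $w_k$ as an iterated pushout-product of the (componentwise) maps $f_v:K_v\to L_v$ tensored with the $X$-factors is correct and is precisely what the paper means by ``closely follow the proof of Theorem \ref{SSformonoids}''; part (a), including the reduction via Proposition \ref{rlp} to cofibrant data and the gluing lemma, is sound.

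However, there is a genuine gap in your treatment of part (b), and it sits exactly where you claim the only remaining obstacle is ``bookkeeping''. Unlike the monoid case, $w_k:Q_k\to L_k$ and the vertical comparison maps $L_k\to L'_k$, $Q_k\to Q'_k$ induced by $f:R\to S$ are coproducts over the (possibly infinitely many) connected components of $\la^{(k)}$ resp.\ $\qa^{(k)}$. Strong $h$-monoidality gives you a weak equivalence at each vertex of each cube, but to conclude that the coproducts $L_k\to L'_k$ and $Q_k\to Q'_k$ are weak equivalences between \emph{arbitrary} (non-cofibrant) objects you need the class of weak equivalences of $\Ee$ to be closed under arbitrary coproducts. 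This is not automatic in a compactly generated monoidal model category; the paper flags it as the one genuinely new ingredient beyond Theorem \ref{SSformonoids} and derives it from Proposition \ref{allhcof} (all objects are $h$-cofibrant, using the strong unit axiom, which follows from strong $h$-monoidality via the resolution axiom) together with Lemma \ref{charhcof}(iii). A related, smaller repair: your claim that ``$h$-monoidality makes $w_k$ an $h$-cofibration'' needs justification for infinite coproducts, since Lemma \ref{coprodhcof} covers only finite ones; here the correct route is that $w_k$ is a $\otimes$-cofibration (the saturated class $I^{\otimes}$ is closed under arbitrary coproducts) and hence an $h$-cofibration by Proposition \ref{couniv0}, using compact generation. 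With these two points supplied, your argument coincides with the paper's proof.
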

\begin{proof}The cartesian monad morphism $\Phi$ of Proposition \ref{Cart}(iii) from $T_{f,g}$ to $T$ induces an adjunction$$\gamma_\Ee^\Phi:\Alg_{T_{f,g}}(\Ee)\lra\Alg_T(\Ee):\delta_\Ee^\Phi$$whose left adjoint takes the quintuple $(X,K,L,f,g)$ to the pushout $P$ in $\Ee$. Theorem \ref{colim} implies that the underlying object of $P$ can be calculated as the colimit of a functor $\tilde{X}:\h \rightarrow \Ee$. Theorem \ref{SS} permits us to realise this colimit as a sequential colimit of pushouts.

From a homotopical point of view it is essential that the sequential colimit presentation of $P$ is made up by pushouts of maps $w_k:Q_k\to L_k$ which are easy to calculate. Indeed, the functor $\tilde{X}:\h\to\Ee$ takes the values \begin{equation}\label{SSSR}\tilde{X}(a) = (\otimes_{v\in \chi(a)} X_v) \otimes (\otimes_{v\in \kappa(a)} K_v) \otimes (\otimes_{v\in\lambda(a)} L_v),\end{equation}cf. formula (\ref{tilde}). Here, $\chi(a)$ is the set of $X$-edges, $\kappa(a)$ is the set of $K$-edges, and $\lambda(a)$ is the set of $L$-edges in the corolla representing $a.$ The $F$-generators of $\h$ act via the map $f:K\to L$, the $G$-generators act via the map $g:K\to U_T(X)$, the morphisms in $\ho$ act via the $T$-algebra structure on $X$. Note that by Lemma \ref{cubes} the map $w_k:Q_k\to L_k$ is a coproduct of comparison maps. Each comparison map is obtained by taking the colimit over a punctured $k$-cube (a connected component of $\qa^{(k)}$) in which the edge-maps are tensor products $Y\otimes f_{v}$ where $f_{v}: K_{v} \rightarrow L_{v}.$

Now we can closely follow the proof of Theorem \ref{SSformonoids} which describes the special case of the free monoid monad. Indeed, formula (\ref{SSSR}) indicates that the only qualitative difference between a general tame polynomial monad and the free monoid monad lies in the fact that the map $w_k:Q_k\to L_k$ is in general a \emph{coproduct} of comparison maps while in the special case treated in Theorem \ref{SSformonoids} we have just a single comparison map. Since (trivial) cofibrations as well as weak equivalences between cofibrant objects are closed under arbitrary coproducts, this difference does not affect the argument establishing (a). However, to carry out the proof of (b) for a general tame polynomial monad, we need the additional property that the class of weak equivalences is closed under arbitrary coproducts. This follows from Proposition \ref{couniv0} because $\Ee$ is strongly $h$-monoidal, cf. Section \ref{strongunit}.\end{proof}

Let $\Phi:S\Rightarrow dTc$ be a cartesian monad morphism, cf. Definition \ref{Phic}. Assume that the categories of $S$- and $T$-algebras in $\Ee$ admit transferred model structures (e.g., $S$ and $T$ are tame polynomial and $\Ee$ is compactly generated $h$-monoidal). By Theorem \ref{colim}, restriction $\delta_\Ee^{\Phi}$ admits a left adjoint $\gamma_\Ee^{\Phi}$. The resulting adjoint pair$$\gamma_\Ee^\Phi:\Alg_S(\Ee)\lra\Alg_T(\Ee):\delta_\Ee^\Phi$$is a Quillen pair with respect to the transferred model structures on both sides. We shall see now that the total left derived functor $\LL\gamma_\Ee^\Phi$ has an explicit formula in terms of the combinatorial data defining $\Phi$, provided that the monoidal model category $\Ee$ admits a \emph{good realisation functor} $|\!-\!|_\Ee$ for simplicial objects.

One way to obtain such a realisation functor is to require that $\Ee$ is \emph{simplicially enriched} in such a way that simplicial hom $\Ee_\bullet(-,-)$ and internal hom $\uEe(-,-)$ are related by the following \emph{compatibility relation}
$$\Ee_\bullet(X,Y)\cong \Ee_\bullet(e,\uEe(X,Y)),$$and moreover $\Ee$ equipped with these simplicial hom's becomes a \emph{simplicial model category}, cf. \cite{Hirschhorn,Hovey}. The compatibility relation implies that the category $\Alg_{T}(\Ee)$ of $T$-algebras is simplicially enriched, and that the free-forgetful adjunction is a simplicial adjunction. The simplicial hom for the category of $T$-algebras is given by the usual formula involving a categorical end (see for instance \cite{SymBat}).

More generally, in order to have a good realisation functor for simplicial objects, it is enough to assume that $\Ee$ has a \emph{standard system of simplices} in the sense of Moerdijk and the second author, cf. \cite[Appendix A]{BergerMoerdijk0}. We then have the following derived version of Theorem \ref{colim}.

\begin{theorem}\label{hounder}\label{hocolim}Let $\Ee$ be a monoidal model category with a ``good'' realisation functor $|\!-\!|_\Ee$ for simplicial objects, and let $\Phi:S\Rightarrow dTc$ be a cartesian monad morphism between polynomial monads. Let $X$ be an $S$-algebra in $\Ee$ whose underlying $J$-collection is pointwise cofibrant. Then the $I$-collection underlying $\,\LL\gamma_\Ee^{\Phi}(X)$ can be calculated as the following homotopy colimit\begin{equation}\label{polhocolimit} \LL\gamma_\Ee^{\Phi}(X)_i = \hocolim_{\bc\in {\bf T}^{\tt S}_i}\widetilde{X}(\bc)\quad\quad(i\in I)\end{equation}where $\tilde{X}:\HS\to\Ee$ represents the $S$-algebra $X$, cf. Section \ref{colimsec}.\end{theorem}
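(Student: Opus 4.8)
The plan is to evaluate the derived functor on an explicit cofibrant resolution of $X$ supplied by the monadic bar construction, and then to recognise the outcome as a Bousfield--Kan homotopy colimit. Write $\beta_\bullet=B(F_S,S,X)_\bullet$ for the simplicial $S$-algebra with $\beta_n=F_S(S^nU_S(X))$ and the usual bar faces and degeneracies. First I would verify that the realisation $|\beta_\bullet|_\Ee$ is a cofibrant resolution of $X$. Since $U_S(X)$ is pointwise cofibrant and $S$ is a finitary polynomial monad, each $S^nU_S(X)$ is again pointwise cofibrant (tensor products and coproducts of cofibrant objects are cofibrant by the pushout-product axiom), so each $\beta_n$ is a free $S$-algebra on a cofibrant object and the bar resolution $\beta_\bullet$ is Reedy cofibrant; hence the good realisation functor yields a cofibrant $S$-algebra. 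Moreover the augmented simplicial object $U_S(\beta_\bullet)\to U_S(X)$ carries an extra degeneracy and is therefore a simplicial homotopy equivalence; as $U_S$ preserves realisations (a finitary polynomial functor preserves sifted colimits, its middle factor $p_*$ having finite fibres) and the good realisation sends simplicial homotopy equivalences to weak equivalences, $|\beta_\bullet|_\Ee\to X$ is a weak equivalence. Thus $\LL\gamma_\Ee^\Phi(X)=\gamma_\Ee^\Phi(|\beta_\bullet|_\Ee)$.

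Second, both functors in play commute with realisation. As a left adjoint $\gamma_\Ee^\Phi$ preserves the colimit defining $|-|$, and $U_T$ preserves realisations for the same sifted-colimit reason as $U_S$; monoidality of the good realisation functor is precisely what renders these two facts compatible on underlying $I$-collections, giving $U_T\LL\gamma_\Ee^\Phi(X)\cong|U_T\gamma_\Ee^\Phi(\beta_\bullet)|_\Ee$. I would then invoke the factorisation of left adjoints: since $U_S\delta_\Ee^\Phi=d\,U_T$ with $d$ the pullback along $\delta:J\to I$ and $c\dashv d$, one gets $\gamma_\Ee^\Phi F_S\cong F_Tc$, whence $U_T\gamma_\Ee^\Phi(\beta_n)=U_TF_Tc\,S^nU_S(X)=Tc\,S^nU_S(X)$. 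In other words $U_T\gamma_\Ee^\Phi(\beta_\bullet)$ is exactly the two-sided bar construction $B(Tc,S,U_S(X))_\bullet$.

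Third comes the combinatorial identification, where Theorem \ref{TST} enters. By that theorem $\HS$ is the nerve of the category $B(Tc,S,1)_\bullet$, so in colour $i$ the set of $n$-simplices of the nerve of ${\bf T}^{\tt S}_i$ is $(TcS^n(1))_i$, namely the set of $n$-chains of composable morphisms in ${\bf T}^{\tt S}_i$. Replacing the terminal object $1$ (which records only colours) by the actual values of $X$ turns $TcS^n(1)$ into $TcS^nU_S(X)$, and unwinding the definition \eqref{tilde} of $\tilde X$ identifies $B(Tc,S,U_S(X))_n$, componentwise in $i$, with the coproduct $\coprod_{\bc_0\to\cdots\to\bc_n}\tilde X(\bc_0)$ over $n$-chains in ${\bf T}^{\tt S}_i$. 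This is precisely the Bousfield--Kan simplicial replacement of the diagram $\tilde X\colon{\bf T}^{\tt S}_i\to\Ee$. Since $\tilde X$ is pointwise cofibrant, this replacement is Reedy cofibrant and its realisation computes $\hocolim_{\bc\in{\bf T}^{\tt S}_i}\tilde X(\bc)$; assembling the three steps gives the asserted formula.

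The hard part will be the homotopical bookkeeping of the first step: checking that the monadic bar resolution is genuinely Reedy cofibrant (so that its realisation is cofibrant and computes the derived functor) and isolating exactly which properties of the ``good'' realisation functor are being used — monoidality, compatibility with the sifted-colimit preservation of $U_S$ and $U_T$, and the passage from simplicial homotopy equivalences to weak equivalences. By contrast, the identifications of the second and third steps are essentially formal once Theorem \ref{TST} and the formula $\gamma_\Ee^\Phi F_S\cong F_Tc$ are in hand.
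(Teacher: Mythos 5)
Your proposal follows essentially the same route as the paper's proof: resolve $X$ by the monadic bar construction, push it through $\gamma_\Ee^\Phi$ using $\gamma_\Ee^\Phi F_S\cong F_Tc$ to recognise the two-sided bar construction $B_\bullet(Tc,S,X)$, identify this simplicial object colourwise with the Bousfield--Kan simplicial replacement of $\tilde X:\HS\to\Ee$, and conclude from pointwise cofibrancy that its realisation computes the homotopy colimit. Your second and third steps match the paper (including its closing remark identifying $\gamma_\Ee^\Phi B_\bullet(S,S,X)$ with $B_\bullet(Tc,S,X)$) almost verbatim; the only cosmetic difference is that the paper obtains your chain-count of $TcS^k$ formally, from Theorem \ref{RTS} applied to the triangle $Id_J\to S$ over $T$ (the functor ${\bf T}^\eta$ being the inclusion of the discrete subcategory of objects of $\HS$), rather than by ``replacing $1$ by $X$'' in the nerve description of Theorem \ref{TST}.

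The one place where your sketch is not yet a proof is precisely the step you defer as ``the hard part'', and the justification you do offer there is insufficient: ``each $\beta_n$ is free on a pointwise cofibrant object'' does not imply Reedy cofibrancy. Since $F_S$ preserves the colimits defining latching objects and all degeneracies of $B_\bullet(F_S,S,X)$ are $F_S$ of unit insertions, Reedy cofibrancy reduces to the latching maps of the underlying simplicial object $[k]\mapsto S^kU_S(X)$ being cofibrations in $\Ee/J$, and for a general monad unit insertions need not even be cofibrations. This is exactly where the paper uses polynomiality: Theorem \ref{polkan} applied to $\eta:Id_J\to S$ yields
\begin{equation*}
S^k(X)_j=\coprod_{d_1\leftarrow\cdots\leftarrow d_k}\tilde X(d_k)\qquad(d_1,\ldots,d_k\in{\bf S}^{\tt S}_j),
\end{equation*}
a coproduct over chains in the classifier ${\bf S}^{\tt S}$, under which the unit (hence every degeneracy) becomes a coproduct-summand inclusion, the degenerate part being indexed by chains containing identities. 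Each latching map is therefore a summand inclusion whose complementary summand is pointwise cofibrant, i.e.\ a cofibration. Note that this is the same chain decomposition you invoke in your third step for $TcS^k$; applying it to $S^k$ itself is what closes the gap, so your outline completes to the paper's argument with no new ideas needed.
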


\begin{proof}Let $B_\bullet(S,S,X)$ be the simplicial bar-construction of the $S$-algebra $X$. Its realisation $B(S,S,X)=|B_\bullet(S,S,X)|_\Ee$ is a cofibrant resolution of the $S$-algebra $X$ with respect to the transferred model structure on $\Alg_S(\Ee).$  This follows from a similar argument as for \cite[Theorem 5.5]{SymBat}, once we know that the following augmented cosimplicial object in $\Ee$

  \vspace{-3mm}\begin{equation}\label{aug}\end{equation}
  {\unitlength=1mm

\begin{picture}(0,4)(-40,17)

\put(-30,25){\makebox(0,0){\mbox{$X$}}}
\put(-25,25){\vector(1,0){7}}
\put(-10,25){\makebox(0,0){\mbox{$S(X)$}}}
\put(1,27){\vector(1,0){7}}
\put(1,23){\vector(1,0){7}}
\put(8,25){\vector(-1,0){7}}
\put(31,27){\vector(1,0){7}}
\put(31,23){\vector(1,0){7}}
\put(31,25){\vector(1,0){7}}
\put(50,25){\makebox(0,0){\mbox{$S^3(X)$}}}
\put(20,25){\makebox(0,0){\mbox{$S^2(X)$}}}
\put(78,25){\makebox(0,0){\mbox{$\ldots $}}}
\put(62,28){\vector(1,0){7}}
\put(62,24){\vector(1,0){7}}
\put(62,22){\vector(1,0){7}}
\put(62,26){\vector(1,0){7}}

\end{picture}}

\noindent is Reedy cofibrant. In this cosimplicial object the cofaces are generated by the unit of the monad $S$ and the codegeneracies are generated by the multiplication of $S$.

The unit $\eta:Id_J\to S$ is a cartesian map of polynomial monads. The category of $J$-collections in $\Ee$ can be identified with the category of $Id_J$-algebras so that $\eta$ induces the free-forgetful adjunction between $J$-collections and $S$-algebras, whence the monad $S=U_S F_S$ can be rewritten as $\delta_{\Ee}^{\eta}\gamma_{\Ee}^{\eta}$. Theorem \ref{polkan} implies that the adjoint pair $(\gamma_{\Ee}^\eta,\delta_{\Ee}^\eta)$ is represented by the adjoint pair $((\bf{S}^{\eta})_!,(\bf{S}^{\eta})^*)$ where $\bf{S}^\eta:{\bf S}^{\tt Id_J}\to{\bf S}^{\tt S}$ is the functor of classifiers induced by $\eta:Id_J\to S$, and $J$-collections (resp. $S$-algebras) are represented by functors ${\bf S}^{\tt Id_J}\to\Ee$ (resp. ${\bf S}^{\tt S}\to\Ee$).

The category ${\bf S}^{\tt Id_J}$ has the same objects as ${\bf S}^{\tt S}$ but only identity morphisms, and the functor $\bf{S}^\eta:{\bf S}^{\tt Id_J}\to{\bf S}^{\tt S}$ is identity on objects. The explicit formula for the left Kan extension $({\bf{S}^{\eta}})_!$ produces then for any $S$-algebra $X$, represented as a functor $\tilde{X}:{\bf S}^{\tt S}\to\Ee$, the following formula for the iteration of the monad $S$
$$\widetilde{S^k(X)}(d)=(({\bf{S}^{\eta}})_! ({\bf{S}^{\eta}})^*)^k(\widetilde{X})(d) = \coprod_{d\stackrel{}{\leftarrow}d_1\leftarrow \cdots\, \leftarrow  d_k}\tilde{X}(d_k),$$where the coproduct is taken over composable chains of morphisms in ${\bf S}^{\tt S}.$

Evaluating this formula at the terminal objects $1_j$ of ${\bf S}^{\tt S}_j$ we obtain
$$S^k(X)_j=(F_SU_S)^k(X)_{j}= \coprod_{d_1\leftarrow \cdots\, {\leftarrow} d_k} \tilde{X}(d_k) \quad(d_1,\ldots,d_k\in {\bf S}^{\tt S}_j).$$
The unit of the monad $U_SF_S$ is the canonical summand inclusion $X_j\to \coprod_d \tilde{X}(d)$ which takes $X_j$ to $\tilde{X}(1_j) = X_j.$
Therefore, the latching object of (\ref{aug}) in dimension $k$ is the coproduct of the $\tilde{X}(d_k)$ over degenerate $k$-simplices of the nerve of ${\bf S}^{\tt S}.$ Since the underlying $J$-collection of the $S$-algebra $X$ is pointwise cofibrant, all summands $\tilde{X}(d)$ are cofibrant, and hence the inclusion of the latchting object is a cofibration. It follows that $B(S,S,X)$ is a cofibrant resolution of the $S$-algebra $X$ so that the value of the total left derived functor $\LL\gamma_\Ee^{\Phi}$ can be calculated as $$\LL\gamma_\Ee^{\Phi}(X)=\gamma_\Ee^{\Phi}(B(S,S,X))=\gamma_\Ee^\Phi|B_\bullet(S,S,X)|_\Ee=|\gamma_\Ee^\Phi(B_\bullet(S,S,X))|_\Ee.$$The simplicial $T$-algebra $\gamma_\Ee^\Phi(B_\bullet(S,S,X))$ is isomorphic to

\vspace{0mm}\begin{equation}\label{barS}\end{equation}
   {\unitlength=1mm

\begin{picture}(0,0)(-30,17)

\put(-10,25){\makebox(0,0){\mbox{$\gamma_\Ee^{\Phi}(S(X))$}}}

\put(8,27){\vector(-1,0){7}}
\put(8,23){\vector(-1,0){7}}
\put(1,25){\vector(1,0){7}}
\put(38,27){\vector(-1,0){7}}
\put(38,23){\vector(-1,0){7}}
\put(38,25){\vector(-1,0){7}}
\put(50,25){\makebox(0,0){\mbox{$\gamma_\Ee^{\Phi}(S^3(X))$}}}
\put(20,25){\makebox(0,0){\mbox{$\gamma_\Ee^{\Phi}(S^2(X))$}}}
\put(78,25){\makebox(0,0){\mbox{$\ldots $}}}

\put(69,28){\vector(-1,0){7}}
\put(69,24){\vector(-1,0){7}}
\put(69,22){\vector(-1,0){7}}
\put(69,26){\vector(-1,0){7}}

\end{picture}}

\noindent and, by Theorem \ref{colim}, the underlying object in dimension $k$ is given by the formula
\begin{equation}\label{gam}\gamma_\Ee^{\Phi}(S^k(X))_i = \colim_{\bc\in{\bf T}^{\tt S}_i}\widetilde{S^k(X)}(\bc)\quad(i\in I),\end{equation}
where this time $S$-algebras are represented as functors from $\HS$ to $\Ee$.

To finish the proof we therefore need a presentation of the iterations of the comonad $F_SU_S$ on $Alg_S(\Ee)$ in terms of the \emph{relative classifier} $\HS$. For ease of notation, we will use the same letter $S$ for the   comonad $F_SU_S.$

Consider the following commutative triangle of polynomial monads
  {\unitlength=1mm

\begin{picture}(40,17)(0,13)

\put(50,25){\makebox(0,0){\mbox{$Id_J$}}}
\put(53,23){\vector(1,-1){5}}
\put(60.5,16){\makebox(0,0){\mbox{$T$}}}
\put(68,23){\vector(-1,-1){5}}
\put(70,25){\makebox(0,0){\mbox{$S$}}}
\put(54,25){\vector(1,0){14}}
\put(60,26){\shortstack{\mbox{$\scriptstyle  \eta$}}}
\put(67,19){\shortstack{\mbox{$\scriptstyle  \Phi$}}}
\end{picture}}

\noindent
and use Theorem \ref{RTS} to get
$$\widetilde{S(X)} =  \widetilde{\gamma_\Ee^{\eta}\delta_\Ee^{\eta}(X)}=
({\bf{T}^{\eta}})_! \widetilde{\delta_\Ee^{\eta}(X)} =
({\bf{T}^{\eta}})_! ({\bf{T}^{\eta}})^* (\widetilde{X})$$
and hence
$$\widetilde{S^k(X)} = (({\bf{T}^{\eta}})_! ({\bf{T}^{\eta}})^*)^k (\widetilde{X}).$$
The   canonical functor ${\bf T}^{\eta}: {\bf T}^{\tt Id_J}\to \HS$ is the inclusion of the discrete subcategory of  objects of $\HS$,  so that we  obtain (in a similar way as above) the formula
\begin{equation}\label{sam}\widetilde{S^k(X)}(\bc)  = \coprod\limits_{\bc\leftarrow\bc_1\leftarrow\cdots\leftarrow\bc_k}\tilde{X}(\bc_k)\end{equation} where this time the coproduct is over composable chains of morphisms in $\HS$.

Let $t_i:{\bf T}^{\tt S}_i\to 1$ be the unique functor to the terminal category. Putting formulas (\ref{gam}) and (\ref{sam}) together we obtain
\begin{eqnarray*}
\gamma_\Ee^{\Phi}(S^k(X))_i & = & \colim_{\bc\in{\bf T}^{\tt S}_i}\widetilde{S^k(X)}(\bc)  = (t_i)_! (({\bf{T}^{\eta}})_! ({\bf{T}^{\eta}})^*)^k (\widetilde{X}) \\
                                                      & =  &(t_i{\bf{T}^{\eta}})_! ({\bf{T}^{\eta}})^*(({\bf{T}^{\eta}})_! ({\bf{T}^{\eta}})^*)^{k-1}
                                                                  (\widetilde{X}) =    \coprod\limits_{\bc_1 \leftarrow\cdots\leftarrow\bc_{k}}\tilde{X}(\bc_{k}) .
\end{eqnarray*}

We conclude that the simplicial object $\gamma_\Ee^\Phi(B_\bullet(S,S,X))$ may be identified with the classical simplicial replacement of Bousfield-Kan for the functor $\tilde{X}:\HS\to\Ee$ representing the $S$-algebra $X$. By hypothesis, this functor is pointwise cofibrant so that the Bousfield-Kan simplicial replacement calculates the homotopy colimit of $\tilde{X}$ upon realisation.\end{proof}

\begin{remark}The simplicial $T$-algebra $\gamma_\Ee^\Phi(B_\bullet(S,S,X))$ is isomorphic to

  {\unitlength=1mm
  \begin{picture}(0,10)(-30,21)

\put(-10,25){\makebox(0,0){\mbox{$Tc(X)$}}}

\put(8,27){\vector(-1,0){7}}
\put(8,23){\vector(-1,0){7}}
\put(1,25){\vector(1,0){7}}
\put(38,27){\vector(-1,0){7}}
\put(38,23){\vector(-1,0){7}}
\put(38,25){\vector(-1,0){7}}
\put(50,25){\makebox(0,0){\mbox{$TcS^2(X)$}}}
\put(20,25){\makebox(0,0){\mbox{$TcS(X)$}}}
\put(78,25){\makebox(0,0){\mbox{$\ldots $}}}

\put(69,28){\vector(-1,0){7}}
\put(69,24){\vector(-1,0){7}}
\put(69,22){\vector(-1,0){7}}
\put(69,26){\vector(-1,0){7}}

\end{picture}}

\noindent where $c:\Ee/J\to\Ee/I$ is induced by the cartesian morphism of polynomial monads $\Phi$ from $S$ to $T$ (see Section \ref{cartesian}). In particular, we recognise here the two-sided simplicial bar-construction $B_\bullet(Tc,S,X)$, and hence we get the formula$$\LL\gamma_\Ee^\Phi(X)\cong|B_\bullet(Tc,S,X)|_\Ee.$$

\end{remark}

\begin{corol}\label{Nerve}The simplicial nerve $N(\HS)$ of the $S$-algebra classifier $\HS$ is a cofibrant simplicial $T$-algebra. In fact,
$$N(\HS) = {\mathbb L}\gamma_\Ee^{\Phi}(1)$$where $\Ee$ is the category of simplicial sets equipped with Quillen's model structure, and $1$ is the terminal simplicial $S$-algebra.\end{corol}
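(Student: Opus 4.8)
The plan is to read the statement off as the special case $X=1$ of Theorem \ref{hocolim}, with $\Ee$ the category of simplicial sets equipped with Quillen's model structure. First I would verify the hypotheses of that theorem. Simplicial sets form a monoidal model category (under cartesian product) which carries a standard system of simplices, hence a good realisation functor $|\!-\!|_\Ee$. Transferred model structures on $\Alg_S(\Ee)$ and $\Alg_T(\Ee)$ exist, since simplicial sets admit a transfer for algebras over \emph{any} (tame or not) polynomial monad (cf. the introduction), so that $\gamma_\Ee^\Phi$ is a left Quillen functor. Finally, the terminal $S$-algebra $1$ has underlying $J$-collection $(\Delta^0)_{j\in J}$, which is pointwise cofibrant because every simplicial set is cofibrant. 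Thus Theorem \ref{hocolim} applies to $X=1$.

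Next I would identify the representing functor $\tilde{1}:\HS\to\Ee$ of Section \ref{colimsec}. By formula (\ref{tilde}) we have $\tilde{1}(\bc)=1_{j_1}\otimes\cdots\otimes 1_{j_k}$, where the $1_{j}$ are the (pointwise terminal) components of $1$. Since the monoidal unit of simplicial sets is the terminal object $\Delta^0$ and $\Delta^0\times\cdots\times\Delta^0=\Delta^0$, the functor $\tilde{1}$ is the constant functor at the point $\Delta^0$. Theorem \ref{hocolim} then gives, for each $i\in I$,
$$\LL\gamma_\Ee^\Phi(1)_i=\hocolim_{\bc\in{\bf T}^{\tt S}_i}\tilde{1}(\bc)=\hocolim_{\bc\in{\bf T}^{\tt S}_i}\Delta^0.$$
The homotopy colimit of the constant point-diagram over a small category is its nerve, and in the present setting this is visible directly from the proof of Theorem \ref{hocolim}: by formula (\ref{sam}) the simplicial $T$-algebra $\gamma_\Ee^\Phi(B_\bullet(S,S,1))$ has, in bar-degree $k$ and colour $i$, the discrete simplicial set $\coprod_{\bc_1\leftarrow\cdots\leftarrow\bc_{k+1}}\tilde{1}(\bc_{k+1})=\coprod_{\bc_1\leftarrow\cdots\leftarrow\bc_{k+1}}\Delta^0$, i.e. the set $N({\bf T}^{\tt S}_i)_k$ of composable $k$-chains in ${\bf T}^{\tt S}_i$. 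Realising this simplicial object by $|\!-\!|_\Ee$ collapses the (discrete) internal direction via the density formula $\int^{[k]}N({\bf T}^{\tt S}_i)_k\times\Delta^k\cong N({\bf T}^{\tt S}_i)$, recovering the nerve. Hence $\LL\gamma_\Ee^\Phi(1)_i=N({\bf T}^{\tt S}_i)=N(\HS)_i$, and assembling over $i\in I$ yields $\LL\gamma_\Ee^\Phi(1)=N(\HS)$; the $T$-algebra structures agree because, by Theorem \ref{TST}, the nerve of $\HS$ is precisely the two-sided bar construction computed here.

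For the cofibrancy assertion I would observe that $\LL\gamma_\Ee^\Phi(1)$ is, by construction in the proof of Theorem \ref{hocolim}, equal to $\gamma_\Ee^\Phi(B(S,S,1))$, the value of the left Quillen functor $\gamma_\Ee^\Phi$ on the cofibrant resolution $B(S,S,1)=|B_\bullet(S,S,1)|_\Ee$ of the terminal $S$-algebra (whose cofibrancy was established in that proof from the pointwise cofibrancy of the underlying $J$-collection of $1$). A left Quillen functor preserves cofibrant objects, so $N(\HS)=\gamma_\Ee^\Phi(B(S,S,1))$ is a cofibrant simplicial $T$-algebra.

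I expect no serious obstacle here: the argument is essentially a bookkeeping exercise once the preceding theorem is in place. The single substantive point is the identification of the Bousfield--Kan replacement of the constant point-functor with the nerve, which is immediate from formula (\ref{sam}) as soon as $\tilde{1}$ is recognised as constant. The only care that will be needed is to keep the two simplicial directions apart (the bar direction realised by $|\!-\!|_\Ee$, and the internal simplicial-set direction, which is discrete for $X=1$) and to check that the realisation collapses the discrete internal direction back to the nerve.
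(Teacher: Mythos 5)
Your proposal is correct and takes essentially the same route as the paper, which states the corollary without a separate proof precisely because it is the specialisation $X=1$ of Theorem \ref{hocolim}: the identification of $\gamma_\Ee^\Phi(B_\bullet(S,S,1))\cong B_\bullet(Tc,S,1)$ with the simplicial nerve of $\HS$ is already contained in the remark following Theorem \ref{TST} and the remark after Theorem \ref{hocolim}, and cofibrancy follows since $\gamma_\Ee^\Phi$ is left Quillen and $B(S,S,1)$ is a cofibrant resolution of $1$. Your explicit verifications (existence of the transfer over simplicial sets for arbitrary polynomial monads, pointwise cofibrancy of the terminal $S$-algebra, constancy of $\tilde{1}$ at $\Delta^0$, and the collapse of the discrete internal simplicial direction under realisation) are exactly the bookkeeping the paper leaves to the reader.
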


\begin{remark}Theorem \ref{hounder} and Corollary \ref{Nerve} generalise formulas of \cite{SymBat,EHBat} where the symmetrisation functor from $n$-operads to symmetric operads has been studied.  Theorem \ref{hounder} is one of the main tools in the study of stabilisation phenomena in \cite{BBC,BBC2}.\end{remark}

\begin{remark}Giansiracusa's formula \cite{G} for the derived modular envelope of a cyclic operad can be understood along similar lines. For the precise relationship with the present approach we refer the reader to \cite[Example 3.4.3]{W2}.\end{remark}

\part{Operads as algebras over polynomial monads}

Symmetric, non-symmetric, cyclic, modular operads, properads, PROP's, the higher operads of the first author, and other types of generalised operads (see \cite{Borisov-Manin, Markl, Loday, JY}) are examples of algebras over polynomial monads. In this part we study these examples in some detail and investigate whether the corresponding polynomial monad is tame or not. We pay a particular attention to the polynomial monad for $n$-operads for reasons explained in the introduction to this article. This special case turns out to be the most intricate one with respect to tameness.

In each case, the definition of the generating polynomial necessitates a rigorous definition of a certain class of graphs, together with the appropriate notion of \emph{graph insertion}. This graph insertion is responsible for the multiplication of the associated polynomial monad. We refer the reader to Part 4 for our terminology and conventions concerning graphs, trees and graph insertion.

\section{Operads based on contractible graphs}

\subsection{Diagram categories}As a first example of tame polynomial monad we consider `linear' monads. These are polynomial monads for which the middle map is the identity:\begin{diagram}I&\lTo^s&B&\rTo^{id}&B&\rTo^t I .\end{diagram}The category of linear monads is isomorphic to the category of small categories. If $C$ is a linear monad and $\CC$ the corresponding small category then the category of algebras over $C$  is the category of diagrams $[\CC,\Ee].$ To see that any linear monad is tame we need to compute the classifier ${\bf C}^{\tt C+1}.$ Let $i$ be an object of $\CC.$  The objects of  ${\bf C}^{\tt C+1}(i)$ are morphisms $f: j\to i$ coloured by two colours $X$ or $K.$ A (non-identity) morphism from $f:j_1\to i$ to $g:j_2\to i$ can exist only if $f$ and $g$ both have colour $X$ and there is a morphism $h:j_1\to j_2$ in $\CC$ such that $f = g\cdot h.$ In other words, the semi-free coproduct classifier ${\bf C}^{\tt C+1}(i)$ is isomorphic to a coproduct of overcategories $\CC/i$ together with a coproduct of as many terminal categories as there are non-identity morphisms in $\CC$. Since each overcategory $\CC/i$ has a terminal object we conclude that the linear monad $C$ is tame.

\subsection{Monoids and non-symmetric operads}\label{SSM}

Let $M$ be free monoid monad on $\Set$. It is a polynomial monad generated by the following polynomial\begin{diagram}1&\lTo^s&\LTrees^*&\rTo^p&\LTrees&\rTo^t 1\end{diagram}in which $\LTrees$ is the set of (isomorphism classes of finite) linear rooted trees, and $\LTrees^*$ is the set of (isomorphism classes of finite) linear rooted trees with one marked vertex. The mapping $p\,$ forgets the marking. The multiplication of $M$ is induced by insertion of linear trees into vertices of linear trees. The category ${\bf M}^{\tt M+1}$ can be described as follows, cf. Section \ref{semifreeclassifier}.

The objects of ${\bf M}^{\tt M+1}$ are corollas with vertex decorated by a linear tree and edges coloured by $X$ or $K.$ The edges of the corolla correspond to vertices of the decorating tree. Therefore, such a corolla can be considered as a linear tree with vertices labelled by $X$ or $K$. The morphisms of ${\bf M}^{\tt M+1}$ are generated by contractions of linear subtrees with $X$-coloured vertices to a single $X$-coloured vertex

   {\unitlength=0.9mm

\begin{picture}(60,42)(-25,20)

\begin{picture}(10,10)(0,-11)

\put(14,22.5){\circle{5}}
\put(14,17.6){\line(0,1){2.5}}
\put(14,22.5){\makebox(0,0){\mbox{$\scriptscriptstyle K$}}}
\put(14,30){\circle{5}}
\put(14,27.5){\line(0,-1){2.5}}
\put(14,30){\makebox(0,0){\mbox{$\scriptscriptstyle X$}}}
\put(14,37.5){\circle{5}}
\put(14,35){\line(0,-1){2.5}}
\put(14,37.5){\makebox(0,0){\mbox{$\scriptscriptstyle X$}}}
\put(14,45){\circle{5}}
\put(14,42.5){\line(0,-1){2.5}}
\put(14,45){\makebox(0,0){\mbox{$\scriptscriptstyle K$}}}
\put(14,50){\line(0,-1){2.5}}

\end{picture}

\begin{picture}(10,10)(11,-3.7)

\put(14,22.5){\circle{5}}
\put(14,17){\line(0,1){3}}
\put(14,22.5){\makebox(0,0){\mbox{$\scriptscriptstyle X$}}}
\put(30,35){\vector(1,0){10}}
\put(14,41){\circle{15}}

\end{picture}

\begin{picture}(0,0)(-20,-11)

\put(14,22.5){\circle{5}}
\put(14,17.6){\line(0,1){2.5}}
\put(14,22.5){\makebox(0,0){\mbox{$\scriptscriptstyle K$}}}
\put(14,30){\circle{5}}
\put(14,27.5){\line(0,-1){2.5}}
\put(14,30){\makebox(0,0){\mbox{$\scriptscriptstyle X$}}}
\put(14,37.5){\circle{5}}
\put(14,35){\line(0,-1){2.5}}
\put(14,37.5){\makebox(0,0){\mbox{$\scriptscriptstyle K$}}}
\put(14,42.5){\line(0,-1){2.5}}

\end{picture}

\begin{picture}(0,0)(-18.8,-3.7)

\put(14,22.5){\circle{5}}
\put(14,17){\line(0,1){3}}
\put(14,22.5){\makebox(0,0){\mbox{$\scriptscriptstyle X$}}}

\end{picture}
\end{picture}}

\noindent and insertions of a single $X$-coloured vertex into an edge:

   {\unitlength=0.9mm

\begin{picture}(60,38)(-25,22)

\begin{picture}(10,10)(0,-15)

\put(14,22.5){\circle{5}}
\put(14,17.6){\line(0,1){2.5}}
\put(14,22.5){\makebox(0,0){\mbox{$\scriptscriptstyle K$}}}
\put(14,30){\circle{5}}
\put(14,27.5){\line(0,-1){2.5}}
\put(14,30){\makebox(0,0){\mbox{$\scriptscriptstyle K$}}}
\put(14,35){\line(0,-1){2.5}}

\end{picture}

\begin{picture}(10,10)(11,-7.7)

\put(14,22.5){\circle{5}}
\put(14,17){\line(0,1){3}}
\put(14,22.5){\makebox(0,0){\mbox{$\scriptscriptstyle X$}}}
\put(30,35){\vector(1,0){10}}

\end{picture}

\begin{picture}(0,0)(-20,-11)

\put(14,22.5){\circle{5}}
\put(14,17.6){\line(0,1){2.5}}
\put(14,22.5){\makebox(0,0){\mbox{$\scriptscriptstyle K$}}}
\put(14,30){\circle{5}}
\put(14,27.5){\line(0,-1){2.5}}
\put(14,30){\makebox(0,0){\mbox{$\scriptscriptstyle X$}}}
\put(14,37.5){\circle{5}}
\put(14,35){\line(0,-1){2.5}}
\put(14,37.5){\makebox(0,0){\mbox{$\scriptscriptstyle K$}}}
\put(14,42.5){\line(0,-1){2.5}}

\end{picture}

\begin{picture}(0,0)(-18.8,-3.7)

\put(14,22.5){\circle{5}}
\put(14,17){\line(0,1){3}}
\put(14,22.5){\makebox(0,0){\mbox{$\scriptscriptstyle X$}}}

\end{picture}

\end{picture}}

\vspace{1ex}
\noindent Obviously, every connected component of ${\bf M}^{\tt M+1}$ contains a terminal object which is a linear tree the vertices of which have alternating colours starting with $X$ and terminating with $X$:

   {\unitlength=0.9mm

\begin{picture}(60,40)(-45,22)

\begin{picture}(10,10)(0,-11)

\put(14,22.5){\circle{5}}
\put(14,17.6){\line(0,1){2.5}}
\put(14,22.5){\makebox(0,0){\mbox{$\scriptscriptstyle K$}}}
\put(14,30){\circle{5}}
\put(14,27.5){\line(0,-1){2.5}}
\put(14,30){\makebox(0,0){\mbox{$\scriptscriptstyle X$}}}
\put(14,37.5){\circle{5}}
\put(14,35){\line(0,-1){2.5}}
\put(14,37.5){\makebox(0,0){\mbox{$\scriptscriptstyle K$}}}
\put(14,45){\circle{5}}
\put(14,42.5){\line(0,-1){2.5}}
\put(14,45){\makebox(0,0){\mbox{$\scriptscriptstyle X$}}}
\put(14,50){\line(0,-1){2.5}}

\end{picture}

\begin{picture}(10,10)(11,-3.7)

\put(14,22.5){\circle{5}}
\put(14,17){\line(0,1){3}}
\put(14,22.5){\makebox(0,0){\mbox{$\scriptscriptstyle X$}}}

\end{picture}

\end{picture}}

\noindent Hence, the free monoid monad $M$ is tame and we obtain in particular formula (\ref{semfreemon}) of the introduction.

Moreover, the objects of the final subcategory $\ts$ of ${\bf M}^{\tt M_{f,g}}$ (cf. Lemma \ref{final}) are linear trees with vertices coloured by $X,K,L$ such that\begin{itemize}\item first and last vertex are coloured by $X;$\item  adjacent vertices have different colours;\item  no edge connects a $K$-vertex with an $L$-vertex.\end{itemize}

Let $X = (R,Y_0,Y_1,u,\alpha)$ be a $M_{f,g}$-algebra, i.e. the data for a pushout along a free map in the category of monoids (see the proof of Theorem \ref{SSformonoids}). Then, according to formula (\ref{SSSR}) the functor $\tilde{X}$ takes on a typical object of ${\bf \ts}$  the value$$R\otimes Y_{i_1}\otimes R\otimes Y_{i_2}\otimes\cdots \otimes Y_{i_k}\otimes R$$where  $(i_1,\dots,i_k)$ is a vertex of a punctured $k$-cube. We thus obtain the Schwede-Shipley formula (\ref{SSformula}) as a special instance of Theorem \ref{filtration} (cf. also \cite[pg. 10]{SS}).

The polynomial monad for non-symmetric operads monad is generated by the following polynomial
$$\begin{diagram}\NN_0&\lTo^s&\RTrees^*&\rTo^p&\RTrees&\rTo^t&\NN_0\end{diagram}$$
in which $\NN_0$ denotes the set of natural numbers including $0$; $\RTrees$ denotes the set of isomorphism classes of planar rooted trees. The elements of $\RTrees^*$ are elements of $\RTrees$ with an additional marking of one vertex. The mapping $p\,$ forgets this marking. The target map takes a planar tree to the cardinality of the set of its leaves and the source map takes a tree $S$ with marked vertex $v$ to the cardinality of the set of leaves of the corolla $cor_v(S)$ surrounding the vertex $v$ in $S$. The multiplication of the polynomial monad is induced by insertion of planar rooted trees into vertices of planar rooted trees.

The polynomial monad $O(1)$ for non-symmetric operads is tame for similar reasons as in the preceding example. The objects of ${\bf O(1)}^{\tt O(1)+1}$ are planar rooted trees the vertices of which are coloured by $X$ and $K.$  Morphisms in ${\bf O(1)}^{\tt O(1)+1}$ are generated by contractions of a subtree with $X$-coloured vertices to a single $X$-coloured vertex, and by insertion of a single $X$-coloured vertex into an edge. A typical terminal object in a connected component of ${\bf O(1)}^{\tt O(1)+1}$ is a planar rooted tree with vertices coloured by $X$ and $K$ such that adjacent vertices have different colours, and such that vertices incident to the root or to the leaves are $X$-coloured. For instance, a tree of the following form is terminal in its connected component:

   {\unitlength=1mm

\begin{picture}(30,26)(0,19)

\begin{picture}(10,10)(-35,-2.6)

\put(14,22.5){\circle{5}}
\put(6.5,30.5){\circle{5}}
\put(21.5,30.5){\circle{5}}
\put(6.5,30.5){\makebox(0,0){\mbox{$\scriptstyle  K$}}}
\put(21.5,30.5){\makebox(0,0){\mbox{$\scriptstyle  K$}}}
\put(14,20){\line(0,-1){5}}
\put(12.1,24.4){\line(-1,1){4}}
\put(15.9,24.4){\line(1,1){4}}
\put(14,22.5){\makebox(0,0){\mbox{$\scriptstyle X$}}}
\put(21.5,44){\line(0,-1){3}}
\put(21.5,36){\line(0,-1){3}}
\put(21.5,38.5){\circle{5}}
\put(21.5,38.5){\makebox(0,0){\mbox{$\scriptstyle X$}}}

\end{picture}

\end{picture}}

There is also a corresponding description for the final subcategory $\ts$ of the free non-symmetric operad extension classifier ${\bf O(1)}^{\bf O(1)_{f,g}}$. As an instance of our Theorem \ref{filtration} we obtain Muro's formula \cite{FM} for free non-symmetric operad extensions.

\begin{remark}We can introduce a coloured version for the polynomial monads above. For this we need to use graphs whose edges are coloured by some set of colours $I.$ The coloured version of the free monoid monad gives  the monad for categories with  fixed object-set  $I.$ Algebras over this monad in a symmetric monoidal category $\Ee$  are precisely $\Ee$-enriched categories with object-set $I.$ Similarly, the coloured version of the monad for non-symmetric operads is the monad for multicategories with fixed object-set. These monads are tame by the same argument as for their single-coloured counterparts. A similar remark applies for all other polynomial monads of this article. We will thus not anymore mention the coloured versions.\end{remark}

\subsection{Symmetric operads}\label{SSP}\label{symmetricoperads}

The monad for symmetric operads \cite{BV,May} is generated by the following polynomial
$$\begin{diagram}\NN_0&\lTo^s&\ORTrees^*&\rTo^p&\ORTrees&\rTo^t&\NN_0\end{diagram}$$
in which $\ORTrees$ is the set of isomorphism classes of ordered rooted trees. Such an isomorphism class is represented by a planar rooted tree together with an ordering of its leaves. The structure maps of this polynomial monad are defined in a similar fashion as those of the polynomial monad for non-symmetric operads.

\begin{defin}\label{nd}A vertex $\,v$ of a rooted tree is called non-degenerate (resp. normal) if the set of incoming edges of $v$ is non-empty (resp. of cardinality at least $2$).

A tree is called \emph{regular} (resp. \emph{normal}) if its vertices are non-degenerate (resp. normal). A tree $T$ is called \emph{non-degenerate} if the set of its leaves is non-empty and any non-degenerate vertex belongs to a linear subtree containing a leaf of the tree.\end{defin}


There is a polynomial monad for {\it constant-free} symmetric operads. These are symmetric operads without constant operations. The generating polynomial is
$$\begin{diagram}\NN&\lTo^s&\ORTrees_{reg}^*&\rTo^p&\ORTrees_{reg}&\rTo^t&\NN\end{diagram}$$
where everything is defined as above except that we restrict to \emph{regular} ordered rooted trees.

One can also define a polynomial monad for {\it normal} symmetric operads. These are constant-free symmetric operads with a unique unary operation, i.e. the object of  operations of arity $1$ is the tensor unit $e$ of $\Ee.$ The generating polynomial of the monad for normal symmetric operad is
$$\begin{diagram}\NN&\lTo^s&\ORTrees_{nor}^*&\rTo^p&\ORTrees_{nor}&\rTo^t&\NN\end{diagram}$$
everything being defined as above except that we restrict to {\it normal} ordered rooted trees.

The polynomial monad for constant-free symmetric operads is tame. In an implicit manner, this was first observed by Getzler-Jones \cite[Section 1.5]{GJ}.  As in the non-symmetric case one can characterise the terminal object in each connected component of the internal algebra classifier as alternating trees with two colours $X$ and $K$  (see Section \ref{SSP}). The polynomial monads for normal symmetric operads are also tame. However, the polynomial monad for general symmetric operads is \emph{not} tame. The following tree

{\unitlength=0.9mm
\begin{picture}(30,23)(-43,14)

\put(14,22.5){\circle{5}}
\put(6.5,30.5){\circle{5}}
\put(21.5,30.5){\circle{5}}
\put(6.5,30.5){\makebox(0,0){\mbox{$\scriptstyle  K$}}}
\put(21.5,30.5){\makebox(0,0){\mbox{$\scriptstyle  K$}}}
\put(14,20){\line(0,-1){5}}
\put(12.1,24.4){\line(-1,1){4}}
\put(15.9,24.4){\line(1,1){4}}
\put(14,22.5){\makebox(0,0){\mbox{$\scriptstyle X$}}}

\end{picture}}

\noindent is the only candidate for a terminal object in one of the connected components of corresponding internal algebra classifier but it has a non-trivial automorphism coming from a $\Z_2$-action on $X_2,$ and so, it can not be terminal. There is an obstruction for the existence of model structure on symmetric operads with coefficient in chain complexes over a field of positive characteristic,  similarly to (\ref{counterexample}), which was first described by Fresse \cite{Fresse}.

\begin{remark}\label{Cavigliared}Symmetric operads are often supposed to be \emph{$0$-reduced}, i.e. to have a unique constant (the monoidal unit $e$) in arity $0$. It seems unlikely that $0$-reduced symmetric operads are the algebras for a polynomial monad in sets. We are grateful to Giovanni Caviglia for having pointed this out. There is nevertheless a way of dealing with $0$-reduced symmetric operads from a polynomial monad point of view.

Let us call \emph{reduced symmetric operad} any algebra for the polynomial monad generated by the following polynomial

$$\begin{diagram}\NN&\lTo^s&\ORTrees_{nd}^*&\rTo^p&\ORTrees_{nd}&\rTo^t&\NN\end{diagram}$$ where $\ORTrees_{nd}$ is the set of isomorphism classes of \emph{non-degenerate} ordered rooted trees and $\ORTrees_{nd}^*$ denotes the set of elements of $\ORTrees_{nd}$ with one \emph{non-degenerate} vertex marked (cf. Definition \ref{nd}). The reader should observe that non-degenerate trees can have degenerate vertices. The structure maps of this polynomial are induced by substitution of non-degenerate trees into marked vertices of non-degenerate trees, much like above. 
In virtue of the existing input leaves in any non-degenerate tree, the polynomial monad for reduced symmetric operads is \emph{tame}. 

We denote by $\Ee/e$ the category of objects of $\Ee$ which are augmented over the monoidal unit $e$. The category of reduced symmetric operads in $\Ee/e$ contains then the category of $0$-reduced symmetric operads in $\Ee$ as a full subcategory, cf. Fresse \cite[Theorem 2.2.18]{Fresse2}. Reduced symmetric operads have the advantage over $0$-reduced symmetric operads to be algebras for a tame polynomial monad in sets. A similar observation applies to graphical PROP's versus Adams-Mac Lane PROP's, cf. Remark \ref{Giovanni}.\end{remark}

\subsection{Planar cyclic and cyclic operads}The generating polynomial of the monad for \emph{cyclic operads} is$$\begin{diagram}\NN&\lTo^s&\OTrees^*&\rTo^p&\OTrees&\rTo^t&\NN\end{diagram}$$where $\OTrees$ is the set of isomorphism classes of ordered (non-rooted) trees. In the generating polynomial of the monad for \emph{planar cyclic operads} the set $\OTrees$ has to be replaced with the set $\OPTrees$ of isomorphism classes of ordered planar trees. Neither of these three polynomial monads is tame for similar reasons as above. Nevertheless  we have

\begin{pro}The polynomial monad for normal (constant-free, reduced) cyclic (resp. planar cyclic) operads is tame.\end{pro}

\begin{proof}The terminal objects in the connected components of the internal algebra classifier can be characterised as alternating coloured trees much as in the case of the monad for non-symmetric operads.\end{proof}

\subsection{Dioperads and $\frac{1}{2}$PROP's}

Instead of going into details we refer the reader to \cite{Markl} for precise definitions. We just mention that the polynomial monad for dioperads is based on the class of contractible ordered graphs while the monad for $\frac{1}{2}$PROP's
is based on the class of those ordered contractible graphs, called $\frac{1}{2}$-graphs, which are obtained as two rooted trees glued together along the roots. Both of these polynomial monads are not tame as both contain the monad for symmetric operads as a submonad. Neverteless, if we define a \emph{normal} dioperad (resp. \emph{normal} $\frac{1}{2}$PROP) as one which has no operations of type $A(0,n)$ and no operations of type $A(n,0)$ for $n\geq 0$ while $A(1,1)=e$, then the following statement holds.

\begin{pro}The polynomial monads for normal dioperads and normal $\frac{1}{2}$PROP's are tame.\end{pro}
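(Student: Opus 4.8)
The plan is to mirror the argument already used for normal (constant-free, reduced) symmetric and cyclic operads, namely to show directly that the semi-free coproduct classifier \cop is a coproduct of categories with terminal object by producing a terminal object in each of its connected components (this is the criterion recorded in Section \ref{finalsub}). Following \cite{Markl}, normal dioperads are the algebras over the polynomial monad whose insertional class is that of normal directed (contractible ordered) trees, and normal $\frac12$PROPs the algebras over the monad based on normal $\frac12$-graphs. For these the explicit description of \cop from Section \ref{semifreeclassifier} specialises as in the operadic cases of Sections \ref{SSM} and \ref{SSP}: an object of \cop is a graph of the relevant class with each vertex coloured by $X$ or $K$, and the generating morphisms either contract a connected cluster of $X$-coloured vertices to a single $X$-vertex (the $T$-algebra structure) or insert a unary $X$-vertex along an edge (the monad unit).

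First I would identify, in each connected component, the candidate terminal object as the maximal $X$-contraction of any of its objects: contract every maximal connected $X$-coloured subgraph to one $X$-vertex, and use unit insertions to arrange that every vertex incident to an external input or output leg is $X$-coloured. This yields an \emph{alternating}-coloured graph in which no two adjacent vertices carry the same colour, exactly as for non-symmetric and normal symmetric operads. I would then verify that every object of the component admits a morphism to this normal form, and that, because contractions and unit insertions can be performed in any order, this morphism is unique up to the automorphisms of the normal form. Thus the normal form is terminal precisely when it admits no non-trivial automorphism.

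The hard part — and the only place the normality hypothesis genuinely enters — will be establishing this rigidity. For the general monads the text already notes that the monad for symmetric operads embeds as a submonad, and its non-tameness is witnessed by a binary $X$-vertex whose two inputs are nullary $K$-stubs: the two stubs are interchangeable and produce a $\mathbb{Z}_2$-automorphism of the would-be terminal object (compare the obstruction discussed in Section \ref{SSP}). The key observation I would record is that the normality conditions — no operations of type $A(0,n)$ or $A(n,0)$ together with $A(1,1)=e$ — forbid exactly the degenerate source/sink vertices and the non-trivial unary vertices that create such interchangeable stubs. Hence in the normal setting every $K$-vertex of an alternating normal form carries genuine, non-degenerate inputs and outputs attached to distinguishable legs, the normal form has trivial automorphism group, and so it is terminal in its component. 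This makes \cop a coproduct of categories with terminal object, which is the definition of tameness (Section \ref{tame}), proving the two monads tame.
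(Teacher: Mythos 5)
Your overall strategy coincides with the paper's, which proves this proposition in one line by reference to the normal symmetric and normal cyclic cases: exhibit a maximally contracted coloured graph as the terminal object in each connected component of \cop, and observe that the normality conditions remove exactly the degenerate vertices whose interchangeable stubs produce the $\mathbb{Z}_2$-automorphisms obstructing terminality in the general (non-tame) case. Your rigidity analysis --- every branch of a normal directed tree (or normal $\frac{1}{2}$graph) terminates in numbered external legs, so the subgraphs hanging at any vertex are pairwise distinguishable and the normal form has trivial automorphism group --- is precisely the point the paper leaves implicit.

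However, one step of your construction of the normal form fails: the ``unit insertions'' you invoke to make every vertex incident to an external leg $X$-coloured do not exist for these monads. Since normality imposes $A(1,1)=e$, the insertional classes of normal directed trees and normal $\frac{1}{2}$graphs contain no $(1,1)$-vertices at all (just as normal rooted trees contain no unary vertices), so the classifier \cop has only the contraction generators and no generator inserting a unary $X$-vertex into an edge. Compare the paper's two $n$-operadic proofs: unary insertions are used for the \emph{constant-free} monad (Theorem \ref{T02}), where unary vertices exist, whereas the proof for the \emph{normal} monad (Theorem \ref{T01}) uses contractions only. Consequently your proposed alternating graph with $X$-vertices at all external legs is not an object one can map to --- for instance, a single $K$-coloured corolla is already terminal in its own (singleton) component --- and two adjacent $K$-vertices can never be separated by a morphism. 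The fix is cosmetic: take as terminal candidate the maximal $X$-contraction (no two adjacent $X$-vertices, with no condition at the boundary); existence and confluence of the contractions to it go through as you indicate, and your rigidity argument applies verbatim to this corrected normal form. With that repair the proof is correct and is the argument the paper intends.
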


\noindent The proof is similar to the one for normal symmetric or normal cyclic operads.

\section{Operads based on general graphs}

\subsection{Modular operads}

The monad for \emph{modular operads} is generated by the polynomial$$\begin{diagram}\NN&\lTo^s&\OGraphs^*&\rTo^p&\OGraphs&\rTo^t&\NN\end{diagram}$$in which $\OGraphs$ denotes the set of isomorphism classes of ordered connected graphs (with non-empty set of vertices) and $\OGraphs^*$ is the set of such isomorphism classes with one vertex marked. The source and target maps and the composition operations are defined as above.
The polynomial monad for modular operads is not tame even if we restrict to  normal modular operads. Normality in this case means that modular operads do not have operations whose arities are corollas with less than three flags.  Indeed, in the corresponding internal algebra classifier there is a connected component which contains the following ordered bicoloured graph:

{\unitlength=0.9mm
\begin{picture}(30,30)(-43,16)

\put(14,22.5){\circle{5}}
\put(6.5,30.5){\circle{5}}
\put(21.5,30.5){\circle{5}}
\put(7.3,32){\makebox(0,0){\mbox{$\scriptscriptstyle 1$}}}
\put(21,29){\makebox(0,0){\mbox{$\scriptscriptstyle 3$}}}
\put(8.3,30.5){\makebox(0,0){\mbox{$\scriptscriptstyle 2$}}}
\put(19.8,30.5){\makebox(0,0){\mbox{$\scriptscriptstyle 2$}}}
\put(21,32){\makebox(0,0){\mbox{$\scriptscriptstyle 1$}}}
\put(7.3,29){\makebox(0,0){\mbox{$\scriptscriptstyle 3$}}}
\put(9,30.5){\line(1,0){10}}
\put(14,20){\line(0,-1){5}}
\put(12.1,24.4){\line(-1,1){4}}
\put(15.9,24.4){\line(1,1){4}}
\put(7.8,32.7){\line(1,1){4}}
\put(12.1,40.4){\line(-1,1){4}}
\put(15.9,40.4){\line(1,1){4}}
\put(20.2,32.7){\line(-1,1){4}}
\put(14,38.5){\circle{5.5}}
\put(15,23.5){\makebox(0,0){\mbox{$\scriptscriptstyle 3 $}}}
\put(13,23.5){\makebox(0,0){\mbox{$\scriptscriptstyle 2 $}}}
\put(14.1,21){\makebox(0,0){\mbox{$\scriptscriptstyle 1 $}}}
\put(15,40){\makebox(0,0){\mbox{$\scriptscriptstyle 2 $}}}
\put(13,40){\makebox(0,0){\mbox{$\scriptscriptstyle 1 $}}}
\put(12.5,37.5){\makebox(0,0){\mbox{$\scriptscriptstyle 4 $}}}
\put(15.5,37.5){\makebox(0,0){\mbox{$\scriptscriptstyle 3 $}}}
\put(13,18){\makebox(0,0){\mbox{$\scriptscriptstyle 1 $}}}
\put(9.5,41.5){\makebox(0,0){\mbox{$\scriptscriptstyle 2 $}}}
\put(18.6,41.5){\makebox(0,0){\mbox{$\scriptscriptstyle 3 $}}}

\end{picture}}

\noindent in which top and bottom vertices are $X$-coloured and the middle vertices are $K$-coloured. The isomorphism class of this graph can not be contracted further inside the internal algebra classifier but it admits a non-trivial automorphism which consists of the following renumbering of the edges incoming in into $X$-vertices:

{\unitlength=0.9mm

\begin{picture}(30,30)(-43,16)

\put(14,22.5){\circle{5}}
\put(6.5,30.5){\circle{5}}
\put(21.5,30.5){\circle{5}}
\put(7.3,32){\makebox(0,0){\mbox{$\scriptscriptstyle 1$}}}
\put(21,29){\makebox(0,0){\mbox{$\scriptscriptstyle 3$}}}
\put(8.3,30.5){\makebox(0,0){\mbox{$\scriptscriptstyle 2$}}}
\put(19.8,30.5){\makebox(0,0){\mbox{$\scriptscriptstyle 2$}}}
\put(21,32){\makebox(0,0){\mbox{$\scriptscriptstyle 1$}}}
\put(7.3,29){\makebox(0,0){\mbox{$\scriptscriptstyle 3$}}}
\put(9,30.5){\line(1,0){10}}
\put(14,20){\line(0,-1){5}}
\put(12.1,24.4){\line(-1,1){4}}
\put(15.9,24.4){\line(1,1){4}}
\put(7.8,32.7){\line(1,1){4}}
\put(12.1,40.4){\line(-1,1){4}}
\put(15.9,40.4){\line(1,1){4}}
\put(20.2,32.7){\line(-1,1){4}}
\put(14,38.5){\circle{5.5}}
\put(15,23.5){\makebox(0,0){\mbox{$\scriptscriptstyle 2 $}}}
\put(13,23.5){\makebox(0,0){\mbox{$\scriptscriptstyle 3 $}}}
\put(14.1,21){\makebox(0,0){\mbox{$\scriptscriptstyle 1 $}}}
\put(15,40){\makebox(0,0){\mbox{$\scriptscriptstyle 2 $}}}
\put(13,40){\makebox(0,0){\mbox{$\scriptscriptstyle 1 $}}}
\put(12.5,37.5){\makebox(0,0){\mbox{$\scriptscriptstyle 3 $}}}
\put(15.5,37.5){\makebox(0,0){\mbox{$\scriptscriptstyle 4 $}}}
\put(13,18){\makebox(0,0){\mbox{$\scriptscriptstyle 1 $}}}
\put(9.5,41.5){\makebox(0,0){\mbox{$\scriptscriptstyle 2 $}}}
\put(18.6,41.5){\makebox(0,0){\mbox{$\scriptscriptstyle 3 $}}}

\end{picture}}

There is thus no transferred model structure on modular operads under the general assumptions of our Main Theorem \ref{maintheorem}.

\begin{remark}The usual definition of modular operad uses stable graphs \cite{GetzK,Markl}, i.e. graphs decorated by genus and satisfying a stability condition. There is a polynomial monad for this version of modular operad, which is not tame by exactly the same argument as above for non-decorated graphs.\end{remark}

These negative results do not exclude the existence of a transferred model structure on the algebras under some more restrictive conditions on the monoidal model category $\Ee$. The following proposition illustrates in which way specific properties of $\Ee$ can be used to establish the existence of a transfer even for algebras over non-tame polynomial monads.

\begin{pro}The monad for modular operads is $\otimes$-admissible in the monoidal model category $\Ch({\bf k})$ of chain complexes over a field $\bf k$ of characteristic $0$.\end{pro}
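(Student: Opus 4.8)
The plan is to run the filtration argument of Theorem \ref{filtration} in a form adapted to the non-tame situation, using the exactness of finite-group coinvariants over a field of characteristic $0$ to compensate for the failure of tameness. First I would invoke Theorem \ref{colim}, which does \emph{not} require tameness: applied to the cartesian monad morphism $T_{f,g}\to T$ of Proposition \ref{Cart}, it shows that the underlying object of the free modular operad extension $R[u,\alpha]$ is the colimit over the classifier $\h$ of the functor $\tilde{X}$ given by formula (\ref{SSSR}). The failure of tameness recorded above is of a very specific nature: each connected component of $\h$ still possesses a maximally contracted, weakly terminal object $a$ — the only obstruction to its being terminal is a finite automorphism group $G=\mathrm{Aut}(a)$ acting on it. Hence the colimit over such a component is not the value of $\tilde{X}$ at $a$, as in the tame case, but its coinvariants $\tilde{X}(a)_{G}$.

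Next I would carry the degree filtration underlying Theorem \ref{filtration} through this description. The categories $\ho,\hi,\hj$ and the punctured-cube layers $\qa^{(k)},\la^{(k)},\wa^{(k)}$ are defined combinatorially and remain meaningful here; the only change is that the weakly terminal objects of their connected components now carry finite automorphism groups. Consequently the pushout squares (\ref{KLX}) persist, but the comparison maps $w_k:Q_k\to L_k$ become coproducts of the tame-case iterated pushout-product maps after passage to $G$-coinvariants for the relevant automorphism groups $G$. Over $\Ch({\bf k})$ with $\mathrm{char}({\bf k})=0$ these group orders are invertible, so ${\bf k}[G]$ is semisimple and the coinvariants functor $(-)_{G}$ on $G$-equivariant chain complexes is \emph{exact}. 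In particular $(-)_{G}$ preserves monomorphisms, hence $\otimes$-cofibrations (over a field both cofibrations and $\otimes$-cofibrations are degreewise injections, cf. Corollary \ref{chaincomplexes}), and, being exact, it sends quasi-isomorphisms to quasi-isomorphisms.

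With these two facts in hand the argument of Theorems \ref{SSformonoids} and \ref{maintheorem} goes through with only cosmetic changes. The iterated pushout-product map attached to a cofibration (resp. trivial cofibration) $u$ is a $\otimes$-cofibration (resp. a trivial $\otimes$-cofibration) by the pushout-product and monoid axioms, which hold in $\Ch({\bf k})$ since all its objects are cofibrant; applying the exact functors $(-)_{G}$ and forming coproducts preserves these properties, both classes being closed under coproducts. Each layer map $P_{k-1}\to P_{k}$ is therefore of the required type, and since $\Ch({\bf k})$ is compactly generated its class of weak equivalences is $\otimes$-perfect, so Lemma \ref{transfinite} assembles the sequential colimit into a single $\otimes$-cofibration (resp. trivial $\otimes$-cofibration) $U_T(u_\alpha):R\to R[u,\alpha]$, which is exactly the asserted $\otimes$-admissibility. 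The main obstacle is making the middle paragraph rigorous: identifying, for the non-tame classifier $\h$, the precise weakly terminal objects of its connected components together with their automorphism groups, and verifying that both the colimit over each component and the layer maps $w_k$ are genuinely computed by $G$-coinvariants of the corresponding tame-case constructions. Once this combinatorial step is in place, the characteristic-$0$ exactness of $(-)_{G}$ supplies the homotopical control for free.
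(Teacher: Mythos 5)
Your strategy contains a genuine gap, and it sits exactly where you locate it yourself: the middle paragraph is not a verification to be ``made rigorous'' afterwards but the entire content of the problem, and the machinery you want to transport does not survive the failure of tameness. The filtration of Theorem \ref{filtration} rests on Lemmas \ref{final0}, \ref{final} and \ref{cubes}, all of which use tameness essentially: $\tso$ is obtained by choosing a \emph{terminal} object in each component of $\ho$, the finality of $\ts$ in $\h$ is proved by contracting onto these terminal objects, and the identification of the layers $\wa^{(k)}$ with coproducts of $k$-cubes uses that objects of $\ts$ admit no non-identity endomorphisms coming from $\ho$. For modular operads the components of the relevant classifiers contain only \emph{weakly} terminal objects with non-trivial automorphism groups, and once automorphisms are present you must prove, not assume, each of your claims: that the one-object subgroupoid $BG$ at such an object is \emph{final} (weak terminality alone does not compute the colimit as coinvariants --- connectedness of all undercategories is needed), that an equivariant analogue of the cube decomposition of $\wa^{(k)}$ exists, and that the comparison maps $w_k$ are coinvariants of equivariant pushout-product maps. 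Carrying this out would amount to redeveloping the whole theory of free algebra extensions in an equivariant setting, a substantial project which the proposal only gestures at.

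What you miss is the reduction that renders all of this unnecessary: in $\Ch({\bf k})$ the generating trivial cofibrations are $0\to D$ with $D$ the acyclic disc complex, so the only pushouts one must control are those along $F_T(0\to D)$, i.e.\ \emph{semi-free coproducts} $X\vee F(K)$ with $K$ acyclic. This replaces the free extension classifier $\h$, with its three colours and degree filtration, by the far simpler semi-free coproduct classifier ${\bf Mod}^{\tt Mod+1}$, for which the paper exhibits a final subcategory equivalent to a coproduct of finite groups (bicoloured graphs whose internal edges join vertices of different colours, with morphisms generated by corolla insertion into $X$-vertices). The colimit then splits off the summand $X$, and the remaining summands, indexed by graphs with at least one $K$-vertex, take \emph{contractible} values because over a field acyclicity of $K$ implies contractibility; semisimplicity of ${\bf k}[G]$ in characteristic $0$ --- your Maschke input, used there in the form that every chain complex of $G$-representations is cofibrant, so that the colimit is computed by a left Quillen functor --- shows these summands are acyclic, whence $X\to X\vee F(K)$ is a quasi-isomorphism (and a summand inclusion, hence a $\otimes$-cofibration). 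Your characteristic-$0$ ingredient is the right one, but deployed at the wrong level of generality; as written, the proposal does not constitute a proof.
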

\begin{proof}The generating trivial cofibrations are of the form $0\to D$ where $D$ is the chain complex $\ldots\leftarrow 0 \leftarrow {\bf k}\stackrel{id}{\leftarrow}{\bf k}\leftarrow 0\leftarrow \ldots.$ It is thus enough to consider semi-free coproducts of modular operads. The underlying object of a semi-free coproduct $X\vee F(K)$ has a direct summand equal to $X$ with canonical injection $X\to X\vee F(K)$. We have to show that this injection is a quasi-isomorphism for acyclic $K$.

The classifier ${\bf Mod}^{\tt Mod +1}$ contains a final subcategory the objects of which are isomorphism classes of ordered graphs with vertices coloured by $X$ and $K,$ such that internal edges only connect vertices of different colours. Since no further contractions of such graphs are possible the morphisms of this final subcategory are generated by insertion of corollas into $X$-vertices. Therefore the final subcategory is equivalent to a coproduct of finite groups. The semi-free coproduct is the colimit of a functor on this subcategory which assigns to each graph a tensor product of as many $X$'s and $K$'s as there are $X$- and $K$-vertices in the graph. The morphisms of the classifier act by permuting $K$-factors and through the modular operad action on $X$-factors. It follows that the colimit of this functor splits into one component which corresponds to the image of the canonical injection $X\to X\vee F(K)$ and other components which correspond to graphs with at least one $K$-vertex. It thus suffices to show that the latter components are acyclic.

Since our chain complex $K$ is a chain complex of ${\bf k}$-vector spaces, acyclicity of $K$ implies contractibility of $K$. Therefore, the functor on the classifier takes those graphs which have $K$-vertices to contractible chain complexes. Over each connected component of the final subcategory (with $K$-vertices) the functor can thus be considered as a chain complex in the abelian category of diagrams over this connected component. As we have seen the latter is equivalent to the category of representations of a finite group. Since ${\bf k}$ has characteristic $0$ this representation category is semisimple so that every chain complex in it is cofibrant. Since taking the colimit is a left Quillen functor we conclude that the colimit of $F$ is acyclic on each connected component containing $K$-vertices, as required.\end{proof}

\subsection{Properads, PROP's}

The generating polynomials of the monad for properads and PROP's are defined in complete analogy with the preceding section, by specifying the appropriate insertional class of graphs. We refer the reader to Markl \cite{Markl} and Johnson-Yau \cite{JY} for an explicit link with the original definitions of properads by Vallette \cite{V} and of PROP's by Adams-Mac Lane. The insertional class of graphs for PROP's (properads) consists of all (connected) directed loop-free graphs.

In the \emph{normal} versions there are no operations of type $A(0,n)$ and no operations of type $A(n,0)$ for $n\geq 0$, while $A(1,1)=e.$

\begin{remark}\label{Giovanni}Our definition of PROP as an algebra of a polynomial monad is slightly weaker than the classical definition as a strict symmetric monoidal category whose set of objects is the commutative monoid of natural numbers.
The difference between the classical and our graphical definition of PROP concerns the structure of vertical composition. In classical PROP's there are two a priori different compositions, horizontal and vertical:
$$ \circ_h: A(n,0)\times A(0,m)\to  A(n,m) \ , \   \circ_v: A(n,0)\times A(0,m)\to  A(n,m). $$
In particular $A(0,0)$ carries two multiplications which satisfy the middle interchange relation and thus make $A(0,0)$ a commutative monoid by the classical Eckmann-Hilton argument. Therefore, the symmetric group action in the coloured operad for classical PROP's is not free so that this operad does not correspond to any polynomial monad. In graphical PROP's however the vertical composition $   \circ_v: A(n,k)\times A(k,m)\to  A(n,m)  $ is represented by the directed graph

 \begin{center}
\scalebox{1} 
{
\begin{pspicture}(0,-1.70)(8.48,1.70)
\psline[linewidth=0.02cm,arrowsize=0.05cm 3.0,arrowlength=1.4,arrowinset=0.0]{->}(3.29,1.28)(4.29,0.67)
\psline[linewidth=0.02cm,arrowsize=0.05cm 3.0,arrowlength=1.4,arrowinset=0.0]{->}(4.05,1.28)(4.33,0.67)
\psline[linewidth=0.02cm,arrowsize=0.05cm 3.0,arrowlength=1.4,arrowinset=0.0]{->}(5.11,1.28)(4.39,0.67)
\psdots[dotsize=0.2](4.33,0.60)
\psdots[dotsize=0.2](4.31,-0.67)
\psline[linewidth=0.02cm,arrowsize=0.05cm 3.0,arrowlength=1.4,arrowinset=0.0]{<-}(3.33,-1.29)(4.25,-0.71)
\psline[linewidth=0.02cm,arrowsize=0.05cm 3.0,arrowlength=1.4,arrowinset=0.0]{<-}(5.11,-1.29)(4.39,-0.73)
\psline[linewidth=0.02cm,arrowsize=0.05cm 3.0,arrowlength=1.4,arrowinset=0.0]{<-}(3.79,-1.29)(4.27,-0.75)
\psline[linewidth=0.02cm,arrowsize=0.05cm 3.0,arrowlength=1.4,arrowinset=0.0]{<-}(4.69,-1.31)(4.35,-0.75)
\psbezier[linewidth=0.02,arrowsize=0.05cm 3.0,arrowlength=1.4,arrowinset=0.0]{->}(4.41,0.55)(5.77,-0.09)(4.99,-0.31)(4.39,-0.67)
\psbezier[linewidth=0.02,arrowsize=0.05cm 3.0,arrowlength=1.4,arrowinset=0.0]{->}(4.25,0.55)(2.79,-0.07)(3.72,-0.27)(4.21,-0.63)
\psbezier[linewidth=0.02,arrowsize=0.05cm 3.0,arrowlength=1.4,arrowinset=0.0]{->}(4.29,0.55)(3.75,0.00)(3.85,-0.11)(4.29,-0.63)
\psbezier[linewidth=0.02,arrowsize=0.05cm 3.0,arrowlength=1.4,arrowinset=0.0]{->}(4.35,0.55)(4.95,0.12)(4.71,-0.25)(4.35,-0.61)
\psline[linewidth=0.02cm,linestyle=dotted,dotsep=0.16cm,arrowsize=0.05cm 2.0,arrowlength=1.4,arrowinset=0.4]{->}(4.31,1.28)(4.33,0.70)
\psline[linewidth=0.02cm,linestyle=dotted,dotsep=0.16cm,arrowsize=0.05cm 2.0,arrowlength=1.4,arrowinset=0.4]{->}(4.61,1.28)(4.37,0.70)
\psline[linewidth=0.02cm,linestyle=dotted,dotsep=0.16cm,arrowsize=0.05cm 2.0,arrowlength=1.4,arrowinset=0.4]{->}(4.29,-0.77)(4.07,-1.27)
\psline[linewidth=0.02cm,linestyle=dotted,dotsep=0.16cm,arrowsize=0.05cm 2.0,arrowlength=1.4,arrowinset=0.4]{->}(4.31,-0.77)(4.45,-1.27)
\psbezier[linewidth=0.02,linestyle=dotted,dotsep=0.16cm,arrowsize=0.05cm 2.0,arrowlength=1.4,arrowinset=0.4]{->}(4.31,0.52)(3.99,-0.03)(4.11,-0.21)(4.33,-0.63)
\psbezier[linewidth=0.02,linestyle=dotted,dotsep=0.16cm,arrowsize=0.05cm 2.0,arrowlength=1.4,arrowinset=0.4]{->}(4.35,0.56)(4.63,0.00)(4.55,-0.21)(4.33,-0.59)
\usefont{T1}{ptm}{m}{n}
\rput(4.17,1.51){$\overbrace{ \ \ \ \ \ \ \ \ \ \ \ \ \ \ \ \ \ \ \ \ \ }^{\Large n} $}
\usefont{T1}{ptm}{m}{n}
\rput(5.40,0.00){$\Large k$}
\usefont{T1}{ptm}{m}{n}
\rput(4.20,-1.48){$\underbrace{\ \ \ \ \ \ \ \ \ \ \ \ \ \ \ \ \ \ \ \ \ }_{\Large m}$}
\end{pspicture}
}
\end{center}
\noindent If $k = 0$ there is no such graph representing vertical composition; rather there is only a graph with two connected components which represents horizontal composition. Therefore, in the case of graphical PROP's $A(0,0)$ carries only one composition (the horizontal) and is thus not necessarily a commutative monoid. It is not hard to see, however, that the coloured operad for classical PROP's is a canonical quotient of the coloured operad for graphical PROP's, and that for normalised PROP's there is no difference at all between the classical and our definition.\end{remark}

\begin{pro}The polynomial monads for (normal) properads and PROP's are not tame.\end{pro}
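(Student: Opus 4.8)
The plan is to establish non-tameness in all four cases by producing, in each corresponding semi-free coproduct classifier $\cop$, a single connected component that has no terminal object. By the definition of tameness and the equivalence of conditions (i)--(iii) recalled in Section~\ref{finalsub}, one such component suffices. The mechanism is exactly the one used for symmetric operads in Section~\ref{symmetricoperads} and for modular operads above: I would exhibit a bicoloured graph which is the \emph{unique candidate} for a terminal object of its component --- in the sense that it cannot be contracted any further (no two $X$-vertices are adjacent) and absorbs every other object of the component under the $X$-vertex insertion and contraction morphisms described in Section~\ref{semifreeclassifier} --- but which carries a \emph{nontrivial automorphism}. Since a terminal object has trivial automorphism group, such a graph cannot be terminal, and as it is the only candidate, its component has no terminal object.

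For the \emph{general} properad and PROP monads I would argue as for dioperads and $\frac{1}{2}$PROPs, via the fact that the polynomial monad for symmetric operads is a sub-polynomial-monad: a rooted tree is a connected directed loop-free graph each of whose vertices has a single outgoing edge, and operadic insertion is the corresponding special case of graph insertion. The symmetric-operad obstruction --- a single binary $X$-vertex whose two inputs carry $K$-decorated branches, with the $\Z_2$-automorphism induced by the $\Sigma_2$-action on binary operations --- remains an object of the properad resp.\ PROP classifier. I would check that no new contraction becomes available there (the lone $X$-vertex still has no adjacent $X$-vertex, and inserting a unary $X$-vertex only creates an $X$--$X$ edge which contracts back), so that it persists as the unique candidate, and that the swapping automorphism is unaffected.

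For the \emph{normal} properad and PROP monads this submonad shortcut is unavailable, since normal symmetric operads are tame; the obstruction must therefore genuinely exploit the graph structure. Here I would transport the modular-operad counterexample into the directed loop-free world: take a connected directed loop-free graph with two non-adjacent $X$-vertices joined through two \emph{parallel} $K$-vertices, and equip each vertex with enough auxiliary legs so that every vertex is of type $A(m,n)$ with $m,n\geq 1$ and $m+n\geq 3$. Such a graph lies in the normal insertional class (it has no $A(0,n)$, $A(n,0)$ or $A(1,1)$ vertices) and, having a cycle in its underlying graph, is not in the image of the operadic submonad, so it is not resolved by normalization. The two $X$-vertices are separated by $K$-vertices, whence no contraction applies and the graph is a sink; interchanging the two parallel $K$-branches, together with the induced relabelling of the flags at the two $X$-vertices, is a nontrivial $\Z_2$-automorphism exactly as in the modular case. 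For PROPs one may even drop connectivity, so the same graph applies a fortiori.

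The step I expect to be the main obstacle is the verification of \emph{maximality} and \emph{uniqueness of the candidate} inside these much larger classifiers: because properad and PROP graphs allow far more general insertions than trees do, I must rule out that some $X$-vertex insertion available only in the bigger monad either contracts the candidate further, or rigidifies it by destroying the branch-interchange symmetry, and I must confirm that no other object of the same component can be terminal (every object of the component must map into the candidate). Concretely this is a bookkeeping of the generating morphisms of $\cop$ from Section~\ref{semifreeclassifier}, verifying that the graph-insertion morphisms at $X$-vertices respect the $\Sigma_2$-equivariance responsible for the automorphism while being compatible with directedness and loop-freeness. The delicate point --- and the one that distinguishes the normal properad/PROP case from the tame normal symmetric operad case --- is to exhibit the counterexample inside the \emph{normal} class while preserving its nontrivial automorphism.
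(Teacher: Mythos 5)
Your overall strategy coincides with the paper's in both cases: for the general monads the paper likewise deduces non-tameness from the fact that the symmetric operad monad sits inside the properad and PROP monads (the paper states this in one line, and your persistence check of the binary obstruction inside the larger classifier is the right way to make that inheritance rigorous), and for the normal monads the paper likewise exhibits, in a connected component of $\cop$, an object admitting no further contraction but carrying a nontrivial automorphism, so that no terminal object can exist in that component.

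There is, however, one concrete step in your normal case that fails as stated: the padding by ``auxiliary legs''. An endomorphism of an object of $\cop$ lies over the identity of its target in the generating polynomial; it may renumber edges at $X$-vertices (by inserting permuted units), but it must fix the linearly ordered set of external legs of the graph, since that ordering is part of the target data. If you equip \emph{every} vertex with free legs to force $m,n\geq 1$ and $m+n\geq 3$, then in particular the two interchangeable $K$-vertices each acquire external legs, and the branch swap permutes those legs. Because they are attached to $K$-vertices, no relabelling at the $X$-vertices can compensate, so the swap is not an endomorphism of the object and your automorphism disappears. This is precisely the mechanism that makes \emph{normal symmetric operads} tame: every $K$-vertex of a normal tree necessarily carries leaves, which rigidifies the would-be terminal objects. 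The repair is to keep all external legs attached to the $X$-vertices and, if you wish to avoid low-biarity $K$-vertices, to pad with \emph{parallel internal edges} instead: e.g.\ an $X$-vertex of biarity $(1,4)$ feeding each of two $K$-vertices of biarity $(2,2)$ by double edges, which in turn feed an $X$-vertex of biarity $(4,1)$. Note also that the modular-operad trick of joining the two $K$-vertices by an edge is unavailable in the directed setting: the orientation of that edge would distinguish the two vertices and kill the swap symmetry. The paper's own counterexample is the minimal ``double path'': a $(1,2)$ $X$-vertex whose two outputs pass through two parallel unary $K$-vertices into a $(2,1)$ $X$-vertex, the only external legs (one global input, one global output) sitting at the $X$-vertices, with the automorphism simultaneously renumbering the two outputs of the source $X$-vertex and the two inputs of the sink $X$-vertex. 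Your instinct to avoid $(1,1)$-type $K$-vertices (given the convention $A(1,1)=e$) is reasonable, but the corrected padding must be internal, not by free legs.
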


\begin{proof}This negative result follows from the fact that both category of PROP's and properads contain the category of symmetric operads. In the normal case observe that the following graph lives in one of the connected components of ${\bf T}^{\tt T+1}$:

{\unitlength=0.9mm

\begin{picture}(30,30)(-43,16)

\put(14,22.5){\circle{5}}
\put(6.5,30.5){\circle{5}}
\put(21.5,30.5){\circle{5}}
\put(7.3,32){\makebox(0,0){\mbox{$\scriptscriptstyle 1$}}}
\put(21,29){\makebox(0,0){\mbox{$\scriptscriptstyle 2$}}}
\put(21,32){\makebox(0,0){\mbox{$\scriptscriptstyle 1$}}}
\put(7.3,29){\makebox(0,0){\mbox{$\scriptscriptstyle 2$}}}
\put(14,20){\vector(0,-1){5}}
\put(8.1,28.4){\vector(1,-1){4}}
\put(19.9,28.4){\vector(-1,-1){4}}
\put(11.8,36.7){\vector(-1,-1){4}}
\put(14,45.3){\vector(0,-1){4}}
\put(16.2,36.7){\vector(1,-1){4}}
\put(14,38.5){\circle{5.5}}
\put(15,23.5){\makebox(0,0){\mbox{$\scriptscriptstyle 3 $}}}
\put(13,23.5){\makebox(0,0){\mbox{$\scriptscriptstyle 2 $}}}
\put(14.1,21){\makebox(0,0){\mbox{$\scriptscriptstyle 1 $}}}
\put(14,40){\makebox(0,0){\mbox{$\scriptscriptstyle 1 $}}}
\put(12.5,37.5){\makebox(0,0){\mbox{$\scriptscriptstyle 2 $}}}
\put(15.5,37.5){\makebox(0,0){\mbox{$\scriptscriptstyle 3 $}}}
\put(13,18){\makebox(0,0){\mbox{$\scriptscriptstyle 2 $}}}
\put(13,43){\makebox(0,0){\mbox{$\scriptscriptstyle 1 $}}}

\end{picture}}

\noindent in which top and bottom vertices are $X$-couloured and the middle vertices are $K$-colour. The isomorphism class of this graph can not be contracted further in ${\bf T}^{\tt T+1}$ but it admits a non-trivial automorphism consisting of the following renumbering of the edges incoming into the $X$-vertices:

{\unitlength=0.9mm
\begin{picture}(30,30)(-43,16)

\put(14,22.5){\circle{5}}
\put(6.5,30.5){\circle{5}}
\put(21.5,30.5){\circle{5}}
\put(7.3,32){\makebox(0,0){\mbox{$\scriptscriptstyle 1$}}}
\put(21,29){\makebox(0,0){\mbox{$\scriptscriptstyle 2$}}}
\put(21,32){\makebox(0,0){\mbox{$\scriptscriptstyle 1$}}}
\put(7.3,29){\makebox(0,0){\mbox{$\scriptscriptstyle 2$}}}
\put(14,20){\vector(0,-1){5}}
\put(8.1,28.4){\vector(1,-1){4}}
\put(19.9,28.4){\vector(-1,-1){4}}
\put(11.8,36.7){\vector(-1,-1){4}}
\put(14,45.3){\vector(0,-1){4}}
\put(16.2,36.7){\vector(1,-1){4}}
\put(14,38.5){\circle{5.5}}
\put(15,23.5){\makebox(0,0){\mbox{$\scriptscriptstyle 2$}}}
\put(13,23.5){\makebox(0,0){\mbox{$\scriptscriptstyle  3$}}}
\put(14.1,21){\makebox(0,0){\mbox{$\scriptscriptstyle 1 $}}}
\put(14,40){\makebox(0,0){\mbox{$\scriptscriptstyle 1 $}}}
\put(12.5,37.5){\makebox(0,0){\mbox{$\scriptscriptstyle 3 $}}}
\put(15.5,37.5){\makebox(0,0){\mbox{$\scriptscriptstyle 2 $}}}
\put(13,18){\makebox(0,0){\mbox{$\scriptscriptstyle 2 $}}}
\put(13,43){\makebox(0,0){\mbox{$\scriptscriptstyle 1 $}}}

\end{picture}}

\end{proof}

\subsection{Wheeled operads, properads and  PROP's}

There is also a ``wheeled'' version of the notions of operad, properad and PROP, due to Markl-Merkulov-Shadrin \cite{Merk}. The polynomial monad for the wheeled version is defined by allowing certain loops in the insertional class of graphs of the ``non-wheeled'' version. For instance, the insertional class of graphs for wheeled operads contains rooted trees as well as graphs obtained from rooted trees by identifying the root with one of the leaves of the tree (see \cite{Merk} for details).

\begin{pro}The polynomial monads for wheeled normal properads and PROP's are not tame.\end{pro}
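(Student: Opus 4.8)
The plan is to mimic the strategy used in the preceding proposition for non-wheeled (normal) properads and PROPs, namely to exhibit an explicit bicoloured graph in a connected component of the classifier $\copn$ which is a candidate terminal object but carries a nontrivial automorphism, thereby obstructing tameness. Recall that a polynomial monad $T$ is tame precisely when $\copn=\cop$ is a coproduct of categories with terminal object (Definition in Section \ref{tame}); so to prove non-tameness it suffices to locate one connected component which admits no terminal object. Following Section \ref{semifreeclassifier}, the objects of $\copn$ are the relevant (wheeled, loop-free connected directed, resp.\ wheeled directed) graphs with vertices bicoloured by $X$ and $K$, and the morphisms are generated by insertions of $X$-coloured corollas into $X$-vertices together with contractions of $X$-coloured subgraphs. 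A component fails to have a terminal object exactly when its only maximally-contracted representative (all internal edges joining an $X$-vertex to a $K$-vertex, so that no further $X$-contraction is possible) has a nontrivial automorphism in the groupoid $\Bq$.

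First I would reduce the normal case to the non-normal one wherever possible by observing, as in the proof of the non-wheeled statement, that the categories of wheeled properads and wheeled PROPs contain the category of symmetric operads (a wheeled structure with no wheels is an ordinary such structure, and these in turn contain symmetric operads); hence the polynomial monad for symmetric operads embeds as a submonad, and the non-tameness of the symmetric operad monad (Section \ref{SSP}) already furnishes, via the graph

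\begin{center}
{\unitlength=0.9mm
\begin{picture}(30,30)(-43,16)
\put(14,22.5){\circle{5}}
\put(6.5,30.5){\circle{5}}
\put(21.5,30.5){\circle{5}}
\put(6.5,30.5){\makebox(0,0){\mbox{$\scriptstyle K$}}}
\put(21.5,30.5){\makebox(0,0){\mbox{$\scriptstyle K$}}}
\put(14,22.5){\makebox(0,0){\mbox{$\scriptstyle X$}}}
\put(14,38.5){\circle{5.5}}
\put(14,20){\line(0,-1){5}}
\put(12.1,24.4){\line(-1,1){4}}
\put(15.9,24.4){\line(1,1){4}}
\put(11.8,36.7){\line(-1,-1){4}}
\put(16.2,36.7){\line(1,-1){4}}
\end{picture}}
\end{center}

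\noindent a candidate terminal object with a $\ZZ_2$-symmetry. The genuinely new content is the \emph{normal} case, where one must work with graphs all of whose vertices have at least three flags (resp.\ satisfy the normalisation $A(1,1)=e$ and the absence of $A(0,n)$, $A(n,0)$ operations).

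For the normal statement I would therefore produce, by direct analogy with the normal-PROP graph already drawn in the proof of the preceding proposition, an explicit directed bicoloured graph in $\copn$ whose $X$-vertices (top and bottom) each carry three flags and whose $K$-vertices (middle) mediate between them through internal edges joining differently-coloured vertices only. The key points to verify are then: (i) this graph lies in the insertional class of the monad in question (connected directed for wheeled properads; arbitrary directed for wheeled PROPs), which holds since the configuration is a loop-free connected directed graph, so it belongs to both classes; (ii) it is maximally $X$-contracted, so it is the unique candidate for a terminal object in its component; and (iii) it admits a nontrivial automorphism given by renumbering the flags incident to the $X$-vertices, exactly as in the displayed automorphism for normal PROPs. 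From (ii) and (iii) the component has no terminal object, so $\copn$ is not a coproduct of categories with terminal object, and the monad is not tame.

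The main obstacle I expect is bookkeeping rather than conceptual: one must check carefully that the chosen graph genuinely cannot be contracted further inside $\copn$ (no $X$-subgraph contraction and no admissible insertion collapses it) and that the flag-renumbering is a bona fide automorphism in the relevant groupoid of \emph{directed} (resp.\ connected directed) graphs — in particular that it respects the in/out orientation of every flag and fixes the $K$-coloured part, so that it is not killed by any contraction morphism in the component. Because the wheeled classes are strictly larger than their non-wheeled counterparts, no new contractions become available that were not already present non-wheeled, so the obstruction persists; verifying this stability of the obstruction under the enlargement of the graph class is the one place where a short explicit argument is needed. Once the graph and its automorphism are exhibited and these three points are checked, the proof is complete.
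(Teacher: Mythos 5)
Your proposal has a genuine gap, and it sits exactly at the point you yourself flagged as needing ``a short explicit argument'': the claim that, because the wheeled graph classes are strictly larger, ``no new contractions become available that were not already present non-wheeled, so the obstruction persists.'' This is false, and the paper's proof opens by asserting the opposite: the classifier ${\bf T}^{\tt T+1}$ for the wheeled versions contains \emph{more} objects and admits \emph{more} contractions than the non-wheeled one, precisely because directed loops are now permitted. Enlarging the insertional class enlarges the set of operations that may be inserted into $X$-vertices, hence enlarges the set of morphisms (contractions) in the classifier. Concretely, in the wheeled PROP case one may multiply $X$-vertices (contract the disconnected subgraph they span, even though this creates directed cycles — forbidden in the loop-free setting) and contract any edge between $X$-vertices; applied to the loop-free two-$X$-vertex graph you borrow from the preceding proposition, these operations contract it down to a graph with a \emph{single} $X$-vertex joined to $K$-vertices through wheels. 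So your point (ii) fails: your candidate is not maximally contracted in the wheeled classifier and is not the unique candidate terminal object of its component. The same phenomenon undermines your reduction via the symmetric-operad submonad: the tree-shaped components of the wheeled classifier acquire new morphisms, so non-tameness of the symmetric operad monad does not simply transfer along the inclusion.

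What the paper actually does is construct a genuinely \emph{wheeled} counterexample: a connected directed graph with one central $X$-vertex carrying six flags, two wheels passing through $K$-vertices, and a straight-through input and output. This graph lies in both insertional classes, admits no further contraction even after all the extra wheeled contractions are taken into account, and carries a nontrivial automorphism interchanging the flags of the two wheels at the $X$-vertex ($2\leftrightarrow 6$ and $3\leftrightarrow 5$). To repair your argument you would have to first carry out the new wheeled contractions on your graph and then analyse the automorphism group of the resulting one-$X$-vertex wheeled graph — which is, in substance, the paper's proof.
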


\begin{proof} The classifier ${\bf T}^{\tt T +1}$ for wheeled version of properads and PROP's  contains more objects and admit more contractions than in nonwheeled version since directed loops are allowed (see \cite[Remark after Def.2.1.8]{Merk}).
In wheeled PROP's case  we can multiply $X$-vertices (something which is not allowed in properads) and contract any edges between $X$-vertices (which is not allowed in ``unwheeled'' PROP's). Doing this operation we end up with a graph which has only one $X$-vertex connected to $K$-vertices. Nevertheless the following graph, which does not admit any further contraction, has a non-trivial automorphism in ${\bf T}^{\tt T +1}$:

{\unitlength=0.9mm

\begin{picture}(30,15)(-43,23)

\put(4,30.5){\circle{5}}
\put(24,30.5){\circle{5}}
\put(14,30.5){\circle{7}}
\put(5.1,31.7){\makebox(0,0){\mbox{$\scriptscriptstyle 1$}}}
\put(23,29.3){\makebox(0,0){\mbox{$\scriptscriptstyle 2$}}}
\put(23,31.7){\makebox(0,0){\mbox{$\scriptscriptstyle 1$}}}
\put(5.1,29.3){\makebox(0,0){\mbox{$\scriptscriptstyle 2$}}}
\put(6.1,29){\vector(1,0){5.2}}
\put(22,29){\vector(-1,0){5.2}}
\put(11.4,32){\vector(-1,0){5.2}}
\put(14,37.3){\vector(0,-1){3.8}}
\put(14,27.4){\vector(0,-1){4.5}}
\put(16.8,32){\vector(1,0){5.2}}
\put(13,25.5){\makebox(0,0){\mbox{$\scriptscriptstyle 2 $}}}
\put(13,36.5){\makebox(0,0){\mbox{$\scriptscriptstyle 1 $}}}
\put(15.8,31.7){\makebox(0,0){\mbox{$\scriptscriptstyle 2$}}}
\put(15.8,29.3){\makebox(0,0){\mbox{$\scriptscriptstyle 3$}}}
\put(12.2,31.7){\makebox(0,0){\mbox{$\scriptscriptstyle 6$}}}
\put(12.2,29.3){\makebox(0,0){\mbox{$\scriptscriptstyle 5$}}}
\put(14.1,32.5){\makebox(0,0){\mbox{$\scriptscriptstyle 1$}}}
\put(14,28.3){\makebox(0,0){\mbox{$\scriptscriptstyle 4 $}}}

\end{picture}}

\noindent In this graph the central vertex is $X$-coloured  and all the other vertices are $K$-coloured. The non-trivial automorphism is generated by a substitution which interchanges $2$ and $6$, and $5$ and $3$ inside the $X$-vertex.\end{proof}

On the positive side we have the following statement that  tame polynomial monads  based on noncontractible graphs do exist.

\begin{pro}The polynomial monads for normal (constant-free, reduced) wheeled operads are tame.\end{pro}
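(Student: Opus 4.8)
The plan is to prove tameness by the same route that succeeds for normal symmetric and normal cyclic operads, now carried out over the insertional class of wheeled rooted trees. By the definition of tameness and the equivalence (i)$\Leftrightarrow$(ii) of Section \ref{finalsub}, it suffices to show that each connected component of the semi-free coproduct classifier $\cop$ possesses a terminal object. So the whole proof reduces to a combinatorial analysis of $\cop$.

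First I would unwind the explicit description of $\cop$ given in Section \ref{semifreeclassifier}: its objects are (normal, constant-free, resp.\ reduced) wheeled rooted trees whose vertices are coloured by $X$ or $K$, and its morphisms are generated by the two elementary moves already used in Sections \ref{SSM} and \ref{SSP} — contraction of a connected monochromatic $X$-subgraph to a single $X$-vertex, and insertion of a unit $X$-vertex into an edge. Next I would identify, inside each connected component, the candidate terminal object as the unique \emph{maximally contracted} coloured wheeled tree: the one in which no internal edge joins two $X$-vertices (so that $X$- and $K$-colours alternate along edges) and every vertex incident to the root or to a leaf is $X$-coloured. That such an alternating canonical form is reached from any object by a zig-zag of contractions, and that once reached it is weakly terminal, follows exactly as in the non-symmetric operad case.

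The main obstacle — and the only step where the \emph{normality} (resp.\ constant-freeness, reducedness) hypothesis genuinely enters — is proving that these maximally contracted coloured wheeled trees are \emph{rigid}, i.e.\ carry no nontrivial automorphism, so that they are honest terminal objects and not merely weakly terminal ones. This is exactly the distinction on which tameness turns: for general symmetric, and for modular, properad and PROP monads, the maximal object of some component admits an edge-permuting automorphism, and so that component has no terminal object at all. The decisive feature of the wheeled-operad class is that a wheeled rooted tree carries at most one directed cycle (the wheel) and otherwise retains a global root-to-leaves orientation. Combined with the normality constraint that every vertex has at least two incoming edges, an automorphism of such a coloured tree must fix the root, whence an induction along the directed edges forces it to fix every vertex and every edge; the single directed wheel admits no nontrivial rotation compatible with the orientation and colouring. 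This is precisely the rigidity that fails for wheeled properads and PROPs, where the explicit graphs exhibited above possess genuine symmetries — the point being that wheeled-operad graphs remain sufficiently tree-like to avoid those symmetries.

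Finally, having shown that each connected component of $\cop$ has a (rigid, hence genuine) terminal object, I would invoke Section \ref{finalsub} to conclude that $\cop$ is a coproduct of categories with terminal object, which is the definition of $T$ being tame. The constant-free and reduced variants follow by the same rigidity argument with the boundary conditions relaxed appropriately: in the reduced case, where arity-$1$ vertices are permitted, I would check that the orientation together with the non-degeneracy of leaves still pins down each vertex under the induction, so that the automorphism groups of the maximal coloured wheeled trees remain trivial.
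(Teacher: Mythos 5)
Your proposal follows exactly the route the paper intends: the paper in fact omits the argument entirely (``We leave the proof to the interested reader as it is very much analogous to the case of symmetric operads''), and what you have written is the natural filling-in of that analogy --- alternating $X$/$K$-coloured wheeled trees as terminal candidates in each component of $\cop$, with normality (resp.\ constant-freeness, reducedness) invoked precisely to kill the automorphisms that obstruct terminality in the general case.

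One step of your rigidity argument is misjustified, though the conclusion survives. You claim the wheel ``admits no nontrivial rotation compatible with the orientation''; but a rotation of a directed cycle \emph{does} preserve the orientation, and if the alternating colour pattern happens to be periodic it preserves the colouring as well, so orientation and colouring alone do not rule out rotations. (Likewise, in the genuinely wheeled graphs the root has been identified with a leaf, so there is no external root for an automorphism to fix and induct from.) The correct mechanism is the one underlying the symmetric-operad case: the objects of the classifier are \emph{ordered} graphs, so any automorphism must preserve the linear ordering (labelling) of the leaves; normality guarantees every wheel vertex has at least one non-wheel input, whose branch --- since the wheel is the unique cycle --- terminates in distinctly labelled leaves. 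A nontrivial rotation would permute these side branches and hence permute leaf labels, which is impossible; the same labelling argument rigidifies the tree-like part. This is also exactly what fails for general (non-normal) wheeled operads, where leafless stumps carry no labels and the $\mathbb{Z}_2$-symmetry reappears. With that repair your proof is complete and agrees with the paper's intended argument.
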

We leave the proof to the interested reader as it is very much analogous to the case of symmetric operads.

\section{Baez-Dolan $+$-construction for polynomial monads}

With any polynomial monad $T$ one can associate another polynomial monad $T^+$, the so-called Baez-Dolan $+$-construction of $T$, see \cite{BD2,Kock, KJBM, Leinster}. If $T$ is generated by the polynomial $P$\begin{diagram}I&\lTo^s&E&\rTo^p&B&\rTo^t&I\end{diagram}then the generating polynomial $P^+$ for  $T^{+}$ is \begin{diagram}B&\lTo^{s^+}&tree^*(P)&\rTo^{p^+}&tree(P)&\rTo^{t^+}&B\end{diagram}where $tree(P)$ is the set of $P$-trees, and $tree^*(P)$ is the set of $P$-trees with a marked vertex. Recall \cite{Kock, KJBM} that a \emph{$P$-tree} is an isomorphism class of rooted trees whose edges are coloured by the elements of $I$, and whose vertices are decorated by $T$-operations in the sense of Remark \ref{operation}, in such a way that the sources and the target of these $T$-operations coincide with the given edge-colouring of the $P$-tree.

The source map $s^+$ of the generating polynomial of $T^{+}$ returns the $T$-operation decorating the marked vertex, the middle map $p^+$ forgets marking, and the target map $t^+$ computes the composite $T$-operation of the whole $P$-tree.

The monad $M$ for monoids is the $+$-construction of the identity monad on $Set.$ The monad $O(1)$ for non-symmetric operads is the $+$-construction of the monad $M$. In general, the characteristic property of the $+$-construction is that \emph{$T^+$-algebras} can be identified with \emph{cartesian monads over $T$} (i.e. $T$-operads in the terminology of Leinster \cite{Leinster}). For instance, non-symmetric operads can be identified with cartesian monads over $M$. In particular, there is a natural notion of \emph{algebra} over a $T^+$-algebra, namely an algebra for the corresponding cartesian monad over $T$.

\begin{theorem}For any polynomial monad $T$, the polynomial monad  $T^+$ is tame.\end{theorem}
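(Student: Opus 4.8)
The plan is to make the semi-free coproduct classifier $(\mathbf{T}^+)^{{\mathtt T}^++1}$ of $T^+$ fully explicit and then to exhibit a terminal object in each of its connected components. By the Baez--Dolan description the colours of $T^+$ are the elements of $B$ and the operations of $T^+$ are the $P$-trees; hence, applying the explicit descriptions of Sections \ref{semifreeclassifier} and \ref{TS} to $T^+$, the objects of $(\mathbf{T}^+)^{{\mathtt T}^++1}$ are $P$-trees whose vertices are coloured by $X$ or $K$, and a morphism $\bc'\to\bc$ exhibits $\bc'$ as a refinement of $\bc$ obtained by substituting into each $X$-coloured vertex $v$ of $\bc$ (decorated by $\beta_v\in B$) a $P$-tree $\sigma_v$ with $t^+(\sigma_v)=\beta_v$, all of whose vertices become $X$-coloured; the $K$-coloured vertices are left untouched. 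In other words, morphisms contract connected subtrees of $X$-vertices, while the $K$-vertices form an invariant skeleton.

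First I would introduce the reduction $R(\bc)$ obtained by contracting each maximal connected subtree of $X$-vertices of $\bc$ to a single $X$-vertex, decorated by the composite $T$-operation ($=t^+$) of that subtree. This is well defined up to isomorphism (by associativity of the monad multiplication, the contracted decoration is independent of the order of contractions), the resulting $P$-tree has no edge joining two $X$-vertices, and there is a canonical morphism $\bc\to R(\bc)$. Since every generating morphism only refines or contracts $X$-regions while preserving the $K$-skeleton, $R$ is constant along morphisms; thus the connected components of $(\mathbf{T}^+)^{{\mathtt T}^++1}$ are indexed by the isomorphism classes of these ``$X$-reduced'' bicoloured $P$-trees, and $R(\bc)$ is a candidate terminal object in the component of $\bc$.

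The heart of the argument, and the step I expect to be the main obstacle, is to prove that each $X$-reduced object --- indeed every bicoloured $P$-tree --- has trivial automorphism group, for this is exactly the property that fails for non-tame monads such as the one for symmetric operads. Here I would crucially use that $T$ is polynomial: by Remark \ref{operation} each vertex $v$ of a $P$-tree is decorated by a $T$-operation $(\beta_v,\sigma_v)$ in which $\sigma_v$ is a linear ordering of the input edges ($=$ children) of $v$, and this ordering is part of the data preserved by isomorphisms of $P$-trees. Consequently an automorphism would induce at each vertex an input-permutation $\pi_v$ with $(\beta_v,\sigma_v)=(\beta_v,\sigma_v\pi_v)$, forcing $\pi_v=\mathrm{id}$; and a rooted-tree automorphism preserving the ordering of children at every vertex is the identity. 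Equivalently, this reflects the fact (Section \ref{polynomialmonad}) that polynomial monads correspond precisely to symmetric operads with freely acting symmetry groups: the $+$-construction rigidifies the would-be symmetry by recording the \emph{full} $P$-tree rather than only the composite operation in $B$.

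Finally I would conclude: because $\mathrm{Aut}(R(\bc))$ is trivial, the morphism $\bc\to R(\bc)$ is the unique morphism from $\bc$ into $R(\bc)$, and more generally any object of the component of $\bc$ admits a unique morphism to $R(\bc)$; hence $R(\bc)$ is terminal in its component. Equivalently, the full subcategory spanned by the $X$-reduced objects is a final discrete subcategory, so by the criterion of Section \ref{finalsub} the classifier $(\mathbf{T}^+)^{{\mathtt T}^++1}$ is a coproduct of categories with terminal object. By Definition \ref{tame} this means that $T^+$ is tame, as claimed.
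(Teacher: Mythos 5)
Your identification of the objects of $(\mathbf{T}^+)^{\mathtt{T}^++1}$ is correct, your general substitution formula for morphisms is correct, and your rigidity observation (bicoloured $P$-trees have no non-trivial automorphisms, because the decorating $T$-operations of Remark \ref{operation} carry linear orderings of the inputs at each vertex) is sound --- it is indeed the reason the $+$-construction escapes the symmetry obstruction that ruins tameness for symmetric operads. The gap lies in the sentence where you paraphrase the substitution formula: ``morphisms contract connected subtrees of $X$-vertices, while the $K$-vertices form an invariant skeleton.'' Among the $P$-trees $\sigma_v$ with $t^+(\sigma_v)=\beta_v$ which may be substituted into an $X$-vertex $v$ there is, whenever $\beta_v$ is a unit $1_i$ of $T$, the \emph{vertex-free} $P$-tree (a bare edge coloured $i$), and substituting it deletes $v$ altogether. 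Read from source to target, the classifier therefore has a second family of generating morphisms besides contractions of $X$-subtrees: morphisms which \emph{introduce} a new $X$-vertex decorated by $1_i$ on any edge of colour $i$. These are exactly the analogues for $T^+$ of the ``insertion of a single $X$-coloured vertex into an edge'' generators already present for the free monoid monad in Section \ref{SSM}, and they are the generators the paper's proof lists alongside the contractions.

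Once these generators are restored, your intermediate claims fail. Take for $\bc$ the linear $P$-tree with two adjacent vertices decorated by $1_i$, both coloured $K$ (this exists for every $T$). It is $X$-reduced; the tree $\bc''$ obtained by inserting a unit $X$-vertex between the two $K$-vertices is also $X$-reduced; and there is a morphism $\bc\to\bc''$. Hence $R$ is not constant along morphisms, the connected components are not indexed by $X$-reduced trees, and the full subcategory of $X$-reduced objects is neither discrete nor final. Worse, $R(\bc)=\bc$ is not terminal in its component --- not even weakly so, since $\bc$ has no $X$-vertices and therefore admits no morphism into it except the identity, while $\bc''$ lies in the same component. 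The actual terminal objects are the \emph{alternating} trees, beginning and ending with $X$: they arise from your $R(\bc)$ by further inserting unit $X$-vertices on every edge joining two $K$-vertices and on every root or leaf edge incident to a $K$-vertex. (Sanity check: for $T$ the identity monad, $T^+$ is the free monoid monad, and it is the alternating trees, not the $X$-reduced ones, that reproduce the formula $M\vee F_T(K)=\coprod_{n\ge 0} M\otimes(K\otimes M)^{\otimes n}$ of the introduction.) Finally, terminality also requires \emph{uniqueness} of the morphism into the alternating tree, which the paper deduces from associativity and unitality of the multiplication of $T$; triviality of the automorphism group of the target, which is what you invoke, does not by itself exclude distinct parallel morphisms.
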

\begin{proof}The objects of $({\bf T^{+}})^{\scriptstyle \tt T^+ +1}$ are $P$-trees with an additional colouring of their vertices by two colours $X$ and $K.$ The morphisms are generated by contraction to corolla of subtrees all of whose vertices are $X$-coloured, or introducing a new $X$-vertex on an edge coloured by an $i\in I.$ The decorating element in this vertex is $1_i\in B.$ One can always contract an object in $({\bf T^{+}})^{\scriptstyle \tt T^+ +1}$ to an object in which the vertex colours alternate starting with $X$ and ending with $X$ (cf. Sections \ref{SSM} and \ref{SSP}). Associativity and unitality of the multiplication of $T$ imply that such a contraction represents a unique morphism in $({\bf T^{+}})^{\scriptstyle \tt T^+ +1}.$ Hence, these objects are terminal in their connected components.\end{proof}

This theorem permits the definition of \emph{homotopy $T$-algebras} for any polynomial monad $T$ and choice of monoidal model category $\Ee$ fulfilling the hypotheses of Theorem \ref{maintheorem}. Indeed, the terminal object $Ass^T$ of the category of $T^+$-algebras corresponds to the identity morphism of cartesian monads  $id:T\to T,$ so that the category of algebras  of $Ass^T$ is isomorphic to the category of algebras of $T. $ Since $T^+$ is tame polynomial we can consider a cofibrant resolution $cAss^T_\Ee\to Ass_\Ee^T$ in the category $\Alg_{T^+}(\Ee)$ of $T^+$-algebras in $\Ee$. The $cAss^T_\Ee$-algebras should then be considered as \emph{homotopy $T$-algebras} in $\Ee$.

For instance, if $M$ is the polynomial monad for monoids then $Ass^M$ induces the classical non-symmetric operad $Ass^M_\Ee$ for monoids in $\Ee$, and $cAss^M_\Ee$ is a cofibrant resolution in the category of non-symmetric operads in $\Ee$, i.e. an \emph{$A_\infty$-operad}. Similarly, the cofibrant operad $cAss^{M^+}_\Ee$ is a coloured operad in $\Ee$ whose algebras can be considered as (non-symmetric) ``homotopy operads'' in $\Ee$.

One can therefore, for any polynomial monad $T$ and any monoidal model category $\Ee$ fulfilling the hypotheses of Theorem \ref{maintheorem}, embed the category of $T$-algebras in $\Ee$ in a larger category of homotopy $T$-algebras. We conjecture that in virtue of the cofibrancy of $cAss^T_\Ee$ in $\Alg_{T^+}(\Ee)$ there exists a transferred model structure on the category of homotopy $T$-algebras under very mild additional hypotheses on $\Ee$.

\section{Higher operads}\label{higheroperads}

For the convenience of the reader we recall here the definition of the higher operads of the first author. In particular, we describe them as algebras over a polynomial monad, closely following \cite{SymBat, EHBat}. This subsumes Section \ref{SSM} since $0$-operads are monoids, and $1$-operads are non-symmetric operads. We also define the monads for various interesting subcategories of the category of $n$-operads. The monads for constant-free, reduced and normal $n$-operads are tame polynomial, while the monad for general $n$-operads is polynomial, but not tame polynomial if $n\geq 2$.

\subsection{Complementary relations and  $n$-ordinals}-\vspace{1ex}

For all what follows $n$ denotes a fixed positive integer.

\begin{defin}\label{ordinals}An \emph{$n$-ordered set} $X$ is a set equipped with $n$ binary \emph{antireflexive} relations  $<_0, \ldots, <_{n-1}$ which are \emph{complementary} in the following sense:
\begin{itemize}\item[(i)]for every pair $(a,b)$ of distinct elements of $X$ there is one and only one relation $<_p$ such that
 $a<_p b$ or $b<_p a.$\end{itemize}

\noindent An \emph{$n$-ordinal} $T$ is a finite $n$-ordered set such that for any $a,b,c$ in $T$
  \begin{itemize}\item[(ii)] if $a<_p b$ and $b<_q c$ then $a<_{min(p,q)} c .$
 \end{itemize}

\noindent A \emph{map of $n$-ordered sets} $\sigma:X\to Y$ is a mapping of the underlying sets such that the relation $a<_p b$ in $X$ implies one of the following three types of relations in $Y$:
 \begin{enumerate}
\item $\sigma(a) <_q \sigma(b)$ for $p\leq q\quad$ or
\item $\sigma(a)= \sigma(b)\quad$ or
\item $\sigma(b) <_q \sigma(a)$ for $p<q.$
\end{enumerate}\end{defin}

\noindent The category of $n$-ordered sets and maps between them will be denoted $\mathrm{Rel}(n)$. The full subcategory of $\mathrm{Rel}(n)$ spanned by the $n$-ordinals will be denoted $\Ord(n)$. There is an obvious forgetful functor $|\!-\!|:\mathrm{Rel}(n)\rightarrow\FinSet$ which forgets the relations. The \emph{cardinality}   of an $n$-ordered set $X$ is the cardinality of its underlying set $|X|$.

As there are no non-trivial automorphisms in $\Ord(n)$ we will assume that each isomorphism class of $n$-ordinals contains a single element, i.e. $\Ord(n)$ is \emph{skeletal}.

Each $n$-ordinal can be represented as a pruned planar rooted tree with $n$ levels (\emph{pruned $n$-tree} for short), cf. \cite[Theorem 2.1]{SymBat}. For example, the $2$-ordinal
\[
0 <_0 2,\ 0 <_0 3,\ 0 <_0 4,\ 1 <_0 2,\  1<_0 3,\ 1 <_0 4,\
0 <_1 1,\ 2 <_1 3,\ 2 <_1 4,\ 3 <_1 4,
\]
is represented by the following pruned tree
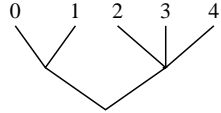
\begin{figure}[h]
\[
\psscalebox{.4 .4} 
{
\begin{pspicture}(0,-1.7822068)(6.81,1.78)
\psline[linecolor=black, linewidth=0.05](1.06,-0.35)(3.06,-1.75)(5.06,-0.35)(4.96,-0.35)
\psline[linecolor=black, linewidth=0.05](0.06,1.04)(1.06,-0.35)(2.06,1.04)(2.06,1.04)
\psline[linecolor=black, linewidth=0.05](3.46,1.04)(5.06,-0.35)(6.66,1.04)(6.66,1.04)
\psline[linecolor=black, linewidth=0.05](5.06,1.04)(5.06,-0.35)(5.06,-0.35)
\usefont{T1}{ptm}{m}{n}
\rput(0.06,1.54){\Huge 0}
\usefont{T1}{ptm}{m}{n}
\rput(2.06,1.54){\Huge 1}
\usefont{T1}{ptm}{m}{n}
\rput(3.46,1.54){\Huge 2}
\usefont{T1}{ptm}{m}{n}
\rput(5.06,1.54){\Huge 3}
\usefont{T1}{ptm}{m}{n}
\rput(6.66,1.54){\Huge 4}
\end{pspicture}
}\]
\caption{\label{symbol}A pruned $2$-tree.}
\end{figure}

The initial $n$-ordinal $z^nU_0$ has empty underlying set and its representing pruned $n$-tree is degenerate: it has no edges but consists only of the root at level $0$. The terminal $n$-ordinal $U_n$ is represented by a linear tree with $n$ levels.

\begin{defin}\label{domination}An $n$-ordered set $X$ is said to \emph{dominate} an $n$-ordered set $\,Y$ if there is a map of $n$-ordered sets $X\to Y$ inducing the identity on underlying sets.\end{defin}

We also would like to consider the limiting case of $\infty$-ordinals.

\begin{defin}Let $T$ be a finite set equipped with a sequence of  binary antireflexive  complimentary relations  $<_0,<_{-1} \ldots, <_p , <_{p-1} \ldots   $ for all integers $p\le 0.$ The set
  $T$  is called an $\infty$-ordinal if these relations satisfy:
  \begin{itemize}
 \item \  $a<_p b $ \ and \ $b<_q c$ \ implies\
 $a<_{min(p,q)} c .$
 \end{itemize}
 \end{defin}

The definition of morphism between $\infty$-ordinals coincides with the definition of morphism between $n$-ordinals for finite $n.$
The category $Ord(\infty)$ denotes {\it the skeletal category of $\infty$-ordinals.}

For an $n$-ordinal $R\ ,\ k\le n $ we consider its  {\it vertical suspension} $S(R)$ which is an $(n+1)$-ordinal with the underlying set $R ,$ and the order $<_m $ equal the  order $<_{m-1}$ on $R$ while $<_0$ is empty.

For example, a vertical suspension of the $2$-ordinal from Figure \ref{symbol} is the $3$-ordinal

\[
\scalebox{0.4} 
{
\begin{pspicture}(0,-2.795)(7.23,2.81)
\psline[linewidth=0.05](1.18,0.13)(3.18,-1.26)(5.18,0.13)(5.08,0.13)
\psline[linewidth=0.05](0.18,1.53)(1.18,0.13)(2.18,1.53)
\psline[linewidth=0.05](3.58,1.53)(5.18,0.13)(6.78,1.53)
\psline[linewidth=0.05](5.18,1.63)(5.18,0.11)
\usefont{T1}{ptm}{m}{n}
\rput(0.15,2.43){\Huge 0}
\usefont{T1}{ptm}{m}{n}
\rput(2.08,2.43){\Huge 1}
\usefont{T1}{ptm}{m}{n}
\rput(3.56,2.43){\Huge 2}
\usefont{T1}{ptm}{m}{n}
\rput(5.13,2.43){\Huge 3}
\usefont{T1}{ptm}{m}{n}
\rput(6.98,2.43){\Huge 4}
\psline[linewidth=0.05](3.18,-1.25)(3.18,-2.77)
\end{pspicture}
}\]

The suspension operation induces functors
$S_p:Ord(n)\rightarrow Ord(n+1), \ 0\le p \le n.$ We also define an $\infty$-vertical suspension functor $Ord(n)\rightarrow Ord(\infty)$ as follows. For an $n$-ordinal $T$ its $\infty$-suspension is an $\infty$-ordinal $S^{\infty}T$ whose underlying set is the same as the underlying set of $T$ and $a<_p b$ in $S^{\infty}T$ if
$a<_{n+p-1} b$ in $T .$ It is not hard to see that the sequence
$$ Ord(0)\stackrel{S}{\longrightarrow} Ord(1) \stackrel{S}{\longrightarrow} Ord(2) \longrightarrow \ldots \stackrel{S}{\longrightarrow} Ord(n) \longrightarrow \ldots \stackrel{S^{\infty}}{\longrightarrow} Ord(\infty),$$
exhibits $Ord(\infty)$ as a colimit of $Ord(n) .$\vspace{1ex}

\subsection{Fox-Neuwirth stratification of configuration spaces and $n$-ordinals}\label{FNord}

Recall that the moduli space of configurations of $k$ ordered, pairwise distinct points in $\RR^n$ admits a stratification which goes back to Fox-Neuwirth.
Consider the following configuration space
$$F(\RR^n,k)= \{(x_1,\ldots, x_k)\in (\RR^n)^k \ | \  x_i \ne x_j  \  \mbox{if} \  i\ne j \ \}$$
and denote $\stackrel{\scriptscriptstyle o \  \ \ \ \ \ \  }{S^{n-p-1}_\pm}$ the open $(n-p-1)$-hemispheres defined by
\[\stackrel{\scriptscriptstyle o \  \ \ \ \ \ \  }{S^{n-p-1}_{+}}=\left\{x\in \RR^n \left \vert \ \begin{array}[c]{ll}
  x_1^2+\ldots+x_n^2 = 1   \\
   x_{p+1}> 0 \ \mbox{and} \ x_i = 0  \ \mbox{for} \  1\le i \le p
  \end{array} \right. \right\} \]
and
 \[  \stackrel{\scriptscriptstyle o \  \ \ \ \ \ \  }{S^{n-p-1}_{-}} = \left\{ x \in \RR^n \left \vert \ \begin{array}[c]{ll}
  x_1^2+\ldots+x_n^2 = 1   \\
   x_{p+1}< 0 \ \mbox{and} \ x_i = 0  \ \mbox{for} \  1\le i \le p
  \end{array} \right. \right\} \raisebox{-3mm}{.} \]
Let $u_{ij}: F(\RR^n,k)\rightarrow S^{n-1}$ be the function
$$u_{ij}(x_1,\ldots,x_k) =  \frac{x_j - x_i}{||x_j-x_i||} $$
The Fox-Neuwirth cell corresponding to an $n$-ordinal $T$ of cardinality $k$ is
\[ FN_T =  \left\{ x\in F(\RR^n,k) \left |
 \ \begin{array}[c]{ll}
  \ u_{ij}(x)  \in \stackrel{\scriptscriptstyle o \  \ \ \ \ \ \  }{S^{n-p-1}_{+}}&  \ \mbox{\rm if} \ i<_p j  \ \mbox{\rm in} \ T  \\
  &  \\
  \ u_{ij}(x)  \in \stackrel{\scriptscriptstyle o \  \ \ \ \ \ \  }{S^{n-p-1}_{-}}&  \ \mbox{\rm if} \ j<_p i  \ \mbox{\rm in} \ T
  \end{array}\right.  \right\} \raisebox{-7mm}{.}\]
For instance, the Fox-Neuwirth cell which corresponds to the $2$-ordinal below

{\unitlength=1mm

\begin{picture}(60,15)(-40,45)

\begin{picture}(10,5)(0,-28)

\put(5,22){\makebox(0,0){\mbox{$S \ \ =$}}}
\put(12.1,24.4){\line(-1,1){4}}
\put(12.1,24.4){\line(1,1){4}}
\put(12.1,24.4){\line(2,-1){11.2}}

\end{picture}

\begin{picture}(10,10)(-0.1,-28)
\put(12.1,24.4){\line(-1,1){4}}
\put(12.1,24.4){\line(1,1){4}}
\put(12.1,24.4){\line(0,-1){5.8}}
\end{picture}

\begin{picture}(10,10)(-0.2,-28)

\put(12.1,24.4){\line(-1,1){4}}
\put(12.1,24.4){\line(1,1){4}}
\put(12.1,24.4){\line(-2,-1){11.2}}
\end{picture}
\end{picture}}

\noindent  consists of configurations of points which seat on three parallel lines in the prescribed order

\

\scalebox{0.6} 
{
\begin{pspicture}(-8,-1.76)(4.96,1.76)
\usefont{T1}{ptm}{m}{n}
\rput(0.05,-0.97){\Large 1}
\usefont{T1}{ptm}{m}{n}
\rput(2.28,0.28){\Large 3}
\usefont{T1}{ptm}{m}{n}
\rput(4.50,-0.53){\Large 5}
\usefont{T1}{ptm}{m}{n}
\rput(0.09,0.88){\Large 2}
\usefont{T1}{ptm}{m}{n}
\rput(2.27,1.30){\Large 4}
\usefont{T1}{ptm}{m}{n}
\rput(4.49,0.66){\Large 6}
\psline[linewidth=0.04cm](0.48,1.74)(0.50,-1.7)
\psline[linewidth=0.04cm](2.70,1.72)(2.68,-1.74)
\psline[linewidth=0.04cm](4.88,1.74)(4.88,-1.72)
\psdots[dotsize=0.2](0.48,0.9)
\psdots[dotsize=0.2](0.50,-0.92)
\psdots[dotsize=0.2](2.68,0.3)
\psdots[dotsize=0.2](2.70,1.34)
\psdots[dotsize=0.2](4.88,0.7)
\psdots[dotsize=0.2](4.88,-0.52)
\end{pspicture}
}

Each Fox-Neuwirth cell is a convex subspace of $(\RR^n)^k$, open in its closure, and we have a stratification
$$F(\RR^n,k)= \bigcup_{|T|\cong  \{1,\ldots, k\} ,  \pi \in S_k} \pi FN_T.$$
Here $\pi FN_T$ is the space obtained from $FN_T$ by renumbering points according to the permutation $\pi .$

The {domination relation} of Definition \ref{domination} induces a partial ordering of the set of $n$-ordinal structures on a fixed set $\{1,\dots,k\}$ of cardinality $k$. Using Fox-Neuwirth stratification we can  show that the nerve of this poset is homotopy equivalent to  $F(\RR^n,k)$, cf. \cite[Remark 2.2]{SymBat}, \cite[Theorem 5.1]{LocBat}.

\subsection{Fibres and total order}Let $\sigma:T\rightarrow S$ be a map of $n$-ordinals. For each $i\in|S|,$ the set-theoretical fibre $|\sigma|^{-1}(i)\subset|T|$ inherits from $T$ the structure of an $n$-ordinal. This $n$-ordinal will be denoted $\sg^{-1}(i)$ and called the \emph{fibre} of $\sg$ at $i$.

Observe that the underlying set $|S|$ is totally ordered by the relation
 $$a<b \ \ \mbox{if there exists} \ p\in \{0,\ldots,n-1\} \text{ such that } a<_p b.$$
We call this the \emph{total order} on the underlying set $|S|.$ The fibres of a map of $n$-ordinals $\sigma:T\rightarrow S$ will accordingly be represented by an ordered list $(T_0,\ldots,T_k)$ of $n$-ordinals, the ordering being induced by the total order on $|S|$.

Analogously, any two composable maps of $n$-ordinals\begin{diagram}[small]T&\rTo^\sg&S&\rTo^\omega&R\end{diagram}induce a family of maps of $n$-ordinals \begin{diagram}[small](\omega\sg)^{-1}(i)\to\omega^{-1}(i)\end{diagram}indexed by $i\in|R|$. We thus have a list of fibres $(T_0,\dots,T_k)$ for the composite map $\omega\sg$, a list of fibres $(S_0,\dots,S_k)$ for $\omega$, and a list of fibres $(T_i^0,\dots,T_i^{m_i})$ for each map  $(\omega\sg)^{-1}(i)\to\omega^{-1}(i)$ where $i\in|R|$. These notations will be used in the definition of an $n$-operad below.

\begin{defin}A map of $n$-ordinals is a \emph{quasibijection} (resp. \emph{order-preserving}) if it induces a bijection between the underlying sets (resp., preserves the total orders of the underlying sets, i.e. only possibilities $(1)$ and $(2)$ of Definition \ref{ordinals} occur).\end{defin}

\begin{defin}An $n$-collection in a symmetric monoidal category $\Ee$ is a family $(A_T)_{T\in\Ord(n)}$ of objects of $\Ee$ indexed by $n$-ordinals. The category of $n$-collections and levelwise morphisms in $\Ee$ will be denoted by $Coll_n(\Ee)$.\end{defin}

We now recall the definition of a pruned $(n-1)$-terminal $n$-operad \cite{SymBat}. Since we do not need other types of $n$-operads we will call them simply $n$-operads.

\begin{defin}\label{defnoper}An \emph{$n$-operad} in $\Ee$ is
an $n$-collection $(A_T)_{T\in\Ord(n)}$ in $\Ee$ equipped with the following structure:\vspace{1ex}

- a morphism $\epsilon: e \rightarrow  A_{U_n}$ (unit);

- a morphism $m_{\sigma}:A_S\otimes A_{T_0}\otimes \dots \otimes A_{T_k}
 \rightarrow A_T$ (multiplication) for each map of $n$-ordinals $\sigma:T \rightarrow S$.\vspace{1ex}

\noindent They must satisfy the following identities:\vspace{1ex}

- for any composite map of $n$-ordinals\begin{diagram}[small]T&\rTo^\sg&S&\rTo^\omega&R\end{diagram}
the associativity diagram

{\unitlength=1mm

\begin{picture}(300,45)(2,0)

\put(20,35){\makebox(0,0){\mbox{$\scriptstyle A_R\otimes A_{S_{\bullet}}\otimes A_{T_0^{\bullet}}\otimes \dots\otimes A_{T_i^{\bullet}}\otimes\dots\otimes A_{T_k^{\bullet}}$}}}
\put(20,31){\vector(0,-1){12}}
\put(94,31){\vector(0,-1){12}}
\put(88,35){\makebox(0,0){\mbox{$\scriptstyle A_R\otimes A_{S_{0}}\otimes A_{T_1^{\bullet}}\otimes\dots\otimes A_{S_{i}}\otimes A_{T_i^{\bullet}}\otimes\dots\otimes A_{S_{k}}\otimes A_{T_k^{\bullet}}$ }}}
\put(50,35){\makebox(0,0){\mbox{$\scriptstyle \cong $}}}
\put(20,15){\makebox(0,0){\mbox{$\scriptstyle A_S\otimes A_{T_1^{\bullet}}\otimes \dots\otimes A_{T_i^{\bullet}}\otimes  \dots \otimes A_{T_k^{\bullet}}$}}}
\put(94,15){\makebox(0,0){\mbox{$\scriptstyle A_R\otimes A_{T_{\bullet}}$}}}
\put(60,5){\makebox(0,0){\mbox{$ \scriptstyle A_T$}}}
\put(35,11){\vector(4,-1){19}}
\put(85,11){\vector(-4,-1){19}}

\end{picture}}

\noindent commutes,
where $$A_{S_{\bullet}}=A_{S_0}\otimes\dots\otimes A_{S_k},$$
$$A_{T_{i}^{\bullet}}=A_{T_i^0} \otimes \dots\otimes A_{T_i^{m_i}}$$
and $$ A_{T_{\bullet}}=A_{T_0}\otimes \dots\otimes A_{T_k};$$

- for an identity $\sigma = id : T\rightarrow T$ the diagram

{\unitlength=1mm
\begin{picture}(50,25)(30,2)

\put(97,20){\vector(-1,0){20}}
\put(60,17){\vector(0,-1){8}}
\put(60,20){\makebox(0,0){\mbox{\small$A_T\otimes A_{U_n}\otimes \dots \otimes A_{U_n}$}}}
\put(114,20){\makebox(0,0){\mbox{\small$A_T\otimes{e}\otimes \dots \otimes {e}$}}}
\put(60,5){\makebox(0,0){\mbox{\small$A_T$}}}
\put(105,15){\vector(-4,-1){30}}
\put(90,9){\makebox(0,0){\mbox{\small$id$}}}

\end{picture}}

\noindent commutes;

- for the unique morphism $T\rightarrow U_n$ the diagram

{\unitlength=1mm
\begin{picture}(50,25)(30,2)

\put(87,20){\vector(-1,0){15}}
\put(60,17){\vector(0,-1){8}}
\put(60,20){\makebox(0,0){\mbox{\small$A_{U_n}\otimes A_T$}}}
\put(98,20){\makebox(0,0){\mbox{\small$e \otimes A_T$}}}
\put(60,5){\makebox(0,0){\mbox{\small$A_T$}}}
\put(95,17){\vector(-3,-1){25}}
\put(84,11){\makebox(0,0){\mbox{\small$id$}}}

\end{picture}}

\noindent commutes.

\end{defin}

The notion of $n$-operad morphism is obvious and we have a category $O_n(\Ee)$ of $n$-operads.
The forgetful functor $$U_n: O_n(\Ee)\rightarrow Coll_n(\Ee)$$ has a left adjoint and is monadic whenever $\Ee$ is cocomplete.

\subsection{Constant-free, reduced and normal $n$-operads}

\begin{defin}An $n$-ordinal is called regular if it is not the initial  $n$-ordinal.
An $n$-ordinal is called normal if it is neither the initial nor the terminal $n$-ordinal.\end{defin}

There is a category structure on $ROrd(n)$ which we will call the category of {\it regular $n$-ordinals}. The morphisms are those maps of $n$-ordinals which are \emph{surjective} on the underlying sets. This forces the fibres to be regular again.

\begin{defin}A regular $n$-collection in $\Ee$ is a family $(A_T)_{T\in ROrd(n)}$ of objects of $\Ee$ indexed by the set $ROrd(n)$ of regular (i.e. nonempty) $n$-ordinals.

A normal $n$-collection in $\Ee$ is a family $(A_T)_{T\in ROrd(n)}$ of objects of $\Ee$ indexed by the set $NOrd(n)$ of normal $n$-ordinals.
\end{defin}

A  {\it constant-free} $n$-operad is defined in a similar way as an $n$-operad by using regular $n$-collections
and maps of regular $n$-ordinals. This defines the category  of constant-free  $n$-operads $CFO_n(\Ee)$ together with a forgetful functor
$$CFU_n: CFO_n(\Ee)\rightarrow RColl_n(\Ee),$$
 where $CFColl_n(\Ee)$ is the category of constant-free $n$-collections.
This functor has a left adjoint and is monadic whenever $\Ee$ is cocomplete.

Analogously we have a category of \emph{normal $n$-ordinals} $NOrd(n)$ with morphisms being surjective morphisms between normal $n$-ordinals. In the list of fibres of a map of normal $n$-ordinals we include only those fibres which are not equal to the terminal $n$-ordinal.  We have the corresponding categories of {\it normal $n$-collections} $NColl_n(\Ee)$ and {\it normal $n$-operads} $NO_n(\Ee).$ The latter can be considered as the full subcategory of $CFO_n(\Ee)$ consisting of constant-free $n$-operads $A$ such that $A_{U_n}=e.$ The forgetful functor $$NU_n: NO_n(\Ee)\rightarrow NColl_n(\Ee),$$is again monadic whenever $\Ee$ is cocomplete.

The category of {\it reduced $n$-operads} is the category of constant-free $n$-operads with additional structure.
This structure is the structure of a contravariant functor on the category of regular $n$-ordinals and their \emph{injective} order-preserving maps.
Similar to the case of reduced symmetric operads, the category of reduced $n$-operads contains the category of those $n$-operads $A$ such that $A_{z^nU_0}=e$
as a full subcategory, cf. Remark \ref{Cavigliared}.

\subsection{The polynomial monad for $n$-operads}\label{noperads}

\begin{pro}\label{ON} The monad $O(n)$ for $n$-operads is generated by the polynomial
\begin{diagram}{\tt Ord(n)}&\lTo^s&\nPTrees^*&\rTo^p&\nPTrees_{}&\rTo^t&{\tt Ord(n)}\end{diagram}
where
\begin{itemize} \item $\tt Ord(n)$ is the set of isomorphism classes of $n$-ordinals; \item $\nPTrees$ is the set of isomorphism classes of $n$-planar trees; \item $\nPTrees^*$ is the set of elements $\tau\in\nPTrees$ with one vertex marked. \end{itemize}
The structure maps are defined as follows: \begin{itemize}\item The middle map forgets the marking; \item The target map associates to $\tau$ the  $n$-ordinal of its leaves; \item The source map associates to a marked vertex $\tau$ the $n$-ordinal $\tau_v$ decorating the marked vertex $v$. \end{itemize}  The multiplication of the monad is induced by insertion of $n$-planar trees into vertices of   $n$-planar trees and the unit of the monad assigns to an $n$-ordinal $T$ a corolla decorated by $T$ with $T$ as an $n$-ordinal  structure on its leaves.\end{pro}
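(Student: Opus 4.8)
The plan is to verify directly that the free $n$-operad functor coincides with the polynomial functor $\underline{P}=t_!p_*s^*$ generated by the displayed polynomial, and that its monad structure is the one induced by insertion of $n$-planar trees. First I would record that an $n$-collection in sets is precisely an object of $\Set/\Ord(n)$, so that $O(n)=U_nF_n$ is an endomonad of $\Set/\Ord(n)$; since $U_n$ is monadic (as recalled after Definition \ref{defnoper}) this free monad exists, and it is finitary because every $n$-planar tree has only finitely many vertices, i.e. the generating polynomial is of finite type.

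Next I would compute $\underline{P}$ pointwise using the formulas $s^*(X)_e=X_{s(e)}$, $p_*(X)_b=\prod_{e\in p^{-1}(b)}X_e$ and $t_!(X)_i=\coprod_{b\in t^{-1}(i)}X_b$ recalled in Section \ref{polynomialmonad}. Since the fiber $p^{-1}(\tau)$ of an $n$-planar tree $\tau$ is by definition its set of vertices, and the source map returns the $n$-ordinal $\tau_v$ decorating the marked vertex $v$, this yields
\begin{equation*}\underline{P}(X)_S=\coprod_{\tau\in t^{-1}(S)}\ \prod_{v}X_{\tau_v},\end{equation*}
the coproduct being over $n$-planar trees $\tau$ with leaf $n$-ordinal $S$, and the product over the vertices $v$ of $\tau$. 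The core of the argument is then to identify this with the underlying $n$-collection of $F_n(X)$. I would argue that a formal composite of operations in the free $n$-operad on $X$ is exactly an $n$-planar tree whose vertices are decorated by elements of $X$, with sources and target matching the edge $n$-ordinals in the sense of Remark \ref{operation}; the associativity and unit axioms of Definition \ref{defnoper} identify two such formal composites precisely when the underlying decorated trees agree. Having matched the two functors, I would then match the monad structures: the unit of $O(n)$ sends an $n$-ordinal $T$ to the corolla decorated by $T$, which is exactly the unit of the polynomial monad, while the multiplication $O(n)^2\to O(n)$, which composes formal composites, corresponds under the identification to insertion of $n$-planar trees into vertices of $n$-planar trees, i.e. to the substitution product $PP\to P$.

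Finally, to conclude that this is genuinely a monad in $\mathbf{Poly}$ and not merely a monad on $\Set/\Ord(n)$, I would check that the unit and multiplication are \emph{cartesian} natural transformations; equivalently, by the characterization recalled in Section \ref{polynomialmonad}, that $\underline{P}$ preserves connected limits and that the naturality squares of unit and multiplication are pullbacks. Cartesianness should follow once the combinatorics is in place, because insertion never identifies distinct vertices and the fiber (vertex set) of a composite tree is the disjoint union of the fibers of its constituents, so the middle squares of the defining diagrams are pullbacks.

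The main obstacle is the combinatorial identification in the second step: verifying that $n$-planar trees, with the grafting and insertion operations set up rigorously in Part 4, are faithful pasting schemes for the associativity data of Definition \ref{defnoper}. Concretely, the delicate point is that the lists of fibers $(T_0,\dots,T_k)$, $(S_0,\dots,S_k)$, $(T_i^0,\dots,T_i^{m_i})$ appearing in the associativity diagram — organized by the total orders on the underlying sets of the $n$-ordinals involved — must correspond bijectively to the ways of grafting $n$-planar trees, so that the poset of parenthesized composites collapses to a single decorated tree and insertion becomes strictly associative and unital on isomorphism classes. Once this bookkeeping is established, the remaining verifications reduce to the routine pointwise computations with $s^*$, $p_*$, $t_!$ above.
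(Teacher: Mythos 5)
Your proposal is correct in outline, but it argues in the opposite direction from the paper. The paper's (sketched) proof first observes that the displayed data determine a polynomial monad, and then compares \emph{algebras}: given an $O(n)$-algebra $A$, each morphism of $n$-ordinals $\sigma:T\to S$ is converted into a two-level $n$-planar tree $[\sigma]$ (one vertex decorated by $S$, one vertex for each fiber $T_i$, leaves carrying the $n$-ordinal $T$), whose action on $A$ defines the multiplication $m_\sigma$ of Definition \ref{defnoper}; conversely, an $n$-operad $B$ is evaluated along an arbitrary $n$-planar tree by a top-to-bottom induction (the paper delegates this to \cite[Lemma 3.2]{SymBat}), and monadicity on both sides then identifies the monads. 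You instead compute the free $n$-operad functor $U_nF_n$ explicitly and match it, as a monad, with $t_!p_*s^*$ and the insertion multiplication, checking cartesianness of unit and multiplication at the end. The combinatorial core is identical in both routes: your ``fiber lists $(T_0,\dots,T_k)$, $(S_0,\dots,S_k)$, $(T_i^0,\dots,T_i^{m_i})$ correspond bijectively to graftings'' is precisely the $[\sigma]$-construction, and your ``faithful pasting scheme'' claim is precisely the cited Lemma 3.2. What each approach buys: the paper's algebra comparison never needs an explicit formula for the free functor and gets associativity of the polynomial monad for free from associativity of tree insertion, whereas your route yields the explicit tree expansion $\underline{P}(X)_S=\coprod_{t(\tau)=S}\prod_v X_{\tau_v}$ of free $n$-operads as a byproduct, at the cost of carrying the full bookkeeping of the associativity diagram through the identification. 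Two small cautions for your version: the target of $\tau$ is the \emph{chosen} dominating $n$-ordinal $S$ on the leaves, not the canonical $n$-ordered set $L(\tau)$ (these differ in general, cf. Section \ref{higheroperads}), and your tree calculus must include degenerate vertices (decorated by the initial $n$-ordinal) and leafless trees, since the monad here is for general, not constant-free, $n$-operads; both points are consistent with your formula but easy to lose in the grafting bookkeeping.
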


\begin{proof}We give a sketch of the proof. More details can be found in \cite{SymBat,EHBat}. First, we can show that the data above determine a polynomial monad $O(n).$
Second, any algebra $A$ of $O(n)$ has the structure of an $n$-operad.  The unit of this operad is given by an $n$-planar rooted tree $L_0$ (see (\ref{specialgraph}) for notations) whose target is the terminal $n$-ordinal $U_n$ and whose set of sources is empty because $L_0$ does not have vertices.
To define a multiplication in  $A$ we will associate an $n$-planar tree $[\sigma]$ with each morphism of $n$-ordinals $\sigma:T\to S.$
The set of vertices $\{v_S,v_{T_i},\ldots,v_{T_k}\}$ of the tree $[\sigma]$ is in one-to-one correspondence with the set $\{S, T_1,\ldots T_k\},$ where $T_1,\ldots,T_k$ are fibres of $\sigma.$ The outgoing edge of the  vertex $v_S$ is the root of the tree $[\sigma].$  The elements of $|S|$ are incoming edges of this vertex with its $n$-ordinal structure. The other vertices are all above  $v_S$ and the outgoing edge of $v_{T_i}$ is the $i$-th incoming edge of $v_S.$ The leaves attached to the vertex $v_{T_i}$ correspond to the elements of $|T_i|$ and this set has $T_i$ as its $n$-ordinal structure. Finally, the set of leaves is equipped with the $n$-ordinal structure of $T.$

\scalebox{0.7} 
{
\begin{pspicture}(-2,-1.41)(9.65,2.44)
\psline[linewidth=0.022cm](0.28,1.04)(0.19,0.66)
\psline[linewidth=0.022cm](0.58,0.30)(0.58,0.66)
\psline[linewidth=0.022cm](1.16,1.04)(0.94,0.62)
\psline[linewidth=0.022cm](0.19,0.66)(0.6,0.30)
\psline[linewidth=0.022cm](0.6,0.32)(0.94,0.64)
\psline[linewidth=0.022cm](3.03,1.13)(3.37,0.77)
\psline[linewidth=0.022cm](3.68,1.13)(3.37,0.77)
\psline[linewidth=0.022cm](3.37,0.77)(3.37,0.33)
\psline[linewidth=0.022cm,arrowsize=0.153cm 2.0,arrowlength=1.2,arrowinset=0.6]{->}(1.51,0.72)(2.51,0.72)
\usefont{T1}{ptm}{m}{n}
\rput(5.65,1.72){1}
\usefont{T1}{ptm}{m}{n}
\rput(6.38,1.72){3}
\psline[linewidth=0.022cm](7.27,-0.54)(7.55,-0.90)
\psline[linewidth=0.022cm](7.81,-0.54)(7.55,-0.90)
\psline[linewidth=0.022cm](7.55,-0.90)(7.55,-1.34)
\pscircle[linewidth=0.022,dimen=outer](6.36,0.50){0.61}
\pscircle[linewidth=0.022,dimen=outer](7.54,-0.91){0.61}
\psline[linewidth=0.022cm](7.57,-1.50)(7.57,-1.90)
\psline[linewidth=0.022cm](6.04,1.02)(5.68,1.52)
\psline[linewidth=0.022cm](6.71,0.99)(7.0,1.54)
\psline[linewidth=0.022cm](7.94,-0.45)(8.38,0.04)
\psline[linewidth=0.022cm](6.73,0.01)(7.17,-0.42)
\psline[linewidth=0.022cm](6.12,0.86)(6.12,0.46)
\psline[linewidth=0.022cm](6.66,0.84)(6.65,0.47)
\psline[linewidth=0.022cm](6.12,0.46)(6.41,0.09)
\psline[linewidth=0.022cm](6.65,0.47)(6.39,0.09)
\psline[linewidth=0.022cm](0.0,1.04)(0.2,0.66)
\psline[linewidth=0.022cm](0.44,1.04)(0.58,0.66)
\psline[linewidth=0.022cm](0.58,0.66)(0.72,1.04)
\psline[linewidth=0.022cm](0.88,1.04)(0.94,0.62)
\usefont{T1}{ptm}{m}{n}
\rput(1.98,1.03){$\sigma$}
\psline[linewidth=0.022cm](6.4,0.82)(6.4,0.12)
\psline[linewidth=0.022cm](6.39,1.53)(6.39,1.11)
\usefont{T1}{ptm}{m}{n}
\rput(7.08,1.72){5}
\usefont{T1}{ptm}{m}{n}
\rput(8.05,1.70){2}
\usefont{T1}{ptm}{m}{n}
\rput(8.80,1.70){4}
\pscircle[linewidth=0.022,dimen=outer](8.76,0.48){0.61}
\psline[linewidth=0.022cm](8.44,1.00)(8.08,1.50)
\psline[linewidth=0.022cm](9.11,0.98)(9.4,1.52)
\psline[linewidth=0.022cm](8.52,0.84)(8.52,0.44)
\psline[linewidth=0.022cm](9.06,0.82)(9.05,0.45)
\psline[linewidth=0.022cm](8.52,0.44)(8.81,0.07)
\psline[linewidth=0.022cm](9.05,0.45)(8.79,0.07)
\psline[linewidth=0.022cm](8.8,0.80)(8.8,0.10)
\psline[linewidth=0.022cm](8.79,1.51)(8.79,1.09)
\usefont{T1}{ptm}{m}{n}
\rput(9.49,1.70){6}
\usefont{T1}{ptm}{m}{n}
\usefont{T1}{ptm}{m}{n}
\rput(2,-0.5){$\sigma(1)=1, \ \sigma(2)=2, \  \sigma(3) =1, $}
\rput(2,-1){$\sigma(4)=2, \ \sigma(5)=1, \  \sigma(6) =2. $}
\usefont{T1}{ptm}{m}{n}
\rput(6,-0.5){$[\sigma]   \  \   = $}
\end{pspicture}
}
\begin{figure}[h]
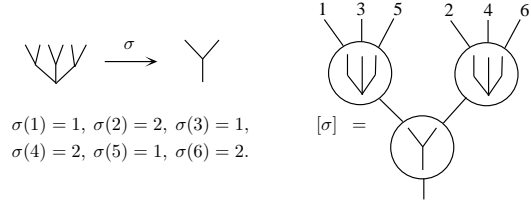
\caption{{\small A $2$-ordinal and its corresponding $2$-planar tree.}}\label{sigma}\end{figure}

\noindent The element $[\sigma]$ then produces the multiplication$$m_{\sigma}:A_S\times A_{T_0}\times \dots \times A_{T_k}\rightarrow A_T$$in the algebra $A.$ All axioms follow from the associativity and unitality of the operation of insertion of $n$-planar trees.

Conversely, if $B$ is an $n$-operad one can associate with any $n$-planar tree $\tau$ a composite map out of
the product of all $B_{\tau_v}$ for all vertices of $\tau$ to the set $B_{t(\tau)}$ using induction from top to the bottom of the tree (see \cite[Lemma 3.2]{SymBat}).\end{proof}

\subsection{The polynomial monads for normal and constant-free $n$-operads}

The monad for normal $n$-operads has been computed in \cite[Theorems 3.1 and 4.1]{SymBat}. We show now that it is a polynomial monad.

\begin{pro}\label{NON}The monad for normal $n$-operads is generated by the polynomial
\begin{diagram}{\tt NOrd(n)}&\lTo^s&\nPTrees_{nor}^*&\rTo^p&\nPTrees_{nor}&\rTo^t&{\tt NOrd(n)}\end{diagram}
where
\begin{itemize}\item $\tt NOrd(n)$ is the  set of isomorphism classes  of  normal $n$-ordinals; \item $\nPTrees_{nor}$ is the set of isomorphism classes of normal $n$-planar trees;\item $\nPTrees_{nor}^*$ is the set of elements of $\,\nPTrees_{nor}$ with one vertex marked. \end{itemize}
The structure maps of this polynomial monad $NO(n)$ are defined in the same way as for the polynomial monad for $n$-operads.\end{pro}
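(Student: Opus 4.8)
The plan is to mirror the proof of Proposition \ref{ON} for general $n$-operads and adapt it to the normal setting, the only difference being the restriction from all $n$-ordinals to normal ones (those which are neither initial nor terminal) and the corresponding restriction on the trees and fiber-lists. First I would verify that the stated data genuinely assemble into a polynomial monad. Since $NO(n)$ is already known to be monadic over $NColl_n(\Ee)$ by the cited Theorems 3.1 and 4.1 of \cite{SymBat}, the real content is to exhibit an explicit generating polynomial and to check that its associated polynomial functor agrees with the endofunctor $NU_n \cdot (\text{free normal } n\text{-operad})$. As in Remark \ref{operation}, the middle map $p$ of finite type yields the set of $NO(n)$-operations, so I would read off that a $T$-operation is a normal $n$-planar tree $\tau$ with target the $n$-ordinal of its leaves and sources the $n$-ordinals decorating its vertices; the multiplication is graph insertion of normal $n$-planar trees into vertices of normal $n$-planar trees. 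The key point is that this insertion is well defined and closed within normal trees, i.e. that inserting a normal $n$-planar tree into a vertex of a normal $n$-planar tree again produces a normal $n$-planar tree; this is where normality of the fibers (forced by the surjectivity condition on morphisms of normal $n$-ordinals, and by the convention that terminal fibers are omitted from the fiber-list) must be used.

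Next I would establish the bijection between algebras. Given an $NO(n)$-algebra $A$ in the polynomial-monad sense, one recovers a normal $n$-operad exactly as in Proposition \ref{ON}: to each surjection of normal $n$-ordinals $\sigma:T\to S$ one associates the normal $n$-planar tree $[\sigma]$ built from $S$ at the root vertex and the (non-terminal) fibers $T_i$ at the upper vertices (compare Figure \ref{sigma}), and the corresponding $T$-operation gives the structure map $m_\sigma$. Conversely, starting from a normal $n$-operad $B$, one builds for each normal $n$-planar tree $\tau$ an operation on $B$ by the top-to-bottom induction of \cite[Lemma 3.2]{SymBat}. The associativity, unitarity and equivariance constraints translate into the associativity and unitality of the insertion operation, precisely as in the non-normal case. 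I would emphasize that, because we are in the normal setting, the unit is handled by the convention that the terminal $n$-ordinal $U_n$ satisfies $A_{U_n}=e$, so that degenerate (identity) operations and terminal fibers contribute trivially; this is what makes the generating polynomial have $\tt NOrd(n)$ rather than $\tt Ord(n)$ as its set of colours.

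Finally I would confirm polynomiality proper. The monad $NO(n)$ is a monad in $\mathbf{Poly}$ over the colour set $\tt NOrd(n)$, hence it suffices (as in the proof of Proposition \ref{Cart}(v)) to observe that the generated endofunctor preserves connected limits, or equivalently that the middle map $p$ is of finite type — which holds since a normal $n$-planar tree has finitely many vertices, so each fiber $p^{-1}(\tau)$ is finite. The multiplication and unit are cartesian because insertion of trees is compatible with the pullback description of composite operations, exactly as recorded in Section \ref{polynomialmonad}.

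I expect the main obstacle to be the closure and normality bookkeeping: one must check carefully that the restriction to normal $n$-ordinals and normal $n$-planar trees is stable under the monad multiplication, and that the omission of terminal fibers from fiber-lists is consistent with both the operadic structure maps and the tree-insertion multiplication. In other words, the delicate part is not the abstract polynomiality (which is formal once the generating polynomial is written down) but verifying that the combinatorial data of normal trees genuinely form a sub-polynomial-monad of $O(n)$, matching the already-known monad $NO(n)$ of \cite{SymBat}. Since the proof closely parallels Proposition \ref{ON}, I would present it as a sketch, pointing to \cite{SymBat,EHBat} for the details of the translation between morphisms of $n$-ordinals and $n$-planar trees, and merely indicating the modifications required by normality.
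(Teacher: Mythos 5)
Your proposal follows essentially the same route as the paper: the paper's proof is a two-sentence reduction to Proposition \ref{ON}, noting precisely the two modifications you identify, namely that the tree $[\sigma]$ is constructed only for surjective maps of normal $n$-ordinals and that no vertices are introduced for fibers isomorphic to the terminal $n$-ordinal. Your additional bookkeeping (closure of normal $n$-planar trees under insertion, finiteness of the fibers of $p$, and the convention $A_{U_n}=e$ handling the unit) is correct and simply fleshes out what the paper leaves implicit.
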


\begin{proof}The proof is analogous to the proof of Proposition \ref{ON}. The only difference is that we construct a normal $n$-planar tree $[\sigma]$ only for surjective maps of normal $n$-ordinals. In this construction we also do not introduce vertices for those fibres of $\sigma$ which are isomorphic to the terminal $n$-ordinal.\end{proof}

\vspace{1ex}

 \scalebox{0.8} 
{
\begin{pspicture}(-2,-2.26)(8.18,2.29)
\begin{pspicture}(1,0)(0,0)
\psline[linewidth=0.022cm](0.30,1.19)(0.30,0.77)
\psline[linewidth=0.022cm](0.70,1.19)(0.70,0.41)
\psline[linewidth=0.022cm](1.08,1.19)(1.08,0.77)
\psline[linewidth=0.022cm](0.30,0.79)(0.70,0.41)
\psline[linewidth=0.022cm](0.70,0.41)(1.08,0.79)
\end{pspicture}
\psline[linewidth=0.022cm](1.88,1.22)(2.16,0.86)
\psline[linewidth=0.022cm](2.42,1.22)(2.16,0.86)
\psline[linewidth=0.022cm](2.16,0.86)(2.16,0.42)
\psline[linewidth=0.022cm,arrowsize=0.153cm 2.0,arrowlength=1.2,arrowinset=0.6]{->}(0.6,0.75)(1.6,0.75)
\usefont{T1}{ptm}{m}{n}
\rput(1.1,1){$\sigma$}
\usefont{T1}{ptm}{m}{n}
\rput(0,-0.5){$\sigma(1)=1, \ \sigma(2)=2, \  \sigma(3) =1$}
\usefont{T1}{ptm}{m}{n}
\rput(5,-0.5){$[\sigma]   \  \   = $}
\rput(5.16,2.07){\large 1}
\usefont{T1}{ptm}{m}{n}
\rput(6.62,2.07){\large 3}
\usefont{T1}{ptm}{m}{n}
\rput(8.02,0.77){\large 2}
\psline[linewidth=0.022cm](6.98,-0.19)(7.26,-0.55)
\psline[linewidth=0.022cm](7.52,-0.19)(7.26,-0.55)
\psline[linewidth=0.022cm](7.26,-0.55)(7.26,-0.99)
\pscircle[linewidth=0.022,dimen=outer](6.07,0.85){0.61}
\pscircle[linewidth=0.022,dimen=outer](7.25,-0.56){0.61}
\psline[linewidth=0.022cm](7.28,-1.15)(7.28,-1.55)
\psline[linewidth=0.022cm](5.70,1.34)(5.34,1.80)
\psline[linewidth=0.022cm](6.42,1.34)(6.70,1.82)
\psline[linewidth=0.022cm](7.62,-0.07)(7.98,0.50)
\psline[linewidth=0.022cm](6.44,0.36)(6.88,-0.07)
\psline[linewidth=0.022cm](5.78,1.24)(5.78,0.82)
\psline[linewidth=0.022cm](6.36,1.24)(6.36,0.82)
\psline[linewidth=0.022cm](5.78,0.82)(6.12,0.44)
\psline[linewidth=0.022cm](6.36,0.82)(6.10,0.44)
\end{pspicture}}
\begin{figure}[h]\caption{{\small A normal $2$-ordinal and its corresponding normal $2$-planar tree.}}\label{sigma2}\end{figure}

\begin{pro}\label{CFON}The monad for constant-free $n$-operads is generated by the polynomial

\begin{diagram}{\tt ROrd(n)}&\lTo^s&\nPTrees_{reg}^*&\rTo^p&\nPTrees_{reg}&\rTo^t&{\tt ROrd(n)}\end{diagram}
\noindent where
\begin{itemize} \item $\tt ROrd(n)$ is the  set of isomorphism classes of regular $n$-ordinals; \item $\nPTrees_{reg}$ is the set of  isomorphism classes of regular $n$-planar trees; \item $\nPTrees_{reg}^*$ is the set of elements of $\,\nPTrees_{reg}$ with one vertex marked. \end{itemize}
The structure maps of this polynomial monad $CFO(n)$ are analogous to those of the polynomial monad for $n$-operads.\end{pro}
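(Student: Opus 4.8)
The final statement is Proposition \ref{CFON}, asserting that the monad $CFO(n)$ for constant-free $n$-operads is the polynomial monad generated by the displayed polynomial on $\mathtt{ROrd}(n)$ with regular $n$-planar trees in the middle. The plan is to follow verbatim the two-step strategy already used in the proofs of Propositions \ref{ON} and \ref{NON}, since the only change is the replacement of the indexing category of $n$-ordinals by its regular variant and the corresponding restriction to regular $n$-planar trees.

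First I would verify that the displayed data genuinely assemble into a polynomial monad over the set $\mathtt{ROrd}(n)$. The source, middle and target maps are defined exactly as for $O(n)$, so the only thing requiring a word is that insertion of a regular $n$-planar tree into a vertex of a regular $n$-planar tree again yields a regular tree: this holds because regularity means every vertex has at least one incoming edge, a property manifestly preserved under grafting, and because the morphisms of $\mathrm{ROrd}(n)$ are precisely the \emph{surjective} maps of $n$-ordinals, whose fibers are forced to be regular. Associativity and unitality of the monad structure then follow formally from associativity and unitality of tree insertion, just as in Proposition \ref{ON}; the unit assigns to a regular $n$-ordinal its decorating corolla.

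The core of the proof is the equivalence between $CFO(n)$-algebras and constant-free $n$-operads, which I would establish by constructing the two comparison directions. Given a $CFO(n)$-algebra $A$, I attach to each surjective map $\sigma:T\to S$ of regular $n$-ordinals the regular $n$-planar tree $[\sigma]$ whose vertices correspond to $S$ and to the (automatically regular) fibers $T_0,\dots,T_k$ of $\sigma$, exactly as in Figure \ref{sigma}; the action of this tree-operation supplies the multiplication $m_\sigma$. Conversely, given a constant-free $n$-operad $B$, I decorate an arbitrary regular $n$-planar tree inductively from leaves to root, as in \cite[Lemma 3.2]{SymBat}, to recover a $CFO(n)$-algebra structure. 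Checking that these assignments are mutually inverse and that the $n$-operad axioms correspond precisely to associativity and unitality of tree insertion is the substance, but it is entirely parallel to the non-symmetric and general cases already treated.

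The main obstacle — really the only point that is not a verbatim copy of the earlier arguments — is to confirm that \emph{no information is lost} by restricting from all maps of $n$-ordinals to surjective ones and from all vertices to regular vertices. Concretely, in the constant-free setting there are no nullary operations, so a multiplication $m_\sigma$ indexed by a non-surjective $\sigma$ (which in the general case would involve a fiber equal to the initial $n$-ordinal, hence a factor $A_\emptyset$) simply does not arise; dually, only regular trees occur because a vertex with empty fiber would decode to a nullary operation absent from a constant-free operad. I would therefore spend one sentence observing that surjective maps of regular $n$-ordinals have only regular fibers, so the tree $[\sigma]$ is regular and every structure map of a constant-free $n$-operad is accounted for, and no spurious ones are introduced. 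With this bookkeeping in place, the proof reduces to the sketch already given for $O(n)$ and $NO(n)$, and I would close by remarking that the argument is analogous to Proposition \ref{ON}.
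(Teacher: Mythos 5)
Your proposal is correct and follows essentially the same route as the paper: the paper's own proof is a one-line reduction to Proposition \ref{ON}, noting only that the tree $[\sigma]$ is constructed just for surjective maps of regular $n$-ordinals, which is exactly the reduction you carry out (with the welcome extra bookkeeping that regularity is preserved under insertion and that surjective maps of regular $n$-ordinals have regular fibers).
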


\begin{proof}The proof is again similar to the proof of Proposition \ref{ON}. The  difference is that we construct a regular $n$-planar tree $[\sigma]$ only for surjective maps of regular $n$-ordinals.\end{proof}

\begin{defin}An $n$-planar tree is called non-degenerate if its underlying tree is non-degenerate in the sense of Definition \ref{nd}. A vertex $v$ of an $n$-planar tree $\tau$ is called non-degenerate if the corolla $cor_v(\tau)$ is non-degenerate in the tree underlying $\tau$.\end{defin}

\begin{pro}\label{RON}The monad for reduced $n$-operads is generated by the polynomial
\begin{diagram}{\tt ROrd(n)}&\lTo^s&\nPTrees_{nd}^*&\rTo^p&\nPTrees_{nd}&\rTo^t&{\tt ROrd(n)}\end{diagram}
\noindent where
\begin{itemize}\item $\tt ROrd(n)$ is the set of isomorphism classes of regular $n$-ordinals;\item $\nPTrees_{nd}$ is the set of isomorphism classes of non-degenerate $n$-planar trees;\item $\nPTrees_{nd}^*$ is the set of elements of $\,\nPTrees_{nd}$ with one non-degenerate vertex marked.\end{itemize}
The structure maps of this polynomial monad $RO(n)$ are analogous to those of the polynomial monad for $n$-operads.\end{pro}

\begin{proof}Everything is similar to the other three cases except that contraction of trees may involve dropping degenerate vertices. As a result, the underlying category of this polynomial monad is the category of regular $n$-ordinals and their injections.\end{proof}

We now prove that the polynomial monads $NO(n),CFO(n)$ and $RO(n)$ are tame, while the monad $O(n)$ is not tame for $n\geq 2$. We construct an obstruction for the existence of transferred model structure in the latter case. Our proof will be based on a combinatorial lemma about directed categories.

\begin{defin}A small category $C$ is called \emph{directed} if there is a function $\dim$ (called dimension function) on objects of this category to an ordinal $\lambda$  such that any non-identity morphism strictly increases the dimension.\end{defin}

\begin{lem}\label{dl}Let $C$ be a finite directed category with a set of generating morphisms $G$ which satisfies the following two conditions:
\begin{enumerate}\item[(i)]Any two parallel  generators in $C$ are equal;\item[(ii)]Any span of  generators$$\omega\stackrel{\phi}{\leftarrow}\tau\stackrel{\psi}{\to}\upsilon$$ in $C$ can be completed to a commutative square by a cospan of generators (or identities)$$\upsilon\stackrel{\phi^*}{\to} \varsigma\stackrel{\psi^*}{\leftarrow}\omega.$$\end{enumerate}Then there is a unique terminal object in each connected component of $C.$\end{lem}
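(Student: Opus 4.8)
The plan is to prove the statement by constructing, for each object $\tau$ of $C$, a canonical morphism to a distinguished object in its connected component, and then showing that this distinguished object is terminal. Since $C$ is finite and directed, each connected component contains at least one object of maximal dimension admitting no outgoing non-identity generators; such an object has only identity morphisms leaving it. First I would observe that any morphism in $C$ factors as a composite of generators, and use the dimension function together with finiteness to guarantee that iteratively applying generators to any object $\tau$ must terminate in such a maximal object, call it $t_\tau$. The two hypotheses (i) and (ii) are exactly what is needed to make this terminal object well-defined and to verify its universal property.

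The key step is to show that for a fixed object $\tau$, any two maximal objects reachable from $\tau$ by composites of generators coincide, and that the morphism $\tau \to t_\tau$ is unique. Here condition (ii) is the workhorse: given two spans of generators emanating from $\tau$, I would use the completion-to-commutative-square property repeatedly (a confluence/diamond argument) to show that any two zig-zags of generators out of $\tau$ can be brought together. The directedness ensures that this rewriting process is \emph{terminating} (dimension strictly increases, and $C$ is finite so $\lambda$ may be taken finite on the relevant objects), and condition (ii) ensures it is \emph{locally confluent}. By the standard Newman's Lemma (a terminating, locally confluent rewriting system is confluent), the normal form $t_\tau$ is unique. Condition (i), that parallel generators are equal, then upgrades confluence on objects to uniqueness of the actual morphism $\tau \to t_\tau$, since any two generator-composites with the same source and target must agree once we know the intermediate objects are forced.

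Having established the existence and uniqueness of $t_\tau$, I would verify that $t_\tau$ is terminal in its connected component. Given any object $\omega$ in the same component as $\tau$, connectedness provides a zig-zag of morphisms between them; since all objects in the component have the same canonical maximal object (by the confluence argument applied along the zig-zag), one gets $t_\omega = t_\tau =: t$. The canonical morphism $\omega \to t$ exists for every such $\omega$, and its uniqueness follows from the uniqueness argument above applied at $\omega$: any morphism $\omega \to t$ is a composite of generators leading to the normal form, hence equals the canonical one. This shows $t$ is terminal, and since the component was arbitrary, every connected component has a unique terminal object.

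The main obstacle I anticipate is the confluence argument, specifically extending the single-span completion of hypothesis (ii) to arbitrary zig-zags and then to the uniqueness of morphisms rather than merely uniqueness of target objects. Condition (ii) as stated only completes a span of two generators by a cospan, so one must carefully inductively glue these local diamonds, keeping track that the completing morphisms remain generators (or identities) so the induction can continue; the possibility that $\phi^*$ or $\psi^*$ is an identity is a genuine case to handle, not a degenerate afterthought. The delicate point is ensuring that Newman's Lemma applies in the categorical setting, i.e. that local confluence of the generator-rewriting on \emph{objects} combined with (i) actually yields a well-defined terminal \emph{object with a unique morphism from each source}; the interplay between hypothesis (i) (rigidity of parallel generators) and hypothesis (ii) (confluence) is exactly where the argument must be most careful.
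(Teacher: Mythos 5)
Your overall architecture --- termination from the dimension function, local confluence from hypothesis (ii), Newman's Lemma for uniqueness of the maximal object, and a zig-zag argument to spread this over a connected component --- correctly establishes that each component has a unique \emph{weakly} terminal object $t$: every object admits \emph{at least one} morphism to $t$. Up to that point your route is sound, and in spirit close to the paper's (which gets weak terminality by induction on the number of objects rather than via Newman's Lemma).

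The genuine gap is the upgrade from weak terminality to terminality, i.e. uniqueness of the morphism $\omega\to t$. Your justification --- that condition (i) suffices ``since any two generator-composites with the same source and target must agree once we know the intermediate objects are forced'' --- rests on a false premise: the intermediate objects are \emph{not} forced. Two generator-paths from $\omega$ to $t$ can pass through entirely different objects (the two legs of any diamond already do), and (i) only identifies generators that are \emph{parallel}, so it says nothing about such pairs. Note also that Newman's Lemma uses only the \emph{existence} of the completing cospan in (ii); the \emph{commutativity} of the square is never invoked anywhere in your argument, yet it is exactly what morphism-uniqueness requires. The missing step is a second, downward induction on the dimension function, which is the heart of the paper's proof: given $f,g: b\to t$, factor $f=f_2f_1$ and $g=g_2g_1$ with $f_1:b\to a_1$, $g_1:b\to a_2$ generators. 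If $a_1=a_2$, then $f_1=g_1$ by (i) and $f_2=g_2$ by the inductive hypothesis at $a_1$, whose dimension exceeds that of $b$. If $a_1\neq a_2$, complete the span to a commutative square $a_1\to c\leftarrow a_2$ by (ii); by the inductive hypothesis at $a_1$, $a_2$ and $c$ (together with weak terminality, which supplies a morphism $c\to t$), the composites $a_i\to c\to t$ are the unique morphisms $a_i\to t$, whence $f=(b\to a_1\to c\to t)=(b\to a_2\to c\to t)=g$ by commutativity of the square. Without this induction your proof establishes only weak terminality, not the lemma as stated.
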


\begin{proof}We first prove that each connected component of $C$ has a unique weakly terminal object, i.e. an object such that there exists at least one morphism to it from any other object of the same connected component. We use an induction on the number $k$ of objects in $C.$  If $k=1$ the unique object in $C$ is weakly terminal.  Assume now that we know that the statement is true for any for $k\le m-1.$ Observe that given a zig-zag of morphisms in $C$
$$a_0\stackrel{}{\leftarrow} a_1\stackrel{}{\rightarrow} a_2 \leftarrow \ldots \ldots \rightarrow  a_{n-2}\stackrel{}{\leftarrow} a_{n-1}\stackrel{}{\rightarrow} a_n$$
 one can replace it by a zig-zag
$$a_0\stackrel{}{\rightarrow} b_1 \stackrel{}{\leftarrow } a_2 \leftarrow \ldots \ldots \rightarrow  a_{n-2}\stackrel{}{\rightarrow} b_{n-1}\stackrel{}{\leftarrow} a_n.$$
Doing the same for the zig-zag of $b$'s and continuing we come to the conclusion that for any two objects $c,c'$ of the same connected component of $C$ one can find an object $c''$ and a cospan $$c\rightarrow  c''\leftarrow c'.$$

We can now assume that there is only one connected component in $C,$ otherwise the statement for $k=m$ follows immediately from the inductive hypothesis.  Let $L$ be the minimum of the function $\dim$ on $C.$ Consider the full subcategory $C'$ consisting of objects $a$ such that $\dim(a)>L.$ Then $C'$ is obviously connected and satisfies our inductive hypothesis. Therefore, it contains a weakly terminal object $t$. If an object $a$ does not belong to $C'$ then there must be a span
$$a\rightarrow b\leftarrow t$$ where $b\in C'.$ In this span $b\leftarrow t$ must be an identity, otherwise $\dim(b)>\dim(t)$ and $t$ would not be weakly terminal. So, we have found a map from any object of $C$ to $t.$ This weakly terminal object is obviously unique.

The next step of the proof is to show that the weakly terminal object $t$ is actually terminal in its connected component.
We use an induction on $\dim$  to prove  that there is at most one morphism  to $t.$  Indeed, the statement is true  for
all objects $a$ such that $\dim(a)\ge \dim(t).$ Now, suppose we know that the morphism is unique for all objects $a$ such that $\dim(a)\ge m.$ Let $k$ be the maximal integer such that  $k<m$ and there exists an object $b$ such that $\dim(b)=k.$ Let $\dim(b)=k.$
and  $f,g:b\rightarrow t.$  One can factorise $f=f_1\cdot f_2$ and $g=g_1\cdot g_2$
where $f_1$ and $g_1$ are generators. Now, we can complete the cospan
$$a_1 \stackrel{f_1}\leftarrow b \stackrel{g_1}{\rightarrow} a_2$$
to a span $$a_1\rightarrow c\leftarrow a_2.$$
If $a_1 \ne a_2$ then we can use our inductive hypothesis and, therefore, there is only one morphism from $c$ to $t$ and we finished the proof.  If $a_1 = a_2$ then $f_1=g_1$ and $f_2 = g_2 $ by the inductive hypothesis.\end{proof}

\subsection{The normal $n$-operad classifier $\nn$}According to formula (\ref{HT}) and Proposition \ref{NON} the $n$-collection whose $S$-th term is the set of normal $S$-dominated $n$-planar trees is the object of objects of the classifier $\nn.$ The morphisms of $\nn$ are generated by insertions of $n$-planar trees into vertices of another $n$-planar tree. This allows us to describe the morphisms explicitly as certain contractions of ``forests'' in an $n$-planar tree.

Let $\tau$ be an $n$-planar tree. {\it An $n$-ordered subtree $\tau'$ of $\tau$} is a plain subtree of $\tau$ such that for each of its vertices the set of incoming edges carries the obvious $n$-ordinal structure induced from $\tau$.

A $n$-ordered subtree $\tau'$ is called an {\it $n$-planar subtree} if the set of leaves of $\tau'$ is equipped with an $n$-ordinal structure (called {\it target $n$-ordinal of $\tau'$}) such that
 \begin{itemize}\item[(i)] $\tau'$ is an $n$-planar tree;\item[(ii)]the corresponding contraction $C_{(\tau,\tau')}$ (\ref{CVT}) is a map of $n$-ordinals.
 \end{itemize}
{\it A forest of $n$-planar subtrees in $\tau$} is a set of $n$-planar subtrees of $\tau$ whose sets of vertices are pairwise disjoint. The source of such a forest is the tree $\tau$ itself; the target is the $n$-planar tree obtained from $\tau$ by contracting each subtree of the forest to a corolla, and decorating the corresponding vertex by the target $n$-ordinal of the contracted subtree.

The category $\nn$ is generated by contractions of single $n$-planar subtrees. There are two types of relations:
\begin{itemize}\item[$(\bullet\bullet)$]
 Given a forest consisting of two disjoint $n$-planar subtrees the morphism of contraction of this forest factorises as two consecutive contractions of its subtrees in any of the two possible orders;
\item[$(\hspace{-0.5mm}\odot\hspace{-2.25mm}\bullet)$]Given an $n$-planar subtree which contains itself an $n$-planar subtree, the morphism of contraction of the bigger subtree factorises as contraction of the smaller subtree followed by contraction of the result of the first contraction.  \end{itemize}

For an example of a generator see the left hand side tree on Figure \ref{4to7}. The target $2$-ordinal of the subtree shown by a dash line is the $2$-ordinal in the root of the contracted tree. Observe that the total order of this $2$-ordinal structure on the leaves is different from the linear order induced by planar structure of the subtree.

\begin{lem}\label{parallel}Any two parallel generators in $\nn$ are equal.\end{lem}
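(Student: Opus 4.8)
The plan is to reformulate a generator as the datum of a single subtree contraction and then to show that this datum is reconstructible from the source and target alone. Recall from the description of $\nn$ that a generator $g\colon\tau\to\upsilon$ is the contraction of one $n$-planar subtree $\tau'\subseteq\tau$ to a corolla; equivalently $g$ is recorded by a vertex $v$ of $\upsilon$ together with the $n$-planar tree $\tau'$ (with its target $n$-ordinal) inserted at $v$ so that the insertion reproduces $\tau$. Since $n$-planar trees are rigid, the representative $\upsilon$ and the vertex $v$ are well defined. A non-identity generator necessarily contracts a subtree with at least two vertices, hence with at least one internal edge, and it induces a contraction map $c\colon\tau\to\upsilon$ collapsing exactly the internal edges of $\tau'$. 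Two parallel generators thus yield two subtrees $\tau_1',\tau_2'\subseteq\tau$ and two contractions $c_1,c_2\colon\tau\to\upsilon$ onto the same target, and it suffices to prove that $c_1$ and $c_2$ collapse the same set of edges: for then $\tau_1'=\tau_2'$, the contracted vertex and its decorating target $n$-ordinal coincide, and $g_1=g_2$. (If $g_1$ is an identity then $\tau=\upsilon$, and any parallel generator is again an identity, since a genuine contraction strictly decreases the number of vertices; so we may assume both are non-identities.)

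The crucial input is a rigidity property forced by \emph{normality}. Since every vertex of a normal $n$-planar tree has at least two incoming edges, I would first establish the sublemma that in such a tree the assignment sending an edge to the set of leaves lying above it is \emph{injective}. Indeed, along any directed path the leaf-set strictly shrinks when one passes above a vertex, because that vertex has a second input carrying at least one further leaf; and two edges on distinct branches carry disjoint leaf-sets. Hence distinct edges carry distinct leaf-sets. Here one uses that all relevant leaf-sets are nonempty, which holds because normal trees are non-degenerate, and that the root edge carries all of the $\geq 2$ leaves of the normal target $n$-ordinal $S$.

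With this sublemma in hand the conclusion is immediate. Each $c_i\colon\tau\to\upsilon$ is the identity on leaves, since both trees lie in the same $S$-component of $\nn$ and so share the leaf $n$-ordinal $S$; moreover inserting a subtree at a vertex leaves unchanged the set of leaves above every surviving edge. Therefore the induced inclusion of edge-sets $\mathrm{Edges}(\upsilon)\hookrightarrow\mathrm{Edges}(\tau)$ is leaf-set preserving, and by the sublemma applied to $\tau$ it is the \emph{unique} such injection: each edge of $\upsilon$ must go to the unique edge of $\tau$ carrying the same leaves. This identification depends only on $\tau$ and $\upsilon$, not on $i$, so $c_1$ and $c_2$ have the same image and hence collapse the same complementary set of edges. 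As this collapsed set is the internal edge-set of a connected subtree, it determines the subtree, its image vertex, and its target $n$-ordinal, giving $\tau_1'=\tau_2'$, $v_1=v_2$, and finally $g_1=g_2$.

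The main obstacle, and the place where normality is indispensable, is precisely the sublemma: without the hypothesis that each vertex has at least two inputs, unary vertices produce chains of edges sharing the same leaf-set, the reconstruction of the collapsed edges is no longer forced, and genuinely distinct parallel generators appear (as the two coface maps $[1]\to[2]$ already do in the classifier $\Delta_+$ of the non-normal monoid monad). I would therefore invoke normality exactly at this point, and take care not to rely on any planar or level rigidity beyond the leaf-set bookkeeping.
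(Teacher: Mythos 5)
Your overall route is the same as the paper's: reconstruct the contracted $n$-planar subtree from the source and target alone, using normality and planarity, and read the target $n$-ordinal off the decoration of the corresponding vertex of the target tree. Your leaf-set injectivity sublemma is correct and is in fact a welcome rigorous implementation of the reconstruction step that the paper only asserts (``due to the planarity and normality \ldots\ source and target of a generator permit to reconstruct entirely the $n$-planar subtree''). But there is a genuine gap: your premise that a non-identity generator contracts a subtree with at least two vertices, hence at least one internal edge, is false. The generators of $\nn$ also include contractions of a single corolla $cor_v(\tau)$ equipped with a target $n$-ordinal different from $\tau_v$: these are the quasibijection generators, which change only the decoration at $v$ and collapse no edge at all. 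The paper is explicit about them in the proof of Lemma \ref{directed} (``such a generator either contracts some internal edges of the underlying tree or comes from a quasibijection''), and they are not a marginal case --- in the classifier for general (non-normal) $n$-operads they produce genuinely \emph{distinct} parallel morphisms, namely the two quasibijections $\sigma_0,\sigma_1:V\to Y$ of Section \ref{counterexample}, which is precisely the obstruction to tameness. Your edge-collapsing argument is vacuous for such generators: from ``$c_1$ and $c_2$ collapse the same (empty) set of edges'' one cannot conclude $\tau_1'=\tau_2'$, since a priori the two generators could act at different vertices. Your disposal of the identity case also fails as stated, because a quasibijection generator does not decrease the number of vertices.

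The gap is local and patchable. A genuine contraction strictly decreases the number of vertices while a quasibijection generator preserves it, so two parallel generators are necessarily of the same kind. If both collapse no edges, then $\tau$ and $\upsilon$ share the same underlying planar tree and their vertex decorations agree everywhere except at most at one vertex; since a non-identity quasibijection generator must change the decoration at the vertex where it acts, both generators act at the unique vertex $v$ with $\tau_v\ne\upsilon_v$ and have the same target $n$-ordinal $\upsilon_v$, hence coincide (and if the decorations agree everywhere, both are the identity). With this supplementary case, together with your leaf-set argument in the multi-vertex case --- where, as in the paper, the target $n$-ordinal is recovered from the decoration of the new vertex of $\upsilon$ and the inner decorations are inherited from $\tau$ --- the proof is complete and essentially coincides with the paper's reconstruction argument.
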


\begin{proof}For the proof it is enough to see that due to the planarity and normality of the underlying trees source and target of a generator permit to reconstruct entirely the $n$-planar subtree whose contraction defines the generator. Indeed, we know the decorating $n$-ordinals of the vertices of this subtree and, hence, its $n$-ordinal structure. The decoration of the vertex of the target tree gives us the $n$-ordinal structure on the set of leaves of this subtree, so that we reconstructed the entire contracted $n$-planar subtree.\end{proof}

\subsection{Diamond generated by a span in $\nn$}\label{diamond}

Let now $\tau$ be an $n$-planar tree and let $\tau',\tau''$ be two $n$-planar subtrees of $\tau.$ These two subtrees generate a span of morphisms in $\nn$
 \begin{equation}\label{dspan}\omega\stackrel{\phi}{\leftarrow}\tau\stackrel{\psi}{\to}\upsilon\end{equation}
 We now describe  a construction   which produces a cospan in $\nn$ $$ \upsilon\stackrel{\phi^*}{\to} \varsigma\stackrel{\psi^*}{\leftarrow} \omega$$  making the square
 \begin{diagram}[small]\tau&\rTo^{\psi}&\upsilon\\\dTo^{\phi}&&\dTo_{\phi^*}\\\omega&\rTo_{\psi^*}&\varsigma\end{diagram}commutative.
 We call this construction {\it diamond  generated by the span (\ref{dspan})}.

We consider two cases:
\begin{itemize}\item[(i)] $\tau'$ and $\tau''$ have no common vertices so they form an $n$-planar subforest  $\tau\cup \tau'$ in $\tau$ ;
\item[(ii)] $\tau'$ and $\tau''$ have at least one common vertices, so $\tau\cup \tau'$ is a partially $n$-ordered subtree $\tau'''$ in $\tau.$
\end{itemize}

Case (i) is easy since in this case we can take as $\varsigma$ the result of contraction of the subforest $\tau\cup \tau'.$ The resulting square commutes because of the relation of type ($\bullet\bullet$) in $\nn.$

For the case (ii) we will now complete the partial $n$-ordered subtree structure on $\tau'''$ to an $n$-planar subtree structure in $\tau$ in such a way that $\tau'$ and $\tau''$ both are $n$-planar subtrees of $\tau'''.$  If such a structure on $\tau'''$ exists then we can take as $\varsigma$ the result of contraction of  $\tau'''$ and we will have a morphism $\delta: \tau\to \varsigma.$ We also get a morphism $\phi^*:\upsilon \to \varsigma$ because $\tau'$ is a subtree of $\tau'''$ and relation of type $(\hspace{-0.5mm}\odot\hspace{-2.25mm}\bullet)$ in $\nn$ shows that $\delta = \psi\cdot\phi^*.$ Analogously we have $\delta = \phi\cdot\psi^*$ and the result follows.

To provide $\tau'''$ with an $n$-planar structure we need to equip the set of leaves $L(\tau''')$ with an $n$-ordinal structure and check that such an equipment satisfies the necessary condition. Observe that $L(\tau''')$ is a subset of $L(\tau'')\cup L(\tau').$ Let $$C_{(\tau,\tau')}:L(\tau)\to L(\tau') \ , \ C_{(\tau,\tau'')}:L(\tau)\to L(\tau'')$$ be the corresponding contraction functions. They determine a unique function $$C: L(\tau)\to L(\tau')\cup L(\tau'').$$   For each $h\in L(\tau''')\subset L(\tau')\cup F(\tau'')$ let us choose
 a  leave $i(h)\in C^{-1}(h).$ We have a subset  $$\{i(h) | h\in  L(\tau''')\}\subset L(\tau)$$ and we equip it with the $n$-ordinal structure induced from $F(\tau).$ The tree $\tau'''$ is an $n$-planar subtree of $\tau.$ It is also clear that $\tau'$ and $\tau''$ are $n$-planar subtrees of $\tau'''$ which finishes the construction.

\begin{lem}\label{directed}For each $n$-ordinal $T$ the category $\nnt$ is a finite directed category.\end{lem}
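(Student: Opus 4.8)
The plan is to exhibit an explicit dimension function on $\nnt$ and verify that every non-identity morphism strictly increases it. Recall that the objects of $\nnt$ are normal $T$-dominated $n$-planar trees, and that the morphisms are generated by contractions of $n$-planar subtrees, subject to the relations $(\bullet\bullet)$ and $(\hspace{-0.5mm}\odot\hspace{-2.25mm}\bullet)$. The natural candidate for $\dim$ is the number of vertices of the underlying $n$-planar tree. First I would observe that since all trees dominated by the fixed $n$-ordinal $T$ are finite with a bounded number of leaves (namely $|T|$), and since each vertex of a normal $n$-planar tree has at least two incoming edges, the number of vertices is bounded above; hence the object-set of $\nnt$ is finite. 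This immediately gives finiteness.

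Next I would check directedness. A single generating contraction replaces an $n$-planar subtree $\tau'\subseteq\tau$ (with at least one vertex) by a single corolla. By normality, every vertex has at least two incoming edges, so a subtree consisting of $m\ge 2$ vertices has strictly more than one vertex and is contracted to exactly one vertex; a subtree with a single vertex would give an identity contraction. Thus a \emph{non-identity} generator strictly decreases the number of vertices. To get a dimension function in the sense of Definition (where morphisms must \emph{increase} dimension toward an ordinal), I would set $\dim(\tau)=N-\#\Vert(\tau)$, where $N$ is the (finite) maximal number of vertices among objects of $\nnt$; then each non-identity generator strictly increases $\dim$. Since arbitrary non-identity morphisms are composites of generators at least one of which is non-identity, and the relations $(\bullet\bullet)$ and $(\hspace{-0.5mm}\odot\hspace{-2.25mm}\bullet)$ only re-express composites of contractions as other composites of contractions, the total decrease in vertex count (equivalently the total increase in $\dim$) is well-defined and strictly positive for any non-identity morphism.

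The one point requiring a little care is to confirm that the relations do not allow a non-trivial morphism to have equal source and target, i.e. that $\dim$ is genuinely constant only on identities. I would argue that any generator strictly changes the vertex count (a contraction of a subtree with $\ge 2$ vertices reduces it, and the only contractions with unchanged vertex count are contractions of single-vertex subtrees, which are identities in $\nnt$ since contracting a corolla to itself does nothing). Hence any composite of at least one non-identity generator strictly decreases the vertex count, so no non-identity morphism can be an endomorphism. This shows $\dim$ strictly increases along every non-identity arrow, completing the verification.

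The main obstacle, such as it is, lies in being precise about the bookkeeping of vertices under contraction in the normal setting: I must ensure that contracting an $n$-planar subtree always collapses it to exactly one vertex and that a single-vertex subtree yields an identity, so that the vertex count is a faithful invariant distinguishing identities from non-identities. This rests on the explicit description of the generators and relations of $\nn$ given above, together with normality (every vertex has $\ge 2$ inputs), which guarantees subtrees chosen for contraction are genuinely ``smaller'' unless trivial. Once this is pinned down, finiteness is immediate from the bound $\#\Vert(\tau)\le |T|-1$ forced by normality and $T$-domination, and directedness follows formally.
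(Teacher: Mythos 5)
Your finiteness argument is fine, but the directedness argument has a genuine gap: it is \emph{not} true that every non-identity generator of $\nnt$ strictly decreases the number of vertices, nor that contracting a single-vertex subtree is automatically an identity. An $n$-planar subtree carries a \emph{target $n$-ordinal} on its leaves which need not coincide with the decoration one would get from the ambient tree; in particular a single-vertex subtree whose target $n$-ordinal differs from the $n$-ordinal decorating that vertex gives a non-identity unary generator, induced by a quasibijection of $n$-ordinals, which leaves the underlying rooted tree (hence the vertex count) unchanged and alters only the decoration. For $n\geq 2$ such non-identity quasibijections exist: the two distinct morphisms between the one-vertex objects decorated by the $2$-ordinals $V$ and $Y$, induced by the quasibijections $\sigma_0,\sigma_1:V\to Y$ exhibited in Section \ref{counterexample} (cf.\ Remark \ref{Tambad}), are exactly of this kind. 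Consequently your function $\dim(\tau)=N-\#\mathrm{Vert}(\tau)$ is constant along these non-identity arrows, and your concluding argument that no non-identity endomorphisms exist collapses as well, since it rests on the same false claim that single-vertex contractions are trivial. (For $n=0,1$ there are no non-trivial quasibijections and your vertex count would suffice, but the lemma is needed precisely for $n\geq 2$, where this phenomenon is the whole difficulty.)

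The paper closes exactly this gap by a two-tier invariant. It defines $(\tau,T)\ll(\tau',T)$ if either the underlying rooted tree of $\tau'$ is obtained from that of $\tau$ by contracting internal edges, or the underlying rooted trees are isomorphic but $d(\tau)<d(\tau')$, where $d(\tau)=\sum_{v}d(\tau_v)$ and $d(S)$ is the number of edges in the level-tree representation of the $n$-ordinal $S$ minus $n-1$ --- the geometric dimension of the Fox-Neuwirth cell of $S$. Every generator either contracts internal edges (your case) or comes from a quasibijection, and a non-identity quasibijection strictly increases $d$ because it corresponds to the inclusion of a Fox-Neuwirth cell into the boundary of a higher-dimensional one. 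This makes $\ll$ a transitive antireflexive relation reflecting all non-identity morphisms, and one then invokes the general fact that any finite set with such a relation carries a strictly increasing dimension function. To repair your proof you must replace the bare vertex count by such a refinement, e.g.\ the lexicographic pair consisting of your $N-\#\mathrm{Vert}(\tau)$ followed by the Getzler-Jones dimension $d(\tau)$, and verify the strict increase of $d$ along quasibijection generators.
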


\begin{proof}The finiteness of $\nnt$ is clear. To prove the existence of an increasing dimension-function, let us construct an antireflexive and transitive relation on the objects of $\nnt$ which reflects the morphisms in $\nnt$. We will write $(\tau,T)  \ll  (\tau',T)$ if
\emph{either} the underlying rooted tree of $\tau'$ is obtained from the underlying rooted tree of $\tau$ by contraction of some internal edges, \emph{or} $\tau$ and $\tau'$ have isomorphic underlying rooted trees but $d(\tau) < d(\tau').$

Here $d(\tau) = \sum_{v\in \tau} d(\tau_v)$, where for an $n$-ordinal $S$ the dimension $d(S)$ is the number of edges in the level-tree representation minus $n-1.$ This dimension $d(S)$ of the $n$-ordinal $S$ is actually the geometric dimension of the Fox-Neuwirth cell defined by $S$ (cf. Section \ref{FNord}), while the resulting $d(\tau)$ is the geometric dimension of the Getzler-Jones cell defined by the $n$-planar tree $\tau$ (cf. \cite{SymBat}). It follows that each generator $f: \tau \to \tau'$ satisfies $\tau \ll \tau',$ because such a generator \emph{either} contracts some internal edges of the underlying tree  \emph{or} comes from a quasibijection, in which case the dimension of the domain tree is strictly less than the dimension of codomain tree (a quasibijection corresponds to an inclusion of a Fox-Neuwirth cell into the boundary of another). We now get by induction on the number of elements that any finite set equipped with a transitive antireflexive relation possesses a dimension-function for its elements which strictly increases along this relation.\end{proof}

\subsection{The classifier \copn}Objects of the classifier $\copn$ consist of  $n$-planar planar trees with an additional decoration of each vertex by
colours $X$ or $ K.$ We call such trees {\it coloured $n$-planar trees.} The vertices are called $X$-vertices or $K$-vertices according to their colours.
{\unitlength=1mm

\begin{picture}(60,60)(-20,-5)

\begin{picture}(10,10)(0,0)

\put(14,22.5){\circle{5}}
\put(7,30.5){\makebox(0,0){\mbox{$\scriptstyle  10$}}}
\put(21,30.5){\makebox(0,0){\mbox{$\scriptstyle  1$}}}
\put(14,32){\makebox(0,0){\mbox{$\scriptstyle  7$}}}
\put(15.9,20.8){\line(1,-1){5}}
\put(12.1,24.4){\line(-1,1){5}}
\put(14,25){\line(0,1){6}}
\put(15.9,24.4){\line(1,1){5}}
\put(14,22.5){\makebox(0,0){\mbox{$\scriptstyle X_t $}}}

\end{picture}

\begin{picture}(10,10)(2.5,8.5)

\put(14,22.5){\circle{5}}
\put(14,25){\makebox(0,0){\mbox{$ $}}}
\put(15.9,20.8){\line(1,-1){5}}
\put(12.1,24.4){\line(-1,1){5}}
\put(14,25){\line(0,1){6}}
\put(15.9,24.4){\line(1,1){5}}
\put(14,32){\makebox(0,0){\mbox{$\scriptstyle  12$}}}
\put(21,30.5){\makebox(0,0){\mbox{$\scriptstyle  11$}}}
\put(14,22.5){\makebox(0,0){\mbox{$\scriptstyle K_v $}}}

\end{picture}

\begin{picture}(10,10)(5,17)

\put(14,22.5){\circle{5}}
\put(14,25){\makebox(0,0){\mbox{$ $}}}
\put(14,20){\line(0,-1){6}}
\put(15.9,24.4){\line(1,1){5}}
\put(14,22.5){\makebox(0,0){\mbox{$\scriptstyle K_t $}}}

\end{picture}

\begin{picture}(10,10)(7.5,8.5)

\put(14,22.5){\circle{5}}
\put(14,25){\makebox(0,0){\mbox{$ $}}}
\put(13,25){\line(-1,2){3}}
\put(15.9,24.4){\line(1,1){8.4}}
\put(14,22.5){\makebox(0,0){\mbox{$\scriptstyle X_s $}}}

\end{picture}

\begin{picture}(10,10)(22.75,-2.5)

\put(14,22.5){\circle{5}}
\put(7,29.5){\makebox(0,0){\mbox{$\scriptstyle  5$}}}
\put(20,29.5){\makebox(0,0){\mbox{$\scriptstyle  8$}}}
\put(14,32){\makebox(0,0){\mbox{$\scriptstyle  3$}}}
\put(12.1,24.4){\line(-1,1){4}}
\put(14,25){\line(0,1){6}}
\put(15.9,24.4){\line(1,1){4}}
\put(14,22.5){\makebox(0,0){\mbox{$\scriptstyle X_p $}}}
\end{picture}

\begin{picture}(10,10)(18.3,-4)

\put(14,22.5){\circle{5}}
\put(14,25){\makebox(0,0){\mbox{$ $}}}
\put(12.1,24.4){\line(-1,1){4}}
\put(14,25){\line(0,1){4}}
\put(15.9,24.4){\line(1,1){4}}
\put(7,29.5){\makebox(0,0){\mbox{$\scriptstyle  4$}}}
\put(20,29.5){\makebox(0,0){\mbox{$\scriptstyle  13$}}}
\put(14,22.5){\makebox(0,0){\mbox{$\scriptstyle K_v $}}}

\end{picture}

\begin{picture}(10,10)(29.5,-13)

\put(14,22.5){\circle{5}}
\put(7,30.5){\makebox(0,0){\mbox{$\scriptstyle  2$}}}
\put(21,30.5){\makebox(0,0){\mbox{$\scriptstyle  6$}}}
\put(14,32){\makebox(0,0){\mbox{$\scriptstyle  9$}}}
\put(12.1,24.4){\line(-1,1){5}}
\put(14,25){\line(0,1){6}}
\put(15.9,24.4){\line(1,1){5}}
\put(14,22.5){\makebox(0,0){\mbox{$\scriptstyle X_w $}}}

\end{picture}

\end{picture}}
\begin{figure}[h]
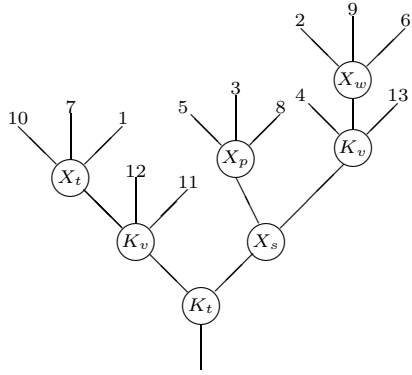
\caption{{\small Typical coloured $n$-planar tree. Here $v,w,p,t,s$ are $n$-ordinals decorating vertices.}}\end{figure}

\noindent The morphisms in $\copn$ are generated by contractions of $n$-planar subtrees all of whose vertices are $X$-vertices. The resulting new vertex after such a  contraction gets colour $X.$  Relations between morphisms are the same as in $\nn.$

\begin{theorem}\label{T01}The polynomial monad $NO(n)$ is tame.\end{theorem}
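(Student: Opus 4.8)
The plan is to show that each connected component of the semi-free coproduct classifier $\copn$ for the monad $NO(n)$ has a terminal object, which by the definition of tameness (Section \ref{tame}) is exactly what we need. By the general description of $\copn$ given just above the statement, the objects are coloured normal $n$-planar trees (vertices coloured by $X$ or $K$), and the morphisms are generated by contractions of $n$-planar subtrees all of whose vertices are $X$-coloured, subject to the relations $(\bullet\bullet)$ and $(\hspace{-0.5mm}\odot\hspace{-2.25mm}\bullet)$ inherited from $\nn$. The strategy is to apply the combinatorial Lemma \ref{dl}: a finite directed category with a set of generators satisfying (i) unique parallel generators and (ii) completion of generator-spans to commutative squares by cospans of generators has a unique terminal object in each connected component.

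First I would fix a normal $n$-ordinal $T$ and restrict attention to the component category $\copns$ of $\copn$ lying over $T$ (the analogue of $\nnt$), since the classifier splits over $\Ord(n)$ and each piece must be shown to be a coproduct of categories with terminal object. I would verify finiteness and the existence of an increasing dimension function exactly as in Lemma \ref{directed}: the same dimension $d(\tau)=\sum_v d(\tau_v)$ works, because the generating contractions either contract internal edges of the underlying tree or arise from quasibijections that strictly raise dimension, and the added $X/K$-colouring does not interfere with this count. This establishes that $\copns$ is a finite directed category.

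Next I would check the two hypotheses of Lemma \ref{dl}. Condition (i), uniqueness of parallel generators, transfers directly from Lemma \ref{parallel}: planarity and normality let one reconstruct the contracted $X$-coloured subtree from its source and target, and the colouring is preserved under contraction, so two parallel generators coincide. Condition (ii), completion of spans, is where I would invoke the diamond construction of Section \ref{diamond}. Given two $X$-coloured $n$-planar subtrees $\tau',\tau''$ of a coloured tree $\tau$, I treat the disjoint case via relation $(\bullet\bullet)$ and the overlapping case by completing the partial $n$-ordered subtree $\tau'''=\tau'\cup\tau''$ to an $n$-planar subtree using the contraction functions $C_{(\tau,\tau')}$ and $C_{(\tau,\tau'')}$ exactly as in the normal case; since $\tau',\tau''$ are entirely $X$-coloured, so is $\tau'''$, and hence the resulting contractions are again generators of $\copn$. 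This yields the required cospan of generators and the commuting square.

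The main obstacle I anticipate is purely bookkeeping rather than conceptual: one must be careful that the $n$-planar subtree structure produced on the overlap $\tau'''$ in the diamond construction respects the $X$-colouring (so that its contraction is an admissible morphism of $\copn$ and not merely of $\nn$), and that the completion genuinely recovers $\tau'$ and $\tau''$ as $n$-planar subtrees of $\tau'''$ so that relation $(\hspace{-0.5mm}\odot\hspace{-2.25mm}\bullet)$ applies to factor both $\phi$ and $\psi$ through the common contraction. Once Lemma \ref{dl} supplies a unique terminal object in each connected component of every $\copns$, I conclude that $\copn$ is a coproduct of categories with terminal object, i.e. $NO(n)$ is tame.
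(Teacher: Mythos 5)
Your proposal is correct and follows exactly the paper's own argument: the paper likewise verifies the hypotheses of the Diamond Lemma \ref{dl} for $(\copn)_T$, using the dimension-function of Lemma \ref{directed}, Lemma \ref{parallel} for uniqueness of parallel generators, and the observation that spans of generators involve only $X$-coloured $n$-planar subtrees, so the diamond construction of Section \ref{diamond} applies verbatim. Your extra care about the colouring of the overlap $\tau'''$ is precisely the point the paper compresses into one sentence.
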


\begin{proof}We check that the classifier $(\copn)_T$ satisfies the conditions of Lemma \ref{dl}. Indeed, we use the same dimension-function as in Lemma \ref{directed}. Property (i) of the Diamond Lemma follows from Lemma \ref{parallel}. Property (ii) follows from the fact that in such a span of generators only $n$-planar subtrees with $X$-vertices are involved and so we can repeat the construction of the diamond from \ref{diamond} verbatim.\end{proof}

\begin{theorem}\label{T02}The polynomial monads $CFO(n)$  and $RO(n)$ are tame.\end{theorem}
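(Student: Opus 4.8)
The plan is to treat both monads exactly as in the proof of Theorem \ref{T01}, reducing tameness in each case to an application of the Diamond Lemma \ref{dl} to the relevant semi-free coproduct classifier $\cop$. In both cases the objects of $\cop$ are coloured $n$-planar trees (decorated by $X$ and $K$) of the appropriate class, and the morphisms are generated by contractions of $X$-coloured $n$-planar subtrees; so the whole argument comes down to re-checking that the two hypotheses of Lemma \ref{dl} persist when the normality constraint is relaxed.

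For $CFO(n)$ (Proposition \ref{CFON}) I would take the objects of $\cop$ to be \emph{regular} coloured $n$-planar trees, i.e. every vertex has at least one incoming edge. First I would verify, using the geometric dimension of the associated Getzler--Jones cell as in Lemma \ref{directed}, that each component of $\cop$ is a finite directed category; this only uses that contractions of internal edges and quasibijections strictly raise the dimension, which is insensitive to normality. Condition (i) of Lemma \ref{dl} is the analogue of Lemma \ref{parallel}: I would show that the planar structure together with the vertex- and leaf-decorations still pins down the contracted $X$-subtree from the source and target of a generator, so that parallel generators coincide. Condition (ii) should follow essentially verbatim from the diamond construction of Section \ref{diamond}, since that construction only ever touches $X$-coloured subtrees, and both the union of regular subtrees and the completion of $\tau'''$ to an $n$-planar subtree stay within the regular class.

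For $RO(n)$ (Proposition \ref{RON}) I would run the same scheme with the objects of $\cop$ being \emph{non-degenerate} coloured $n$-planar trees, remembering that here the generating contractions are allowed to drop degenerate (nullary) vertices, so that the underlying maps are maps of regular $n$-ordinals and their injections. The directedness argument adapts once degenerate vertices are weighted correctly in the dimension-function. For condition (i) the reconstruction of Lemma \ref{parallel} must additionally keep track of which degenerate vertices were discarded, while for condition (ii) I would re-run the diamond of Section \ref{diamond} and check that the completing subtree $\tau'''$ remains non-degenerate and that dropping degenerate vertices is compatible with the two relations $(\bullet\bullet)$ and $(\hspace{-0.5mm}\odot\hspace{-2.25mm}\bullet)$.

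The hard part will be precisely the interaction of contraction with the weakened hypotheses. In the normal case the absence of unary and nullary vertices makes the reconstruction in Lemma \ref{parallel} and the completion of $\tau'''$ automatic; for $CFO(n)$ the presence of unary ($U_n$-decorated) vertices, and for $RO(n)$ the dropping of degenerate vertices, create potential ambiguities in matching the uncontracted part of a tree across a generator. The crux is to show that \emph{planarity} --- the same feature that already excludes the non-trivial automorphisms responsible for the non-tameness of the symmetric and the general $n$-operad monads --- continues to rigidify the combinatorics enough that both conditions of Lemma \ref{dl} hold. Once this is secured, Lemma \ref{dl} furnishes a unique terminal object in each connected component of $\cop$, whence $CFO(n)$ and $RO(n)$ are tame.
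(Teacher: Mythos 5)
There is a genuine gap, and it lies exactly where you located the ``hard part'': the Diamond Lemma \ref{dl} is simply not applicable to the classifiers ${\bf CFO(n)}^{\tt CFO(n)+1}$ and ${\bf RO(n)}^{\tt RO(n)+1}$, because these categories fail to be \emph{directed}, and no re-weighting of the dimension-function can repair this. The reason the normal case works is that normal $n$-ordinals exclude the terminal $n$-ordinal $U_n$, so the classifier $\copn$ has no unary vertices and all its generating morphisms are contractions; Lemma \ref{directed} exploits precisely this. Once $U_n$-decorated vertices are admitted (as they are for $CFO(n)$ and $RO(n)$), the unit of the monad produces generators that \emph{insert} a unary $X$-vertex into an edge, going opposite to contraction. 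Concretely, let $a$ be the tree consisting of a single unary $X$-vertex decorated by $U_n$ and $b$ the linear tree with two such $X$-vertices: contraction of the $X$-subtree $b$ gives a morphism $b\to a$ (since $U_n\circ U_n=U_n$), while inserting the trivial tree into one vertex of $b$ and the unary corolla into the other gives a morphism $a\to b$. Two distinct objects with non-identity morphisms in both directions rule out any dimension-function that strictly increases along non-identity morphisms, so hypothesis~0 of Lemma \ref{dl} fails; for $RO(n)$ the stump-dropping morphisms add a further layer of the same phenomenon. (Your condition (i) is also shakier than you suggest: the reconstruction in Lemma \ref{parallel} explicitly uses normality, and in a chain of three unary $X$-vertices the contractions of the top pair and of the bottom pair are parallel generators contracting \emph{different} subtrees --- they coincide only because of unitality, not because the subtree is determined by source and target.)

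The paper's actual proof sidesteps all of this by a reduction to Theorem \ref{T01} rather than a re-run of it. One observes that $\copn$ sits inside ${\bf CFO(n)}^{\tt CFO(n)+1}$ (away from the terminal-ordinal components), takes the terminal object $t$ of a connected component of $\copn$ already furnished by the Diamond Lemma, and \emph{saturates it with units}: every edge joining two $K$-vertices, and every leaf or root attached to a $K$-vertex, is subdivided by a new unary $X$-vertex decorated by $U_n$. The unit generators supply a morphism $t\to t'$, and one checks directly that these saturated objects $t'$ are terminal in their components of ${\bf CFO(n)}^{\tt CFO(n)+1}$; the components over $S=U_n$ are handled separately, their terminal objects being linear trees with colours alternating $X,K,\dots,X$ (as for the free monoid classifier of Section \ref{SSM}). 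For $RO(n)$ no new terminal objects are needed at all: the reduced classifier has more objects (trees with stumps) but also morphisms dropping stumps, so the same objects remain terminal in their (enlarged) connected components. So the correct strategy is to \emph{use} the tameness of $NO(n)$ as input and exhibit terminal objects explicitly, not to re-verify the hypotheses of Lemma \ref{dl} in a setting where they provably fail.
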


\begin{proof}The category $\copn$ is a subcategory of  ${\bf CFO(n)}^{\tt CFO(n)+1}$ for non-terminal $n$-ordinals. It is not difficult to construct a terminal object in the connected component of ${\bf CFO(n)}^{\tt CFO(n)+1}$ out of the terminal object in the corresponding connected component of $\copn.$ Indeed, let $t\in \copn$ be such an object. We construct an object $t'$ in  ${\bf CFO(n)}^{\tt CFO(n)+1}$ as follows: for any two $K$-vertices in $t$ connected by an edge, or for a leave or root attached to a $K$-vertex, we replace this edge or leave with a linear tree with one vertex. We assign the colour $X$ to this new vertex. We have a morphism from $t$ to $t'$ generated by the unit of the $n$-operad.

 {\unitlength=0.9mm

\begin{picture}(60,35)(-35,25)

\begin{picture}(10,10)(14.3,-11)
\put(14,22.5){\circle{5}}
\put(14,25){\makebox(0,0){\mbox{$ $}}}
\put(12.1,24.4){\line(-1,1){4.9}}
\put(14,17){\line(0,1){3}}
\put(15.9,24.4){\line(1,1){4.6}}
\put(14,22.5){\makebox(0,0){\mbox{$\scriptstyle K_v $}}}

\end{picture}

\begin{picture}(10,10)(33.5,-20)

\put(14,22.5){\circle{5}}
\put(12.1,24.4){\line(-1,1){4}}
\put(14,25){\line(0,1){6}}
\put(15.9,24.4){\line(1,1){4}}
\put(14,22.5){\makebox(0,0){\mbox{$\scriptstyle K_w $}}}

\end{picture}

\begin{picture}(10,10)(-40,0)

\begin{picture}(10,10)(14.3,-11)

\put(14,22.5){\circle{5}}
\put(14,25){\makebox(0,0){\mbox{$ $}}}
\put(12.1,24.4){\line(-1,1){4.9}}
\put(14,17){\line(0,1){3}}
\put(15.9,24.4){\line(1,1){4.6}}
\put(14,22.5){\makebox(0,0){\mbox{$\scriptstyle K_v $}}}

\end{picture}

\begin{picture}(10,10)(41.8,-27.8)

\put(14,22.5){\circle{5}}
\put(12.1,24.4){\line(-1,1){4}}
\put(14,25){\line(0,1){6}}
\put(15.9,24.4){\line(1,1){4}}
\put(14,22.5){\makebox(0,0){\mbox{$\scriptstyle K_w $}}}

\end{picture}

\begin{picture}(10,10)(45,-20)

\put(14,22.5){\circle{5}}
\put(12.1,24.4){\line(-1,1){4}}
\put(-25,23){\vector(1,0){15}}
\put(14,22.5){\makebox(0,0){\mbox{$\scriptscriptstyle X_{\hspace{-0.2mm}\scriptscriptstyle u_{\hspace{-0.2mm}n}} $}}}

\end{picture}

\end{picture}

\end{picture}}

\noindent For $S=U_n$ a typical terminal object in the connected component is a linear tree whose vertices are decorated by $U_n$ and whose colours are alternating between $X$ and $K,$ starting with $X$ and ending with $X.$

   {\unitlength=0.9mm

\begin{picture}(60,40)(-45,25)

\begin{picture}(10,10)(0,-11)

\put(14,22.5){\circle{5}}
\put(14,17.6){\line(0,1){2.5}}
\put(14,22.5){\makebox(0,0){\mbox{$\scriptscriptstyle K_{\hspace{-0.2mm}\scriptscriptstyle u_{\hspace{-0.2mm}n}} $}}}
\put(14,30){\circle{5}}
\put(14,27.5){\line(0,-1){2.5}}
\put(14,30){\makebox(0,0){\mbox{$\scriptscriptstyle X_{\hspace{-0.2mm}\scriptscriptstyle u_{\hspace{-0.2mm}n}} $}}}
\put(14,37.5){\circle{5}}
\put(14,35){\line(0,-1){2.5}}
\put(14,37.5){\makebox(0,0){\mbox{$\scriptscriptstyle K_{\hspace{-0.2mm}\scriptscriptstyle u_{\hspace{-0.2mm}n}} $}}}
\put(14,45){\circle{5}}
\put(14,42.5){\line(0,-1){2.5}}
\put(14,45){\makebox(0,0){\mbox{$\scriptscriptstyle X_{\hspace{-0.2mm}\scriptscriptstyle u_{\hspace{-0.2mm}n}} $}}}
\put(14,50){\line(0,-1){2.5}}

\end{picture}

\begin{picture}(10,10)(11,-3.7)

\put(14,22.5){\circle{5}}
\put(14,17){\line(0,1){3}}
\put(14,22.5){\makebox(0,0){\mbox{$\scriptscriptstyle X_{\hspace{-0.2mm}\scriptscriptstyle u_{\hspace{-0.2mm}n}} $}}}
\end{picture}

\end{picture}}

\noindent We leave the proof that these objects are terminal in their connected components of ${\bf CFO(n)}^{\tt CFO(n)+1}$ as an exercise.

The classifier ${\bf RO(n)}^{\tt RO(n)+1}$ for reduced $n$-operads contains more objects than the classifier ${\bf CFO(n)}^{\tt CFO(n)+1}$ because
trees with stumps are allowed. Nevertheless it also contains morphisms of `dropping' stumps. So objects which are terminal in their connected component of ${\bf CFO(n)}^{\tt CFO(n)+1}$ are also terminal in their connected components of ${\bf RO(n)}^{\tt RO(n)+1}.$
\end{proof}

\begin{remark}\label{Tambad}The difficulties with the case of $n$-operads ($n\ge2$) in comparison with monoids and non-symmetric operads (and many other operad types) are closely related to the existence of the so-called ``bad'' cells in the Fulton-MacPherson compactification  of real configurations spaces, discovered by Tamarkin \cite{SymBat,Kon}. For instance, the object of $(\copn)_S$ ($n=2$), represented in Figure \ref{Tam}, is terminal in its connected component.

   {\unitlength=1mm

\begin{picture}(60,25)(-10,35)

\begin{picture}(10,5)(0,-28)

\put(5,22){\makebox(0,0){\mbox{$S \ \ =$}}}
\put(12.1,24.4){\line(-1,1){4}}
\put(12.1,24.4){\line(1,1){4}}
\put(12.1,24.4){\line(2,-1){11.2}}

\end{picture}

\begin{picture}(10,10)(-0.1,-28)
\put(12.1,24.4){\line(-1,1){4}}
\put(12.1,24.4){\line(1,1){4}}
\put(12.1,24.4){\line(0,-1){5.8}}
\end{picture}

\begin{picture}(10,10)(-0.2,-28)

\put(12.1,24.4){\line(-1,1){4}}
\put(12.1,24.4){\line(1,1){4}}
\put(12.1,24.4){\line(-2,-1){11.2}}
\end{picture}

\begin{picture}(10,10)(-15,-28)

\put(2,22){\makebox(0,0){\mbox{$Y \ \ =$}}}
\put(12.1,24.4){\line(-1,1){4}}
\put(12.1,24.4){\line(1,1){4}}
\put(12.1,24.4){\line(0,-1){4}}

\end{picture}

\begin{picture}(10,10)(-34,-28)

\put(-2,22){\makebox(0,0){\mbox{$V \ \ =$}}}
\put(8.1,28.4){\line(0,-1){4}}
\put(16.1,28.4){\line(0,-1){4}}
\put(8.1,24.4){\line(1,-1){4}}
\put(16.1,24.4){\line(-1,-1){4}}

\end{picture}

\end{picture}}


  {\unitlength=1mm

\begin{picture}(60,40)(-20,-10)

\begin{picture}(10,10)(-8.8,-2.6)

\put(14,22.5){\circle{5}}
\put(8,29.5){\makebox(0,0){\mbox{$\scriptstyle  3$}}}
\put(20,29.5){\makebox(0,0){\mbox{$\scriptstyle  5$}}}
\put(12.1,24.4){\line(-1,1){4}}
\put(15.9,24.4){\line(1,1){4}}
\put(14,22.5){\makebox(0,0){\mbox{$\scriptstyle K_V $}}}

\end{picture}

\begin{picture}(10,10)(2.5,8.5)

\put(14,22.5){\circle{5}}
\put(15.9,20.8){\line(1,-1){5}}
\put(12.1,24.4){\line(-1,1){5}}
\put(14,25){\line(0,1){6}}
\put(7,30.5){\makebox(0,0){\mbox{$\scriptstyle  1$}}}
\put(14,22.5){\makebox(0,0){\mbox{$\scriptstyle X_V $}}}

\end{picture}

\begin{picture}(10,10)(5,17)

\put(14,22.5){\circle{5}}
\put(14,25){\makebox(0,0){\mbox{$ $}}}
\put(14,20){\line(0,-1){6}}
\put(15.9,24.4){\line(1,1){5}}
\put(14,22.5){\makebox(0,0){\mbox{$\scriptstyle X_Y $}}}

\end{picture}

\begin{picture}(10,10)(7.5,8.5)

\put(14,22.5){\circle{5}}
\put(21.5,30.5){\makebox(0,0){\mbox{$\scriptstyle 6 $}}}
\put(13.7,25){\line(0,1){6}}
\put(15.9,24.4){\line(1,1){5}}
\put(14,22.5){\makebox(0,0){\mbox{$\scriptstyle X_V $}}}

\end{picture}

\begin{picture}(10,10)(19,-2.5)

\put(14,22.5){\circle{5}}
\put(7,29.5){\makebox(0,0){\mbox{$\scriptstyle  2$}}}
\put(20,29.5){\makebox(0,0){\mbox{$\scriptstyle  4$}}}
\put(12.1,24.4){\line(-1,1){4}}
\put(15.9,24.4){\line(1,1){4}}
\put(14,22.5){\makebox(0,0){\mbox{$\scriptstyle K_V $}}}

\end{picture}

\end{picture}}\vspace{-10mm}

\begin{figure}[h]
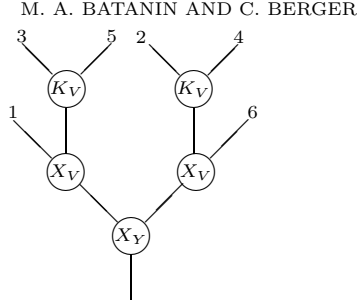
\caption{{\small Terminal object in a connected component of $\copn$ which corresponds to a Tamarkin bad cell.}} \label{Tam}\end{figure}

\noindent It was shown in \cite{SymBat} that this figure corresponds exactly to the first bad cell in the Fulton-MacPherson operad ${\bf fm}^2.$ One can show that certain cells of maximal dimensions in ${\bf fm}^n$ correspond to terminal objects in $\copn$ but in general, we don't know an explicit characterisation of such cells and as a consequence an explicit formula for semi-free coproducts of $n$-operads for $n\ge 2.$ \end{remark}

\subsection{The polynomial monad $O(n)$ is not tame for  $n\ge 2$}\label{counterexample}

Some connected components of ${\bf O(n)}^{\tt O(n) +1}$ do not have terminal objects but do contain weakly terminal objects. This is because we have to include trees with stumps in the description of the monad for $n$-operads. As a consequence colimits over ${\bf O(n)}^{\tt O(n) +1}$ are more complicated than those appearing in the monads for constant-free and normal $n$-operads. Weak equivalences may not be preserved by these colimits, and this creates an obstruction for the existence of transferred model structure on $n$-operads in an arbitrary monoidal model category $\Ee$ even if the latter satisfies the hypothesis of Theorem \ref{maintheorem}.

More precisely, for $n=2$, the category $({\bf O(n)}^{\tt O(n)+1})_{z^2U_0}$ has as objects all coloured $2$-planar trees without
leaves. All such trees which contain  exactly two $K$-vertices without leaves form a full connected subcategory of $({\bf O(n)}^{\tt O(n)+1})_{z^2U_0}$ This subcategory contains a final subcategory consisting of two objects

 {\unitlength=1mm

\begin{picture}(30,30)(0,10)

\begin{picture}(10,10)(-8.8,-2.6)

\put(14,22.5){\circle{5}}
\put(6.5,30.5){\circle{5}}
\put(21.5,30.5){\circle{5}}
\put(6.5,30.5){\makebox(0,0){\mbox{$\scriptstyle  K_0$}}}
\put(21.5,30.5){\makebox(0,0){\mbox{$\scriptstyle  K_0$}}}
\put(14,20){\line(0,-1){5}}
\put(12.1,24.4){\line(-1,1){4}}
\put(15.9,24.4){\line(1,1){4}}
\put(14,22.5){\makebox(0,0){\mbox{$\scriptstyle X_Y $}}}

\end{picture}

\put(40,28){\makebox(0,0){\mbox{and}}}

\begin{picture}(10,10)(-54.8,-2.6)

\put(14,22.5){\circle{5}}
\put(6.5,30.5){\circle{5}}
\put(21.5,30.5){\circle{5}}
\put(6.5,30.5){\makebox(0,0){\mbox{$\scriptstyle  K_0$}}}
\put(21.5,30.5){\makebox(0,0){\mbox{$\scriptstyle  K_0$}}}
\put(14,20){\line(0,-1){5}}
\put(12.1,24.4){\line(-1,1){4}}
\put(15.9,24.4){\line(1,1){4}}
\put(14,22.5){\makebox(0,0){\mbox{$\scriptstyle X_V $}}}

\end{picture}

\end{picture}}

\noindent where $Y,V$ are $2$-ordinals as in (\ref{Tambad}). There are exactly two non-trivial morphisms between these objects generated by two quasibijections $\sigma_0$ and $\sigma_1$ from $V$ to $Y.$ It follows that the $z^2U_0$-component of the semi-free coproduct of $2$-operads contains a summand isomorphic to the coequaliser of
$$ X_{Y}\otimes K_0\otimes K_0 \coeq X_{V}\otimes  K_0\otimes K_0,$$
where $$d=X(\sigma_0)\otimes id_{K_0\otimes K_0}$$
 $$d' = X(\sigma_1)\otimes \tau_{K_0\otimes K_0},$$ and $\tau$ is the symmetry morphism in $\Ee.$

Let now  $\Ee$ be the category of chain complexes over a field $\bf k$ of characteristic two. Let $K$ be a non-reduced  $2$-collection such that $K_0 = C,$ where $C$ is an acyclic cofibrant object in $\Ee$ and $K_T =0$ for all $T\ne 0.$ Let $X$ be a $2$-operad with $X_T = {\bf k}$ for all $T\in Ord(2).$  The aforementioned calculation shows that the coproduct $F(K)\vee X$ contains a summand $C\otimes C/\Z_2$  which is not necessarily an acyclic complex, moreover, it is not hard to find a $C$ such that  $0$-homology of $C\otimes C/\Z_2$ will be ${\bf k}\oplus {\bf k}.$ Hence, $X\to F(K)\vee X$ can not be a weak equivalence. Nevertheless, if the category of $2$-operads in $\Ee$ admitted a transferred model structure this map would have to be a weak equivalence because trivial cofibrations are closed under pushouts. Similar obstructions for the existence of a model structure on $n$-operads exist for any $n\ge 2.$

\part{Graphs, trees and graph insertion}

In this last part, we give formal definitions of graphs, trees and graph insertion. Graphs have been used in an essential way in defining the polynomial monads of Part 3 whose algebras describe different types of operads. Indeed, each class of graphs, which is suitably \emph{closed under graph insertion}, gives rise to a polynomial monad on sets whose multiplication is directly induced by graph insertion. The graph insertional origin of the monad multiplication is responsible for the substitutional structure of the associated algebras. This explains somewhat why these algebras are ``generalised operads''.

Most important for us are the graphical properties of the so-called \emph{$n$-planar trees}. These are higher-order generalisations of the well-known linear ($n=0$) and planar ($n=1$) rooted trees. Algebras for the polynomial monad defined by the class of $n$-planar trees are precisely pruned $(n-1)$-terminal $n$-operads of the first author \cite{SymBat,EHBat}, cf. Section \ref{higheroperads}.

Our notions of graph and graph insertion are equivalent to those used in the recent book by Johnson-Yau \cite{JY}, though closer in spirit to the Feynman graphs of Joyal-Kock \cite{JK}. We are grateful to Mark Johnson and Donald Yau for pointing out that our previously used definitions omitted the ``free-living loops'' which naturally appear whenever ``operadic algebras'' are equipped with ``traces''.

\section{Graphs and graph insertion}

\begin{defin}A graph is a finite category $G$ with three kinds of objects, called $v$-objects, $f$-objects and $e$-objects respectively.

Among non-identity arrows in $G$ are only allowed arrows with source an $f$-object and target either a $v$- or an $e$-object such that the following two axioms hold:\begin{itemize}\item[(i)]each $e$-object is the target of precisely two non-identity arrows;\item[(ii)]each $f$-object is the source of at least one and at most two non-identity arrows, among which one at least has an $e$-object as target; if both targets are $e$-objects, the arrows must be parallel.\end{itemize}

A morphism of graphs $G\to G'$ is a functor of the underlying categories which takes $v$-, $f$- resp. $e$-objects of $\,G$ to $v$-, $f$- resp. $e$-objects of $\,G'$.\end{defin}

Observe that this definition allows the set of $f$-objects to be empty, in which case the set of $e$-objects is empty as well. For ease of terminology, the $v$-, $f$- resp. $e$-objects of $G$ will be called the \emph{vertices}, \emph{flags} resp. \emph{edges} of the graph $G$. The reader should consider the categorical structure of $G$ as describing the incidence relations between these three kinds of objects of $G$.

Each edge $e$ of $G$ comes equipped with a unique cospan $f_1\to e \leftarrow f_2.$ We say that the two flags $f_1$ and $f_2$ are {\it adjacent}. A flag $f$ is {\it free} if $f$ is the source of exactly one non-identity arrow. For a vertex $v$ of $G$ the sources of the incoming arrows are the {\it flags attached to} $v$, or \emph{$v$-flags}. Each $v$-flag is the source of exactly one arrow with target $v.$ An edge $e$ is {\it internal} if the cospan $f_1\to e \leftarrow f_2$ either fulfills $f_1=f_2$ or extends to a zigzag$$v_1 \leftarrow f_1 \to e \leftarrow f_2 \to v_2.$$

\noindent Non-internal edges will be called {\it external.}

A {\it corolla} is a graph with a unique vertex and only external edges.  For each $n\geq 0$ there is up to isomorphism a unique corolla with $n$ external edges.

A {\it free-living edge} (resp. {\it free-living loop}) is a graph with no vertices and one external (resp. internal) edge.

A {\it pointed loop} is a graph with one vertex and one internal edge.

Let $G,G'$ be two graphs, let $v$ be a vertex of $G$ and let $\rho$ be a bijection between the set of free flags of $G'$ and the set of $v$-flags of $G$. Then {\it the insertion of $G'$ into $G$ along $\rho$} is the graph $G\circ_\rho G'$ defined as follows.

Let $G\setminus v$ be the graph obtained by removing from $G$ vertex $v$ as well as all arrows with target $v$. Let $f(G')$ (resp. $e(G')$) be the discrete subcategory of $G'$ containing only the \emph{free flags} (resp. \emph{external edges}) of $G'$. Let  $\overline{G'\setminus e(G')}$  be the category obtained by removing from $G'$ all external edges $e$ as well as all arrows to such $e$, and by identifying each free flag with its adjacent.

Both categories $f(G')$ and $\overline{G'\setminus e(G')}$ are not graphs in our sense, but there is an obvious functor
(composite of inclusion and quotient)  $f(G')\to\overline{G'\setminus e(G')}.$ The bijection $\rho$ induces a functor $f(G')\to G'\setminus v$ which takes a free flag of $G'$ to its image under $\rho$ in $G\setminus v$. We define $G\circ_{\rho} G'$ to be the categorical pushout

\begin{diagram}[small]f(G')&\rTo &\overline{G'\setminus e(G')}\\\dTo^\rho & &\dTo \\  G\setminus v &\rTo&\NWpbk    G\circ_{\rho} G'\end{diagram}
which is easily seen to be a graph in our sense. This graph insertion has obvious associativity and commutativity properties. Moreover, the corollas serve as right units. Observe that $G'$ can be considered as a subgraph of $G\circ_{\rho}G'.$

Insertion of free-living edges \emph{removes} vertices. More precisely, insertion of a free-living edge into the unique vertex of a pointed loop (resp. corolla with two external edges) yields a free-living loop (resp. free-living edge).

If $G''$ is obtained by insertion of a graph $G'$ into a vertex of a graph $G$ then we will say dually that $G$ is the result of {\it contracting $G'$ inside $G''.$ }

\subsection{Trees, forests and other special graphs}\label{specialgraph}

The realisation of a graph $G$ is the geometric realisation of (the simplicial nerve of) the underlying category.

A graph $G$ is {\it connected} (resp. a {\it tree}) if the realisation of $G$ is connected (resp. contractible). A graph $G$ is a {\it forest} if it is a finite coproduct of trees.

A {\it rooted tree} is a tree with a distinguished external edge called {\it root;} the other external edges of the tree are called \emph{leaves} or \emph{input edges}. In a rooted tree $T$ the corolla $cor_v(T)$ attached to a vertex $v$ has a canonical structure of rooted tree. The input edges (resp. root) of this ``local'' tree $cor_v(T)$ will be called the incoming edges (resp. outgoing edge) of $v$. Morphisms of rooted trees are morphisms of the underlying graphs which preserve the outgoing edges of the vertices.

A graph is called a {\it rooted forest} if it is a finite coproduct of rooted trees.

\noindent There are some types of graphs which we would like to give separate names:

-- A \emph{linear tree} on $n$ vertices $L_n$ is a tree with $n$ vertices and $n+1$ edges two of which are external. In particular, $L_0$ denotes the \emph{free-living edge}.

-- A \emph{linear graph} on $n$ vertices $[1,n]$ is a tree with $n$ vertices and $n-1$ internal edges and no external edges. We will call the two vertices with only one edge attached the {\it boundary vertices} of $[1,n]$. A \emph{path} between two vertices $v_1,v_2$ of a graph $G$ is a graph morphism $[1,n]\to G$ taking the boundary vertices to $v_1,v_2.$

A non-empty graph $G$ without free-living edges/loops is connected (resp. a tree) if and only if for any ordered pair of vertices there is a (unique) path between them.

\subsection{Ordered graphs and ordered trees}\label{orderedgraphs}

A graph is said to be \emph{partially ordered} if the set of its free flags is linearly ordered. A graph $G$ is said to be \emph{ordered} if $G$ as well as the corollas $cor_v(G)$ for the vertices $v$ of $G$ are partially ordered.

In particular, an \emph{ordered corolla} with vertex $v$ carries two linear orderings: a linear ordering of the set of its free flags and a linear ordering the set of its $v$-flags:

{\unitlength=0.75mm\begin{picture}(0,25)(-60,11)

\put(14,22.5){\circle{14.8}}
\put(4,35){\makebox(0,0){\mbox{$ $}}}
\put(14,37){\makebox(0,0){\mbox{$ $}}}
\put(24,35){\makebox(0,0){\mbox{$ $}}}
\put(10.5,26.8){\makebox(0,0){\mbox{$\scriptstyle 1 $}}}
\put(17.7,26.8){\makebox(0,0){\mbox{$\scriptstyle 3 $}}}
\put(14,17){\makebox(0,0){\mbox{$\scriptstyle 2 $}}}
\put(5,30){\makebox(0,0){\mbox{$\scriptstyle 3 $}}}
\put(23,30){\makebox(0,0){\mbox{$\scriptstyle 2 $}}}
\put(13,12){\makebox(0,0){\mbox{$\scriptstyle 1 $}}}
\put(14,15.4){\line(0,-1){5}}
\put(9.3,27.7){\line(-1,1){5}}
\put(18.8,27.8){\line(1,1){5}}

\end{picture}}

An isomorphism between ordered graphs is an isomorphism of the underlying graphs which preserves all orderings.

An {\it ordered rooted tree} is a rooted tree which is ordered as a tree in such a way that the external root and the roots (i.e. outgoing edges) of the corollas are the first elements of the respective linear orderings. This implies that we can forget about the external root and the roots of corollas and keep only the linear orderings of the input edges of the tree and of the incoming edges for each vertex of the tree.

An {\it ordered rooted forest} is a finite coproduct of ordered rooted trees.

Ordered graphs admit an operation of graph insertion which depends only on compatibility conditions between graphs. The bijection $\rho$ between the free flags of  $G'$ and the $v$-flags of $G$ is uniquely determined by the linear orderings once the cardinalities are the same. We therefore can speak unambigously about the insertion of $G'$ into the vertex $v$ of $G$, the result of which shall be denoted $G'\circ_v G.$ The unit for this ordered graph insertion is given by those ordered corollas for which the linear ordering of the free flags coincides with linear ordering of the $v$-flags.

We can easily check that the subcategories of ordered trees (forests), ordered rooted trees (forests) are closed under graph insertion and, hence, induce a well defined graph insertion on isomorphism classes of the corresponding groupoids.

Each isomorphism class of \emph{ordered} rooted trees has a unique representative by a \emph{planar} rooted tree equipped with a linear ordering of its input edges.

\subsection{Directed graphs and directed trees}

A {\it directed graph} is a graph with a chosen arrow in each span $f_1\to e\leftarrow f_2$ for each edge $ e$ of the graph.  Such a choice amounts to the choice of an orientation for this edge. Morphisms of directed graphs are required to preserve these chosen arrows.

In a directed graph the $v$-flags of a vertex $v$ are subdivided into incoming and outgoing flags. The same is true for free flags.   {\it A directed ordered graph} is a directed graph with a linear ordering of all outgoing and a linear ordering of all incoming free flags, as well as linear orderings of incoming $v$-flags and linear orderings of outgoing $v$-flags for each vertex $v$. Directed graph-insertion is defined like in the non-directed case assuming in addition that the bijection $\rho$ is \emph{orientation-reversing}.

Any rooted tree admits two canonical orientations from top to bottom or vice versa. We always orient a rooted tree from top to bottom.  We also have a directed version of the linear graph $[1,n]$ with the direction going from $p$ to $p-1.$

{\it A loop} in a directed graph $G$ is any map of directed graphs $s:[1,n]\to G, n\ge 2,$ for which $s(1)=s(n)$ or any map from the free living loop $l$ to $G.$

A  {\it loop-free graph} is a directed graph which has no loops. Loop-free graphs are closed under graph-insertion.

\section{Planar trees and $n$-planar trees}

\subsection{Planar trees}

A subgraph $G'$ of a graph $G$ is called \emph{plain} if for each vertex $v$ of $G'$, the cardinalities of the set of $v$-flags are the same in $G'$ and in $G$. For any pair $(T,T')$ consisting of a tree $T$ and plain subtree $T'$ of $T$ there is a well-defined function, called {\it contraction}
\begin{equation}\label{CVT} C_{(T,T')}:e(T)\to e(T')\end{equation}
taking external edges of $T$ to external edges of $T'$. This function is constructed as follows. For each external edge $e$ of $T$ there exists a unique $n\ge 0$ and a unique injective  map $\gamma_e$ from the rooted tree $L^{rt}_n$ to $T$ with the following three properties:
\begin{itemize}
\item[(i)] $\gamma_e$ takes
the unique input edge of $L^{rt}_n$ to $e;$
\item[(ii)]  $\gamma_e$ takes
the root $\mathbf{rt}$ of $L^{rt}_n$ to an external edge of $T';$
\item[(iii)] the image of $\gamma_e$ does not contain any vertices of $T'$.
\end{itemize}
We then define $C_{(T,T')}(e) = \gamma_e(\mathbf{rt}).$\vspace{1ex}

We introduce a special notation if $T'$ is the corolla $cor_v(T)$ of a vertex $v$ of $T$, namely
\begin{equation}\label{CV} C_v=C_{(T,cor_v(T))}:e(T)\to e(cor_v(T))\end{equation}and call it \emph{$v$-contraction}. If $T$ is an ordered tree then $e(T)$ and $e(cor_v(T))$ are linearly ordered. We will say that the ordered tree $T$ is {\it planar} if for each vertex $v$ of $T$ the $v$-contraction preserves the linear orders up to cyclic permutation, which means that the $v$-contraction becomes an order-preserving map after cyclic permutation of the set of external edges of $cor_v(T).$

If $T$ is a rooted tree then the sets $e(T)$ and $e(cor_v(T))$ are pointed by the respective roots and the $v$-contraction is a map of pointed sets which restricts away from the roots. By abuse of notation we consider the $v$-contractions of a rooted tree as restricted to the set of input edges. Accordingly, an ordered rooted tree is planar if and only if for each vertex $v$ the $v$-contraction is order-preserving.

The subcategories of planar and planar rooted trees are closed under graph insertion and there is a graph insertion on isomorphism classes of the corresponding groupoids.

\subsection{Higher planar rooted trees}

The notion of $n$-planar rooted tree generalises linear and planar rooted trees.  A {\it partially  $n$-ordered rooted tree  $T$} is a rooted tree equipped with the structure of $n$-ordinal on the set of incoming edges of each vertex $v$ of $T.$ An  {\it $n$-ordered rooted tree  $T$} is a partially $n$-ordered rooted tree equipped with an  $n$-ordinal structure  on the set of free flags of $T$. Since each $n$-ordinal induces a linear order on its underlying set, such a tree has a canonical structure of ordered rooted tree.

We say that $T$ is an {\it $n$-planar tree} if for each $v$ the contraction function $C_v$ is a map of $n$-ordinals.  For instance, $0$-planar trees are linear rooted trees and $1$-planar trees are planar rooted trees as previously defined. Each isomorphism class of partially $n$-ordered rooted trees contains a unique planar representative. Therefore, an $n$-planar tree $\tau$ is a planar rooted tree with the following additional structure:

\begin{itemize}\item Each vertex $v$ is decorated by an $n$-ordinal $\tau_v$ whose underlying linear ordered set coincides with the set of incoming edges of $v;$
\item The leaves are labelled by natural numbers from $1$ to $p$ where $p$ is the number of leaves of $\tau;$
\end{itemize}
Such a {\it labelled decorated tree} becomes an $n$-planar tree if we fix as well an $n$-ordinal $S$ with underlying set $|S|=\{1,\ldots,p\}$ such that the following compatibility condition holds:

Recall (cf. \cite{SymBat}) that the set $L(\tau)$ of leaves of an $n$-ordered tree $\tau$ has a canonical structure of $n$-ordered set. Let $k,l\in L(\tau)$ be leaves of $\tau$ and let $v(k,l)$ be the upmost vertex of $\tau$ which lies below $k$ and $l$ in $\tau$. The shortest edge-path in $\tau$ which begins with $k$ (resp. $l$) and ends at $v(k,l)$ determines a unique incoming edge of $v(k,l)$, and hence a unique element $e_k$ (resp. $e_l$) of $|\tau_{v(k,l)}|$. By definition we have  $k<_r l$ in $L(\tau)$ precisely when $e_k<_r e_l$ in the $n$-ordinal $\tau_{v(k,l)}$.

The $n$-planarity of the pair $(\tau,S)$ amounts then to the requirement that $S$ \emph{dominates} the $n$-ordered set $L(\tau)$ in the sense of Definition \ref{domination}. This ``planar'' description of $n$-planar trees is quite efficient. For example, the decorated tree $\tau$ on the left hand side

\scalebox{0.9} 
{
\begin{pspicture}(-2,-5.1)(6.43,3)
\usefont{T1}{ptm}{m}{n}
\rput(0.00,2.02){\large 1}
\usefont{T1}{ptm}{m}{n}
\rput(1.06,2.04){\large 3}
\usefont{T1}{ptm}{m}{n}
\rput(6.27,2.02){\large 5}
\usefont{T1}{ptm}{m}{n}
\rput(0.68,2.04){\large 2}
\usefont{T1}{ptm}{m}{n}
\rput(5.35,2.04){\large 4}
\usefont{T1}{ptm}{m}{n}
\rput(3.96,2.04){\large 7}
\psline[linewidth=0.022cm](0.31,1.19)(0.31,0.78)
\psline[linewidth=0.022cm](0.71,1.19)(0.71,0.41)
\psline[linewidth=0.022cm](1.08,1.19)(1.08,0.79)
\psline[linewidth=0.022cm](0.30,0.79)(0.72,0.41)
\psline[linewidth=0.022cm](0.70,0.41)(1.08,0.79)
\psline[linewidth=0.022cm](5.50,1.23)(5.78,0.87)
\psline[linewidth=0.022cm](6.04,1.23)(5.78,0.87)
\psline[linewidth=0.022cm](5.78,0.87)(5.78,0.43)
\psline[linewidth=0.022cm](2.98,1.23)(3.00,0.83)
\psline[linewidth=0.022cm](3.62,1.23)(3.62,0.85)
\psline[linewidth=0.022cm](3.00,0.83)(3.32,0.47)
\psline[linewidth=0.022cm](3.62,0.85)(3.32,0.45)
\pscircle[linewidth=0.022,dimen=outer](5.73,0.86){0.59}
\pscircle[linewidth=0.022,dimen=outer](3.27,0.88){0.61}
\pscircle[linewidth=0.022,dimen=outer](0.67,0.88){0.61}
\psline[linewidth=0.022cm](2.90,0.41)(2.62,0.13)
\psline[linewidth=0.022cm](3.64,1.37)(3.96,1.81)
\psline[linewidth=0.022cm](2.9,1.37)(2.58,1.83)
\psline[linewidth=0.022cm](6.06,1.35)(6.24,1.79)
\psline[linewidth=0.022cm](5.40,1.33)(5.40,1.79)
\psline[linewidth=0.022](0.26,1.31)(0.06,1.79)
\psline[linewidth=0.022cm](0.68,1.49)(0.68,1.81)
\psline[linewidth=0.022cm](1.06,1.33)(1.08,1.79)
\usefont{T1}{ptm}{m}{n}
\rput(2.58,2.06){\large 6}
\psline[linewidth=0.022cm](3.46,-0.88)(3.74,-1.24)
\psline[linewidth=0.022cm](4.00,-0.88)(3.74,-1.24)
\psline[linewidth=0.022cm](3.74,-1.24)(3.74,-1.68)
\pscircle[linewidth=0.022,dimen=outer](2.23,-0.31){0.61}
\pscircle[linewidth=0.022,dimen=outer](3.73,-1.25){0.61}
\psline[linewidth=0.022cm](3.76,-1.84)(3.76,-2.24)
\psline[linewidth=0.022cm](1.74,0.01)(1.08,0.43)
\psline[linewidth=0.022cm](4.12,-0.78)(5.68,0.29)
\psline[linewidth=0.022cm](2.62,-0.76)(3.18,-1.02)
\psline[linewidth=0.022cm](1.94,0.07)(1.94,-0.34)
\psline[linewidth=0.022cm](2.52,0.07)(2.52,-0.34)
\psline[linewidth=0.022cm](1.94,-0.34)(2.28,-0.72)
\psline[linewidth=0.022cm](2.52,-0.34)(2.26,-0.72)
\end{pspicture}
\begin{pspicture}(-2.5,-4.26)(8.18,0.29)
\psline[linewidth=0.022cm](1.6,0.01)(1.60,-0.61)
\psline[linewidth=0.022cm](1.60,0.00)(1.88,0.59)
\psline[linewidth=0.022cm](1.60,0.00)(1.34,0.59)
\psline[linewidth=0.022cm](2.8,0.00)(2.8,0.59)
\psline[linewidth=0.022cm](2.30,-0.01)(2.3,0.59)
\psline[linewidth=0.022cm](0.0,0.01)(0.0,0.61)
\psline[linewidth=0.022cm](0.52,0.00)(0.52,0.61)
\psline[linewidth=0.022cm](0.98,0.00)(0.98,0.61)
\psline[linewidth=0.022cm](1.60,-0.61)(2.3,0.00)
\psline[linewidth=0.022cm](0.0,0.01)(1.6,-0.60)
\psline[linewidth=0.022cm](2.8,0.00)(1.6,-0.60)
\psline[linewidth=0.022cm](1.62,-0.60)(0.52,0.00)
\psline[linewidth=0.022cm](1.62,-0.60)(0.98,0.00)
\end{pspicture}
}\vspace{-32mm}
\begin{figure}[h]
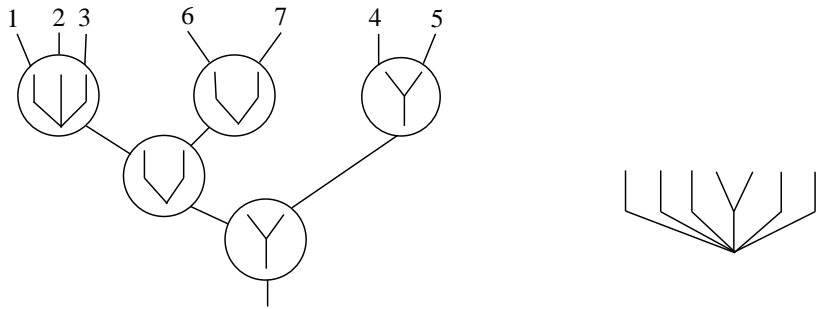
\caption{{\small A $2$-planar tree as a labelled decorated tree with dominating $2$-ordinal.}} \label{1to7}\end{figure}

\noindent acquires an $n$-planar structure if we just add that the induced $2$-ordered set $L(\tau)$ is dominated by the $2$-ordinal as depicted on the right hand side.

\subsection{Regular and normal  $n$-planar trees and the geometry of Fulton-MacPherson operad}

An $n$-planar tree is called {\it regular} if all its decorated ordinals are regular.
An $n$-planar tree is called {\it normal} if all its decorated ordinals are normal.

Normal $n$-planar trees are closely related to the  geometry of the Getzler-Jones decomposition of the Fulton-MacPherson operad
of point configurations in $\RR^n$ \cite{SymBat,GJ,Marklbad}. Recall that the quotient of   $F(\RR^n,k)$ by the Lie group generated by dilatations and translations is called the moduli space of configurations ${\rm Mod}^n(k).$ This moduli space has the same homotopy type as $F(\RR^n,k).$ The  Fox-Neuwirth stratification descends to a stratification of   ${\rm Mod}^n(k)$. Applying Fulton-MacPherson compactification to this stratification we obtain the $k$-th space of the Fulton-MacPherson operad together with its Getzler-Jones decomposition \cite{GJ}. The cells of this decomposition are in one-to-one correspondence with labelled decorated trees \cite{SymBat}. For instance,  the labelled decorated tree of Figure \ref{sigma} corresponds to the following $6$-dimensional Getzler-Jones cell (the so-called `bad' cell of Tamarkin \cite{SymBat,Marklbad}):

\scalebox{0.6} 
{
\begin{pspicture}(-8,-4.74)(8.36,4.74)
\pscircle[linewidth=0.03,dimen=outer](1.67,2.05){1.67}
\pscircle[linewidth=0.03,dimen=outer](1.67,-2.06){1.67}
\psline[linewidth=0.05cm](1.64,3.68)(1.64,4.72)
\psline[linewidth=0.05cm](1.7,-0.42)(1.7,0.42)
\psline[linewidth=0.05cm](1.74,-3.7)(1.76,-4.72)
\psline[linewidth=0.04cm](0.78,-1.1)(0.8,-2.92)
\psline[linewidth=0.04cm](1.7,-1.1)(1.7,-2.92)
\psline[linewidth=0.04cm](2.62,-1.12)(2.64,-2.92)
\psline[linewidth=0.04cm](0.72,2.9)(0.74,1.12)
\psline[linewidth=0.04cm](1.68,2.88)(1.68,1.08)
\psline[linewidth=0.04cm](2.58,2.88)(2.58,1.1)
\psdots[dotsize=0.18](0.8,-2.32)
\psdots[dotsize=0.18](1.7,-1.92)
\psdots[dotsize=0.18](2.64,-2.52)
\psdots[dotsize=0.18](0.72,2.08)
\psdots[dotsize=0.18](1.68,1.46)
\psdots[dotsize=0.18](2.58,2.28)
\usefont{T1}{ptm}{m}{n}
\rput(0.46257812,-2.035){\large 1}
\usefont{T1}{ptm}{m}{n}
\rput(1.3763516,-1.575){\large 3}
\usefont{T1}{ptm}{m}{n}
\rput(2.3926954,-2.155){\large 5}
\usefont{T1}{ptm}{m}{n}
\rput(0.46488282,2.425){\large 2}
\usefont{T1}{ptm}{m}{n}
\rput(1.4079297,1.865){\large 4}
\usefont{T1}{ptm}{m}{n}
\rput(2.3404298,2.405){\large 6}
\end{pspicture}
}

A normal $n$-planar tree $\tau$ contains extra information about the corresponding Getzler-Jones cell.  Namely, the $n$-ordinal $S$ on the leaves of $\tau$ expresses that the boundary of the closure of the Fox-Neuwirth cell $FN_S$ has a non-empty intersection with the Getzler-Jones cell represented by $\tau$, i.e. the latter lives in the $S$-space of the Getzler-Jones $n$-operad constructed by the first author in \cite{SymBat}. For example, the tree of Figure \ref{sigma} is a normal $2$-planar tree with leaf $2$-ordinal given by the $2$-ordinal $S$. Example \ref{FNord} tells us that the $6$-dimensional Getzler-Jones cell above contains part of the boundary of the $6$-dimensional Fox-Neuwirth cell $FN_S \subset {\rm Mod}^2(6)$ (this part of the boundary is actually a $5$-dimensional contractible manifold with corners  \cite{SymBat,Marklbad}).

\subsection{Insertion and contraction of $n$-planar trees}

Since $n$-planar trees are in particular ordered rooted trees, graph insertion of an $n$-planar rooted tree inside a vertex of another $n$-planar rooted tree is well defined. The result of this graph insertion is in general just an $n$-ordered rooted tree unless we require a compatibility condition of the corresponding $n$-ordinal structures. This compatibility condition is the following: an $n$-planar tree $\tau'$ inserts into a vertex $v$ of an $n$-planar tree $\tau$ if the bijection $\rho_v$ induced by the $n$-order structures of $\tau$ and $\tau'$ (see \ref{orderedgraphs}) is an isomorphism between the leaf $n$-ordinal of $\tau'$ and the corolla $n$-ordinal $\tau_v.$  It is now easy to check that the resulting tree has a canonical $n$-planar structure. This operation preserves isomorphism classes of $n$-planar trees. Normal and regular $n$-planar trees are closed under graph insertion and so are their isomorphism classes. For instance, graph insertion of the $2$-planar tree of Figure \ref{sigma2} as shown in Figure \ref{3to7}

\scalebox{0.9} 
{
\begin{pspicture}(-2,-4)(8.18,3)
\usefont{T1}{ptm}{m}{n}
\rput(0.00,2.05){\large 1}
\usefont{T1}{ptm}{m}{n}
\rput(1.06,2.07){\large 3}
\usefont{T1}{ptm}{m}{n}
\rput(2.81,2.05){\large 5}
\usefont{T1}{ptm}{m}{n}
\rput(0.68,2.07){\large 2}
\usefont{T1}{ptm}{m}{n}
\rput(1.45,2.07){\large 4}
\usefont{T1}{ptm}{m}{n}
\rput(4.10,2.05){\large 7}
\psline[linewidth=0.022cm](0.30,1.22)(0.30,0.80)
\psline[linewidth=0.022cm](0.70,1.22)(0.70,0.44)
\psline[linewidth=0.022cm](1.08,1.22)(1.08,0.80)
\psline[linewidth=0.022cm](0.30,0.82)(0.72,0.44)
\psline[linewidth=0.022cm](0.70,0.44)(1.08,0.82)
\psline[linewidth=0.022cm](1.88,1.22)(2.16,0.86)
\psline[linewidth=0.022cm](2.42,1.22)(2.16,0.86)
\psline[linewidth=0.022cm](2.16,0.86)(2.16,0.42)
\psline[linewidth=0.022cm](3.26,1.22)(3.28,0.82)
\psline[linewidth=0.022cm](3.90,1.22)(3.90,0.84)
\psline[linewidth=0.022cm](3.28,0.82)(3.60,0.46)
\psline[linewidth=0.022cm](3.90,0.84)(3.60,0.44)
\psline[linewidth=0.022cm](1.84,-0.23)(1.84,-0.66)
\psline[linewidth=0.022cm](2.20,-0.23)(2.20,-1.03)
\psline[linewidth=0.022cm](2.52,-0.25)(2.52,-0.66)
\psline[linewidth=0.022cm](1.84,-0.65)(2.18,-1.05)
\psline[linewidth=0.022cm](2.18,-1.07)(2.52,-0.65)
\pscircle[linewidth=0.022,dimen=outer](3.59,0.89){0.59}
\pscircle[linewidth=0.022,dimen=outer](2.15,0.89){0.61}
\pscircle[linewidth=0.022,dimen=outer](0.67,0.91){0.61}
\pscircle[linewidth=0.022,dimen=outer](2.15,-0.58){0.61}
\psline[linewidth=0.022cm](2.18,-1.17)(2.18,-1.57)
\psline[linewidth=0.022cm](2.16,0.30)(2.16,0.02)
\psline[linewidth=0.022cm](2.52,1.38)(2.84,1.82)
\psline[linewidth=0.022cm](1.78,1.36)(1.48,1.82)
\psline[linewidth=0.022cm](3.92,1.38)(4.10,1.82)
\psline[linewidth=0.022cm](3.26,1.36)(3.26,1.82)
\psline[linewidth=0.022cm](0.26,1.34)(0.06,1.82)
\psline[linewidth=0.022cm](0.68,1.52)(0.68,1.84)
\psline[linewidth=0.022cm](1.06,1.36)(1.08,1.82)
\psline[linewidth=0.022cm](2.64,-0.23)(3.50,0.32)
\psline[linewidth=0.022cm](0.82,0.34)(1.66,-0.21)
\usefont{T1}{ptm}{m}{n}
\rput(3.26,2.05){\large 6}
\usefont{T1}{ptm}{m}{n}
\rput(5.16,2.07){\large 1}
\usefont{T1}{ptm}{m}{n}
\rput(6.62,2.07){\large 3}
\usefont{T1}{ptm}{m}{n}
\rput(8.02,0.77){\large 2}
\psline[linewidth=0.022cm](6.98,-0.19)(7.26,-0.55)
\psline[linewidth=0.022cm](7.52,-0.19)(7.26,-0.55)
\psline[linewidth=0.022cm](7.26,-0.55)(7.26,-0.99)
\pscircle[linewidth=0.022,dimen=outer](6.07,0.85){0.61}
\pscircle[linewidth=0.022,dimen=outer](7.25,-0.56){0.61}
\psline[linewidth=0.022cm](7.28,-1.15)(7.28,-1.55)
\psline[linewidth=0.022cm](5.70,1.34)(5.34,1.80)
\psline[linewidth=0.022cm](6.42,1.34)(6.70,1.82)
\psline[linewidth=0.022cm](7.62,-0.07)(7.98,0.50)
\psline[linewidth=0.022cm](6.44,0.36)(6.88,-0.07)
\psline[linewidth=0.022cm](5.78,1.24)(5.78,0.82)
\psline[linewidth=0.022cm](6.36,1.24)(6.36,0.82)
\psline[linewidth=0.022cm](5.78,0.82)(6.12,0.44)
\psline[linewidth=0.022cm](6.36,0.82)(6.10,0.44)
\psbezier[linewidth=0.022,linestyle=dashed,dash=0.16cm 0.16cm](5.18,1.14)(5.20,0.15)(3.39,-0.32)(2.70,-0.81)
\psbezier[linewidth=0.022,linestyle=dashed,dash=0.16cm 0.16cm](7.66,-1.47)(7.14,-2.25)(3.58,-0.55)(2.66,-0.83)
\end{pspicture}
}\vspace{-20mm}
\begin{figure}[h]
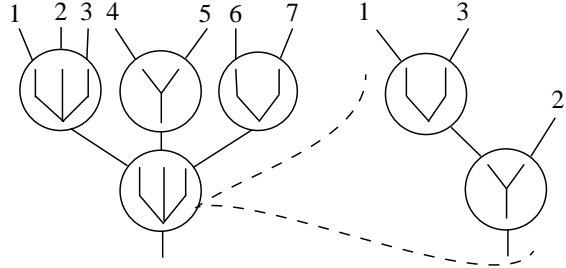
\caption{{\small Insertion of a $2$-planar tree to a vertex of another $2$-planar tree.}} \label{3to7}\end{figure}

\noindent yields the $2$-planar tree of Figure \ref{1to7}.

If an $n$-planar tree $\tau''$ is obtained by an insertion of an $n$-planar tree  $\tau'$ into a vertex of an $n$-planar tree  $\tau$   we will say that $\tau$ is the result of {\it contracting $\tau'$ inside $\tau''.$} For instance, the graph insertion of Figure \ref{3to7} corresponds to the following contraction:

\scalebox{0.9} 
{
\begin{pspicture}(0,-2.8)(6.43,3)
\usefont{T1}{ptm}{m}{n}
\rput(0.00,2.02){\large 1}
\usefont{T1}{ptm}{m}{n}
\rput(1.06,2.04){\large 3}
\usefont{T1}{ptm}{m}{n}
\rput(6.27,2.02){\large 5}
\usefont{T1}{ptm}{m}{n}
\rput(0.68,2.04){\large 2}
\usefont{T1}{ptm}{m}{n}
\rput(5.35,2.04){\large 4}
\usefont{T1}{ptm}{m}{n}
\rput(3.96,2.04){\large 7}
\psline[linewidth=0.022cm](0.30,1.19)(0.30,0.77)
\psline[linewidth=0.022cm](0.70,1.19)(0.70,0.41)
\psline[linewidth=0.022cm](1.08,1.19)(1.08,0.77)
\psline[linewidth=0.022cm](0.30,0.79)(0.72,0.41)
\psline[linewidth=0.022cm](0.70,0.41)(1.08,0.79)
\psline[linewidth=0.022cm](5.50,1.23)(5.78,0.87)
\psline[linewidth=0.022cm](6.04,1.23)(5.78,0.87)
\psline[linewidth=0.022cm](5.78,0.87)(5.78,0.43)
\psline[linewidth=0.022cm](2.98,1.23)(3.00,0.83)
\psline[linewidth=0.022cm](3.62,1.23)(3.62,0.85)
\psline[linewidth=0.022cm](3.00,0.83)(3.32,0.47)
\psline[linewidth=0.022cm](3.62,0.85)(3.32,0.45)
\pscircle[linewidth=0.022,dimen=outer](5.73,0.86){0.59}
\pscircle[linewidth=0.022,dimen=outer](3.27,0.88){0.61}
\pscircle[linewidth=0.022,dimen=outer](0.67,0.88){0.61}
\psline[linewidth=0.022cm](2.90,0.41)(2.62,0.13)
\psline[linewidth=0.022cm](3.64,1.37)(3.96,1.81)
\psline[linewidth=0.022cm](2.88,1.37)(2.58,1.83)
\psline[linewidth=0.022cm](6.06,1.35)(6.24,1.79)
\psline[linewidth=0.022cm](5.40,1.33)(5.40,1.79)
\psline[linewidth=0.022cm](0.26,1.31)(0.06,1.79)
\psline[linewidth=0.022cm](0.68,1.49)(0.68,1.81)
\psline[linewidth=0.022cm](1.06,1.33)(1.08,1.79)
\usefont{T1}{ptm}{m}{n}
\rput(2.58,2.06){\large 6}
\psline[linewidth=0.022cm](3.46,-0.88)(3.74,-1.24)
\psline[linewidth=0.022cm](4.00,-0.88)(3.74,-1.24)
\psline[linewidth=0.022cm](3.74,-1.24)(3.74,-1.68)
\pscircle[linewidth=0.022,dimen=outer](2.23,-0.31){0.61}
\pscircle[linewidth=0.022,dimen=outer](3.73,-1.25){0.61}
\psline[linewidth=0.022cm](3.76,-1.84)(3.76,-2.24)
\psline[linewidth=0.022cm](1.74,0.01)(1.08,0.43)
\psline[linewidth=0.022cm](4.12,-0.78)(5.68,0.29)
\psline[linewidth=0.022cm](2.62,-0.76)(3.18,-1.02)
\psline[linewidth=0.022cm](1.94,0.07)(1.94,-0.34)
\psline[linewidth=0.022cm](2.52,0.07)(2.52,-0.34)
\psline[linewidth=0.022cm](1.94,-0.34)(2.28,-0.72)
\psline[linewidth=0.022cm](2.52,-0.34)(2.26,-0.72)
\psline[linewidth=0.022cm,arrowsize=0.153cm 2.0,arrowlength=1.2,arrowinset=0.6]{->}(6.5,0)(8.3,0)
\rput{-120}(-0.8,1){\psellipse[linewidth=0.02,linestyle=dashed,dash=0.16cm 0.16cm,dimen=outer](-0.3,4.1)(0.9,2)}\end{pspicture}
\begin{pspicture}(-2,-2.26)(8.18,2.29)
\usefont{T1}{ptm}{m}{n}
\rput(0.00,2.05){\large 1}
\usefont{T1}{ptm}{m}{n}
\rput(1.06,2.07){\large 3}
\usefont{T1}{ptm}{m}{n}
\rput(2.81,2.05){\large 5}
\usefont{T1}{ptm}{m}{n}
\rput(0.68,2.07){\large 2}
\usefont{T1}{ptm}{m}{n}
\rput(1.45,2.07){\large 4}
\usefont{T1}{ptm}{m}{n}
\rput(4.10,2.05){\large 7}
\psline[linewidth=0.022cm](0.3,1.25)(0.3,0.8)
\psline[linewidth=0.022cm](0.7,1.25)(0.7,0.45)
\psline[linewidth=0.022cm](1.08,1.22)(1.08,0.80)
\psline[linewidth=0.022cm](0.30,0.82)(0.72,0.44)
\psline[linewidth=0.022cm](0.70,0.44)(1.08,0.82)
\psline[linewidth=0.022cm](1.88,1.22)(2.16,0.86)
\psline[linewidth=0.022cm](2.42,1.22)(2.16,0.86)
\psline[linewidth=0.022cm](2.16,0.86)(2.16,0.42)
\psline[linewidth=0.022cm](3.26,1.22)(3.28,0.82)
\psline[linewidth=0.022cm](3.90,1.22)(3.90,0.84)
\psline[linewidth=0.022cm](3.28,0.82)(3.60,0.46)
\psline[linewidth=0.022cm](3.90,0.84)(3.60,0.44)
\psline[linewidth=0.022cm](1.84,-0.23)(1.84,-0.66)
\psline[linewidth=0.022cm](2.20,-0.23)(2.20,-1.03)
\psline[linewidth=0.022cm](2.52,-0.25)(2.52,-0.66)
\psline[linewidth=0.022cm](1.84,-0.65)(2.18,-1.05)
\psline[linewidth=0.022cm](2.18,-1.07)(2.52,-0.65)
\pscircle[linewidth=0.022,dimen=outer](3.59,0.89){0.59}
\pscircle[linewidth=0.022,dimen=outer](2.15,0.89){0.61}
\pscircle[linewidth=0.022,dimen=outer](0.67,0.91){0.61}
\pscircle[linewidth=0.022,dimen=outer](2.15,-0.58){0.61}
\psline[linewidth=0.022cm](2.18,-1.17)(2.18,-1.57)
\psline[linewidth=0.022cm](2.16,0.30)(2.16,0.02)
\psline[linewidth=0.022cm](2.52,1.38)(2.84,1.82)
\psline[linewidth=0.022cm](1.78,1.36)(1.48,1.82)
\psline[linewidth=0.022cm](3.92,1.38)(4.10,1.82)
\psline[linewidth=0.022cm](3.26,1.36)(3.26,1.82)
\psline[linewidth=0.022cm](0.26,1.34)(0.06,1.82)
\psline[linewidth=0.022cm](0.68,1.52)(0.68,1.84)
\psline[linewidth=0.022cm](1.06,1.36)(1.08,1.82)
\psline[linewidth=0.022cm](2.64,-0.23)(3.50,0.32)
\psline[linewidth=0.022cm](0.82,0.34)(1.66,-0.21)
\usefont{T1}{ptm}{m}{n}
\rput(3.26,2.05){\large 6}
\end{pspicture}
}
\begin{figure}[h]
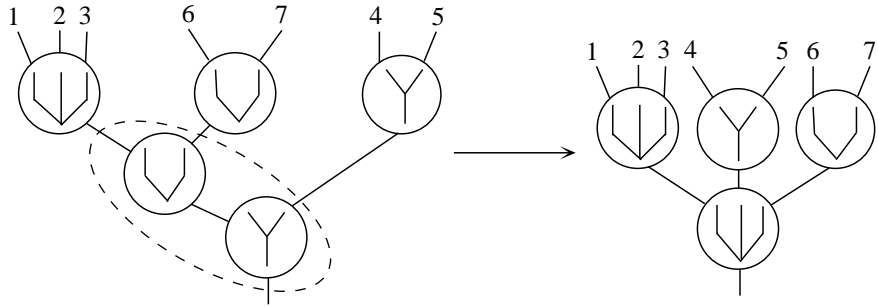
\caption{{\small Contraction of a $2$-planar tree.}} \label{4to7}\end{figure}

\subsection{Insertion of normal $n$-planar trees and multiplication in Fulton-MacPherson operad}

\label{subst} Insertion of normal $n$-planar trees is closely related to the multiplication of the Fulton-MacPherson operad \cite{SymBat}. In fact, it can be explained in terms of infinitesimal substitutions of point configurations. For the example of Figure \ref{3to7} we have two infinitesimal configurations in the Fulton-MacPherson operad

\scalebox{1} 
{
\begin{pspicture}(-1.5,-2)(8.38,2)
\usefont{T1}{ptm}{m}{n}
\rput(0.10171875,-0.02){\footnotesize 1}
\usefont{T1}{ptm}{m}{n}
\rput(0.74623436,0.26){\footnotesize 3}
\usefont{T1}{ptm}{m}{n}
\rput(2.1417968,0.12){\footnotesize 5}
\usefont{T1}{ptm}{m}{n}
\rput(0.44992188,-0.24){\footnotesize 2}
\psline[linewidth=0.022cm](0.28,0.28)(0.28,-0.38)
\pscircle[linewidth=0.022,dimen=outer](0.59,-0.01){0.59}
\psline[linewidth=0.022cm](0.6,1.42)(0.6,0.58)
\psline[linewidth=0.022cm](0.6,-0.58)(0.6,-1.46)
\psline[linewidth=0.022cm](0.6,0.28)(0.6,-0.38)
\psline[linewidth=0.022cm](0.88,0.26)(0.88,-0.4)
\psdots[dotsize=0.12](0.28,-0.02)
\psdots[dotsize=0.12](0.6,-0.2)
\psdots[dotsize=0.12](0.88,0.1)
\usefont{T1}{ptm}{m}{n}
\rput(1.7319531,-0.24){\footnotesize 4}
\pscircle[linewidth=0.022,dimen=outer](1.93,-0.01){0.59}
\psline[linewidth=0.022cm](1.94,1.42)(1.94,0.58)
\psline[linewidth=0.022cm](1.94,-0.58)(1.94,-1.46)
\psline[linewidth=0.022cm](1.94,0.28)(1.94,-0.38)
\psdots[dotsize=0.12](1.94,0.12)
\psdots[dotsize=0.12](1.94,-0.2)
\usefont{T1}{ptm}{m}{n}
\rput(3.3129532,0.24){\footnotesize 7}
\usefont{T1}{ptm}{m}{n}
\rput(2.886953,-0.14){\footnotesize 6}
\pscircle[linewidth=0.022,dimen=outer](3.29,-0.01){0.59}
\psline[linewidth=0.022cm](3.3,1.42)(3.3,0.58)
\psline[linewidth=0.022cm](3.3,-0.58)(3.3,-1.46)
\psline[linewidth=0.022cm](3.08,0.3)(3.08,-0.36)
\psline[linewidth=0.022cm](3.5,0.32)(3.5,-0.34)
\psdots[dotsize=0.12](3.08,-0.2)
\psdots[dotsize=0.12](3.5,0)
\usefont{T1}{ptm}{m}{n}
\rput(5.5140233,-0.075){and}
\usefont{T1}{ptm}{m}{n}
\rput(7.8262343,-0.18){\footnotesize 3}
\usefont{T1}{ptm}{m}{n}
\rput(7.3617187,-0.58){\footnotesize 1}
\pscircle[linewidth=0.022,dimen=outer](7.79,-0.45){0.59}
\psline[linewidth=0.022cm](7.8,1.46)(7.8,0.14)
\psline[linewidth=0.022cm](7.8,-1.02)(7.8,-1.42)
\psline[linewidth=0.022cm](7.58,-0.14)(7.58,-0.8)
\psline[linewidth=0.022cm](8.0,-0.12)(8.0,-0.78)
\psdots[dotsize=0.12](7.58,-0.5)
\psdots[dotsize=0.12](8.0,-0.3)
\psdots[dotsize=0.12](7.8,0.66)
\usefont{T1}{ptm}{m}{n}
\rput(7.529922,0.66){\footnotesize 2}
\end{pspicture}
}

\noindent Then we substitute the configuration on the first line to the  point $1$ in the second picture, the second line to the point $2$ in the second picture and the third line to the point $3$ in the second picture. The resulting configuration is therefore

\scalebox{0.9} 
{
\begin{pspicture}(-5,-2.8)(2.94,2.5)
\usefont{T1}{ptm}{m}{n}
\rput(0.38171875,-0.84){\footnotesize 1}
\usefont{T1}{ptm}{m}{n}
\rput(1.0262344,-0.56){\footnotesize 3}
\usefont{T1}{ptm}{m}{n}
\rput(1.6817969,1.46){\footnotesize 5}
\usefont{T1}{ptm}{m}{n}
\rput(0.7299219,-1.06){\footnotesize 2}
\psline[linewidth=0.022cm](0.56,-0.54)(0.56,-1.2)
\pscircle[linewidth=0.022,dimen=outer](0.87,-0.83){0.59}
\psline[linewidth=0.022cm](0.88,0.06)(0.88,-0.24)
\psline[linewidth=0.022cm](0.88,-1.4)(0.88,-1.68)
\psline[linewidth=0.022cm](0.88,-0.54)(0.88,-1.2)
\psline[linewidth=0.022cm](1.16,-0.56)(1.16,-1.22)
\psdots[dotsize=0.12](0.56,-0.84)
\psdots[dotsize=0.12](0.88,-1.0)
\psdots[dotsize=0.12](1.16,-0.7)
\usefont{T1}{ptm}{m}{n}
\rput(1.2719531,1.1){\footnotesize 4}
\pscircle[linewidth=0.022,dimen=outer](1.47,1.33){0.59}
\psline[linewidth=0.022cm](1.48,2.16)(1.48,1.92)
\psline[linewidth=0.022cm](1.48,0.76)(1.48,0.28)
\psline[linewidth=0.022cm](1.48,1.62)(1.48,0.96)
\psdots[dotsize=0.12](1.48,1.46)
\psdots[dotsize=0.12](1.48,1.16)
\usefont{T1}{ptm}{m}{n}
\rput(2.1729531,-0.58){\footnotesize 7}
\usefont{T1}{ptm}{m}{n}
\rput(1.7469531,-0.96){\footnotesize 6}
\pscircle[linewidth=0.022,dimen=outer](2.15,-0.83){0.59}
\psline[linewidth=0.022cm](2.16,0.04)(2.16,-0.24)
\psline[linewidth=0.022cm](2.16,-1.4)(2.16,-1.66)
\psline[linewidth=0.022cm](1.94,-0.52)(1.94,-1.18)
\psline[linewidth=0.022cm](2.36,-0.5)(2.36,-1.16)
\psdots[dotsize=0.12](1.94,-1.0)
\psdots[dotsize=0.12](2.36,-0.7)
\psellipse[linewidth=0.022,dimen=outer](1.47,-0.81)(1.47,1.09)
\psline[linewidth=0.022cm](1.54,-2.16)(1.54,-1.88)
\end{pspicture}
}

\noindent which corresponds exactly to the tree of Figure \ref{1to7}.

\section{Other definitions of graphs}

Since there are many different treatments of graphs in the literature we include several of them  and describe briefly the relationship with our definition. The order is not chronological nor it is related to the importance and popularity.

\subsection{Feynman graphs of Joyal-Kock}

In \cite{JK} Joyal and Kock define a Feynman graph $\Gamma$
as a span in finite sets:$$E\stackrel{s}{\leftarrow} H\stackrel{t}{\to} V$$equipped with a fixed point free involution $\iota:E\to E$ such that $s$ is an injection. The elements of $V$ are called vertices, the elements of $H$ half-edges and the elements of $E$ oriented edges.

To any Feynman graph $\Gamma$ one can assign a graph $G$ in our sense as follows. The set of vertices of $G$ is the set  $V.$ The set of flags of $G$ is the set $E.$ The set of edges of $G$ is the set of orbits of $\iota.$ There is a unique morphism from a flag $h$ to a $v\in V$ if $h = s(\bar{h}).$ The target $v$ of this map is $t(\bar{h}).$ For any flag there is a unique map from it to its orbit. One checks that this construction produces a graph in our sense.

\begin{pro}The set of isomorphism classes of Feynman graphs embeds into the set of isomorphism classes of graphs in our sense. A graph in our sense is not a Feynman graph if and only if it contains a connected component isomorphic to a free living loop.\end{pro}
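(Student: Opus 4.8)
The plan is to exhibit the asserted map explicitly and then characterise its failure to be surjective by a direct inspection of the two combinatorial structures. First I would construct the forward map on objects: given a Feynman graph $\Gamma=(E\overset{s}{\leftarrow}H\overset{t}{\to}V,\iota)$, the recipe in the preceding paragraph produces a graph $G(\Gamma)$ with vertex-set $V$, flag-set $H$ (not $E$ --- note the excerpt's apparent typo, since the flags should be the half-edges $H$), and edge-set the set of $\iota$-orbits on $E\cong H\sqcup(E\setminus s(H))$. I would verify that $G(\Gamma)$ satisfies axioms (i) and (ii) of our definition of graph, and that a morphism of Feynman graphs induces a morphism of graphs, so that we genuinely have a functor, hence a well-defined map on isomorphism classes.

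Next I would check injectivity on isomorphism classes. The point is that the whole span-plus-involution datum of $\Gamma$ can be reconstructed from $G(\Gamma)$: the involution $\iota$ is recovered from the two-element fibres of the edge-incidence arrows, the injectivity of $s$ corresponds precisely to the condition that each flag of $G(\Gamma)$ is attached to at most one vertex, and the cospan $f_1\to e\leftarrow f_2$ at each edge records exactly the $\iota$-orbit. Thus two Feynman graphs with isomorphic associated graphs are isomorphic, giving an injection of isomorphism-class sets.

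The heart of the statement, and the step I expect to demand the most care, is the characterisation of the image. I would identify which of our graphs arise from Feynman graphs by unwinding what the Feynman datum forces. Because $s:H\to E$ is an \emph{injection}, every half-edge is the source of exactly one oriented edge, and because $\iota$ is a fixed-point-free involution each edge-orbit has exactly two elements of $E$; crucially, each element of $E$ either lies in the image of $s$ (so is attached, via some $h$, to a vertex) or not. Translating back, a graph lies in the image precisely when it admits such a presentation, and the only obstruction is an edge-orbit \emph{neither} of whose two members comes from a half-edge attached to a vertex --- i.e. an edge with no incident vertices and no flags. In our terminology this is exactly a free-living loop (a graph component with no vertices and one internal edge), since the free-living \emph{edge} (external, two distinct flags without vertices) is realisable by taking $H=\varnothing$ on those flags only if... here I would be careful: the free-living edge corresponds to an $\iota$-orbit whose two half-edges are not in $s(H)$, which Joyal--Kock's formalism with $s$ merely injective still permits, whereas the free-living loop is the genuinely non-representable case. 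I would therefore argue that a connected graph fails to be in the image if and only if it is a free-living loop, and then pass to arbitrary graphs componentwise: a general graph is in the image iff none of its connected components is a free-living loop. The main obstacle is to pin down the boundary case (free-living edge versus free-living loop) and confirm against the exact Joyal--Kock conventions that the only structure their spans \emph{cannot} encode is the vertexless loop, which I would do by a careful fibre-counting argument on $s$ and $\iota$.

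\begin{proof}
Sketch as above: construct the functor $\Gamma\mapsto G(\Gamma)$, verify the graph axioms, recover $(s,t,\iota)$ from $G(\Gamma)$ to get injectivity on isomorphism classes, and characterise the image by showing that the sole non-representable connected graph is the free-living loop, concluding componentwise.
\end{proof}
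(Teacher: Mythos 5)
Your proposal is built on a misreading of the construction, and the ``typo'' you correct is not a typo. The paper deliberately takes the set of flags of $G(\Gamma)$ to be $E$, not $H$: the elements of $E\setminus s(H)$ are exactly what become the \emph{free} flags of $G(\Gamma)$, and every $\iota$-orbit $\{a,\iota a\}$ then receives precisely two arrows, from the distinct flags $a$ and $\iota a$. With your convention (flags $=H$), an orbit containing an element outside $s(H)$ receives fewer than two arrows, so axiom (i) of the definition of graph fails and $G(\Gamma)$ is not a graph whenever $\Gamma$ has free ends --- already for the free-living edge itself ($V=H=\varnothing$, $E$ a two-element set swapped by $\iota$), which your map would send to a vertexless, flagless edge, an object that does not exist in this formalism. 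So your forward map is not well defined, and an injection on isomorphism classes cannot be extracted from it.

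This error then propagates into the characterisation of the image, which is the heart of the statement and where your argument is self-contradictory: you first declare the obstruction to be ``an edge-orbit neither of whose members lies in $s(H)$, i.e.\ an edge with no incident vertices and no flags'' and call this the free-living loop, and a sentence later (correctly) observe that such an orbit is the free-living \emph{edge} and is representable. The point you never pin down is that, in the paper's formalism, a free-living loop is not a flagless edge; it is the graph with \emph{one} flag having two parallel arrows into one edge, i.e.\ a flag adjacent to itself. The obstruction is therefore precisely a fixed point of the adjacency involution: in any $G(\Gamma)$ the two flags of an edge are $a\neq\iota a$ (fixed-point freeness of $\iota$), so no component can be a free-living loop; conversely, given $G$ with no free-living loop component, one reconstructs $\Gamma$ by taking $E=$ all flags, $H=$ non-free flags, $s$ the inclusion, $t$ the attachment map, and $\iota$ the involution exchanging adjacent flags, which is fixed-point free exactly because no flag of $G$ is self-adjacent. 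This fixed-point argument is the ``careful fibre-counting'' you defer, and without it (and with the wrong flag set) the proof does not go through.
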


\begin{proof}If a graph $G$ does not contain free-living loops we can reconstruct a Feynman graph $\Gamma$ as follows. The set of vertices of $\Gamma$ will be equal to the set of vertices of $G.$ The set of half-edges will be the set of non-free flags of $G.$ The set of oriented edges will be equal to the set of all flags of $G$ and the involution exchanges the adjacent flags. Since $G$ does not contain free-living loop this involution is fixed-point free.\end{proof}

\subsection{Getzler-Kapranov graphs}

In \cite{Borisov-Manin,Cos,GetzK} a slightly different definition of a graph has been used (attributed to Kontsevich-Manin by Getzler and Kapranov) : a graph $\Gamma$ is a map of sets $\pi:H\to V$ together with an involution $\sigma:H\to H, \sigma^2 = 1.$ We can construct a Feynman graph  $\Gamma$ in the sense of Joyal-Kock as follows.
The set of vertices of  $\Gamma$ is $V.$ The set of flags of ${\Gamma}$ is $H$ and $t= \pi.$ The set of oriented edges of $\Gamma $ is  equal to $H\sqcup H^{\sigma}$ where $H^{\sigma}$ is the set of fixed points of $\sigma$ (so we simply add to $H$ one extra point for each fixed point of $\sigma$) and $s$ is the first  coprojection. The involution $\iota$ maps each fixed point $h$ to its copy in $H^{\sigma}$ and coincides with $\sigma(h)$ if  $h$ is not a fixed point of $\sigma.$ This shows that the set of isomorphism classes of Getzler-Kapranov graphs is a subset of the set of isomorphism classes of Feynman graphs. The difference is in the treatment of graphs without vertices which exist in the latter but not in the former setting.

\subsection{Johnson-Yau graphs}This type of graph was introduced in \cite{Markl,Merk} and studied by Johnson-Yau \cite{JY} paying special attention to free-living edges and loops.

\begin{defin}[{\cite{JY}}]A generalised graph $\mathbb{G}$ is a finite set $Flag(\mathbb{G})$ with involution $\iota ,$ together with a partition and choice of an isolated cell $\mathbb{G}_0$ such that there is a fixed-point free involution $\pi $ on the set of fixed points of $\iota $ within $\mathbb{G}_0.$\end{defin}

Here a partition is a presentation of a finite set as a finite coproduct of some finite sets called cells (empty cells are allowed).
An isolated cell of a partition with an involution is a cell invariant under the involution.

For each Johnson-Yau graph we can construct a  graph $G$ in our sense as follows. A vertex of $G$ is a cell of $\mathbb{G}$ which is not an exceptional cell of $\mathbb{G}_0.$ The  flags of $G$ are $(Flag(\mathbb{G})\setminus \mathbb{G}_0)\sqcup (Flag(\mathbb{G})\setminus \mathbb{G}_0)^{\iota}$ as well as all orbits of $\iota$ on $\mathbb{G}_0.$ For $h\in Flag(\mathbb{G})\setminus \mathbb{G}_0$  which is not a fixed point of $\iota$ we have an edge $e(h)$ such that $e(h) = e(\iota h)$ and a unique morphism in $G$  from $h$ to $e.$ For a fixed point $h$ of $\tau$ in $(Flag(\mathbb{G})\setminus \mathbb{G}_0)$ its adjacent belongs to $(Flag(\mathbb{G})\setminus \mathbb{G}_0)^{\iota}$ and we have a morphism from $h$ and its adjacent to $e.$

For a non-trivial orbit $h$ of $\iota$ in $\mathbb{G}_0$ there are an edge in $G$ and  exactly two morphisms from $h$ to this edge in $G.$ Finally, for each orbit of $\pi$ there is one edge in $G$ and a morphism from each element of this orbit to this edge.

\begin{pro}This construction determines a bijection between isomorphism clas-ses of graphs in our sense and isomorphism classes of Johnson-Yau graphs.\end{pro}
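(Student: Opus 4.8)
The plan is to prove the proposition by exhibiting an explicit two-sided inverse to the assignment $\mathbb{G}\mapsto G$ described above, and then checking that both assignments are natural with respect to isomorphisms, so that they descend to the respective sets of isomorphism classes. Before constructing the inverse I would first verify that the forward assignment is well-defined, i.e.\ that the category $G$ produced from a Johnson--Yau graph $\mathbb{G}$ really satisfies axioms (i) and (ii) of our notion of graph. This is a finite case-check organised by the four kinds of edge that the construction produces: edges coming from free $\iota$-orbits outside $\mathbb{G}_0$, edges coming from $\iota$-fixed flags outside $\mathbb{G}_0$ together with their adjoined copies in $(Flag(\mathbb{G})\setminus\mathbb{G}_0)^\iota$, edges coming from non-trivial $\iota$-orbits inside $\mathbb{G}_0$, and edges coming from $\pi$-orbits. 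In each case one reads off that the edge is the target of exactly two non-identity arrows, and that each flag is the source of one or two non-identity arrows with the required parallelism in the degenerate (loop) cases; the flag-to-vertex incidences are dictated by the partition, the vertices of $G$ being the non-isolated cells of $\mathbb{G}$.

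Next I would construct the inverse $G\mapsto\mathbb{G}$. Given a graph $G$ in our sense, one takes $Flag(\mathbb{G})$ to consist of the non-free flags of $G$ together with one half-edge for each free flag and the appropriate doubling along each free-living loop; the involution $\iota$ pairs the two flags adjacent along an internal edge and fixes the half-edge of a leg; the partition assigns each flag to the cell of the vertex it is attached to, and the distinguished isolated cell $\mathbb{G}_0$ collects exactly those flags attached to no vertex (the floating ends); finally $\pi$ pairs, inside $\mathbb{G}_0$, the two $\iota$-fixed flags belonging to each free-living edge. One then checks that $\pi$ is fixed-point free and $\iota$-compatible, so that the data form a genuine Johnson--Yau graph, and that an isomorphism $G\cong G'$ of graphs induces a bijection of flag sets commuting with $\iota$ and respecting the partition, $\mathbb{G}_0$ and $\pi$, hence an isomorphism of Johnson--Yau graphs (and symmetrically for the forward map).

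It then remains to verify that the two composites are the identity on isomorphism classes, which I would do by tracking each structural element through both constructions: ordinary $\iota$-orbits correspond to internal edges between vertices, $\iota$-fixed half-edges inside vertex cells correspond to legs (external edges attached to a vertex), $\pi$-orbits inside $\mathbb{G}_0$ correspond to free-living edges, and the $\iota$-orbits inside $\mathbb{G}_0$ correspond to free-living loops. The main obstacle --- and, as the authors stress in the introduction, the whole reason the distinction between these graph formalisms matters --- is precisely the bookkeeping of the degenerate cases: free-living edges, free-living loops, and the exact role played by the isolated cell $\mathbb{G}_0$ and the auxiliary involution $\pi$. The ordinary part of the dictionary (cells $\leftrightarrow$ vertices, $\iota$-orbits $\leftrightarrow$ edges, half-edges $\leftrightarrow$ free flags) is routine; the care lies in ensuring that every vertex-free edge is accounted for exactly once, with no over- or under-counting when an $\iota$-orbit happens to sit entirely inside $\mathbb{G}_0$, and in confirming that the choices made in the doubling steps of the two directions cancel upon passage to isomorphism classes.
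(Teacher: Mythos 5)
The paper states this proposition without proof, so your argument has to stand on its own; as written it does not, because your inverse construction mishandles external edges attached to a vertex. In the Johnson--Yau formalism such an edge is encoded by a \emph{single} $\iota$-fixed flag lying in the cell of its vertex; the outer free end simply does not exist there, which is exactly why the forward construction has to adjoin the extra copies $(Flag(\mathbb{G})\setminus\mathbb{G}_0)^{\iota}$. Consequently the inverse construction must \emph{delete} those free flags of $G$ whose adjacent flag is attached to a vertex. Your recipe instead keeps one element of $Flag(\mathbb{G})$ for every free flag of $G$ and decrees that $\mathbb{G}_0$ consists of all flags attached to no vertex. Then the outer end of every vertex-attached external edge becomes an $\iota$-fixed element of $\mathbb{G}_0$ which your $\pi$ (pairing only the two ends of each free-living edge) does not move, contradicting the Johnson--Yau requirement that $\pi$ be a fixed-point-free involution on \emph{all} fixed points of $\iota$ inside $\mathbb{G}_0$. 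Concretely, for a corolla with one external edge your construction yields $Flag(\mathbb{G})=\{f,f'\}$ with vertex cell $\{f\}$, $\mathbb{G}_0=\{f'\}$ and $\iota=\mathrm{id}$: there is no fixed-point-free involution on the one-element set $\{f'\}$, so the output is not a Johnson--Yau graph at all. Even if one paired such stray ends arbitrarily (possible only when their number is even), the composite $\mathbb{G}\mapsto G\mapsto\mathbb{G}$ would double every $\iota$-fixed flag outside $\mathbb{G}_0$, so it could not be the identity on isomorphism classes.

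The repair is local. Take $Flag(\mathbb{G})$ to consist of the flags of $G$ attached to vertices, together with those free flags whose adjacent flag is also free (the ends of free-living edges), together with two copies of each flag having two parallel arrows to an edge (free-living loops); let $\iota$ interchange the two attached flags of each internal edge (including pointed loops), interchange the two copies of each loop flag, and fix everything else; put each vertex-attached flag in the cell of its vertex and all remaining flags in $\mathbb{G}_0$; and let $\pi$ pair the two ends of each free-living edge. With this correction the rest of your outline --- well-definedness of the forward assignment, compatibility of both assignments with isomorphisms, and the orbit-by-orbit check that the two composites are identities (legs $\leftrightarrow$ $\iota$-fixed flags in vertex cells, internal edges $\leftrightarrow$ non-trivial $\iota$-orbits outside $\mathbb{G}_0$, free-living edges $\leftrightarrow$ $\pi$-orbits, free-living loops $\leftrightarrow$ non-trivial $\iota$-orbits inside $\mathbb{G}_0$) --- goes through and establishes the stated bijection.
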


\subsection{Joyal-Street graphs}According to Joyal-Street \cite{JS} a {\it topological graph} is a Hausdorff space $G$ with a discrete closed subset $G_0$ (the set of vertices of $G$) such that $G\setminus G_0$ is a $1$-manifold without boundary (empty $1$-manifold is allowed).

A {\it graph with boundary} is a pair $(\Gamma,\partial \Gamma)$ where $\Gamma$ is a compact topological graph and $\partial \Gamma$ is a subset of its set of vertices such that each $x\in \partial \Gamma$ is of degree $1.$ The last thing means that the space $V\setminus x$ has one connected component, where $V$ is a sufficiently small connected neighbourhood of $x.$ Morphisms between graphs with boundary are homeomorphisms taking vertices to vertices and boundary points to boundary points.

\begin{pro}\label{geometricrealisation}Geometric realisation provides a bijection between isomorphism classes of graphs in our sense and isomorphism classes of Joyal-Street graphs with boundary.\end{pro}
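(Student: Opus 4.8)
The plan is to construct, in both directions, explicit maps between isomorphism classes of graphs in our combinatorial sense and isomorphism classes of Joyal-Street topological graphs with boundary, and to check that geometric realisation realises one of these directions. First I would make precise what the geometric realisation of a graph $G$ is: it is the geometric realisation of the simplicial nerve of the underlying category, as stated in Section \ref{specialgraph}. Concretely, each vertex $v$ contributes a $0$-cell, each edge $e$ with its cospan $f_1\to e\leftarrow f_2$ contributes a $1$-cell whose two endpoints are attached to the targets of $f_1$ and $f_2$ (either vertices, or, if a flag is free, a free endpoint). The key observation is that away from the images of the vertices this realisation is a $1$-manifold without boundary, since every edge-object sits in the middle of exactly two flag-arrows by axiom (i); the realisation $|G|$ is compact because $G$ is a finite category; and the set of images of $v$-objects is a discrete closed subset. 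Thus $|G|$ is a topological graph in the sense of Joyal-Street, and I would designate the free flags of $G$ as the boundary points, checking that each such point has degree $1$ because a free flag is the source of exactly one non-identity arrow.

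The main work is to verify that this assignment respects isomorphism classes and is a bijection, which I would do by constructing an inverse. Given a Joyal-Street graph with boundary $(\Gamma,\partial\Gamma)$, I would recover the combinatorial data as follows: the vertices of $G$ are the non-boundary points of $\Gamma_0$; the edges of $G$ are the connected components of $\Gamma\setminus\Gamma_0$ (each such component is an open arc or a circle, since $\Gamma\setminus\Gamma_0$ is a $1$-manifold without boundary and $\Gamma$ is compact); and the flags are the ``ends'' of these arcs, i.e. the germs of arcs approaching a point of $\Gamma_0$ or approaching a boundary point. The incidence arrows are dictated by which germ limits onto which vertex. The subtle points are precisely the degenerate cases already flagged in the surrounding text: a circle component with no vertex on it yields a \emph{free-living loop}, a compact arc with boundary endpoints and no interior vertex yields a \emph{free-living edge}, and a circle passing through a single vertex yields a \emph{pointed loop}. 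I would check that each of these topological configurations corresponds to exactly the combinatorial graph our definition permits, since our axioms (i)--(ii) were designed (following the remark crediting Johnson-Yau) to accommodate exactly these loops and free-living edges.

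The hard part will be the careful bookkeeping of the two axioms (i) and (ii) in the degenerate loop and free-edge cases, ensuring the correspondence is bijective rather than merely surjective on one side. In particular, axiom (ii) allows an $f$-object to map to two parallel $e$-objects, or to one $v$-object and one $e$-object, and I must confirm that each such configuration matches a unique local topological picture near a vertex or along an edge, with no combinatorial graph realising the same topological type twice. I would argue that the two constructions are mutually inverse up to isomorphism by showing that reconstructing the nerve from $\Gamma$ recovers the original incidence category, and that realising the reconstructed $G$ recovers $\Gamma$ up to the homeomorphisms allowed by Joyal-Street (which only permit reparametrisation of edges and relabelling of vertices). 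Finally I would note that morphisms on both sides are isomorphisms only, so it suffices to match isomorphism classes; functoriality beyond bijection is not claimed. Once these local matchings are established, the bijection on isomorphism classes follows formally, completing the proof of Proposition \ref{geometricrealisation}.
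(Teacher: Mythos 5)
Your proposal is correct and follows essentially the same route as the paper: the realisation direction is straightforward, and the inverse is constructed by reading the incidence data back off the topological graph (the paper marks the $1/2$-points of parametrised edges as edge-objects and the $1/4$- and $3/4$-points as flags, where you use components of $\Gamma\setminus\Gamma_0$ and germs of arc-ends), with the degenerate free-living edge/loop and pointed-loop cases checked separately. The one point to make explicit is that your uniform germ recipe assigns \emph{no} flag to a bare circle, violating axiom (i), so the free-living loop must indeed be special-cased — it carries a single flag with two parallel arrows to its edge — exactly as you anticipate and as the paper's accompanying picture does.
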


\begin{proof}Geometric realisation produces a topological graph with boundary where the boundary points correspond to free flags. For the inverse correspondence we assume that each closed edge of  $\Gamma$ is parametrised by a bijective continuous function from the interval $[0,1]$  so that it makes sense to consider $t$-points, $t\in[0,1]$, on the edges of $\Gamma$.

For $(\Gamma,\partial \Gamma)$ we construct a categorical graph $G$ by taking as its vertices the set of vertices of $\Gamma$ minus the set $\partial \Gamma$ of boundary points. The set of edges of $G$  is the set of $1/2$-points on the edges of the topological graph $\Gamma.$ The set of adjacent flags is the set of $1/4$- and $3/4$-points on those edges of $\Gamma$ which connect two non-boundary points. For an edge which connects a boundary point to a non-boundary point we take the non-boundary point as a flag and the middle point of the interval between the non-boundary point and the $1/2$-point as its adjacent. If an edge connects two non-boundary points then we take those points as flags of $G.$ The morphisms in $G$ are obvious from the following example:

\scalebox{0.7} 
{
\begin{pspicture}(-3,-3.2)(12.141295,3)
\definecolor{color382b}{rgb}{0.0,0.8,0.0}
\definecolor{color383b}{rgb}{0.2,0.8,0.0}
\psline[linewidth=0.04cm,arrowsize=0.153cm 2.0,arrowlength=1.4,arrowinset=0.4]{->}(1.75,1.55)(2.8,0.65)
\psline[linewidth=0.04cm,arrowsize=0.153cm 2.0,arrowlength=1.4,arrowinset=0.4]{->}(4.05,-0.15)(2.9,0.55)
\psline[linewidth=0.04cm,arrowsize=0.153cm 2.0,arrowlength=1.4,arrowinset=0.4]{->}(4.1,-0.18)(5.6,-0.5)
\psbezier[linewidth=0.04,arrowsize=0.153cm 2.0,arrowlength=1.4,arrowinset=0.4]{->}(7.5,2.55)(6.45,2.1)(6.1,-0.05)(5.75,-0.4)
\psbezier[linewidth=0.04,arrowsize=0.153cm 2.0,arrowlength=1.4,arrowinset=0.4]{->}(7.6,2.6)(8.75,2.8)(8.65,0.5)(8.65,0.25)
\psbezier[linewidth=0.04,arrowsize=0.153cm 2.0,arrowlength=1.4,arrowinset=0.4]{->}(7.65,-1.5)(8.4,-1.4)(8.55,-0.65)(8.65,0.1)
\psbezier[linewidth=0.04,arrowsize=0.153cm 2.0,arrowlength=1.4,arrowinset=0.4]{->}(7.6,-1.5)(6.6,-1.55)(6.15,-1.05)(5.75,-0.55)
\psline[linewidth=0.04cm,arrowsize=0.153cm 2.0,arrowlength=1.4,arrowinset=0.4]{->}(4.50,-1.65)(5.6,-0.55)
\psline[linewidth=0.04cm,arrowsize=0.153cm 2.0,arrowlength=1.4,arrowinset=0.4]{->}(4.45,-1.7)(3.5,-2.7)
\psline[linewidth=0.04cm,arrowsize=0.153cm 2.0,arrowlength=1.4,arrowinset=0.4]{->}(2.65,-2.05)(3.35,-2.7)
\psline[linewidth=0.04cm,arrowsize=0.153cm 2.0,arrowlength=1.4,arrowinset=0.4]{->}(2.65,-2.05)(1.75,-1.25)
\psline[linewidth=0.04cm,arrowsize=0.153cm 2.0,arrowlength=1.4,arrowinset=0.4]{->}(0.15,-2.5)(0.15,-0.45)
\psline[linewidth=0.04cm,arrowsize=0.153cm 2.0,arrowlength=1.4,arrowinset=0.4]{->}(0.15,1.55)(0.15,-0.3)
\psbezier[linewidth=0.04,arrowsize=0.153cm 2.0,arrowlength=1.4,arrowinset=0.4]{->}(9.95,0.25)(9.95,0.9)(11.35,1.65)(12.05,0.3)
\psbezier[linewidth=0.04,arrowsize=0.153cm 2.0,arrowlength=1.4,arrowinset=0.4]{->}(9.95,0.15)(9.95,-0.65)(11.55,-0.95)(12.05,0.15)
\psdots[dotsize=0.25,fillstyle=solid,fillcolor=color382b,dotstyle=o](5.65,-0.45)
\psdots[dotsize=0.25,fillstyle=solid,fillcolor=color383b,dotstyle=o](4.75,1.3)
\psdots[dotsize=0.25,fillstyle=solid,fillcolor=color382b,dotstyle=o](1.7,-1.15)
\psdots[dotsize=0.25,fillstyle=solid,dotstyle=o](1.75,1.55)
\psdots[dotsize=0.18,fillstyle=solid,fillcolor=red,dotstyle=o](3.45,-2.75)
\psdots[dotsize=0.18,fillstyle=solid,fillcolor=red,dotstyle=o](12.05,0.25)
\psdots[dotsize=0.18,fillstyle=solid,fillcolor=red,dotstyle=o](2.85,0.65)
\psdots[dotsize=0.18,fillstyle=solid,fillcolor=red,dotangle=90.05,dotstyle=o](0.15,-0.35)
\psdots[dotsize=0.18,fillstyle=solid,fillcolor=red,dotstyle=o](8.65,0.15)
\psdots[dotsize=0.18](7.65,-1.5)
\psdots[dotsize=0.18](7.55,2.55)
\psdots[dotsize=0.18](9.95,0.2)
\psdots[dotsize=0.18](4.1,-0.15)
\psdots[dotsize=0.18](2.65,-2.05)
\psdots[dotsize=0.18](4.5,-1.7)
\psdots[dotsize=0.25,fillstyle=solid,dotangle=90.033676,dotstyle=o](0.15,-2.56)
\psdots[dotsize=0.25,fillstyle=solid,dotangle=90.033676,dotstyle=o](0.15,1.6)
\end{pspicture}
}

\noindent In this picture large dots correspond to the vertices of the topological graph and green dots correspond to the vertices of the categorical graphs, white dots correspond to the boundary points and the free flags of the categorical graph, black dots correspond to the non-free flags of the categorical graph and red dots correspond to its edges.\end{proof}

\noindent Proposition \ref{geometricrealisation} justifies the representation of graphs and trees used in this paper.\vspace{2ex}

\vspace{1ex}

{\bf\noindent Acknowledgements.} We wish to express our gratitude to G. Caviglia, D.-C. Cisinski, B. Fresse, E. Getzler, M. Johnson, A. Joyal, J. Kock, S. Lack, A. Lazarev, G. Maltsiniotis, M. Markl, F. Muro, S. Schwede, R. Street, B. Vallette and D. Yau for many useful comments and conversations. Special thanks go to Mark Weber for numerous enlightening discussions about polynomial monads and $2$-dimensional monad theory, and to the referee whose careful reading helped us to eliminate several inaccuracies and to ameliorate considerably the presentation.

The authors gratefully acknowledge the financial support of Scott Russel Johnson Memorial Foundation, Max Plank Institut f\"{u}r Mathematik in Bonn, Newton Institute of Mathematical Science in Cambridge, the Australian Research Council (grants DP0558372, DP130101172) and the French Agence Nationale de Recherche (grant HOGT).

\renewcommand{\refname}{Bibliography.}


\begin{thebibliography}{99}

\bibitem{AM}Ara D., Maltsiniotis G., {\it Towards a Thomason model structure on the category of strict n-categories,} Adv. Math. \textbf{259} (2014), 557--654.

\bibitem{BD}Baez J., Dolan J.,{\it  Higher-dimensonal algebra and topological quantum field theory,} J. Math. Phys. \textbf{36} (1995), 6073--6105.

\bibitem{BD2}Baez J., Dolan J., {\it  Higher-dimensonal algebra III, $n$-categories and the algebra of opetopes,} Adv. Math. \textbf{135} (1998), 145--206.

\bibitem{Barwick}Barwick C., {\it On left and right model categories and left and right Bousfield localizations,} Homology, Homotopy and Appl. \textbf{12} (2010), 245--320.

\bibitem{SymBat}Batanin M. A., {\it The symmetrization of $n$-operads and compactification of real configuration spaces,} Adv. Math. \textbf{211} (2007), 684--725.

\bibitem{EHBat}Batanin M. A., {\it The Eckmann-Hilton argument and higher operads,}  Adv. Math. \textbf{217} (2008), 334--385.

\bibitem{LocBat}Batanin M. A., {\it Locally constant $n$-operads as higher braided operads,}  {J. Noncommut. Geom.} \textbf{4} (2010), 237--263.

\bibitem{BBC}Batanin M. A., {\it An operadic proof of Baez-Dolan stabilization hypothesis}, to appear in Proc. Amer. Math. Soc. (2017), {\tt arXiv:1511.09130}.

\bibitem{BBC2}Batanin M. A., {\it Higher braided operads and stabilization phenomena}, in preparation.

\bibitem{Benabou}B\'enabou J., {\it Some remarks on free monoids in a topos}, Lect. Notes Math. \textbf{1488}, Springer Verlag, Berlin, \textbf{1991}, 20--29.

\bibitem{BergerMoerdijk}Berger C., Moerdijk I., {\it Axiomatic homotopy theory for operads,}  Comment. Math. Helv. \textbf{78} (2003), 805--831.

\bibitem{BergerMoerdijk0}Berger C., Moerdijk I., {\it The Boardman-Vogt resolution of operads in monoidal model categories}, Topology \textbf{45} (2006), 807--849.

\bibitem{BergerMoerdijk1}Berger C., Moerdijk I., {\it Resolution of coloured operads and rectification of homotopy algebras,} Contemp. Math. \textbf{431} (2007), 31--58.

\bibitem{BergerMoerdijk2}Berger C., Moerdijk I., {\it On the homotopy theory of enriched categories,} Q. J. Math. \textbf{64} (2013), 805--846.

\bibitem{BV}Boardman J. M., Vogt R. M., {\it Homotopy invariant algebraic structures on topological spaces,} Lect. Notes Math. \textbf{347}, Springer Verlag, Berlin, 1973.

\bibitem{Borisov-Manin}Borisov D., Manin Y.,  {\it Generalized operads and their inner cohomomorphisms,} Geometry and dynamics of groups and spaces. Birkhauser, Progress in Math., vol. \textbf{265} (2008), 247--308.

\bibitem{cis}Cisinski D.-C., {\it Locally constant functors,} Math. Proc. Camb. Phil. Soc. \textbf{147} (2009), 593--614.

\bibitem{CM}Cisinski D.-C., Moerdijk I., {\it Dendroidal sets and simplicial operads,} J. Topology \textbf{6} (2013), 705--756.

\bibitem{Cos}Costello K,  {\it The $A_{\infty}$-operads and the moduli space of curves,}  {\tt arXiv:0402015v2}.

\bibitem{Dieck}tom Dieck T., {\it Algebraic Topology,} Textbooks in Math., Europ. Math. Soc., Z\"urich, 2008.

\bibitem{DOR}Dundas B. I., {\O}stv{\ae}r P.-A., R\"ondigs O., {\it Enriched Funtors and Stable Homotopy Theory,} Doc. Math. \textbf{8} (2003), 409--488.

\bibitem{Fresse}Fresse B., {\it PROP's in model categories and homotopy invariance of structures,} Georgian Math. J. \textbf{17} (2010), 79--160.

\bibitem{Fresse2}Fresse B., {\it Homotopy of operads $\&$ Grothendieck-Teichm\"uller groups I,} Amer. Math. Soc., 2017.

\bibitem{GK}Gambino N., Kock J., {\it Polynomial functors and polynomial monads,} Math. Proc. Camb. Philos. Soc. \textbf{154} (2013), no. 1, 153--192.

\bibitem{GJ}Getzler E., Jones J. D. S., {\it Operads, homotopy algebras and iterated integrals for double loop spaces,} hep-th/940355.

\bibitem{GetzK}Getzler E., Kapranov M., {\it Modular operads,} Compos. Math. \textbf{110} (1998), 65--126.

\bibitem{G}Giansiracusa J., {\it The framed little $2$-discs operad and diffeomorphisms of handlebodies,}  J. Topology \textbf{4} (2011), 919--941.

\bibitem{GinKap}Ginzburg V., Kapranov M., {\it Koszul duality for operads,} Duke Math. J. \textbf{76} (1994), 203--272.

\bibitem{GoeJa}Goerss P., Jardine J. F., {\it Simplicial homotopy theory,} Progress in Math., Modern Birkh\"auser Classics. Basel, 2009.

\bibitem{Guitart}Guitart R., {\it Relations et carr\'es exacts,} Ann. Sc. Math. Qu\'ebec \textbf{IV} (1980), 103--125.

\bibitem{HRY}Hackney P., Robertson M., Yau D., {\it Relative left properness of colored operads}, Algebr. $\&$ Geom. Topol. \textbf{16} (2016), 2691--2714.

\bibitem{JY}Johnson M. W., Yau D., {\it A Foundation for PROP's, Algebras, and Modules,} Math. Surveys Monogr., vol. \textbf{203}, Amer. Math. Soc. 2015.

\bibitem{JK}Joyal A., Kock J.,  {\it Feynman graphs, and nerve theorem for compact symmetric multicategories (extended abstract),} Electr. Notes in Theor. Comput. Sci. \textbf{270} (2011), 105--113.

\bibitem{JS}Joyal A., Street R., {\it The geometry of tensor calculus I,} Adv. Math. \textbf{88} (1991), 55--112.

\bibitem{HHR}Hill M. A., Hopkins M. J., Ravenel D. C., {\it On the non-existence of elements of Kervaire invariant one,} {\tt arXiv:0908.3724v4}.

\bibitem{Hirschhorn}Hirschhorn P., {\it Model Categories and their Localizations,} Math. Surveys Monogr., vol. \textbf{99}, Amer. Math. Soc., Providence, RI, 2003.

\bibitem{Hovey}Hovey M., {\it Model Categories,} Math. Surveys Monogr., vol. \textbf{63}, Amer. Math. Soc., 1999.

\bibitem{Kock}Kock J., {\it Polynomial functors and trees,}  Int. Math. Res. Notices \textbf{3} (2011), 609--673.

\bibitem{KJBM}Kock J., Joyal A., Batanin M., Mascari J.-F., {\it Polynomial functors and opetopes,} Adv. Math. \textbf{224} (2010), 2690--2737.

\bibitem{Kon}Kontsevich M., {\it Operads and motives in deformation quantisation,} Lett. Math. Phys.  \textbf{48} (1999),  35--72.

\bibitem{Lack}Lack S., {\it A Quillen model structure for $2$-categories,} K-theory \textbf{26} (2002), 171--205.

\bibitem{Lack2}Lack S., {\it Codescent objects and coherence,} J. Pure Appl. Algebra \textbf{175} (2002), 223--241.

\bibitem{Leinster}Leinster T., {\it Higher Operads, Higher Categories,} London Math. Soc. Lect. Notes, Camb. Univ. Press, Cambridge, 2004.

\bibitem{Loday}Loday J.-L., Vallette B., {\it Algebraic Operads,} Springer-Verlag Berlin Heidelberg, A series of Comp. Studies in Math., 2012.

\bibitem{Lurie}Lurie J., {\it Higher Topos Theory,} Annals of Math. Studies \textbf{170}, Princeton University Press, Princeton, NJ, 2009.

\bibitem{Markl}Markl M., {\it Operads and {PROP}'s,} in Elsevier, Handbook of Algebra vol. \textbf{5} (2008), 87--140.

\bibitem{Marklbad}Markl M., {\it An $E_{\infty}$-extension of the associahedra and the Tamarkin cell mystery,} J. Geom. Phys. \textbf{98} (2015), 446--471.

\bibitem{Merk}Markl M., Merkulov S., Shadrin S., {\it Wheeled PROP's, graph complexes and master equation,} J. Pure Appl. Algebra \textbf{213} (2009), 496--535.

\bibitem{May}May J. P., {\it The geometry of iterated loop spaces,}  Lect. Notes Math. \textbf{271}, Springer-Verlag, Berlin, 1972.

\bibitem{MayPonto}May J. P., Ponto K., \emph{More Concise Algebraic Topology. Localization, completion and model categories,}  Chicago Lect. Math., Illinois, 2012.

\bibitem{MP}Moerdijk I., Palmgren E., {\it Wellfounded trees in categories,} Annals of Pure and Applied Logic \textbf{104} (2000), 189-218.

\bibitem{FM}Muro F., {\it Homotopy theory of non-symmetric operads,} Algebr. $\&$ Geom. Topol. \textbf{11} (2011), 1541--1599.

\bibitem{FM2}Muro F., {\it Homotopy theory of non-symmetric operads II: change of base category and left properness,} Algebr. $\&$ Geom. Topol. \textbf{14} (2014), 229--281.

\bibitem{Power}Power A. J., {\it A general coherence result,} J. Pure Appl. Algebra \textbf{57} (1989), 165--173.

\bibitem{Quillen}Quillen D. G., {\it Homotopical algebra,} Lect. Notes Math. \textbf{43}, Springer-Verlag, Berlin, 1967.

\bibitem{Rezk1}Rezk, C., {\it Fibrations and homotopy colimits of simplicial sheaves,} {\tt arXiv:math.AT/9811038}.

\bibitem{Rezk}Rezk, C., {\it A cartesian presentation of weak $n$-categories,}  Geom. $\&$ Topol. \textbf{14} (2010), 521--571.

\bibitem{Zav}Szawiel S., Zawadowski M., \emph{Theories of analytic monads,} Math. Struct. Comp. Sci. \textbf{24} (2014).

\bibitem{Schwede}Schwede S., {\it An untitled book project about symmetric spectra,} homepage of Stefan Schwede.

\bibitem{SS}Schwede S., Shipley B. E., {\it Algebras and modules in monoidal model categories,} Proc. London Math. Soc. \textbf{3} (2000), 491--511.

\bibitem{Sp}Spitzweck M., {\it Operads and algebras in general model categories}, arXiv:math/0101102.

\bibitem{T}Tambara D., {\it On multiplicative transfer,} Comm. Algebra \textbf{21} (1993), 1393--1420.

\bibitem{V}Vallette B., {\it Koszul duality for props,} Trans. Amer. Math. Soc. \textbf{359} (2007), 4865--4943.

\bibitem{WPol}Weber M., {\it Polynomials in categories with pullbacks,} Theory Appl. Categ. \textbf{30} (2015), 533--598.

\bibitem{WOper}Weber M., {\it Operads as polynomial $2$-monads,} Theory Appl. Categ. \textbf{30} (2015), 1659--1712.

\bibitem{W}Weber M., {\it Internal algebra classifiers as codescent objects of crossed internal categories,} Theory Appl. Categ. \textbf{30} (2015), 1713--1792.

\bibitem{W2}Weber M., {\it Algebraic Kan extensions along morphisms of internal algebra classifiers,} Tbilisi Math. J. \textbf{9} (2016), 65--142.

\end{thebibliography}
\end{document}